\documentclass[1 leqno,11pt,psfig]{amsart}
\usepackage{amssymb, amsmath,amsmath,latexsym,amssymb,amsfonts,amsbsy, amsthm,mathtools,graphicx,CJKutf8,CJKnumb,CJKulem,color}
\usepackage{graphicx,epsfig}
\usepackage{graphicx}
\usepackage{amssymb}
\usepackage{graphicx}
\usepackage{graphicx,epsfig}
\usepackage{amsmath}
\usepackage{mathrsfs}
\usepackage{amssymb}

\usepackage{amssymb,mathrsfs,amsfonts,amsmath,amsbsy,tikz}\usepackage{fontenc}\usepackage{textcomp}


\setlength{\oddsidemargin}{0mm}
\setlength{\evensidemargin}{0mm} \setlength{\topmargin}{0mm}
\setlength{\textheight}{220mm} \setlength{\textwidth}{155mm}

\renewcommand{\theequation}{\thesection.\arabic{equation}}
\numberwithin{equation}{section}

\allowdisplaybreaks


\let\al=\alpha
\let\b=\beta

\let\pa=\partial


\def\bbT{\mathbb{T}}


\def\R{\mathbf R}
\def\Z{\mathbf Z}

\def\no{\noindent}

\newcommand{\beq}{\begin{equation}}
\newcommand{\eeq}{\end{equation}}
\newcommand{\ben}{\begin{eqnarray}}
\newcommand{\een}{\end{eqnarray}}
\newcommand{\beno}{\begin{eqnarray*}}
\newcommand{\eeno}{\end{eqnarray*}}
\newcommand\numberthis{\addtocounter{equation}{1}\tag{\theequation}}

\newtheorem{theorem}{Theorem}[section]
\newtheorem{definition}[theorem]{Definition}
\newtheorem{lemma}[theorem]{Lemma}
\newtheorem{proposition}[theorem]{Proposition}
\newtheorem{corol}[theorem]{Corollary}
\newtheorem{example}[theorem]{Example}
\newtheorem{remark}[theorem]{Remark}


\begin{document}
\begin{CJK*}{UTF8}{gkai}
\title[The number of traveling wave families]{The number of traveling wave families in a running water with Coriolis force}

\author{Zhiwu Lin}
\address{School of Mathematics, Georgia Institute of Technology, 30332, Atlanta, GA, USA}
\email{zlin@math.gatech.edu}

\author{Dongyi Wei}
\address{School of Mathematical Science, Peking University, 100871, Beijing, P. R. China}
\email{jnwdyi@163.com}

\author{Zhifei Zhang}
\address{School of Mathematical Science, Peking University, 100871, Beijing, P. R. China}
\email{zfzhang@math.pku.edu.cn}

\author{Hao Zhu}
\address{Department of Mathematics, Nanjing University,  210093, Nanjing, Jiangsu, P. R. China}
\email{haozhu@nju.edu.cn}

\date{\today}

\maketitle
\begin{abstract}
In this paper, we study the number of traveling wave families  near a shear flow  under the influence of  Coriolis force, where the traveling speeds lie outside the range of the flow $u$.  Under  the $\beta$-plane approximation,
if
  the flow $u$ has a critical point at which $u$ attains its minimal (resp. maximal) value,
then  a unique transitional $\beta$ value exists in the positive (resp. negative) half-line
  such that the number of  traveling wave families  near the shear flow changes suddenly  from finite to infinite when  $\beta$ passes through it.
On the other hand, if
  $u$ has no such critical points, then the number
is always finite for positive (resp.  negative)  $\beta$ values.
  This is true  for general shear flows under  mildly technical assumptions, and for a large   class of shear flows including a cosine jet $u(y) = {1+\cos(\pi y)\over
2}$ (i.e. the sinus profile) and analytic monotone flows
   unconditionally.
  The sudden change of the number of  traveling wave families indicates that  long time dynamics around the  shear flow is much richer than the
  non-rotating case, where no such traveling wave families exist.
\end{abstract}
\section{Introduction}

 The earth's rotation influences dynamics of  large-scale flows significantly. Under  the $\beta$-plane approximation,
  the motion for such a flow could be described by 2-D incompressible Euler equation with rotation
\begin{align}\label{Euler equation}
		\partial_{t}\vec{v}+(\vec{v}\cdot\nabla)\vec{v}=-\nabla P-\beta yJ\vec
{v},\quad  \nabla\cdot\vec{v}=0,
	\end{align}
where $\vec{v}=(v_{1},v_{2})$ is the fluid velocity, $P$ is the pressure,
$
J=%
\begin{pmatrix}
0 & -1\\
1 & 0
\end{pmatrix}
$
is the rotation matrix, and $\beta$ is the Rossby number. Here we study the fluid in a {\it periodic} finite channel
$\Omega=D_T=\bbT_T\times{[y_1,y_2]}$, $\bbT_T=\R/(T\Z)$
with non-permeable  boundary condition on $\pa\Omega$:
\begin{align}\label{boundary condition for euler}
v_2=0\quad \text{on}\quad  y=y_1, y_2.
\end{align}
The $\beta$-plane approximation is commonly used for large-scale motions in geophysical fluid dynamics \cite{McWilliams,Pedlosky1987}.
 The
vorticity form of (\ref{Euler equation}) takes
\begin{equation}
\partial_{t}\omega+(\vec{v}\cdot\nabla)\omega+\beta v_{2}%
=0,\label{vorticity-eqn}%
\end{equation}
where $\omega=\partial_{x}v_{2}-\partial_{y}v_{1}$. By the incompressible condition, we introduce the stream function $\psi$ such that  $\vec{v}=\nabla^{\perp}\psi=(\pa_y\psi,-\pa_x\psi)$.
Consider the shear flow $(u(y),0)$, which is a steady solution of (\ref{vorticity-eqn}). The
linearized equation of (\ref{vorticity-eqn}) around $(u(y),0)$ takes
\begin{align}\label{linearized Euler equation}
	\partial_{t}\omega+u\partial_{x}\omega-(\beta-u^{\prime\prime})\partial_x\psi=0,
	\end{align}
which was derived in \cite{Rossby1939}.

In the study of long time dynamics near a shear flow,
 the most rigid case is  the  nonlinear inviscid damping (to a shear
flow), a kind of asymptotic stability. This means that if the initial velocity is taken close enough to the given shear flow in some function space, then the
velocity tends asymptotically to a nearby shear flow in this space.
The existence of nearby non-shear steady states or
 traveling waves means that nonlinear inviscid damping (to a shear
flow) is not true, and long time dynamics near the shear flow may be richer and fruitful.
 To understand the richer   long time dynamics near the shear flow in this situation, an important step is  to clarify whether the number of
curves of nearby
 traveling waves with traveling speeds converging to different points is infinite.
Indeed,
if the number is finite, then the velocity might tend asymptotically  to some nonlinear superpositions of finite  such non-shear states  when the initial data is taken close  to the flow, and  quasi-periodic nearby solutions are expected, which indicates new but not so complicated dynamics. If
the number is infinite, then the evolutionary velocity might tend asymptotically  to  superpositions of infinite such non-shear states, and   almost periodic nearby solutions  potentially exist, which predicts complicated even chaotic long time behavior  near the flow. Similar phenomena were observed numerically in the study of Vlasov-Poisson system, a  model describing
collisionless plasmas \cite{DZ90,BD93,BD94,LD09}. This model shares many similarities with the 2D incompressible Euler equation. By numerical simulations, it was found that for some initial perturbation near homogeneous states, the asymptotic state toward which the system evolves can be described by a superposition of BGK modes \cite{DZ90}. This offers a hint for further numerical study in the 2D Euler case.
It is very challenging to study  long time dynamics near a shear flow   in a fully analytic way when such
 non-shear steady states or 
 traveling waves  exist. The first step towards this direction is to construct nonlinear superpositions of traveling waves as in the Vlasov-Poisson case.

When there is no Coriolis force, long time dynamics near monotone flows is relatively rigid in strong topology, while it is still highly non-trivial to give a mathematical confirmation. A first step is to understand  the linearized equation. Orr \cite{Orr} observed the linear damping for  Couette flow, and Case  predicted   the decay of velocity for monotone shear
flows. Recently,  their predictions are confirmed in \cite{LZ,Zillinger-1,Zillinger-2,WZZ-1,GNRS,Jia2020-1,Jia2020-2} and are extended to non-monotone flows in \cite{WZZ-2,WZZ-3}. Meanwhile,  great progress has been made in the study of nonlinear dynamics near shear flows. Bedrossian and Masmoudi \cite{BM15} proved nonlinear inviscid damping near  Couette flow for the initial perturbation in some  Gevrey space on $\mathbb{T}\times\mathbf{R}$.  Ionescu and  Jia \cite{Ionescu-Jia20} extended the  above  asymptotic stability to a periodic finite channel $\mathbb{T}\times[-1,1]$ under the assumption that the initial vorticity perturbation is compacted supported in the interior of the channel. Later, nonlinear inviscid damping was proved near a class of Gevrey smooth monotone shear
flows  in a periodic finite channel if the perturbation is
taken in a suitable Gevrey space, where  $u''(y)$ is compactly supported
\cite{Ionescu-Jia2001,Masmoudi-Zhao2001}. It is still challenging to study the long time behavior near general, rough,
monotone  or non-monotone shear flows. On the other hand, inviscid damping (to a shear flow) depends on the regularity of the perturbation, and the existence of non-shear stationary structures is shown near some specific flows. Lin and Zeng  \cite{LZ} found cats' eyes flows near Couette for $H^{<{3\over2}}$  vorticity perturbation in a periodic  finite channel, while no non-shear  traveling waves near Couette exist if the regularity is $H^{>{3\over2}}$,   in contrast to the linear level, where  damping is always true for any initial vorticity in $L^2$. For  Kolmogorov flows, which is non-monotone,
 Coti Zelati,  Elgindi and Widmayer \cite{CEW20} constructed non-shear stationary states near Kolmogorov at analytic regularity on the square torus, while there are no nearby non-shear steady states
at regularity $H^{3}$ for velocity on a rectangular torus.
They also proved that any traveling wave  near Poiseuille must be shear for $H^{>5}$  vorticity perturbation in a periodic  finite channel.

As indicated in \cite{Pedlosky1987}, the study of the dynamics of large-scale oceanic or atmospheric motions must include the Coriolis force  to be geophysically
relevant, and once the Coriolis force  is included a host of subtle and fascinating dynamical phenomena are possible.
By numerical computation,
Kuo \cite{Kuo1974} found the boundary of  barotropic instability for the sinus profile, which is far from  linear instability in no Coriolis case. Later, based on Hamiltonian index theory and spectral analysis,  Lin, Yang and Zhu  theoretically confirmed large parts of the boundary and corrected the rest. New traveling waves, which are purely due to the Coriolis effects,
are found near the sinus profile \cite{LYZ}. Barotropic instability of other geophysical  shear flows has also attracted much attention. For instance, by looking for the neutral solutions, most of
the stability boundary, which is again different from   no Coriolis case, of bounded and unbounded Bickley jet is found  numerically and analytically in  \cite{Lipps,Howard-Drazin,Maslowe,Balmforth-Piccolo,Engevik}.  More fruitful geophysical fluid dynamics, such as Rossby wave and baroclinic instability, could be found in \cite{Kuo1974,Pedlosky1987,Drazin,Majda,McWilliams}. On the other hand, similar to  no Coriolis case, linear inviscid damping is still true for a large class of flows and moreover,  the same decay estimates of the velocity can be obtained  for a class of monotone flows \cite{WZZ}.
 Elgindi and Widmayer \cite{EW} viewed  Coriolis effect as one mechanism helping to stabilize the
motion of an ideal fluid, and proved the almost global stability of the zero solution  for
the $\beta$-plane equation.  Global stability of the zero solution is further to be confirmed in \cite{PW}.

When  Coriolis force is involved,  long time dynamics near a   shear flow becomes fruitful. One of the main reasons is that, comparing with no Coriolis case,  there are new traveling waves  with  fluid trajectories moving in one direction.
 This paper is devoted to studying  the number of such  traveling wave families near a general shear flow $u$   under  the influence of  Coriolis force.
Here, a traveling wave family  roughly includes the sets of nearby traveling waves with traveling speeds converging to a same number  outside the  range of the flow, see Definition \ref{traveling wave family} for details.
  Precisely, we
 prove that if the flow $u$ has a  critical point at which $u$ attains its minimal (resp. maximal) value,  then a unique transitional $\beta$ value  $\beta_{+}$
 (resp. $\beta_{-}$)  exists in the positive (resp. negative) half-line, through which the number of traveling wave families changes suddenly from
 finite to infinite.
The  transitional $\beta$ values are defined in (\ref{def-beta+})--(\ref{def-beta-}).
   If
 the flow $u$ has no such  critical  points, then  the number of
 traveling wave families is always finite   for positive (resp. negative) $\beta$ values.
  This is true  for general shear flows under  mildly technical assumptions.
 Based on Hamiltonian structure and index theory, we unconditionally prove the above results for a flow in class $\mathcal{K}^+$, which is
 defined as follows.
\begin{definition}\label{def-flow-in-class K+}
A flow $u$ in class $\mathcal{K}^+$ means that $u\in H^4(y_1,y_2)$
is not a constant function, and for any $\beta\in\text{Ran} (u'')$, there exists $u_\beta\in\text{Ran} (u)$ such that $K_\beta=(\beta-u'')/(u-u_\beta)$
is positive and bounded on $[y_1,y_2]$.
\end{definition}

A typical example of such a flow is a cosine jet $u(y)={\frac{1+\cos(\pi y)}{2}},$ $ y\in\lbrack-1,1]$ (i.e. the sinus profile), which was studied in  geophysical literature \cite{Kuo1949,Kuo1974,Pedlosky1987}.
For $\beta=0$ and a general shear flow $u\in C^2([y_1,y_2])$,  Rayleigh \cite{Rayleigh1880} gave  a necessary  condition for spectral instability that $u''(y_0)=0$ for some $y_0\in(y_1,y_2)$, and even under this condition, Fj{\o}rtoft \cite{Fjortoft} provided a sufficient  condition for spectral stability  that     $(u-u(y_0))u''\geq0$  on $(y_1,y_2)$.
For $\beta\neq0$ and $u\in C^2([y_1,y_2])$,   the above two conditions   can be extended  as $\beta-u''(y_\beta)=0$ for some  $y_{\beta}\in(y_1,y_2)$   and  $(\beta-u'')(u-u(y_\beta))\leq 0$ on $(y_1,y_2)$, respectively. See, for example, (6.3)--(6.4) in \cite{Kuo1974}. For a flow in class $\mathcal{K}^+$, the extended Rayleigh's  condition implies that $\beta\in\text{Ran} (u'')$ is necessary for spectral instability, but the flow does not satisfy the extended Fj{\o}rtoft's  sufficient condition for spectral stability. The sharp condition for spectral stability indeed depends on $\beta$ and the wave number $\alpha$, which was obtained in \cite{Lin03} for $\beta=0$  and  in \cite{LYZ} for $\beta\neq0$.

Consider a class of general shear  flows satisfying
\beno
(\textbf{H1}) \quad u\in H^{4}(y_{1},y_{2}), \,\,  u''\neq0  \,\, \text{on}\,\, u\text{'s}\,\,  \text{critical level}\; \{u'=0\}.
\eeno
A flow $u$  in class $\mathcal{K}^+$ satisfies
the assumption ({\bf{H1}}).
In fact, it is trivial for $0\notin \text{Ran}(u'')$; if $0\in \text{Ran}(u'')$ and  $y_0\in[y_1,y_2]$ satisfies $u'(y_0)=0$ and $u''(y_0)=0$, then $u(y_0)-u_0=-{1\over K_0(y_0)}u''(y_0)=0$.
Thus, $\varphi\equiv u-u_0$ solves $\varphi''+K_0\varphi=0$, $\varphi(y_0)=\varphi'(y_0)=0$. Then $u\equiv u_0$, which is a contradiction.

To state
  our main results with few restriction, we first consider  flows in class $\mathcal{K}^+$, and left the extension to general shear flows satisfying
  ({\bf{H1}}) in Section 2.

\begin{theorem}\label{traveling wave construction-classK+}
Let 
$\beta\neq0$  and the flow $u$ be in class $\mathcal{K}^+$.
\begin{itemize}
\item[(1)] If
$\{u'=0\}\cap\{u=u_{\min}\}\neq\emptyset$, then
there exists $\beta_+\in(0,\infty)$ such that there exist at most finitely  many  traveling wave families  near $(u,0)$ for  $\beta\in(0,\beta_+]$, and infinitely  many traveling wave families  near $(u,0)$ for $\beta\in(\beta_+,\infty)$. Moreover, $\beta_+$ is specified in
\eqref{def-beta+}.
\item[(2)]
If
$\{u'=0\}\cap\{u=u_{\max}\}\neq\emptyset$,
 then
there exists $\beta_-\in(-\infty,0)$ such that there exist at most finitely  many  traveling wave families  near $(u,0)$ for  $\beta\in[\beta_-,0)$, and infinitely  many traveling wave families  near $(u,0)$ for $\beta\in(-\infty,\beta_-)$. Moreover,
$\beta_-$ is specified in
\eqref{def-beta-}.
\end{itemize}
\begin{itemize}
\item[(3)] If
$\{u'=0\}\cap\{u=u_{\min}\}=\emptyset$, then    there exist at most finitely many traveling wave families  near  $(u,0)$ for   $\beta\in(0,\infty)$.
\item[(4)]
 If
$\{u'=0\}\cap\{u=u_{\max}\}=\emptyset$, then  there exist at most finitely  many traveling wave families  near  $(u,0)$ for   $\beta\in(-\infty,0)$.
\end{itemize}
Here,
 the precise description of a traveling wave family  near $(u,0)$ is given in Definition $\ref{traveling wave family}$.
\end{theorem}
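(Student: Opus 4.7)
The plan is to translate the counting of traveling wave families into a spectral counting problem for the one-parameter family of Sturm--Liouville operators associated to the Rayleigh--Kuo equation. A nonshear traveling wave of \eqref{linearized Euler equation} with speed $c\notin\mathrm{Ran}(u)$ and wave number $\alpha=2\pi k/T$, $k\in\mathbf{Z}_+$, is exactly an eigenfunction of
\begin{equation*}
\mathcal{L}_c:=-\partial_y^2-K_c,\qquad K_c(y):=\frac{\beta-u''(y)}{u(y)-c},
\end{equation*}
on $H^2\cap H^1_0(y_1,y_2)$ associated to the eigenvalue $-\alpha^2<0$. A traveling wave family near $(u,0)$, per Definition~\ref{traveling wave family}, corresponds to a smooth branch of negative eigenvalues $\lambda_n(c)$ of $\mathcal{L}_c$ converging, as $c$ approaches some accumulation point $c_\ast\in\mathbf{R}\setminus(u_{\min},u_{\max})$, to an admissible level $-\alpha^2$. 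So counting families reduces to tracking the Morse index of $\mathcal{L}_c$ along each admissible approach.

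First I would dispose of the uninteresting accumulation points. As $|c|\to\infty$, $K_c\to 0$ uniformly, so $\mathcal{L}_c\to-\partial_y^2\geq(\pi/(y_2-y_1))^2>0$ and no families escape to infinity. For $c_\ast$ finite and strictly outside $[u_{\min},u_{\max}]$, $K_{c_\ast}$ is smooth and bounded, $\mathcal{L}_{c_\ast}$ has finite Morse index, and the class $\mathcal{K}^+$ hypothesis---applied through the positive bounded function $K_\beta$ and a Friedrichs-type substitution $\phi=(u-u_\beta)w$---provides a lower bound on $\mathcal{L}_c$ uniform in $c$ on each connected component of the admissible region. Hence at most finitely many families can accumulate at each such $c_\ast$, and the total over all such $c_\ast$ is finite. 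The only remaining accumulation points are $u_{\min}$, approached from below when $\beta>0$, and $u_{\max}$, approached from above when $\beta<0$. The involution $(u,\beta)\mapsto(-u,-\beta)$ swaps these two situations, so it is enough to treat $c_\ast=u_{\min}$ with $\beta>0$ to prove (1) and (3); then (2) and (4) follow by symmetry.

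The dichotomy in (1) versus (3) is governed by the singular behavior of $K_{u_{\min}}$ near the minimum. When $\{u'=0\}\cap\{u=u_{\min}\}=\emptyset$, the factor $u-u_{\min}$ vanishes only at the endpoint(s) and only to first order (because $u'\neq 0$ there), so $K_{u_{\min}}$ has at worst an integrable $(y-y_i)^{-1}$ singularity and $\mathcal{L}_{u_{\min}}$ remains bounded below with finite Morse index. Standard monotonicity of $\lambda_n$ in $c$ then shows only finitely many branches cross any fixed threshold $-(2\pi k/T)^2$ as $c\nearrow u_{\min}$, proving (3). When an interior critical minimizer $y_0$ exists, $u''(y_0)>0$ by (\textbf{H1}), and Taylor expansion yields
\begin{equation*}
K_{u_{\min}}(y)\,\sim\,\frac{2(\beta-u''(y_0))}{u''(y_0)\,(y-y_0)^2}\qquad\text{as }y\to y_0,
\end{equation*}
an inverse-square potential whose attractive strength crosses the critical Hardy value $\tfrac14$ exactly at $\beta=\tfrac{9}{8}u''(y_0)$. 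Aggregating over all interior minimizers, I expect $\beta_+=\tfrac{9}{8}\min\{u''(y_0):y_0\text{ interior},\ u(y_0)=u_{\min}\}$, which should coincide with the quantity in \eqref{def-beta+}. In the subcritical regime $\beta\leq\beta_+$, Hardy's inequality keeps $\mathcal{L}_{u_{\min}}$ bounded below with finite Morse index, so only finitely many families. In the supercritical regime $\beta>\beta_+$, the Friedrichs extension of $\mathcal{L}_{u_{\min}}$ has discrete spectrum accumulating at $-\infty$; a min-max comparison then shows that for $c$ sufficiently close to $u_{\min}^-$, $\mathcal{L}_c$ has arbitrarily large Morse index, so infinitely many branches cross every admissible level $-\alpha^2$, producing infinitely many families.

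The main technical obstacle will be the rigorous passage between the singular limit $\mathcal{L}_{u_{\min}}$ and the nonsingular $\mathcal{L}_c$ for $c<u_{\min}$: one needs monotone $\Gamma$-convergence of the quadratic forms together with Weyl-type counting asymptotics, both to pin down $\beta_+$ precisely as in \eqref{def-beta+} and to make sure each supercritical eigenvalue of the limit genuinely lifts to a traveling wave family rather than being absorbed into spurious behavior at the singularity. The Hamiltonian and index-theoretic framework of \cite{Lin03,LYZ}, adapted to the $\beta$-plane equation, supplies the needed continuity and monotonicity of the Morse index along $c\mapsto\mathcal{L}_c$, and the $\mathcal{K}^+$ structure furnishes the clean variational characterization of $\beta_+$ via the positive weight $K_\beta$.
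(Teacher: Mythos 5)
Your translation of the counting problem into tracking the negative spectrum of $\mathcal{L}_c$ as $c\to u_{\min}^-$ (or $u_{\max}^+$) is the right framework, and your intuition about an inverse-square potential near a critical minimizer with a Hardy threshold is part of the story. However, your guessed formula for $\beta_+$ is wrong, and the error reveals a real misunderstanding of where the $\tfrac{9}{8}$ factor comes from.

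You propose $\beta_+=\tfrac{9}{8}\min\{u''(y_0):y_0\ \text{interior},\ u(y_0)=u_{\min}\}$, i.e.\ $\tfrac{9}{8}\mu_+$. The paper's answer, from \eqref{def-beta+}, is $\beta_+=\min\{\tfrac{9}{8}\kappa_+,\ \mu_+\}$. The Hardy constant $4$ (equivalently the $-\tfrac14$ threshold for the inverse-square coefficient, hence the $\tfrac{9}{8}$) is only available when the test functions are forced to vanish at the singular point — that is precisely the hypothesis of Lemma~\ref{Hardy type inequality2}. At an \emph{endpoint} critical minimizer this constraint is free (Dirichlet boundary condition), so the threshold there is $\tfrac{9}{8}u''(y_i)$, and aggregating over endpoint contributions gives the $\tfrac{9}{8}\kappa_+$ piece. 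At an \emph{interior} minimizer $y_0$ there is no such constraint for $\lambda_1$: as soon as $\beta>u''(y_0)$ you can take a bump $\phi$ with $\phi(y_0)\neq0$, and
\begin{equation*}
\int \frac{u''-\beta}{u-c}\,|\phi|^2\,dy \longrightarrow \int\frac{u''-\beta}{u-u_{\min}}\,|\phi|^2\,dy=-\infty
\end{equation*}
as $c\to u_{\min}^-$, because $\frac{u''-\beta}{u-u_{\min}}\sim\frac{-(\beta-u''(y_0))}{(u''(y_0)/2)(y-y_0)^2}$ is nonintegrable against $|\phi|^2$ when $\phi(y_0)\neq0$. So $\lambda_1(c)\to-\infty$ already for $\beta>u''(y_0)$, without any Hardy factor. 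The interior contribution to $\beta_+$ is therefore $\mu_+$, not $\tfrac{9}{8}\mu_+$. Your formula also omits the endpoint piece $\tfrac{9}{8}\kappa_+$ altogether, which matters whenever $\kappa_+<\mu_+$. This is exactly why the paper distinguishes the cases in Lemma~\ref{aH1} and why Corollary~\ref{sturm-liouville-eigen-asym} records the correct sharp threshold $\min\{\tfrac98\kappa_+,\mu_+\}$ for $\lambda_1$. Your ``min-max comparison shows arbitrarily large Morse index'' step is also too crude to produce infinitely many families in the intermediate range $\mu_+<\beta\le\tfrac98\kappa_+$, where only $\lambda_1,\dots,\lambda_{m_\beta}$ blow up and the Morse index of the singular limit stays bounded; the infinitude in that range comes from a single eigenvalue branch $\lambda_1(c)\to-\infty$ crossing every level $-(k\alpha)^2$, not from a growing Morse index.

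A second, independent gap: you invoke ``standard monotonicity of $\lambda_n$ in $c$'' to conclude each branch crosses any fixed threshold finitely often. For flows with $\beta\in\operatorname{Ran}(u'')$ this is not available: $\lambda_n(c)$ can in principle oscillate as $c\to u_{\min}^-$, producing infinitely many crossings even when $\inf\lambda_n>-\infty$. Ruling this out is the content of the paper's entire Section~4; for class $\mathcal{K}^+$ it is done (Theorem~\ref{number for sinus flow}) through the Hamiltonian index identity $k_c+k_r+k_i^{\le0}=n^-(L_\alpha)$: each oscillation contributes a nonresonant neutral mode with $\langle L_\alpha\cdot,\cdot\rangle\le0$, and infinitely many such would contradict $n^-(L_\alpha)<\infty$. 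Your phrase ``continuity and monotonicity of the Morse index along $c\mapsto\mathcal{L}_c$'' does not capture this argument — the Morse index of $L_\alpha$ is a fixed number, and what it controls is not a monotone count of $\mathcal{L}_c$'s negative modes but a global bound on the number of non-resonant neutral modes.

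Your treatment of the uninteresting accumulation points (finite $c_\ast$ strictly outside $[u_{\min},u_{\max}]$, and $|c|\to\infty$) and the reduction by the involution $(u,\beta)\mapsto(-u,-\beta)$ are both correct and coincide with the paper's.
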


Unless otherwise specified,  ``near $(u,0)$" always means ``in a (velocity) $H^3$ neighborhood of $(u,0)$" in Theorem \ref{traveling wave construction-classK+} and the rest of this paper, as indicated in Definition \ref{a set of traveling wave solutions}.
These traveling wave families do not exist if there is no  Coriolis force.
By Theorem \ref{traveling wave construction-classK+}, Coriolis force and its magnitude indeed  bring fascinating dynamics near the shear flow.
  On the one hand,
 for flows having no  critical point which is meanwhile a minimal point, the number of traveling wave families is always finite  no matter how much magnitude of Coriolis force, which is   a mild Coriolis effect.
 On the other hand,
 for flows having  such a critical  point, there is a surprisingly sharp difference, namely, when the  Coriolis parameter   passes through the transitional point $\beta_{+}$, the number of traveling wave families  changes  suddenly from finite to infinite. In particular, quasi-periodic solutions to (\ref{Euler equation})--(\ref{boundary condition for euler}) can be expected near the shear flow for  $\beta\in(0,\beta_+]$, while
almost periodic solutions potentially exist  for  $\beta\in(\beta_+,\infty)$.
 This could be regarded as  a strong Coriolis effect and predicts chaotic long time dynamics near these flows.

 The same dynamical phenomena are true for  general shear flows under some mildly technical assumptions.  The explicit result is stated in Theorem \ref{traveling wave construction}. For $\beta>0$, the  technical assumption for flows having a critical and meanwhile minimal point is that $u_{\min}$ is not an embedding eigenvalue of the linearized Euler operator for small wave numbers. The  assumption for flows having no such critical points is some regularized  condition near the endpoints of $u$. Note that the first spectral assumption has only restriction for one  point $u_{\min}$, no matter whether the interior of $\text{Ran}\,(u)$ has embedding eigenvalues.
 The second assumption is more generic and quite easy to verify. Both the two technical assumptions are used for ruling out eigenvalues' oscillation for Rayleigh-Kuo  boundary value problem (BVP) as the parameter $c$ tends to $u_{\min}$, see Subsection 2.2 for details.

 Let us give some remarks on properties of such traveling waves near the flow $u$.
\begin{itemize}
\item  The traveling waves
 have fluid trajectories moving in one direction, see \eqref{horizontal-velocity does not change sign} in the proof of Lemma \ref{traveling wave construction-lemma}. Thus unlike  the  constructed steady flow near Couette flow in \cite{LZ}, the streamlines here have
 no cat's eyes structure.
\item
The traveling waves can be constructed near a smooth shear flow for $H^{\geq{3}}$ (including $H^{>6}$) velocity perturbation when the Coriolis parameter is large, see Corollary \ref{traveling wave construction-corollary}.
In contrast, in the case of    no Coriolis force,  no traveling waves  could  be found  near Couette flow   for $H^{>{5\over2}}$  velocity  perturbation \cite{LZ} and near Poiseuille  flow  for $H^{>6}$  velocity   perturbation \cite{CEW20}.
\item
Let $\{u'=0\}\cap\{u=u_{\min}\}\neq \emptyset$ and $\beta>{9\over8}\kappa_+$.
The directions of  vertical velocities of the nearby traveling waves
might
change  frequently  with small  amplitude as the traveling speeds converge to $u_{\min}^-$, see  Remark \ref{asymptotic behavior of vertical velocities}.
\end{itemize}

We  apply the main results to analytic monotone flows (including Couette flow) and the sinus profile. For  an analytic monotone  flow, there exist at most finitely many nearby traveling wave families   for   $\beta\neq0$, see  Corollary \ref{thm-analytic-monotone flow}.
  For the  sinus profile, as   mentioned above, it is in class $\mathcal{K}^+$, and so  applying Theorem \ref{traveling wave construction-classK+} (1)-(2) we get that $\{u'=0\}\cap\{u=u_{\min}\}=\{\pm1\}$, $\{u'=0\}\cap\{u=u_{\max}\}=\{0\}$,  $\beta_+={9\over16}\pi^2$, $\beta_-=-{1\over2}\pi^2$,  there exist at most finitely  many  traveling wave families  near the sinus profile for  $\beta\in[-{1\over2}\pi^2,{9\over16}\pi^2]$, and infinitely  many nearby traveling wave families  for $\beta\notin[-{1\over2}\pi^2,{9\over16}\pi^2]$. Moreover, we will give a systematical study on  the  number  of  isolated real eigenvalues of  the linearized Euler operator and traveling wave families near the sinus profile on the whole  $(\alpha,\beta)$'s region in Section 7 (here $\alpha$ is the wave number in the $x$-direction), which plays an
  important role in further study on its  long time dynamics. We make a  comparison with  the  previous work in \cite{LYZ}. By Theorem \ref{the explicit number of traveling wave families}, the number of isolated real eigenvalues of  the linearized Euler operator  (i.e. non-resonant modes)
  determines that of traveling wave families.
 The explicit  number of isolated real eigenvalues  in the region $(\alpha,\beta)\in(0,\infty)\times[-{\pi^2\over2},{\pi^2\over2}]$ can be obtained in  \cite{LYZ}, but no information can be concluded outside this region, see the discussion below Figure 4 in  \cite{LYZ}. Our new contribution for the sinus profile in this paper is that we  calculate  the  explicit  number of isolated real eigenvalues in the remaining area $(\alpha,\beta)\in(0,\infty)\times(-\infty,-{\pi^2\over2})\cup({\pi^2\over2},\infty)$, and thus completely get the number of   traveling wave families near the sinus profile on the whole  $(\alpha,\beta)$'s region.
 For the sinus profile,  the  novelty is that we give the asymptotic behavior of the $n$-th eigenvalue $\lambda_n(c)$ of the Rayleigh-Kuo BVP \eqref{sturm-Liouville} as $c\to 0^-$ for $\beta\in({\pi^2\over2},\infty)$ and as $c\to 1^+$ for $\beta\in(-\infty,-{\pi^2\over2})$, from which we find the transitional $\beta$ values such that the number of traveling wave families changes suddenly from finite
to infinite. For general shear flows satisfying ${\bf (H1)}$, the key   is to study whether $\lambda_n(c)$ is  unbounded from  below as $c$ is close to $u_{\min}$ (or $u_{\max}$) in Theorem \ref{eigenvalue asymptotic behavior-bound} and to rule out the oscillation of $\lambda_n(c)$ in Theorems \ref{main-result}-\ref{number for sinus flow}. In this paper, we focus on the
description of the eigenvalues of the Rayleigh-Kuo BVP \eqref{sturm-Liouville}, which in turn, by Theorem \ref{the explicit number of traveling wave families}, yields information  on traveling wave families.

The rest of this paper is organized as follows. In Section $2$, we extend  Theorem \ref{traveling wave construction-classK+}  to general shear flows and give the outline of the proof.  In Sections 3-4, we study the
 asymptotic behavior of the $n$-th eigenvalue of Rayleigh-Kuo BVP, where we determine the transitional  values for the $n$-th eigenvalue of Rayleigh-Kuo BVP in Section 3, and rule out oscillation of the $n$-th eigenvalue in Section 4.
In Section $5$, we establish the correspondence between a traveling wave family  and an isolated real eigenvalue of the linearized Euler operator.
In Section $6$, we prove the main Theorems \ref{traveling wave construction} and  \ref{traveling wave construction-classK+}.
As a concrete application, we thoroughly study the number of traveling wave families near the sinus profile in the last section.

\vspace{0.2cm}

\no\textbf{Notation}

\vspace{0.2cm}

We provide the notations that we use in this
paper.  Let $u_{\min}=\min(u)$ and $u_{\max}=\max(u)$ for $u\in C([y_1,y_2])$. For    a shear  flow $u$ satisfying
$
(\textbf{H1})$, we use the following characteristic quantities of the flow.
  If $\{u'=0\}\cap\{u=u_{\min}\}\neq\emptyset,$ we define
\begin{align}
\label{def-kappa+}
\kappa_+&:=
		\min \{u''(y)|y\in [y_1,y_2]\text{ such that } u'(y)=0\text{ and } u(y)=u_{\min}\}.
\end{align}
If $\{u'=0\}\cap\{u=u_{\max}\}\neq\emptyset,$ we define
\begin{align}
\label{def-kappa-}
 \kappa_-&:=
		\max \{u''(y)|y\in[y_1,y_2]\text{ such that } u'(y)=0\text{ and } u(y)=u_{\max}\}.
\end{align}
Note that $\kappa_+\in (0,\infty)$ and $\kappa_-\in (-\infty,0)$ in  \eqref{def-kappa+}--\eqref{def-kappa-}. In fact,
${\bf (H1)}$ implies $u''(y_0)>0$ for  $y_0\in A:=\{y\in [y_1,y_2]| u'(y)=0\text{ and } u(y)=u_{\min}\}$. Then $y_0$ is an isolated point of $A$. Thus, $A$ is a finite set and $\kappa_+\in (0,\infty)$ in \eqref{def-kappa+}. Similarly, $\kappa_-\in (-\infty,0)$ in \eqref{def-kappa-}.
Besides \eqref{def-kappa+}--\eqref{def-kappa-}, we define
\begin{align}
\label{def-kappa-infty}
\kappa_+:=
		\infty,\quad \text{ if }&\{u'=0\}\cap\{u=u_{\min}\}=\emptyset, \\
 \label{def-kappa-negative-infty}\kappa_-:=-\infty, \;\text{ if }&\{u'=0\}\cap\{u=u_{\max}\}=\emptyset.
 \end{align}
If $\{u=u_{\min}\}\cap(y_1,y_2)\neq \emptyset,$ we define
\begin{align}
\label{def-mu+}
\mu_+&:=\min\{ u''(y)|y\in(y_1,y_2)\text{ such that } u(y)=u_{\min}\}.
\end{align}
If $\{u=u_{\max}\}\cap(y_1,y_2)\neq \emptyset,$ we define
\begin{align}
\label{def-mu-}
\mu_-&:=\max\{ u''(y)|y\in(y_1,y_2)\text{ such that }u(y)=u_{\max}\}.
\end{align}
Note that $\mu_+\in[\kappa_+,\infty)$ and $\mu_-\in (-\infty,\kappa_-]$ in \eqref{def-mu+}--\eqref{def-mu-}.
Then we define
\begin{align}\label{def-beta+}
\beta_+:=
\begin{cases}
\min\{{9\over8}\kappa_+,\mu_+\},  &
\text{if } \{u=u_{\min}\}\cap(y_1,y_2)\neq\emptyset,\\
{9\over8}\kappa_+, &
\text{if } \{u=u_{\min}\}\cap(y_1,y_2)=\emptyset,
\end{cases}\end{align}
and
\begin{align}\label{def-beta-}
\beta_-:=
\begin{cases}
\max\{{9\over8}\kappa_-,\mu_-\},  &
\text{if } \{u=u_{\max}\}\cap(y_1,y_2)\neq\emptyset,\\
{9\over8}\kappa_-, &
\text{if } \{u=u_{\max}\}\cap(y_1,y_2)=\emptyset.
\end{cases}
\end{align}
We denote
\begin{align}\label{E+}
(\textbf{E$_+$}) \quad \quad\quad u_{\min} \text{ is not an embedding eigenvalue of }\mathcal{R}_{\alpha,\beta},\\\label{E-}
(\textbf{E$_-$}) \quad \quad\quad u_{\max} \text{ is not an embedding eigenvalue of }\mathcal{R}_{\alpha,\beta},
\end{align}
where  $\mathcal{R}_{\alpha,\beta}$ is defined in \eqref{defRab}.
Moreover, we define
\begin{align}
\label{def-m0finitebetapositive}
m_{\beta}&:=\left\{\begin{array}{ll}
\sharp\{a\in(y_1,y_2)|u(a)=u_{\min},u''(a)-\beta<0\},&\text{if}\;0<\beta\leq{9\over8}\kappa_+,\\
		\sharp\{a\in(y_1,y_2)|u(a)=u_{\max},u''(a)-\beta>0\},&\text{if}\;{9\over8}\kappa_-\leq\beta<0,
		\end{array}\right.
\end{align}
and
\begin{align}
\label{def-M0finitebetapositive}
M_{\beta}&:=\left\{\begin{array}{ll}
-\inf\limits_{c\in(-\infty,u_{\min})}\lambda_{m_{\beta}+1}(c),&\text{if}\;0<\beta\leq{9\over8}\kappa_+,\\
-\inf\limits_{c\in(u_{\max},\infty)}\lambda_{m_{\beta}+1}(c),&\text{if}\;{9\over8}\kappa_-\leq\beta<0,
		\end{array}\right.
\end{align}
where $\lambda_{m_{\beta}+1}(c)$ is the $(m_{\beta}+1)$-th eigenvalue of the Rayleigh-Kuo  BVP
\eqref{sturm-Liouville}.

$\mathbf{R}$, $\mathbf{Z}$ and $\mathbf{Z}^+$  denote the set of all the real numbers, integers and positive integers, respectively.
$\sharp(K)$ or $\sharp\,K$  is the cardinality of the set $K$.
Let  $L$ be a linear operator from  a Banach space $X$  to $X$.  $X^*$ is the dual space of $X$.
$\sigma(L)$,  $\sigma_e(L)$ and $\sigma_d(L)$ are the spectrum, essential spectrum and discrete spectrum of the operator $L$, respectively.
For $\psi\in L^2(D_T)$,   the Fourier transform of $\psi$ in $x$ is denoted by $\widehat \psi$.

\section{Extension to general shear flows and  outline of the proof}

In this section, we first extend the main Theorem \ref{traveling wave construction-classK+} to  general shear flows under mild assumptions, and then discuss our approach  in its proof.

\subsection{Main results for general shear flows}
For a shear flow in $H^4(y_1,y_2)$, we give the exact number of traveling wave families near the flow.
\begin{theorem}\label{the explicit number of traveling wave families}
 Let $\alpha=2\pi/T$, $\beta\neq 0$ and $u\in H^4(y_1,y_2)$. Then
$\sharp(\bigcup_{k\geq1}(\sigma_d(\mathcal{R}_{k\alpha,\beta})\cap\mathbf{R}))$ is exactly  the number of traveling wave families near $(u,0)$, where
$\mathcal{R}_{k\alpha,\beta}$ is defined in \eqref{defRab} and the precise description of a traveling wave family  near $(u,0)$ is given in Definition
$\ref{traveling wave family}$.
\end{theorem}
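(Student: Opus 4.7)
The plan is to prove the equality by setting up a bijection between $\bigcup_{k\geq 1}(\sigma_d(\mathcal{R}_{k\alpha,\beta})\cap\mathbf{R})$ and the set of traveling wave families near $(u,0)$. Since $u\in H^4(y_1,y_2)$ and $\beta\neq 0$, any $c_0\in\sigma_d(\mathcal{R}_{k\alpha,\beta})\cap\mathbf{R}$ automatically lies outside $\mathrm{Ran}(u)$ (points of $\mathrm{Ran}(u)$ belong to the essential spectrum), so the associated eigenfunction $\phi_0$ is a smooth nontrivial solution of the Rayleigh-Kuo problem \eqref{sturm-Liouville} at parameter $c=c_0$ with wave number $k\alpha$ and Dirichlet data.

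First I would show that every isolated real eigenvalue produces a traveling wave family. Fix $(k,c_0)$ as above. In a frame moving with speed $c$, a traveling wave of \eqref{Euler equation}--\eqref{boundary condition for euler} with a single $x$-frequency $k\alpha$ is a steady solution whose stream function satisfies an elliptic profile equation of the form $\Delta\psi + F(\psi-cy;\beta)=0$ with Dirichlet boundary conditions, where $F$ encodes the nonlinear vorticity-stream relation. Linearising this equation at $(u,c_0)$ yields exactly the Rayleigh-Kuo operator at $c_0$, whose kernel is the one-dimensional span of $\phi_0$; because $c_0$ is isolated, the linearisation is Fredholm of index zero on the $H^3$-based function space that appears in Definition~\ref{a set of traveling wave solutions}. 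Using $c$ as bifurcation parameter and carrying out a Lyapunov-Schmidt reduction, one obtains a one-parameter branch of genuinely non-shear solutions accumulating at $(u,0)$ with speeds $c\to c_0$, which realises a traveling wave family in the sense of Definition~\ref{traveling wave family}.

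Second I would prove the converse: every traveling wave family determines a unique isolated real eigenvalue. Take a defining sequence of family members with speeds $c_n\to c_\infty\notin\mathrm{Ran}(u)$ and stream-function perturbations $\widetilde\psi_n\to 0$ in $H^3$. Expand each $\widetilde\psi_n$ in its $x$-Fourier modes; by the non-shear hypothesis in Definition~\ref{traveling wave family}, some mode $k\geq 1$ is nontrivial. Normalising in $H^3$ and using the $H^4$-regularity of $u$ together with the fact that $c_\infty\notin\mathrm{Ran}(u)$ makes $(\beta-u'')/(u-c_\infty)$ smooth, a subsequence of the $k$-th Fourier profiles converges to some $\phi_\infty\not\equiv 0$ satisfying \eqref{sturm-Liouville} at $c=c_\infty$. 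This identifies $c_\infty$ as an eigenvalue of $\mathcal{R}_{k\alpha,\beta}$ in $\mathbf{R}$, and being outside $\mathrm{Ran}(u)$ forces it into the discrete spectrum $\sigma_d(\mathcal{R}_{k\alpha,\beta})$.

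The main obstacle will be the bifurcation step: performing the nonlinear construction in the $H^3$ topology while verifying that the branch is honestly non-shear (rather than a trivial reparametrisation) and that the $c$-derivative of the profile equation is transversal to the range of the linearisation, so that the Lyapunov-Schmidt reduction actually yields a curve emanating from $c_0$. Once this is established, distinctness of the branches for different pairs $(k,c_0)$ follows from the uniqueness up to scalar of the Rayleigh-Kuo eigenfunction at fixed $(k,c_0)$ with $c_0\notin\mathrm{Ran}(u)$ together with the uniqueness of the limiting speed within a family; combining with the two directions above then gives the exact cardinality $\sharp\bigcup_{k\geq 1}(\sigma_d(\mathcal{R}_{k\alpha,\beta})\cap\mathbf{R})$ asserted in the theorem.
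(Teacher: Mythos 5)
Your outline matches the skeleton of the paper's argument (Lemma~\ref{traveling wave construction-lemma} for existence, Lemma~\ref{traveling wave concentration} for the converse, and Definition~\ref{traveling wave family} for the counting), but at two points it claims more than it can deliver, and these are precisely the places where the paper invests most of its effort.

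First, the bifurcation step. You assert that the kernel of the linearised profile operator at $(u,c_0)$ is the one-dimensional span of $\phi_0$. That is false in general: the same value $c_0$ may satisfy $-(k\alpha)^2=\lambda_{n_k}(c_0)$ for several integers $k\geq 1$, giving a kernel spanned by all $\phi_{c_0,n_k}(y)\cos(k\alpha x)$, and in addition one can have $c_0\in\sigma_d(\mathcal{R}_{0,\beta})\cap\mathbf{R}$, i.e.\ a zero mode, which threatens to make the bifurcating branch a shear flow rather than a genuine traveling wave. Moreover the transversality $\lambda'_{n}(c_0)\neq 0$ can fail ($c_0$ an extreme point of $\lambda_n$). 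You flag these as ``the main obstacle'' but do not say how to dispose of them, and they cannot be disposed of by a vanilla Lyapunov--Schmidt/Crandall--Rabinowitz argument when they occur. The paper's resolution is a different idea: instead of trying to handle the degenerate bifurcation at $u$ directly, it perturbs the \emph{background shear flow} to $u+\tau_1 u_1$, where $u_1$ solves the inhomogeneous Rayleigh ODE $u_1''(u-c_0)-(u''-\beta)u_1=\zeta_1$ with a source $\zeta_1$ chosen so that after perturbation the transversality $\partial_c\lambda_{n_*}(c_\tau,\tau_1)\neq 0$ holds and $0\notin\sigma(\mathcal{L}_{c_\tau,\tau_1})$; one then bifurcates off the perturbed shear flow and keeps the resulting wave inside the original $H^3$-neighborhood of $(u,0)$. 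Some such flow-modification argument is needed; without it your construction step is incomplete in exactly the cases that force the equivalence-class formulation in Definition~\ref{traveling wave family}.

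Second, the compactness step in the converse direction. You propose to normalise the stream-function perturbations in $H^3$ and extract a weak limit of the $k$-th Fourier profiles that is ``$\not\equiv 0$''. Weak $H^3$ convergence of $H^3$-normalised functions gives strong $H^2$ convergence, but the limit can perfectly well be zero if the mass escapes into the high-order part of the $H^3$ norm. The paper instead normalises the vertical velocity $\tilde v_\varepsilon = v_\varepsilon/\|v_\varepsilon\|_{L^2}$ in $L^2$, uses the steady vorticity equation and elliptic regularity (together with $c_0\notin\mathrm{Ran}(u)$ so that $(\beta-u'')/(u_\varepsilon-c_\varepsilon)$ is uniformly bounded) to obtain a uniform $H^3$ bound on $\tilde v_\varepsilon$, and then strong $H^2$ convergence forces the limit to retain $L^2$ norm one, hence be nontrivial. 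You should switch to this $L^2$ normalisation and prove the extra $H^3$ regularity estimate; otherwise the ``$\phi_\infty\not\equiv 0$'' claim is unsupported.

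Once those two gaps are filled, the counting argument you sketch at the end (each eigenvalue gives a family, each family has a well-defined limiting speed that must be such an eigenvalue, and families with the same limiting speed are identified) does coincide with how the paper assembles Theorem~\ref{the explicit number of traveling wave families} from Lemmas~\ref{traveling wave construction-lemma}, \ref{traveling wave concentration} and Definition~\ref{traveling wave family}.
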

Then we state our main theorem for a shear flow satisfying $
(\textbf{H1})$.
\begin{theorem}\label{traveling wave construction}
Let $\beta\neq0$  and $u$ satisfy ${\bf (H1)}$.
\begin{itemize}
\item[(1)] If
$\{u'=0\}\cap\{u=u_{\min}\}\neq\emptyset$
and  $\bf{(E_+)}$ holds for every $\alpha\in(0, \sqrt{M_{\beta}}]\cap\{{2k\pi\over T}|k\in\mathbf{Z}^+\}$ and $\beta\in(0,{9\over8}\kappa_+)$, then
there exists $\beta_+\in(0,\infty)$ such that there exist at most finitely  many  traveling wave families  near $(u,0)$ for  $\beta\in(0,\beta_+)$, and infinitely  many traveling wave families  near $(u,0)$ for $\beta\in(\beta_+,\infty)$, where
$\kappa_+$, $\bf{(E_+)}$
 and $ M_{\beta}$ are defined in \eqref{def-kappa+}, \eqref{E+} and \eqref{def-M0finitebetapositive}, respectively. Moreover, $\beta_+$  is specified in \eqref{def-beta+}.
\item[(2)]
If
$\{u'=0\}\cap\{u=u_{\max}\}\neq\emptyset$
and  $\bf{(E_-)}$ holds  for every $\alpha\in(0, \sqrt{M_{\beta}}]\cap\{{2k\pi\over T}|k\in\mathbf{Z}^+\}$ and $\beta\in({9\over8}\kappa_-,0)$,
 then
there exists $\beta_-\in(-\infty,0)$ such that there exist at most finitely  many  traveling wave families  near $(u,0)$ for  $\beta\in(\beta_-,0)$, and infinitely  many traveling wave families  near $(u,0)$ for $\beta\in(-\infty,\beta_-)$,
where
 $\kappa_-$ and
$\bf{(E_-)}$
are defined in \eqref{def-kappa-} and \eqref{E-}, respectively. Moreover, $\beta_-$  is specified in \eqref{def-beta-}.
\end{itemize}
Assume that $u(y_1)\neq u(y_2)$ and for $i=1,2$, there exist $ \delta>0,\ C>0$ and $m_i> 0$ such that $(\rm{i})$ $u''(y)=\beta_i$ for $y\in(y_i-\delta,y_i+\delta)\cap[y_1,y_2]$
or $(\rm{ii})$ $C^{-1}|y-y_i|^{m_i}\leq |u''(y)-\beta_i|\leq C|y-y_i|^{m_i}$ for $y\in(y_i-\delta,y_i+\delta)\cap[y_1,y_2]$
or $(\rm{iii})$ $ \beta_iu'(y_i)(-1)^i\geq 0$, where $\beta_i=u''(y_i)$.
\begin{itemize}
\item[(3)] If
$\{u'=0\}\cap\{u=u_{\min}\}=\emptyset$, then    there exist at most finitely many traveling wave families  near  $(u,0)$ for   $\beta\in(0,\infty)$.
\item[(4)]
 If
$\{u'=0\}\cap\{u=u_{\max}\}=\emptyset$, then  there exist at most finitely  many traveling wave families  near  $(u,0)$ for   $\beta\in(-\infty,0)$.
\end{itemize}
Here, the precise description of a traveling wave family  near $(u,0)$ is given in Definition $\ref{traveling wave family}$.
\end{theorem}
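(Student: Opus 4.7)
The overall plan is to reduce the counting of traveling wave families to a counting problem for the eigenvalue curves of the Rayleigh-Kuo boundary value problem, viewed as functions of the traveling speed $c$ on the two half-lines $(-\infty,u_{\min})$ and $(u_{\max},+\infty)$.

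By Theorem \ref{the explicit number of traveling wave families}, the number of traveling wave families near $(u,0)$ equals $\sharp\bigl(\bigcup_{k\geq1}(\sigma_d(\mathcal{R}_{k\alpha,\beta})\cap\mathbf{R})\bigr)$ with $\alpha=2\pi/T$. A real number $c$ outside $\mathrm{Ran}(u)$ belongs to $\sigma_d(\mathcal{R}_{k\alpha,\beta})$ precisely when $-(k\alpha)^2$ is an eigenvalue of the Rayleigh-Kuo BVP \eqref{sturm-Liouville} at parameter $c$, that is, $\lambda_n(c)=-(k\alpha)^2$ for some $n\geq 1$. So the task becomes to count pairs $(n,k)$ for which the curve $c\mapsto\lambda_n(c)$ meets the horizontal level $-(k\alpha)^2$ on $(-\infty,u_{\min})\cup(u_{\max},+\infty)$.

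For part (1), I would analyze $c\mapsto\lambda_n(c)$ on $(-\infty,u_{\min})$ for fixed $\beta>0$. As $c\to-\infty$ the potential $(\beta-u'')/(u-c)$ tends uniformly to $0$, so $\lambda_n(c)$ converges to the $n$-th Dirichlet eigenvalue of $-\partial_y^2$ on $(y_1,y_2)$, which is positive. The decisive behavior is as $c\to u_{\min}^-$: Theorem \ref{eigenvalue asymptotic behavior-bound} establishes a sharp dichotomy in terms of the quantity $\beta_+$ from \eqref{def-beta+}. When $\beta<\beta_+$, every $\lambda_n$ stays bounded below on $(-\infty,u_{\min})$, so only finitely many crossings with the discrete levels $-(k\alpha)^2$ are possible. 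When $\beta>\beta_+$, the curve $\lambda_{m_{\beta}+1}(c)\to-\infty$ as $c\to u_{\min}^-$, which by continuity produces infinitely many crossings with $\{-(k\alpha)^2:k\geq1\}$. To turn these asymptotics into an actual crossing count, I invoke Theorems \ref{main-result}--\ref{number for sinus flow}: under hypothesis $\bf{(E_+)}$ on wave numbers $\alpha\in(0,\sqrt{M_{\beta}}]\cap\{2k\pi/T:k\in\mathbf{Z}^+\}$ they rule out oscillation of $\lambda_n$, so each intersection is isolated, while the definition of $M_\beta$ in \eqref{def-M0finitebetapositive} guarantees no crossings at all when $k\alpha>\sqrt{M_{\beta}}$, reducing the count to finitely many wave numbers.

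Part (2) follows by the mirror argument on $(u_{\max},+\infty)$ with $\kappa_-,\mu_-,\beta_-$ in place of their positive counterparts. Parts (3)--(4) are handled uniformly: the hypothesis $\{u'=0\}\cap\{u=u_{\min}\}=\emptyset$ (resp.\ at $u_{\max}$) sets $\kappa_+=+\infty$ (resp.\ $\kappa_-=-\infty$) via \eqref{def-kappa-infty}--\eqref{def-kappa-negative-infty}, and then Theorem \ref{eigenvalue asymptotic behavior-bound} keeps every $\lambda_n$ bounded below near the critical level for every finite $\beta>0$ (resp.\ $\beta<0$); the endpoint conditions (i)--(iii) are what Theorems \ref{main-result}--\ref{number for sinus flow} require to prevent accumulation of crossings through $y_1,y_2$, yielding finiteness. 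The main obstacle is the non-oscillation step: because the potential $(\beta-u'')/(u-c)$ is sign-indefinite and becomes singular as $c\to u_{\min}^-$, classical Sturm comparison does not directly yield monotonicity of $c\mapsto\lambda_n(c)$, and controlling how the curves interlace the discrete lines $\{-(k\alpha)^2:k\geq1\}$ is precisely where the spectral assumptions $\bf{(E_\pm)}$ and the boundary-regularity conditions (i)--(iii) are consumed.
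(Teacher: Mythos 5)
Your overall strategy matches the paper's: reduce via Theorem~\ref{the explicit number of traveling wave families} to counting $\sharp\bigl(\bigcup_{k\geq1}(\sigma_d(\mathcal{R}_{k\alpha,\beta})\cap\mathbf{R})\bigr)$, recast this as counting crossings of the eigenvalue curves $c\mapsto\lambda_n(c)$ with the levels $-(k\alpha)^2$, use Theorem~\ref{eigenvalue asymptotic behavior-bound} for the asymptotics as $c\to u_{\min}^-$ (resp.\ $u_{\max}^+$), and invoke the non-oscillation results to turn asymptotics into actual finite/infinite counts. This is essentially the paper's Section~6 argument.

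However, there is one concrete factual error worth correcting. You write that when $\beta>\beta_+$ ``the curve $\lambda_{m_\beta+1}(c)\to-\infty$ as $c\to u_{\min}^-$''. This reverses the content of Theorem~\ref{eigenvalue asymptotic behavior-bound}(1): for $\beta\le\tfrac98\kappa_+$, the curves $\lambda_1,\dots,\lambda_{m_\beta}$ diverge to $-\infty$ (part (i)), while $\lambda_{m_\beta+1}$ is precisely the first one that \emph{stays bounded below} (part (ii) says $M_\beta=-\inf\lambda_{m_\beta+1}<\infty$). The regime $\mu_+<\beta\le\tfrac98\kappa_+$ (when $\mu_+<\tfrac98\kappa_+$) is exactly where this distinction bites: there $m_\beta\ge1$ and the infinitely many crossings come from $\lambda_1\to-\infty$, not from $\lambda_{m_\beta+1}$. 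Only when $\beta>\tfrac98\kappa_+$ (Theorem~\ref{eigenvalue asymptotic behavior-bound}(3)) does every $\lambda_n$, including $\lambda_{m_\beta+1}$, diverge. The conclusion ``infinitely many crossings'' survives once the index is fixed, but the cited mechanism as stated does not. Relatedly, the finiteness of the relevant wave numbers in the regime $\beta<\beta_+$ does not come directly from $M_\beta$ controlling $\lambda_{m_\beta+1}$ in general; it comes from Corollary~\ref{sturm-liouville-eigen-asym} giving $\inf\lambda_1>-\infty$, which coincides with $-M_\beta$ precisely because $m_\beta=0$ there. You should say this explicitly, since it is the $m_\beta=0$ observation that collapses the two. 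Finally, you lump Theorems~\ref{main-result}--\ref{number for sinus flow} together as all requiring the endpoint conditions (i)--(iii); in fact only Theorem~\ref{thm-flows good endpoints} requires them (used for parts (3)--(4)), Theorem~\ref{main-result} uses $\bf{(E_\pm)}$ (used for parts (1)--(2)), and Theorem~\ref{number for sinus flow} needs neither and is not invoked in this proof.
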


As mentioned in Introduction, $\bf{(E_+)}$ or $\bf{(E_-)}$ is  ``one spectral point" assumption for small wave numbers. Note that if ${2\pi\over T}>\sqrt{M_{\beta}}$, then $(0, \sqrt{M_{\beta}}]\cap\{{2k\pi\over T}|k\in\mathbf{Z}^+\}=\emptyset$, and $\bf{(E_{\pm})}$ is not needed in Theorem \ref{traveling wave construction} (1)--(2). One of
the conditions (i)--(iii) is the ``good''   endpoints assumption  and rather generic.
For example, if $u\in C^{m}([y_1,y_2])$, $m\geq 3$ and $ u^{(k_i)}(y_i)\neq 0$ for some $ 3\leq k_i\leq m$, then $(\rm{ii})$ is true for $m_i=k_i-2$. Thus, for analytic flows, (ii) holds if $ u^{(k_i)}(y_i)\neq 0$ for some $  k_i\geq 3$ and (i) holds otherwise.
Applying  Theorem \ref{traveling wave construction} (3)-(4) to analytic monotone flows, we have the following result.
\begin{corol}\label{thm-analytic-monotone flow}
Let $u$ be an analytic monotone flow: $u'(y)\neq 0$ for $y\in[y_1,y_2]$. Then
 there exist at most finitely many traveling wave families  near  $(u,0)$  for  $\beta\neq0$.
\end{corol}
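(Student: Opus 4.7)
The plan is to apply parts (3) and (4) of Theorem \ref{traveling wave construction} directly. The assumption that $u$ is analytic and strictly monotone ($u'(y)\neq 0$ on $[y_1,y_2]$) forces $\{u'=0\}=\emptyset$, so the critical-level conditions $\{u'=0\}\cap\{u=u_{\min}\}=\emptyset$ and $\{u'=0\}\cap\{u=u_{\max}\}=\emptyset$ both hold trivially, and the assumption $({\bf H1})$ is satisfied vacuously (with $u\in H^4(y_1,y_2)$ since $u$ is analytic on the closed interval). Strict monotonicity further gives $u(y_1)\neq u(y_2)$. Hence the only hypothesis of Theorem \ref{traveling wave construction} that requires actual verification is the endpoint condition (i)/(ii)/(iii) at $y_1$ and $y_2$.

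For this I would use analyticity in the standard way. Fix $i\in\{1,2\}$ and set $\beta_i=u''(y_i)$. If $u''-\beta_i$ vanishes identically in some neighborhood of $y_i$, then by the identity theorem for analytic functions $u''\equiv\beta_i$ on $[y_1,y_2]$, which is condition (i). Otherwise there is a smallest integer $k_i\geq 3$ with $u^{(k_i)}(y_i)\neq 0$, and the Taylor expansion
\[
u''(y)-\beta_i=\frac{u^{(k_i)}(y_i)}{(k_i-2)!}(y-y_i)^{k_i-2}+O\bigl(|y-y_i|^{k_i-1}\bigr)
\]
immediately yields the two-sided bound $C^{-1}|y-y_i|^{m_i}\leq |u''(y)-\beta_i|\leq C|y-y_i|^{m_i}$ demanded by (ii) on a sufficiently small one-sided neighborhood $(y_i-\delta,y_i+\delta)\cap[y_1,y_2]$, with $m_i=k_i-2>0$.

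With all hypotheses in place, Theorem \ref{traveling wave construction} (3) handles $\beta\in(0,\infty)$ and Theorem \ref{traveling wave construction} (4) handles $\beta\in(-\infty,0)$, producing finitely many traveling wave families near $(u,0)$ in each case. Since the corollary is a direct specialization of the theorem and all the nontrivial spectral and index-theoretic machinery lives inside Theorem \ref{traveling wave construction}, there is no genuine obstacle; the only content is the endpoint dichotomy above, and analyticity supplies it for free via the identity theorem together with the Taylor expansion.
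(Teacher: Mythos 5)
Your proof is correct and takes essentially the same route as the paper: you reduce the corollary to Theorem \ref{traveling wave construction} (3)--(4), observe that strict monotonicity trivializes the critical-level hypotheses and gives $u(y_1)\neq u(y_2)$, and verify the endpoint dichotomy (i)/(ii) via analyticity exactly as in the remark the paper makes just before the corollary (identity theorem if $u''\equiv\beta_i$ near $y_i$, otherwise Taylor expansion with leading order $m_i=k_i-2$). No gap.
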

\subsection{Outline and our approach in the proof}
Non-parallel steady flows or traveling
waves may be bifurcated from a shear flow if the linearized Euler operator has an
embedding or isolated real eigenvalues \cite{BD01,LZ,LYZ}. Based on the existence of an embedding eigenvalue for a class of monotone shear flows near Couette flow, cat's eyes steady states
are  bifurcated  from  these flows \cite{LZ}. When the Coriolis force is involved,     non-parallel traveling waves
are bifurcated from the sinus profile on account of the existence of an isolated real eigenvalue \cite{LYZ}. The traveling  speeds lie outside the
range of the sinus profile and are contiguous to the isolated real eigenvalue.
Now, we consider such bifurcation theorem for general shear flows, namely, using an isolated real eigenvalue of the linearized Euler operator, we prove that
such traveling waves can be bifurcated from general shear flows. We use the following concept.
\begin{definition}\label{a set of traveling wave solutions}
$\{\vec{u}_{\varepsilon}\left(
x-c_{\varepsilon}t,y\right)  =(  u_{\varepsilon}\left(  x-c_{\varepsilon
}t,y\right), v_{\varepsilon}\left(  x-c_{\varepsilon}t,y\right)  ) |\varepsilon\in(0,\varepsilon_0)\text{ for some }\varepsilon_0>0\}$ is called a set of traveling wave solutions  near $(u,0)$ with traveling speeds converging to $ c_0$, if for each $\varepsilon\in(0,\varepsilon_0)$,
$ \vec{u}_{\varepsilon}\left(
x-c_{\varepsilon}t,y\right)  =(  u_{\varepsilon}\left(  x-c_{\varepsilon
}t,y\right), v_{\varepsilon}\left(  x-c_{\varepsilon}t,y\right)  ) $ is a
traveling wave solution
to \eqref{Euler equation}--\eqref{boundary condition for euler} which has period $T$ in $x$ such that
\begin{align}\label{traveling wave construction-lemma-H3}
 \|(u_\varepsilon,v_\varepsilon)-(u,0)\|_{H^3(D_T)} \leq\varepsilon,
\end{align}
 $\|v_{\varepsilon}\|_{L^2\left(  D_T\right)}\neq0$,
 $c_{\varepsilon}\notin \text{Ran}(u)$ and $c_{\varepsilon}\rightarrow c_0$.
\end{definition}

Then we give the bifurcation result for general shear flows.
\begin{lemma}\label{traveling wave construction-lemma}
 Let $\alpha=2\pi/T$, $\beta\neq 0$ and $u\in H^4(y_1,y_2)$.
 Assume that $c_0\in\bigcup_{k\geq1}(\sigma_d(\mathcal{R}_{k\alpha,\beta})\cap\mathbf{R})$, where $\mathcal{R}_{k\alpha,\beta}$ is defined in \eqref{defRab}.
Then there
exists a set of traveling wave solutions  near $(u,0)$ with traveling speeds converging to $ c_0$.
Moreover, we have
$u_{\varepsilon}\left(  x,y\right)-c_\varepsilon  \neq0$.
\if0
and
 there exists $\varphi_{c_0}\in\ker(\mathcal{G}_{c_0})$  such that
${\tilde{v}_{\varepsilon}}\longrightarrow\varphi_{c_0}$ in  ${H^{2}\left(
0,T\right)\times \left(
y_1,y_2\right)  }$ and $\varphi_{c_0}$ is odd in $x$, where
$\mathcal{G}_{c_0}$ is defined in \eqref{Glinearized elliptic operator} and
  $\tilde{v}_{\varepsilon}={{v}_{\varepsilon}/\|{v}_{\varepsilon}\|_{L^2\left(  0,T\right)  \times\left(
y_1,y_2\right)}}$.\fi
\if0
the stream function $\psi_{\varepsilon}$ of $\vec{u}_{\varepsilon}$
 takes the form
\begin{equation*}
\psi_{\varepsilon}(x,y)=\psi_{0}\left(  y\right)  +\varepsilon\varphi_{c_0}(x,y)+o(\varepsilon).
\end{equation*}

In particular,
if $c_0\in\sigma_d(\mathcal{R}_{\alpha,\beta})\cap\mathbf{R}$ and $c_0\notin\bigcup\limits_{k\geq2}(\sigma_d(\mathcal{R}_{k\alpha,\beta})\cap\mathbf{R})$, then
the   period $T={2\pi}/{\alpha}$ in $x$ is minimal and $\varphi_{c_0}=\phi_{c_0}(y)\cos(\alpha x)$,
where $\phi_{c_0}$ is an eigenfunction of $-\alpha^2\in \sigma(\mathcal{L}_{c_0})$.
\fi
\end{lemma}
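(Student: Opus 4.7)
The plan is to realize the desired family of traveling waves as a branch of non-shear solutions bifurcating from the shear profile, at $c = c_0$, in a nonlinear elliptic boundary value problem on $D_T$ with $c$ as bifurcation parameter. A traveling wave of \eqref{Euler equation}--\eqref{boundary condition for euler} with speed $c$ and $u_\varepsilon(x,y) - c \neq 0$ has $\omega + \beta y$ constant on the level sets of $\tilde\psi := \psi - cy$, which is monotone in $y$, so $\omega + \beta y = F_c(\tilde\psi)$ for a profile function $F_c$. Requiring that the shear state $(u,0)$ be a solution forces $F_c(\psi_0(y)-cy) = -u'(y) + \beta y$ (recall $\omega = -\Delta\psi$ under the paper's conventions), and the traveling wave problem becomes
\begin{equation*}
\Delta\tilde\psi = \beta y - F_c(\tilde\psi) \quad\text{in }D_T,\qquad \tilde\psi|_{y=y_i} = \psi_0(y_i)-cy_i,\quad i=1,2,
\end{equation*}
$T$-periodic in $x$, close in $H^3(D_T)$ to the shear solution $\tilde\psi_0^{(c)} := \psi_0 - cy$.

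Writing $\tilde\psi = \tilde\psi_0^{(c)} + \Phi$, the perturbation $\Phi$ solves $L_c \Phi = N_c(\Phi)$ with
\[
L_c := \Delta + \frac{\beta - u''}{u - c},
\]
and $N_c$ a superquadratic remainder depending analytically on $c$ for $c$ near $c_0 \notin \text{Ran}(u)$. After Fourier decomposition in $x$, the restriction of $L_c$ to the $k$-th mode is (up to sign) the operator whose spectral problem is the Rayleigh-Kuo BVP associated with $\mathcal R_{k\alpha,\beta}$. Since $c_0 \in \sigma_d(\mathcal R_{k\alpha,\beta}) \cap \mathbf R$ is isolated, $\ker L_{c_0}$ on the $T$-periodic space contains $\phi_0(y)\cos(k\alpha x)$ for an eigenfunction $\phi_0$; restricting to the closed subspace of even-in-$x$ functions one may arrange that the kernel is one-dimensional, and transversality of $\partial_c L_c$ at $c_0$ follows from self-adjointness of $\mathcal R_{k\alpha,\beta}$ together with analyticity of $c \mapsto (u-c)^{-1}$. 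The Crandall-Rabinowitz theorem then produces a local $C^1$ branch $\{(c_\varepsilon,\Phi_\varepsilon)\}_{\varepsilon\in(0,\varepsilon_0)}$ with $c_\varepsilon \to c_0$ and $\Phi_\varepsilon = \varepsilon\phi_0(y)\cos(k\alpha x) + o(\varepsilon)$ in $H^3(D_T)$.

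Elliptic regularity applied to the nonlinear equation, using $u \in H^4$, upgrades the branch to $H^3(D_T)$ with norm $O(\varepsilon)$, giving \eqref{traveling wave construction-lemma-H3} after a reparameterization of $\varepsilon$. Since the bifurcating direction depends genuinely on $x$, $v_\varepsilon = -\partial_x \Phi_\varepsilon \not\equiv 0$ in $L^2(D_T)$. Because $c_0 \notin \text{Ran}(u)$, continuity of $c_\varepsilon \to c_0$ and the embedding $H^3(D_T) \hookrightarrow C^0(\overline{D_T})$ yield $c_\varepsilon \notin \text{Ran}(u)$ and $u_\varepsilon(x,y) - c_\varepsilon \neq 0$ pointwise for $\varepsilon$ small, which is both the final conclusion of the lemma and the horizontal-velocity-sign property used later. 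The hard part will be the case when several $\mathcal R_{k\alpha,\beta}$ share $c_0$ as eigenvalue, so $\ker L_{c_0}$ is multidimensional and the simple-eigenvalue hypothesis of Crandall-Rabinowitz fails; in that event one substitutes a Lyapunov-Schmidt reduction combined with Krasnoselskii's bifurcation-from-isolated-spectrum theorem, whose odd Morse-index-change hypothesis is secured by $c_0$ being isolated in $\sigma_d(\mathcal R_{k\alpha,\beta})$ together with the analytic Sturm-Liouville perturbation theory for the Rayleigh-Kuo BVP developed in Sections~3--4.
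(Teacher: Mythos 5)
Your overall setup—recasting traveling waves as a bifurcation problem for $-\Delta\phi - (f_c(\phi+\psi_0-cy) - f_c(\psi_0-cy))=0$ and identifying the linearization with $\mathcal G_{c_0}$—matches the paper, but the bifurcation analysis itself has two real gaps.

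\textbf{Transversality does not follow from self-adjointness and analyticity.} You assert that the Crandall--Rabinowitz transversality condition ``follows from self-adjointness of $\mathcal R_{k\alpha,\beta}$ together with analyticity of $c\mapsto(u-c)^{-1}$.'' That is false: the relevant quantity is $\lambda_{n_*}'(c_0)$, where $-( k_*\alpha)^2=\lambda_{n_*}(c_0)$ and $k_*$ is the largest such $k$, and a real-analytic eigenvalue curve can perfectly well have a local extremum at $c_0$ with $\lambda_{n_*}'(c_0)=0$. The paper explicitly singles this out as its Case 2 and deals with it by a substantive new idea you do not have: it constructs a deliberate one-parameter perturbation $u\mapsto u+\tau u_1$, with $u_1$ solving the Rayleigh-type ODE $u_1''(u-c_0)-(u''-\beta)u_1=\zeta_1$ for a chosen sign-definite $\zeta_1$, so that $\partial_\tau\lambda_{n_*}(c_0,0)>0$. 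This tilts the eigenvalue curve, produces a nearby shear flow and speed where the transversality holds, and then applies Case 1 at the perturbed flow. Your fallback—Lyapunov--Schmidt plus a Krasnoselskii/odd-Morse-index-change bifurcation—does not rescue the degenerate case either: if $\lambda_{n_*}$ has an even-order critical point at $c_0$ (e.g. a local minimum touching $-(k_*\alpha)^2$), the crossing number of $\mathcal G_c$ at $c_0$ is zero and there is no topological obstruction forcing bifurcation.

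\textbf{The $k=0$ (shear) mode is unaddressed, and your multi-mode handling is under-specified.} When $c_0\in\sigma_d(\mathcal R_{0,\beta})\cap\mathbf R$, i.e.\ $0\in\sigma(\mathcal L_{c_0})$, the kernel of $\mathcal G_{c_0}$ contains an $x$-independent function, and even if Crandall--Rabinowitz applies one must rule out that the bifurcating branch consists of shear flows (no $v_\varepsilon$). The paper's Case 3 treats this by extending the flow perturbation so as to simultaneously restore transversality for $\lambda_{n_*}$ and eject $0$ from $\sigma(\mathcal L_{c,\tau})$; this requires choosing $\xi_1$ so that the two integrals against $\phi_{n_*,c_0}^2/(u-c_0)^2$ and $\phi_{j_0,c_0}^2/(u-c_0)^2$ have prescribed (opposite) signs, which is possible precisely because those densities are linearly independent. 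Your proposal never mentions this issue. Similarly, when $c_0$ is an eigenvalue for several $k$-modes, restricting to even-in-$x$ functions does not make $\ker\mathcal G_{c_0}$ one-dimensional; both $\phi_1(y)\cos(\alpha x)$ and $\phi_2(y)\cos(2\alpha x)$ can lie in the even kernel. The correct fix, which the paper uses, is to work in the smaller space $B_*$ of functions with minimal $x$-period $2\pi/(k_*\alpha)$, where the maximality of $k_*$ forces the kernel to be one-dimensional. You gesture at Lyapunov--Schmidt but do not identify this subspace restriction, which is the actual mechanism.

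In short: the passage from the abstract bifurcation framework to a working proof hinges on (a) the flow-perturbation device to defeat non-transversal crossings and the $k=0$ contamination, and (b) the restriction to the $k_*$-periodic subspace to get a simple kernel. Both are absent from your proposal, and without them the central claim (a nontrivial, non-shear bifurcating branch for \emph{every} $c_0\in\bigcup_{k\geq1}(\sigma_d(\mathcal R_{k\alpha,\beta})\cap\mathbf R)$, with no extra hypotheses) does not follow.
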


Here, we mention some differences  from   the construction of traveling waves in the literature.
First, the  horizontal period of  constructed traveling waves in Proposition 7 of \cite{LYZ} is not the given period  $T$, and for the sinus profile, the  period of  traveling waves is modified to $T$  by  adjusting the traveling speed in Theorem 7 of \cite{LYZ}. But the price is an additional condition, namely, the isolated eigenvalue $c_0$ can not be
an extreme point of $\lambda_{1}$ (i.e. $\alpha_0\neq \sqrt{\Lambda_\beta}$ in Theorem 7 (ii) of \cite{LYZ}), where  $\lambda_{n}$ is the $n$-th eigenvalue of
  \eqref{sturm-Liouville}.
  In Lemma \ref{traveling wave construction-lemma},
 we can construct traveling waves for general flows no matter whether $c_0$ is an extreme point  of $\lambda_{n}$,
and thus improve the result  in Theorem 7 of \cite{LYZ} even for the sinus profile.
Second, it is possible that $c_0\in\sigma_d(\mathcal{R}_{0,\beta})\cap\mathbf{R}$, which makes it subtle to guarantee that  the bifurcated solutions near the flow $u$ is not a shear flow.
 Thus, the extension of the bifurcation result for the sinus profile in \cite{LYZ} to general shear flows in Lemma \ref{traveling wave construction-lemma} is still non-trivial,  since we have to treat the unsolved case  that  $c_0$ is  an extreme point of $\lambda_{n_0}$ for some  $n_0\in\mathbf{Z}^+$ or $c_0\in\sigma_d(\mathcal{R}_{0,\beta})\cap\mathbf{R}$.
 To overcome the difficulty, we carefully modify the flow $u$ to a suitable shear flow, which satisfies that $\lambda_{n_0}$ is locally monotone near $c_0$ and    $c_0\notin\sigma_d(\mathcal{R}_{0,\beta})\cap\mathbf{R}$, and then study the  bifurcation at the suitable shear flow. Finally,
the minimal horizontal periods of constructed traveling waves are possibly  less than ${2\pi/\alpha}$ if $c_0\in\sigma_d(\mathcal{R}_{\alpha,\beta})\cap\mathbf{R}.$
In fact, the  Sturm-Liouville operator $\mathcal{L}_{c_0}$ could indeed have more than one  negative eigenvalues (e.g., if $\kappa_+<\infty$, $\beta>{9\kappa_+/8}$ and $c_0$ is close to $u_{\min}$), where $\mathcal{L}_{c_0}$ is defined in \eqref{sturm-Liouville}.
In this case, we  give sufficient condition to guarantee that  the minimal period  is  ${2\pi/\alpha}$ in Lemma
\ref{traveling wave concentration ad}. In contrast,
the minimal period must be ${2\pi/\alpha}$  in Theorem 5.1 of \cite{Li-Lin} and Theorem 1 of \cite{LZ}, since the corresponding Sturm-Liouville operator has only one negative eigenvalue.

Since the isolated real eigenvalue $c_0$ lies outside the range of the flow $u$, by a similar proof of Lemma \ref{traveling wave construction-lemma} we can improve the regularity of traveling waves as follows.
\begin{corol}\label{traveling wave construction-corollary}
 Let $\alpha=2\pi/T$, $\beta\neq 0$, $u\in C^\infty([y_1,y_2])$ and $s\geq3$.
 Assume that $c_0\in\bigcup_{k\geq1}(\sigma_d(\mathcal{R}_{k\alpha,\beta})\cap\mathbf{R})$, where $\mathcal{R}_{k\alpha,\beta}$ is defined in \eqref{defRab}.
Then the conclusion in Lemma \ref{traveling wave construction-lemma} holds true with \eqref{traveling wave construction-lemma-H3} replaced by
$
 \|(u_\varepsilon,v_\varepsilon)-(u,0)\|_{H^s(D_T)} \leq\varepsilon.
$
\end{corol}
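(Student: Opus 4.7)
The plan is to reproduce the bifurcation argument of Lemma~\ref{traveling wave construction-lemma} in $H^s(D_T)$ rather than in $H^3(D_T)$, exploiting the single fact that $c_0\in\mathbf{R}\setminus\text{Ran}(u)$. Since $c_0$ is isolated and outside $\text{Ran}(u)$ while $c_\varepsilon\to c_0$, one can fix $\delta_0>0$ so that $\mathrm{dist}(c_\varepsilon,\text{Ran}(u))\geq \delta_0$ uniformly in small $\varepsilon$. Consequently the coefficient $(\beta-u'')/(u-c_\varepsilon)$ in the Rayleigh--Kuo equation
\[
\phi''-k^2\alpha^2\phi+\frac{\beta-u''}{u-c_\varepsilon}\phi=0,\qquad \phi(y_1)=\phi(y_2)=0,
\]
lies in $C^\infty([y_1,y_2])$ uniformly in $\varepsilon$, and a standard ODE bootstrap upgrades the eigenfunction $\phi_{c_0}$ supplied by Lemma~\ref{traveling wave construction-lemma} to $C^\infty([y_1,y_2])$. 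Thus the linear datum driving the bifurcation is already as smooth as one wishes, which was the only loss of regularity in the $H^3$ argument.

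Next I would revisit the bifurcation equation for a traveling wave at speed $c_\varepsilon$, which is a semilinear elliptic problem of the schematic form $\Delta\psi+\beta y=F_\varepsilon(\psi-c_\varepsilon y)$ on $D_T$, where $F_\varepsilon$ is the profile function assembled from the shear ansatz. Because $u-c_\varepsilon$ is bounded away from zero uniformly in small $\varepsilon$, the argument $\psi-c_\varepsilon y$ stays in the interval on which $F_\varepsilon$ is $C^\infty$, and its $C^\infty$ seminorms are controlled uniformly in $\varepsilon$ by those of $u$. This is the decisive structural consequence of $c_0\notin\text{Ran}(u)$: no critical layer forms, so the Nemytskii map $\psi\mapsto F_\varepsilon(\psi-c_\varepsilon y)$ is smooth from $H^s(D_T)$ into itself for every $s\geq 3$ by the Sobolev algebra property, with all estimates uniform in $\varepsilon$.

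With these two inputs the Lyapunov--Schmidt / implicit function theorem scheme used in Lemma~\ref{traveling wave construction-lemma} applies verbatim in $H^{s+2}(D_T)$: Fredholmness of the linearized operator, the shape of the kernel spanned by $\phi_{c_0}(y)\cos(k\alpha x)$, and the simple-eigenvalue transversality that closes the bifurcation are all properties of the linearization and are independent of the Sobolev exponent, while the nonlinearity has just been shown to be smooth. The resulting branch $\psi_\varepsilon\in H^{s+2}(D_T)$ delivers $\vec{u}_\varepsilon-(u,0)\in H^{s+1}(D_T)\hookrightarrow H^s(D_T)$, and a rescaling of the bifurcation parameter produces the stated bound $\|(u_\varepsilon,v_\varepsilon)-(u,0)\|_{H^s(D_T)}\leq\varepsilon$; the auxiliary conclusions ($u_\varepsilon-c_\varepsilon\neq 0$, $c_\varepsilon\notin\text{Ran}(u)$, period in $x$ equal to $T$, non-trivial vertical velocity) are inherited from Lemma~\ref{traveling wave construction-lemma} because the $H^s$ solutions constructed here embed into the $H^3$ solutions of that lemma. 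The main obstacle I anticipate is the uniform-in-$\varepsilon$ $C^\infty$ control of $F_\varepsilon$ together with its dependence on $c_\varepsilon$; once that is verified, the passage from $H^3$ to $H^s$ reduces to an elliptic bootstrap iterated on $\Delta\psi_\varepsilon+\beta y=F_\varepsilon(\psi_\varepsilon-c_\varepsilon y)$ starting from the $H^3$ solution of Lemma~\ref{traveling wave construction-lemma}.
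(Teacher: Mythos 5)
Your proposal follows the paper's proof essentially verbatim: the paper observes that since $u\in C^\infty([y_1,y_2])$ and $c_0\notin\text{Ran}(u)$, the profile function $f_c$ (and the auxiliary shear perturbation $u_1$ used in Cases 2--3 of the proof of Lemma \ref{traveling wave construction-lemma}) is $C^\infty$, and then simply replaces the bifurcation spaces $B=H^4(D_T)$, $C=H^2(D_T)$ by $H^{s+1}(D_T)$, $H^{s-1}(D_T)$. Your choice of $H^{s+2}/H^s$ rather than $H^{s+1}/H^{s-1}$ is a harmless variant, and the remaining conclusions carry over exactly as you say.
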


One naturally asks whether the assumption $c_0\in\bigcup_{k\geq1}(\sigma_d(\mathcal{R}_{k\alpha,\beta})\cap\mathbf{R})$ in Lemma \ref{traveling wave construction-lemma} is necessary.
By studying the asymptotic behavior of traveling speeds and $L^2$ normalized vertical velocities for nearby traveling waves, we confirm that it is true.

\begin{lemma}\label{traveling wave concentration}
Let $\alpha=2\pi/T$, $\beta\neq 0$ and $u\in H^4(y_1,y_2)$.
Assume that
$\{\vec{u}_{\varepsilon}\left(
x-c_{\varepsilon}t,y\right)  =(  u_{\varepsilon}\left(  x-c_{\varepsilon
}t,y\right), v_{\varepsilon}\left(  x-c_{\varepsilon}t,y\right)  ) |\varepsilon\in(0,\varepsilon_0)\}$ is a set of traveling wave solutions  near $(u,0)$
 with traveling speeds converging to $ c_0$.
  Then
 $c_0\in\bigcup_{k\geq1}(\sigma_d(\mathcal{R}_{k\alpha,\beta})\cap\mathbf{R})\cup\{u_{\min},u_{\max}\},$ where $\mathcal{R}_{k\alpha,\beta}$ is defined in \eqref{defRab}. Moreover,
if $c_0\in\bigcup_{k\geq1}(\sigma_d(\mathcal{R}_{k\alpha,\beta})\cap\mathbf{R})$, then
there exists  $\varphi_{c_0}\in\ker(\mathcal{G}_{c_0})$ such that
 \begin{align}\label{eigenfunction-convergence}
 {\tilde{v}_{\varepsilon}}\longrightarrow\varphi_{c_0} \ \ \text{in} \ \ {H^{2}\left(D_T\right)  },
 \end{align}
 where the operator $\mathcal{G}_{c_0}$ is defined by
\begin{align}\label{Glinearized elliptic operator}
 \mathcal{G}_{c_0}=-\Delta-\frac{\beta
-u^{\prime\prime}\left(  y\right)  }{u\left(  y\right)  -c_0}\; : \; H^2(D_T)\to L^2(D_T)
\end{align}with periodic boundary condition in $x$ and Dirichlet boundary condition in $y$, and
  $\tilde{v}_{\varepsilon}={{v}_{\varepsilon}/\|{v}_{\varepsilon}\|_{L^2\left( D_T\right)}}$.
\end{lemma}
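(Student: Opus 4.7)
My plan is to derive from the stationary vorticity equation a nonlinear elliptic equation for the non-shear part of the stream function perturbation, and then pass to a compactness limit inside $\ker(\mathcal{G}_{c_0})$. First, I would write the stream function $\psi_\varepsilon$ of $\vec u_\varepsilon$ as $\psi_\varepsilon = \psi_0(y) + \bar\psi_\varepsilon(y) + \phi_\varepsilon(x,y)$ with $\psi_0'=u$ and $\int_0^T\phi_\varepsilon\,dx=0$, so that $v_\varepsilon = -\partial_x\phi_\varepsilon$, and absorb the shear perturbation into $\tilde u := u + \bar\psi_\varepsilon'$. Substituting $\omega_\varepsilon = -\tilde u' - \Delta\phi_\varepsilon$ into the moving-frame vorticity equation $(u_\varepsilon-c_\varepsilon)\partial_x\omega_\varepsilon + v_\varepsilon\partial_y\omega_\varepsilon + \beta v_\varepsilon = 0$ and separating linear and quadratic terms produces
$$\partial_x\mathcal{G}_{c_\varepsilon,\tilde u}\phi_\varepsilon \;=\; \tfrac{1}{\tilde u-c_\varepsilon}\,\{\phi_\varepsilon,\Delta\phi_\varepsilon\},$$
where $\mathcal{G}_{c,\tilde u} := -\Delta - (\beta-\tilde u'')/(\tilde u-c)$ and $\{\cdot,\cdot\}$ is the Poisson bracket in $(x,y)$. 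Since each $c_\varepsilon \notin \text{Ran}(u)$, the limit $c_0$ lies in the closure of $\mathbf{R}\setminus[u_{\min},u_{\max}]$; the case $c_0\in\{u_{\min},u_{\max}\}$ already delivers the first conclusion, so I focus on $c_0\in\mathbf{R}\setminus[u_{\min},u_{\max}]$, in which $|\tilde u - c_\varepsilon|\geq\delta>0$ uniformly for small $\varepsilon$.

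Antidifferentiating in $x$ (both sides have zero $x$-mean) yields $\mathcal{G}_{c_\varepsilon,\tilde u}\phi_\varepsilon = P_\varepsilon$ with $\|P_\varepsilon\|_{L^2}\lesssim \|\phi_\varepsilon\|_{H^3}^2$. I would set $a_\varepsilon := \|v_\varepsilon\|_{L^2}$, $\tilde\phi_\varepsilon := \phi_\varepsilon/a_\varepsilon$, and $\epsilon_\varepsilon := \|(u_\varepsilon,v_\varepsilon)-(u,0)\|_{H^3}$; Poincaré in $x$ then gives $\|\tilde\phi_\varepsilon\|_{L^2}\leq C$, and from $\|\nabla\delta\psi_\varepsilon\|_{H^3}\leq\epsilon_\varepsilon$ together with the zero-mean property I obtain $\|\phi_\varepsilon\|_{H^4}\leq C\epsilon_\varepsilon$. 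The crux is the interpolation
$$\|\phi_\varepsilon\|_{H^3}^2 \;\leq\; \|\phi_\varepsilon\|_{H^2}\,\|\phi_\varepsilon\|_{H^4} \;\leq\; C\,\epsilon_\varepsilon\, a_\varepsilon\,\|\tilde\phi_\varepsilon\|_{H^2},$$
which produces $\|\mathcal{G}_{c_\varepsilon,\tilde u}\tilde\phi_\varepsilon\|_{L^2}\leq C\epsilon_\varepsilon\|\tilde\phi_\varepsilon\|_{H^2}$. Standard elliptic regularity for the uniformly elliptic $\mathcal{G}_{c_\varepsilon,\tilde u}$ (whose zero-order coefficient is $L^\infty$-bounded thanks to $\delta$), with Dirichlet boundary condition in $y$ and periodic in $x$, then yields $\|\tilde\phi_\varepsilon\|_{H^2}\leq C(\epsilon_\varepsilon\|\tilde\phi_\varepsilon\|_{H^2}+1)$, and absorbing the small term on the right gives the uniform bound $\|\tilde\phi_\varepsilon\|_{H^2}\leq C$. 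This absorption is the main technical obstacle, since $a_\varepsilon$ may be much smaller than $\epsilon_\varepsilon$ and the naive scaling $\|\tilde\phi_\varepsilon\|_{H^s}\lesssim\|\phi_\varepsilon\|_{H^s}/a_\varepsilon$ does not close; only the $H^2$--$H^4$ interpolation trades the lost power of $a_\varepsilon$ for a gained power of $\epsilon_\varepsilon$.

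With the uniform $H^2$ bound in hand, I would extract a subsequence $\tilde\phi_\varepsilon\rightharpoonup\tilde\phi_0$ weakly in $H^2$ and strongly in $H^1$ and pass to the limit in $\mathcal{G}_{c_\varepsilon,\tilde u}\tilde\phi_\varepsilon = P_\varepsilon/a_\varepsilon$; the right-hand side is $O(\epsilon_\varepsilon)$ in $L^2$ by the previous estimate, and the coefficients converge uniformly, so $\mathcal{G}_{c_0}\tilde\phi_0 = 0$. Setting $\varphi_{c_0} := -\partial_x\tilde\phi_0$, translation invariance of $\mathcal{G}_{c_0}$ in $x$ yields $\varphi_{c_0}\in\ker(\mathcal{G}_{c_0})$, while strong $L^2$ convergence of $\tilde v_\varepsilon$ to $\varphi_{c_0}$ combined with $\|\tilde v_\varepsilon\|_{L^2}=1$ forces $\varphi_{c_0}\neq 0$. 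A Fourier expansion in $x$ then identifies a nonzero mode $\hat\varphi_{c_0}(k_0,\cdot)$ with $k_0\geq 1$ as an eigenfunction of the Rayleigh-Kuo operator $\mathcal{R}_{k_0\alpha,\beta}$ at eigenvalue $c_0$, giving $c_0\in\sigma_d(\mathcal{R}_{k_0\alpha,\beta})\cap\mathbf{R}$. Finally, the convergence $\tilde v_\varepsilon \to \varphi_{c_0}$ is upgraded from weak $H^2$ to strong $H^2$ by applying elliptic regularity to $\eta_\varepsilon := \tilde v_\varepsilon - \varphi_{c_0}$ and observing that $\mathcal{G}_{c_0}\eta_\varepsilon = (\mathcal{G}_{c_0}-\mathcal{G}_{c_\varepsilon,\tilde u})\tilde v_\varepsilon - \partial_x(P_\varepsilon/a_\varepsilon) \to 0$ in $L^2$.
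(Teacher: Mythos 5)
Your argument is correct, but it travels a noticeably different road than the paper's. You decompose $\psi_\varepsilon = \psi_0 + \bar\psi_\varepsilon + \phi_\varepsilon$, pass to the stream-function equation, antidifferentiate in $x$ to get $\mathcal{G}_{c_\varepsilon,\tilde u}\phi_\varepsilon = P_\varepsilon$, and then run the key interpolation $\|\phi_\varepsilon\|_{H^3}^2\leq\|\phi_\varepsilon\|_{H^2}\|\phi_\varepsilon\|_{H^4}\leq C\,\epsilon_\varepsilon\,a_\varepsilon\|\tilde\phi_\varepsilon\|_{H^2}$ to make the nonlinear source absorbable. The paper instead never touches the stream function: it uses $\partial_x\omega_\varepsilon = \Delta v_\varepsilon$ and divides the moving-frame vorticity equation by $u_\varepsilon - c_\varepsilon$ to obtain directly
\[
\Delta\tilde v_\varepsilon + \frac{\partial_y(\omega_\varepsilon-\omega_0)}{u_\varepsilon-c_\varepsilon}\tilde v_\varepsilon + \frac{\beta-u''}{u_\varepsilon-c_\varepsilon}\tilde v_\varepsilon = 0,
\]
in which the nonlinearity is already packaged as a small coefficient $\partial_y(\omega_\varepsilon-\omega_0)=O(\varepsilon)$ multiplying the normalized unknown, so no antidifferentiation or stream-function decomposition is needed; the absorption is done with the cheaper interpolation $\|\tilde v_\varepsilon\|_{H^1}\leq C\|\tilde v_\varepsilon\|_{H^2}^{1/2}\|\tilde v_\varepsilon\|_{L^2}^{1/2}$, and the strong $H^2$ convergence comes from a further $H^3$ bound obtained by differentiating the equation once, rather than from your residual-equation argument for $\eta_\varepsilon$. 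Both routes close; yours buys a neat $H^2$--$H^4$ trade of powers of $a_\varepsilon$ against powers of $\epsilon_\varepsilon$ in exchange for a heavier setup, while the paper's is shorter because the nonlinear structure of the vorticity equation already isolates the small factor for you.

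Two points in your writeup deserve a sentence of justification. First, the antidifferentiation step: the claim that the right-hand side $\frac{1}{\tilde u-c_\varepsilon}\{\phi_\varepsilon,\Delta\phi_\varepsilon\}$ has zero $x$-mean is not a formal identity for arbitrary $\phi_\varepsilon$; it is forced because the left-hand side $\partial_x\mathcal{G}_{c_\varepsilon,\tilde u}\phi_\varepsilon$ obviously has zero $x$-mean, which is equivalent to the vanishing of the Reynolds stress $\int_0^T u_\varepsilon v_\varepsilon\,dx$ for the traveling wave. You should say so, since otherwise the reader may object that $\int_0^T\{\phi,\Delta\phi\}\,dx = -\partial_y^2\int_0^T\phi_x\phi_y\,dx$ need not vanish in general. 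Second, the final step $\varphi_{c_0}=-\partial_x\tilde\phi_0\in\ker(\mathcal{G}_{c_0})\subset H^2$ tacitly uses elliptic regularity to lift $\tilde\phi_0\in H^2$ to $H^4$ from $\mathcal{G}_{c_0}\tilde\phi_0=0$; this is routine but worth a phrase, since before that upgrade $-\partial_x\tilde\phi_0$ is only known to lie in $H^1$.
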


The limit function $\varphi_{c_0}$ in
 Lemma
 \ref{traveling wave concentration} is a superposition of finite normal modes, see Remark \ref{The function varphi c0 superposition of finite normal modes}.
If $c_0\in\bigcup_{k\geq1}(\sigma_d(\mathcal{R}_{k\alpha,\beta})\cap\mathbf{R})$ in Lemma \ref{traveling wave concentration}, the vertical velocities of the nearby traveling waves  have simple asymptotic behavior as seen in \eqref{eigenfunction-convergence}.
However, if $c_0\in\{u_{\min},u_{\max}\}$, then the asymptotic behavior
 of vertical velocities might be complicated, see Remark \ref{asymptotic behavior of vertical velocities}.
The proofs of
Lemmas \ref{traveling wave construction-lemma}-\ref{traveling wave concentration} are given  in Section 5.

By Lemma
\ref{traveling wave concentration}, for any set  of traveling waves  near $(u,0)$ with traveling speeds converging to $ c_0$, $c_0$ must be an isolated real eigenvalue of the linearized Euler operator (besides $u_{\min}$ and $u_{\max}$).
By Lemma \ref{traveling wave construction-lemma}, every isolated real eigenvalue is contiguous  to the speeds of nearby traveling waves.
As the minimal periods of traveling waves in $x$ can be less than ${2\pi/\alpha}$, there might be two or more sets of  traveling wave solutions near $(u,0)$
 with traveling speeds converging to a same isolated real eigenvalue.
For example, if $((i+1)\alpha)^2=-\lambda_{n_i}(c_0)$, $\lambda_{n_i}$ is monotone near $c_0$ for $i=1,2$, and $(k\alpha)^2\neq \lambda_n(c_0)$ for $k\notin\{2,3\}$
 and $n\notin\{n_1,n_2\}$, then an application to Lemma \ref{traveling wave construction-lemma} (see Case 1 in its proof) gives two sets of traveling wave
 solutions, which has minimal periods $\pi/\alpha$ and $2\pi/ (3\alpha)$ respectively, near $(u,0)$ with traveling speeds converging to $c_0$. Moreover,
 traveling wave solutions could be bifurcated from nearby shear flows, which might induce more sets of  traveling wave solutions near $(u,0)$ with traveling
 speeds converging to $ c_0$.
This suggests us to  define a traveling wave family  near $(u,0)$ by an equivalence class as follows.
\begin{definition}\label{traveling wave family} A traveling wave family  near $(u,0)$
is defined  by an equivalence class under $\sim$, where
if $\{\vec{u}_{i,\varepsilon}\left(
x-c_{i,\varepsilon}t,y\right)=(  u_{i,\varepsilon}\left(  x-c_{i,\varepsilon
}t,y\right),v_{i,\varepsilon}\left(  x-c_{i,\varepsilon}t,y\right)  ) |\varepsilon\in(0,\varepsilon_i)\}$, $i=1,2$, are two sets of traveling wave solutions
 near $(u,0)$ with traveling speeds converging to  $ c_i\notin \text{Ran}(u)$,
  then
$\{\vec{u}_{1,\varepsilon}\left(
x-c_{1,\varepsilon}t,y\right)|\varepsilon\in(0,\varepsilon_1)\}$ and $\{\vec{u}_{2,\varepsilon}\left(
x-c_{2,\varepsilon}t,y\right)|\varepsilon\in(0,\varepsilon_2)\}$ are equivalent,  $\{\vec{u}_{1,\varepsilon}\left(
x-c_{1,\varepsilon}t,y\right)|\varepsilon\in(0,\varepsilon_1)\}\sim\{\vec{u}_{2,\varepsilon}\left(
x-c_{2,\varepsilon}t,y\right)|\varepsilon\in(0,\varepsilon_2)\}$, if
$ c_1=c_2.$
\end{definition}


By Lemma \ref{traveling wave concentration}, there exists $\varphi_{i}\in\ker(\mathcal{G}_{c_i})$ such that   ${\tilde{v}_{i,\varepsilon}}\longrightarrow\varphi_{i}$ {in}  ${H^{2}\left(
D_T\right)}$, where $\tilde{v}_{i,\varepsilon}={{v}_{i,\varepsilon}/\|{v}_{i,\varepsilon}\|_{L^2\left(  D_T\right)}}$ and ${v}_{i,\varepsilon}, \varepsilon\in(0,\varepsilon_i),$ are given in Definition \ref{traveling wave family}.
By Lemmas \ref{traveling wave construction-lemma} and \ref{traveling wave concentration}, we obtain the exact number of traveling wave families near a flow $u\in H^4(y_1,y_2)$ in Theorem \ref{the explicit number of traveling wave families}.

Thus, the number of isolated real eigenvalues of the linearized Euler operator plays an important role in counting the traveling wave families near the shear flow.
In terms of the stream function $\psi$, (\ref{linearized Euler equation}) can be written as
$
\partial_{t}\Delta\psi+u\partial_{x}\Delta\psi+(\beta-u^{\prime\prime})\partial_x\psi=0.
$
 By taking Fourier transform in $x$, we have
 \begin{align}\label{equation after Fourier transform}
 (\partial^2_y-\alpha^2)\partial_t\widehat{\psi}=i\alpha((u''-\beta)-u(\partial^2_y-\alpha^2))\widehat{\psi}.
 \end{align}
For  $\alpha>0$ and $\beta\in \mathbf{R}$,  the linearized Euler operator is given by
\begin{align}\label{defRab}
\mathcal{R}_{\alpha,\beta}:=-(\partial^2_y-\alpha^2)^{-1}((u''-\beta)-u(\partial^2_y-\alpha^2)).
\end{align}
Then (\ref{equation after Fourier transform}) becomes
$
-\frac{1}{ i\alpha}\partial_t\widehat{\psi}=\mathcal{R}_{\alpha,\beta}\widehat{\psi}.
$
Recall that $\sigma_e(\mathcal{R}_{\alpha,\beta})=\text{Ran} (u)$. Then the set of isolated real eigenvalues $\sigma_d(\mathcal{R}_{\alpha,\beta})\cap\mathbf{R}\subset(-\infty,u_{\min})\cup(u_{\max}, \infty)$. Moreover, it is well-known that
$\sigma_d(\mathcal{R}_{\alpha,\beta})\cap\mathbf{R}=\emptyset$ if $\beta=0$; $\sigma_d(\mathcal{R}_{\alpha,\beta})\cap(u_{\max},\infty)=\emptyset$ if $\beta>0$; and
$\sigma_d(\mathcal{R}_{\alpha,\beta})\cap(-\infty,u_{\min})=\emptyset$ if $\beta<0$, see \cite{Kuo1949,Tung1981,Pedlosky1987,LYZ}.
Therefore,  we only need to study $\sharp(\sigma_d(\mathcal{R}_{\alpha,\beta})\cap(-\infty,u_{\min}))$ for $\beta>0$ and $\sharp(\sigma_d(\mathcal{R}_{\alpha,\beta})\cap(u_{\max},\infty))$ for $\beta<0$. We mainly study $\sharp(\sigma_d(\mathcal{R}_{\alpha,\beta})\cap(-\infty,u_{\min}))$ for $\beta>0$, since the other is similar.
$c\in \sigma_d(\mathcal{R}_{\alpha,\beta})$ if and only if its corresponding eigenfunction $\psi_c$ satisfies the
Rayleigh-Kuo  BVP:
\begin{align}\label{sturm-Liouville}
\mathcal{L}_c\phi:=-\phi''+{u''-\beta\over u-c}\phi=\lambda\phi, \;\;\;\;\phi(y_1)= \phi(y_2)=0,
\end{align}
where $\phi\in H_0^1\cap H^2(y_1,y_2)$ and $\lambda=-\alpha^2$.  This equation  is  formulated by Kuo \cite{Kuo1949}.
For $c<u_{\min}$,
it follows from \cite{RS} that the $n$-th eigenvalue of (\ref{sturm-Liouville})  is
\begin{align}\label{variation2}
\lambda_n(c)
=&\inf_{\dim V_n=n}\;\;\sup_{\phi\in H_0^1, \phi\in V_n, \|\phi\|_{L^2}=1}\langle\mathcal{L}_c\phi,\phi\rangle\\\nonumber
=&\inf_{\dim V_n=n}\;\;\sup_{\phi\in H_0^1, \phi\in V_n, \|\phi\|_{L^2}=1}\int_{y_1}^{y_2}\left(|\phi'|^2+{u''-\beta\over u-c}|\phi|^2\right)dy.
\end{align} In this way,  we have
\begin{align*}
\sigma_d(\mathcal{R}_{\alpha,\beta})\cap(-\infty,u_{\min})=\bigcup_{n\geq1}\{c<u_{\min}:\lambda_n(c)=-\alpha^2\}.
\end{align*}
To determine whether $\sharp(\sigma_d(\mathcal{R}_{\alpha,\beta})\cap(-\infty,u_{\min}))$ is finite, we need to study the number of solutions $c<u_{\min}$ such that $\lambda_n(c)=-\alpha^2$ for   $n\geq1$. Since $\lim_{c\to-\infty}\lambda_n(c)={{n^2}\over4} \pi^2>0$ by Proposition 4.2 in \cite{LYZ} and $\lambda_n(c)$ is real-analytic on $(-\infty,u_{\min})$, the only possibility such that $\sharp(\sigma_d(\mathcal{R}_{\alpha,\beta})\cap(-\infty,u_{\min}))=\infty$ is that there exists a sequence $\{c_j(\alpha,\beta)\}_{j=1}^\infty\subset\bigcup_{n\geq1}\{c<u_{\min}:\lambda_n(c)=-\alpha^2 \} $ such that $c_j(\alpha,\beta)\to u_{\min}^-$. Thus, the key is to study the asymptotic behavior of $\lambda_n(c)$ as $c\to u_{\min}^-$.
We divide it into two steps.
 \\
 {\bf Step 1.} We study how many $n$'s such that $\lambda_n(c)\to -\infty$ as $c\to u_{\min}^-$. We determine a transitional $\beta$ value such that the number
 $\sharp\{n\geq1:\lambda_n(c)\to -\infty \text{ as }c\to u_{\min}^-\}$ changes suddenly from finite to infinite when $\beta$ passes through it.

\begin{theorem}\label{eigenvalue asymptotic behavior-bound}
Let  $u$ satisfy ${\bf (H1)}$.
$(1)$ Let  $0<\beta\leq{9\over8}\kappa_+$.
Then
\begin{itemize}
\item[(i)]
$
\lim\limits_{c\to u_{\min}^-}\lambda_n(c)=-\infty,\;\;1\leq n\leq m_{\beta};
$

\item[(ii)]
$
M_{\beta}<\infty;
$

\item[(iii)]
 there exists an integer $N_{\beta}> m_{\beta}$ such that
$
\inf\limits_{c\in(-\infty,u_{\min})}\lambda_{N_{\beta}}(c)\geq 0.
$
\end{itemize}
$(2)$ Let  ${9\over 8}\kappa_-\leq\beta<0$.
Then
\begin{itemize}
\item[(i)]
$
\lim\limits_{c\to u_{\max}^+}\lambda_n(c)=-\infty,\;\;1\leq n\leq m_{\beta};
$

\item[(ii)]
$
M_{\beta}<\infty;
$

\item[(iii)]
 there exists an integer $N_{\beta}> m_{\beta}$ such that
$
\inf\limits_{c\in(u_{\max},\infty)}\lambda_{N_{\beta}}(c)\geq 0.
$
\end{itemize}
$(3)$ Let  $\beta>{9\over8}\kappa_+$. Then
$
\lim_{c\to u_{\min}^-}\lambda_n(c)= -\infty
$ for $n\geq1$.\\
$(4)$ Let  $\beta<{9\over8}\kappa_-$. Then
$
\lim_{c\to u_{\max}^+}\lambda_n(c)= -\infty
$ for $n\geq1$.\\
Here, $\kappa_{\pm}$, $m_{\beta}$ and $M_{\beta}$ are defined in  \eqref{def-kappa+}--\eqref{def-kappa-negative-infty}, \eqref{def-m0finitebetapositive} and \eqref{def-M0finitebetapositive}, respectively.
\end{theorem}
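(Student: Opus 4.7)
I plan to exploit the min-max characterization \eqref{variation2} together with Hardy-type estimates for the potential $V_c(y) := (u''(y) - \beta)/(u(y) - c)$ near the extremal points of $u$. Parts $(2)$ and $(4)$ reduce to $(1)$ and $(3)$ by the involution $(u, \beta, c) \mapsto (-u, -\beta, -c)$, which preserves $\mathcal{L}_c$ and swaps $u_{\min}$ with $u_{\max}$; so I focus on $\beta > 0$. Write $Q_c(\phi) := \int_{y_1}^{y_2}(|\phi'|^2 + V_c|\phi|^2)\,dy$ for the Rayleigh--Kuo form.

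For the divergence statements $(1)(i)$ and $(3)$, I build concentrating test functions. For $(1)(i)$, let $a_1, \dots, a_{m_\beta} \in (y_1, y_2)$ enumerate the interior minima with $u''(a_i) < \beta$, and fix disjoint $I_i = [a_i - \delta, a_i + \delta]$ and bumps $\phi_i \in C_c^\infty(\operatorname{int} I_i)$ with $\phi_i(a_i) = 1$. The Taylor expansion $u(y) - c = (u_{\min} - c) + \tfrac{u''(a_i)}{2}(y - a_i)^2 + O(|y-a_i|^3)$ and the substitution $y - a_i = \sqrt{2(u_{\min} - c)/u''(a_i)}\,z$ give
\[
\int_{I_i} \frac{|\phi_i|^2}{u - c}\, dy \sim \frac{\pi\,\phi_i(a_i)^2}{\sqrt{(u_{\min}-c)\,u''(a_i)/2}} \quad \text{as } c \to u_{\min}^-,
\]
so $Q_c(\phi_i)/\|\phi_i\|_{L^2}^2 \to -\infty$. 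Disjoint supports diagonalize $Q_c$ on $\operatorname{span}\{\phi_i\}$, and \eqref{variation2} yields $\lambda_{m_\beta}(c) \to -\infty$, hence $\lambda_n(c) \to -\infty$ for $1 \le n \le m_\beta$. For $(3)$, pick $a \in [y_1,y_2]$ with $u(a) = u_{\min}$, $u'(a) = 0$, $u''(a) = \kappa_+$, and rescale $y - a = \sqrt{2(u_{\min} - c)/\kappa_+}\,z$. A direct computation shows that, up to the diverging prefactor $\tfrac{\kappa_+}{2(u_{\min} - c)}$, the form $Q_c$ restricted to test functions of the form $\phi(y) = \psi(z)$ becomes the quadratic form of
\[
\mathcal{H} := -\partial_z^2 - \gamma\,(1 + z^2)^{-1}, \qquad \gamma := \frac{2(\beta - \kappa_+)}{\kappa_+}.
\]
Since $\beta > \tfrac{9}{8}\kappa_+$ forces $\gamma > \tfrac{1}{4}$ and $(1 + z^2)^{-1} \sim z^{-2}$ at infinity, $\mathcal{H}$ exceeds the sharp Hardy threshold and hence has infinitely many negative eigenvalues accumulating at $0$. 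For any $n$, rescaling and truncating the first $n$ of the corresponding $L^2$-orthogonal eigenfunctions produces an $n$-dimensional test subspace on which $Q_c/\|\cdot\|_{L^2}^2 \to -\infty$, giving $\lambda_n(c) \to -\infty$ for all $n \ge 1$.

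For $(1)(ii)$ and $(1)(iii)$ I argue that at most $m_\beta + O(1)$ eigenvalues can be $L^2$-negative. Put $W := \{\phi \in H_0^1(y_1,y_2) : \phi(a_i) = 0,\ 1 \le i \le m_\beta\}$, a closed subspace of codimension $m_\beta$. On each $I_i$, for $\phi \in W$, the pointwise bound
\[
\frac{\beta - u''(y)}{u(y) - c} \le \frac{2(\beta - u''(a_i))}{u''(a_i)\,(y - a_i)^2}\,(1 + \omega(\delta))
\]
together with the one-sided Hardy inequality $\int_{I_i}|\phi|^2/(y - a_i)^2\,dy \le 4\int_{I_i}|\phi'|^2\,dy$, combined with the arithmetic $\beta \le \tfrac{9}{8}\kappa_+ \le \tfrac{9}{8}u''(a_i)$, shows that the negative part of $V_c$ is absorbed into a fraction of the kinetic energy, uniformly in $c$, yielding $Q_c(\phi) \ge -C_\beta\|\phi\|_{L^2}^2$. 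By the min-max principle this gives $\lambda_{m_\beta + 1}(c) \ge -C_\beta$, proving $(ii)$. To upgrade to $(iii)$, intersect $W$ with the orthogonal complement of the first $n - 1$ Dirichlet eigenfunctions of $-\partial_y^2$, gaining the Poincar\'e reserve $\int|\phi'|^2\,dy \ge (n\pi/(y_2 - y_1))^2\|\phi\|_{L^2}^2$; choosing $n$ so that this exceeds $C_\beta$ ensures $Q_c \ge 0$ on a subspace of codimension $m_\beta + n - 1$, so $\lambda_{N_\beta}(c) \ge 0$ with $N_\beta := m_\beta + n$.

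The delicate points I expect to absorb the most care are: $(a)$ the threshold case $\beta = \tfrac{9}{8}\kappa_+$ with $u''(a_i) = \kappa_+$, where the absorbing coefficient equals $1$ and no kinetic reserve remains inside $I_i$; this is repaired by using only a fraction $(1 - \eta)$ of the inner kinetic energy for absorption, leaving $\eta\int_{I_i}|\phi'|^2\,dy$ plus the outer kinetic integral on $[y_1, y_2]\setminus\bigcup_i I_i$ to provide, together with Poincar\'e, the required uniform reserve. $(b)$ Uniform control of the Taylor remainder $O(|y-a_i|^3)$ in $u - c$, which forces $\delta$ to be chosen in terms of $u$ and $\beta$ only and not of $c$; this is what makes the $o(1)$ in $\omega(\delta)$ independent of $c < u_{\min}$. $(c)$ Handling minimum points at the boundary $\{y_1, y_2\}$ in parts $(i)$ and $(3)$, where the limit operator lives on a half-line with Dirichlet endpoint; the sharp half-line Hardy constant is again $\tfrac{1}{4}$, so the threshold $\tfrac{9}{8}\kappa_+$ is unchanged and the same scheme applies with one-sided bumps.
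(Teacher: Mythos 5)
Your overall strategy aligns with the paper's for $(1)(\mathrm{i})$, $(1)(\mathrm{ii})$--$(\mathrm{iii})$, and the reduction of $(2)$, $(4)$ to $(1)$, $(3)$ via the involution $(u,\beta,c)\mapsto(-u,-\beta,-c)$; your route to $(3)$ is genuinely different; and your treatment of the threshold case $\beta = \tfrac{9}{8}\kappa_+$ in $(1)(\mathrm{ii})$ has a real gap.

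On $(3)$: the semiclassical rescaling $y - a = \sqrt{2\tau/\kappa_+}\,z$ with $\tau = u_{\min}-c$ and the reduction to the model operator $\mathcal{H} = -\partial_z^2 - \gamma(1+z^2)^{-1}$, $\gamma = 2(\beta-\kappa_+)/\kappa_+ > \tfrac14$, is a valid alternative: $\mathcal{H}$ has infinitely many negative bound states (supercritical $z^{-2}$ tail), and for fixed $n$ the first $n$ eigenfunctions decay exponentially, so they truncate cleanly into the expanding window $|z|\lesssim \delta/\sqrt{\tau}$; the divergent prefactor $\kappa_+/(2\tau)$ then drives $\lambda_n(c)\to-\infty$. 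The paper instead builds explicit disjointly supported test functions $\varphi_{i,R}(y)=y^{1/2}\,\eta\!\left(2\bigl(\tfrac{\ln y}{R}+i+1\bigr)-1\right)$ on logarithmic annuli $[e^{-(i+1)R},e^{-iR}]$; this keeps everything elementary and avoids invoking the spectral theory of $\mathcal{H}$. Both routes work; yours is more conceptual, the paper's is more self-contained and avoids truncation estimates.

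The gap is your proposed repair of the threshold case in $(1)(\mathrm{ii})$. When $\beta = \tfrac98 u''(a_i)$, the absorption coefficient $\tfrac{2(\beta-u''(a_i))}{u''(a_i)}$ equals exactly $\tfrac14$, so the plain one-sided Hardy bound $\int_{I_i}|\phi|^2/(y-a_i)^2\,dy \le 4\int_{I_i}|\phi'|^2\,dy$ exhausts the entire inner kinetic energy, and the Taylor remainders in $u''(y)-u''(a_i)$ and $u(y)-u_{\min}$ leave an error of order $\int_{I_i}|\phi|^2/|y-a_i|\,dy \lesssim \delta\,\|\phi'\|_{L^2(I_i)}^2$, which is controlled by the kinetic energy, not by $\|\phi\|_{L^2}^2$. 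Reserving a fraction $\eta$ of the inner kinetic energy does not help: you then only absorb $\tfrac{1-\eta}{4}\int|\phi|^2/(y-a_i)^2$, leaving an unabsorbed $\tfrac{\eta}{4}\int|\phi|^2/(y-a_i)^2$ which is again comparable to $\|\phi'\|_{L^2(I_i)}^2$ by Hardy itself---you have moved the problem, not removed it. The actual fix used in the paper is the sharpened Hardy inequality of Lemma~\ref{Hardy type inequality2},
\[
\left\|\frac{\phi}{y-y_0}\right\|_{L^2(a,b)}^2 + \frac{1}{\max(b-y_0,\,y_0-a)}\int_a^b\frac{|\phi|^2}{|y-y_0|}\,dy \le 4\,\|\phi'\|_{L^2(a,b)}^2,
\]
whose second term, after choosing $\delta$ small so that the Lipschitz constant of $u''$ is dominated by $1/(4\delta)$, absorbs precisely the $O(1/|y-a_i|)$ error and yields $\int_{I_i}(|\phi'|^2+V_c|\phi|^2)\ge 0$ on $\{\phi(a_i)=0\}$ even at the threshold. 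Without this refinement your bound degenerates to $-C\delta\|\phi'\|_{L^2}^2$, which does not imply $M_\beta<\infty$ at $\beta=\tfrac98\kappa_+$.
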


The transitional  value $\beta={9\over8}\kappa_+$ is illustrated   in Figure 1. We give a simple example to explain why such a transitional  value exists. Consider the flow $u={1\over2} y^2$ on $[0,1]$ and $\beta>0$.
If $c<0$ is very close to $0$, then the energy quadratic form in \eqref{variation2} roughly looks like
\begin{align*}
\langle\mathcal{L}_c\phi,\phi\rangle\sim\int_0^1 |\phi'|^2+{2-2\beta\over y^2}|\phi|^2dy.
\end{align*}
Thus, if $2-2\beta>-{1\over4}\Leftrightarrow\beta<{9\over8}$, by  Hardy type inequality (Lemma \ref{Hardy type inequality2}) we have $\langle\mathcal{L}_c\phi,\phi\rangle$ is bounded from below for any test functions $\phi$ with $\phi(0)=0$. From this formal observation, we may expect $\lambda_1(c)$ is bounded from below. If $2-2\beta<-{1\over4}\Leftrightarrow\beta>{9\over8}$, $\langle\mathcal{L}_c\phi,\phi\rangle$ is unbounded from below by looking at the test functions $y^{{1\over2}+\varepsilon}$ with $\varepsilon\to0^+$. We will construct  test functions motivated by the function $y^{1\over2}$ to show that all the eigenvalues are unbounded from below.
\begin{figure}[!ht]
	\centering
	\begin{tabular}{ll}
	   \includegraphics[width=2in]{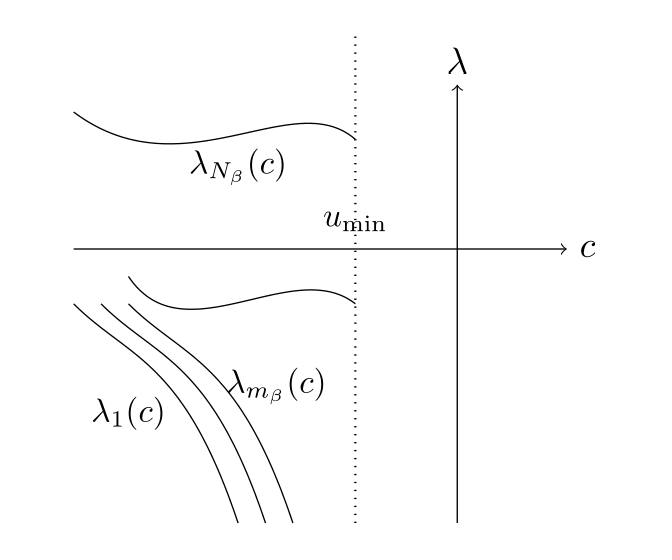}\quad\quad  \includegraphics[width=2in]{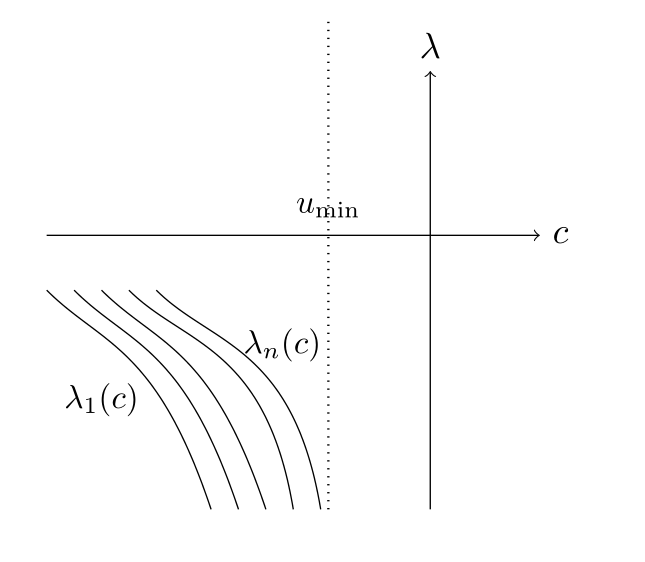} \\
	   \quad\quad  $0<\beta\leq{9\over8}\kappa_+$ \;\;\;\;\;\;\;\;\;\;\;\;\;\;\quad\quad\quad\quad\quad\quad\quad\quad $\beta>{9\over8}\kappa_+$
	\end{tabular}
 \begin{center}\vspace{-0.2cm}
   {\small {\bf Figure 1.} }
  \end{center}
\end{figure}

For general flows, the main idea in the proof of Theorem \ref{eigenvalue asymptotic behavior-bound} (1)-(2)  is  to control $\int{u''-\beta\over u-c}|\phi|^2dy$ using the $L^2$ norm of $\phi'$ near a singular point (see Lemma \ref{aH1}), which involves  very delicate and careful localized analysis. The transitional $\beta$ values ${9\over8} \kappa_{\pm}$ are essentially due to  the optimal Hardy type inequality \eqref{Hardy-finite-interval}.
The idea in the proof of Theorem \ref{eigenvalue asymptotic behavior-bound} (3)-(4) is to construct suitable test functions such that the functional in  \eqref{variation2} converges  to $-\infty$ as $c\to u_{\min}^-$ or $u_{\max}^+$, see \eqref{the point we prove}. This is inspired by the
``eigenfunction" $y^{1\over2}$ for the  optimal Hardy type equality and a support-separated technique. The proof of Theorem \ref{eigenvalue asymptotic behavior-bound} is given in Section 3.

Then we give sharp criteria for $\lambda_1(c)\to -\infty$ as $c\to u_{\min}^-$ if $\beta\in[{9\over 8}\kappa_-,{9\over8}\kappa_+]$.
By Theorem \ref{the explicit number of traveling wave families},
 the number of traveling wave families is to count the union of  $\sharp(\sigma_d(\mathcal{R}_{k\alpha,\beta})\cap\mathbf{R})$ for all $k\geq1$.
Thus,   the number of traveling wave families is infinity provided that  $\lambda_1(c)\to -\infty$ as $c\to u_{\min}^-$.
By Theorem \ref{eigenvalue asymptotic behavior-bound} (1)-(2), we get the sharp criteria for $\lambda_1(c)\to -\infty$ as $c\to u_{\min}^-$.
 \begin{corol}\label{sturm-liouville-eigen-asym}
 Let  $u$ satisfy ${\bf (H1)}$.
\begin{itemize}
\item[(1)] If $\{u=u_{\min}\}\cap(y_1,y_2)\neq \emptyset$,  then  a  transitional $\beta$ value $\min\{{9\over8}\kappa_+,\mu_+\}$ exists in
$(0,{9\over8}\kappa_+]$ such that
$\inf_{c\in(-\infty,u_{\min})}\lambda_1(c)>-\infty$ for $\beta\in(0,\min\{{9\over8}\kappa_+,\mu_+\}]$ and
 $\lim_{c\to u_{\min}^-}$ $\lambda_1(c)=-\infty$ for $\beta\in(\min\{{9\over8}\kappa_+,\mu_+\},{9\over8}\kappa_+]$.
\item[(2)] If $\{u=u_{\max}\}\cap(y_1,y_2)\neq \emptyset$, then  a  transitional $\beta$ value $\max\{{9\over8}\kappa_-,\mu_-\}$ exists in $[{9\over8}\kappa_-,0)$ such that
$\inf_{c\in(u_{\max}, \infty)}\lambda_1(c)>-\infty$ for $\beta\in[\max\{{9\over8}\kappa_-,\mu_-\},0)$ and $\lim_{c\to u_{\max}^+}\\ \lambda_1(c)=-\infty$ for $\beta\in[{9\over8}\kappa_-,\max\{{9\over8}\kappa_-,\mu_-\})$.
\item[(3)] If $\{u=u_{\min}\}\cap(y_1,y_2)= \emptyset$,  then
$\inf_{c\in(-\infty,u_{\min})}\lambda_1(c)>-\infty$ for $\beta\in(0,{9\over8}\kappa_+]$.
\item[(4)] If $\{u=u_{\max}\}\cap(y_1,y_2)= \emptyset$,  then
$\inf_{c\in(u_{\max},\infty)}\lambda_1(c)>-\infty$ for $\beta\in[{9\over8}\kappa_-,0)$.
\end{itemize}
Here, $\kappa_{\pm}$ and $\mu_{\pm}$ are defined in \eqref{def-kappa+}--\eqref{def-mu-}.
\end{corol}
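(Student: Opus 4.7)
The plan is to deduce this corollary directly from Theorem \ref{eigenvalue asymptotic behavior-bound} by tracking how the integer $m_{\beta}$ defined in \eqref{def-m0finitebetapositive} varies with $\beta$. The key observation is that $m_{\beta}=0$ forces $\lambda_{m_{\beta}+1}=\lambda_1$, so that the bound $M_{\beta}<\infty$ from Theorem \ref{eigenvalue asymptotic behavior-bound}(1)(ii)--(2)(ii) immediately yields a uniform lower bound on $\lambda_1$, whereas $m_{\beta}\geq 1$ triggers Theorem \ref{eigenvalue asymptotic behavior-bound}(1)(i)--(2)(i) with $n=1$, producing $\lambda_1(c)\to-\infty$. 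The transitional value $\mu_+$ (resp.\ $\mu_-$) is precisely the threshold at which this switch occurs.

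For part (1), I would argue as follows. Suppose $\beta\in(0,\min\{\tfrac{9}{8}\kappa_+,\mu_+\}]$. For any $a\in(y_1,y_2)$ with $u(a)=u_{\min}$, the definition \eqref{def-mu+} yields $u''(a)\geq \mu_+\geq\beta$, hence the set appearing in \eqref{def-m0finitebetapositive} is empty and $m_{\beta}=0$. Applying Theorem \ref{eigenvalue asymptotic behavior-bound}(1)(ii) one gets $\inf_{c<u_{\min}}\lambda_1(c)=-M_{\beta}>-\infty$. Now suppose $\beta\in(\min\{\tfrac{9}{8}\kappa_+,\mu_+\},\tfrac{9}{8}\kappa_+]$; this interval is nonempty only when $\mu_+<\tfrac{9}{8}\kappa_+$, and in that case \eqref{def-mu+} guarantees some interior minimizer $a_*$ with $u''(a_*)=\mu_+<\beta$, so $m_{\beta}\geq 1$ and Theorem \ref{eigenvalue asymptotic behavior-bound}(1)(i) with $n=1$ gives $\lim_{c\to u_{\min}^-}\lambda_1(c)=-\infty$. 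When instead $\mu_+\geq\tfrac{9}{8}\kappa_+$, the upper interval collapses and the first step already covers all of $(0,\tfrac{9}{8}\kappa_+]$, so the conclusion is vacuously consistent.

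Part (3) is the same first step without any second case: the hypothesis $\{u=u_{\min}\}\cap(y_1,y_2)=\emptyset$ makes the set in \eqref{def-m0finitebetapositive} empty for every $\beta\in(0,\tfrac{9}{8}\kappa_+]$, so $m_{\beta}=0$ throughout and Theorem \ref{eigenvalue asymptotic behavior-bound}(1)(ii) gives the uniform lower bound on $\lambda_1$. Parts (2) and (4) are obtained by the same argument applied to the maximum, replacing $u_{\min},\kappa_+,\mu_+$ by $u_{\max},\kappa_-,\mu_-$, using the definition \eqref{def-mu-} and Theorem \ref{eigenvalue asymptotic behavior-bound}(2) in place of (1), and flipping the inequalities involving $u''(a)-\beta$ accordingly.

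I do not anticipate a genuine obstacle, since the hard analytic work (Hardy-type bounds, test function constructions, the asymptotic behavior itself) is already encapsulated in Theorem \ref{eigenvalue asymptotic behavior-bound}. The only point requiring a moment's care is the bookkeeping between $m_{\beta}=0$ and the index $\lambda_{m_{\beta}+1}=\lambda_1$, and noticing that crossing $\mu_{\pm}$ is exactly when the first interior extremum of $u$ begins to contribute to the counting set in \eqref{def-m0finitebetapositive}.
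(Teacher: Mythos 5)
Your argument is correct and coincides with the paper's intended reasoning, which the authors record in the single remark immediately after the corollary: $\inf_{c<u_{\min}}\lambda_1(c)>-\infty$ if and only if $m_{\beta}=0$ and $\beta\leq\frac{9}{8}\kappa_+$. Your translation of the threshold $m_{\beta}=0$ into $\beta\leq\mu_+$ (and its vacuous truth when $\{u=u_{\min}\}\cap(y_1,y_2)=\emptyset$), together with the direct application of Theorem \ref{eigenvalue asymptotic behavior-bound}(1)(i)--(ii), (2)(i)--(ii), is exactly the bookkeeping the paper leaves to the reader.
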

Here, a key point for Corollary \ref{sturm-liouville-eigen-asym} (1) and (3) is that $\inf_{c\in(-\infty,u_{\min})}\lambda_1(c)>-\infty$ if and only if $m_{\beta}=0$ and $\beta\leq{9\over8}\kappa_+.$

 \noindent{\bf Step 2.} We rule out the oscillation of $\lambda_n(c)$  as $c\to u_{\min}^-$ (or $c\to u_{\max}^+$).
By Theorem \ref{eigenvalue asymptotic behavior-bound} (1), we get that for $1\leq n\leq m_{\beta}$,  $\lambda_n(c)=-\alpha^2$ has only finite number of solutions  $c$ on $(-\infty, u_{\min})$. Moreover, if $n\geq N_{\beta}$,  no solutions exist for $\lambda_n(c)=-\alpha^2$ on $c\in(-\infty, u_{\min})$. Now, we consider
  whether  $\sharp(\{\lambda_n(c)=-\alpha^2,c\in(-\infty,u_{\min})\})<\infty$  for $m_{\beta}<n<N_{\beta}$. Indeed, we rule out the oscillation of $\lambda_n(c)$ under   the spectral assumption $\bf{(E_\pm)}$, or under the ``good" endpoints assumption (i.e. one of the conditions  (i)--(iii) in Theorem \ref{traveling wave construction}), or for flows in class $\mathcal{K}^+$.
The oscillation of $\lambda_n(c)$ is illustrated in Figure 2.
\begin{figure}[!ht]
 \centering
 \begin{tabular}{ll}
   \includegraphics[width=2in]{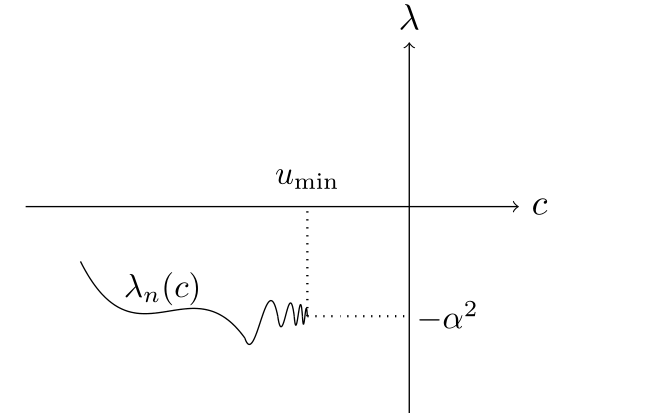}
 \end{tabular}
 \begin{center}\vspace{-0.2cm}
   {\small {\bf Figure 2.} }
  \end{center}
\end{figure}
\\
{\bf Case 1.}
Under  the spectral assumption,
the main argument to rule out oscillation is to prove  uniform $H^1$ bound for corresponding eigenfunctions, and the proof is  in Subsection 4.1.  In this case, ${9\over8} \kappa_{\pm}$ are also   transitional $\beta$ values
 for the number of isolated real eigenvalues of the linearized Euler operator if $|\kappa_{\pm}|<\infty$.

\begin{theorem}\label{main-result}
Assume that $u$ satisfies $\bf{(H1)}$ and $\alpha>0$.
\begin{itemize}
\item[(1)]
If $ 0<\beta<{9\over 8}\kappa_+$, $0<\alpha^2\leq M_{\beta}$ and $\bf{(E_+)}$ holds for this $ \alpha$, then
\begin{align}\label{num-beta+}
m_{\beta}\leq\sharp(\sigma_d(\mathcal{R}_{\alpha,\beta})\cap(-\infty,u_{\min}))<\infty.
\end{align}
If $ 0<\beta\leq{9\over 8}\kappa_+$, then  \eqref{num-beta+} holds for $\alpha^2>M_{\beta}$.
\item[(2)]
If $ {9\over8}\kappa_-<\beta<0$, $0<\alpha^2\leq M_{\beta}$ and $\bf{(E_-)}$ holds for this $ \alpha$, then
\begin{align}\label{num-beta-}
m_{\beta}\leq\sharp(\sigma_d(\mathcal{R}_{\alpha,\beta})\cap(u_{\max},\infty))<\infty.
\end{align}
If $ {9\over8}\kappa_-\leq\beta<0$, then  \eqref{num-beta-} holds for  $\alpha^2>M_{\beta}$.
 \item[(3)]
 If $\beta>{9\over 8}\kappa_+$, then $\sharp(\sigma_d(\mathcal{R}_{\alpha,\beta})\cap(-\infty,u_{\min}))=\infty$.
 \item[(4)]
If $\beta<{9\over 8}\kappa_-$, then $\sharp(\sigma_d(\mathcal{R}_{\alpha,\beta})\cap(u_{\max},\infty))=\infty$.

\end{itemize}
Here, $\kappa_{\pm}$, $\bf{(E_{\pm})}$, $m_{\beta}$ and $M_{\beta}$ are defined in \eqref{def-kappa+}--\eqref{def-kappa-negative-infty},
\eqref{E+}--\eqref{E-}, \eqref{def-m0finitebetapositive} and \eqref{def-M0finitebetapositive}, respectively.
\end{theorem}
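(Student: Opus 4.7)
My plan is to prove all four parts by combining the variational characterization of $\lambda_n(c)$ in \eqref{variation2}, the asymptotic and boundedness information supplied by Theorem \ref{eigenvalue asymptotic behavior-bound}, and real analyticity of each $\lambda_n$ on $(-\infty,u_{\min})$ (resp.\ $(u_{\max},\infty)$). The substitution $(u,\beta,c)\mapsto(-u,-\beta,-c)$ leaves the Rayleigh–Kuo BVP \eqref{sturm-Liouville} invariant while exchanging $u_{\min}\leftrightarrow -u_{\max}$ and swapping the signs of $\kappa_{\pm}$, so parts (2) and (4) will follow from parts (1) and (3); I focus on the $\beta>0$ cases.

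For the lower bound $m_{\beta}\leq\sharp(\sigma_d(\mathcal{R}_{\alpha,\beta})\cap(-\infty,u_{\min}))$ in part (1), I will apply the intermediate value theorem to each $\lambda_n$ with $1\leq n\leq m_{\beta}$: since $\lim_{c\to-\infty}\lambda_n(c)=n^2\pi^2/4>0$ by Proposition 4.2 of \cite{LYZ} and $\lim_{c\to u_{\min}^-}\lambda_n(c)=-\infty$ by Theorem \ref{eigenvalue asymptotic behavior-bound}(1)(i), the continuous function $\lambda_n$ must take the value $-\alpha^2$ at some $c_n\in(-\infty,u_{\min})$. Simplicity and strict ordering $\lambda_1(c)<\lambda_2(c)<\cdots$ of Dirichlet Sturm–Liouville eigenvalues at each fixed $c$ force these roots $c_1,\dots,c_{m_{\beta}}$ to be pairwise distinct, giving the lower bound. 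Exactly the same IVT argument applied to every $n\geq1$ handles part (3): there Theorem \ref{eigenvalue asymptotic behavior-bound}(3) gives $\lim_{c\to u_{\min}^-}\lambda_n(c)=-\infty$ for every $n$, producing infinitely many distinct isolated real eigenvalues.

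For the finiteness claim in part (1), I split into three ranges of $n$. If $n\geq N_{\beta}$, Theorem \ref{eigenvalue asymptotic behavior-bound}(1)(iii) together with monotonicity of $\lambda_n$ in $n$ gives $\lambda_n(c)\geq 0>-\alpha^2$, so there are no roots. If $1\leq n\leq m_{\beta}$, the two limits at the endpoints confine all roots of $\lambda_n(c)=-\alpha^2$ to a compact subinterval $[-K_n,u_{\min}-\delta_n]\subset(-\infty,u_{\min})$, on which real-analyticity of $\lambda_n$ yields only finitely many roots. In the regime $\alpha^2>M_{\beta}$, the definition of $M_{\beta}$ combined with monotonicity in $n$ gives $\inf_{c}\lambda_n(c)\geq -M_{\beta}>-\alpha^2$ for every $n>m_{\beta}$, so the middle range $m_{\beta}<n<N_{\beta}$ is automatically empty and no spectral hypothesis is required; this settles the $\alpha^2>M_{\beta}$ statement in part (1).

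The genuinely delicate case is $m_{\beta}<n<N_{\beta}$ with $0<\alpha^2\leq M_{\beta}$, and this is where I expect the main obstacle. I will argue by contradiction: suppose there is a sequence $c_j\to u_{\min}^-$ with $\lambda_n(c_j)=-\alpha^2$, and let $\phi_j\in H^1_0(y_1,y_2)$ be the corresponding $L^2$-normalized Dirichlet eigenfunctions of $\mathcal{L}_{c_j}$. The hard part is proving a uniform $H^1$ bound on $\{\phi_j\}$: the potential $(u''-\beta)/(u-c_j)$ becomes singular at the points where $u=u_{\min}$ as $c_j\to u_{\min}^-$, and control of $\|\phi_j'\|_{L^2}$ must be extracted from the identity $\langle\mathcal{L}_{c_j}\phi_j,\phi_j\rangle=-\alpha^2$ by a partition-of-unity splitting into neighborhoods of $\{u=u_{\min}\}$ and their complements, combined with the sharp Hardy-type inequalities that underlie Theorem \ref{eigenvalue asymptotic behavior-bound}(1)(ii), which are available precisely because $\beta<\tfrac{9}{8}\kappa_+$. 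Once uniform $H^1$ control is established, Rellich compactness produces a weak $H^1$ and strong $L^2$ subsequential limit $\phi^{*}\neq 0$ solving the limiting eigenvalue equation at $c=u_{\min}$; this exhibits $u_{\min}$ as an embedding eigenvalue of $\mathcal{R}_{\alpha,\beta}$, contradicting $(\mathbf{E_{+}})$ and completing the proof.
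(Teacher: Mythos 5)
Your overall architecture matches the paper's: parts (3)--(4) follow from Theorem \ref{eigenvalue asymptotic behavior-bound} (3)--(4) together with $\lim_{c\to -\infty}\lambda_n(c)=\tfrac{n^2}{4}\pi^2>0$ and the intermediate value theorem; the same IVT argument gives the lower bound $m_\beta\leq\sharp$ (which the paper leaves implicit); the ranges $n\geq N_\beta$ and $1\leq n\leq m_\beta$ contribute no roots and finitely many roots respectively by analyticity; the second statement of (1) ($\alpha^2>M_\beta$) falls out of the definition of $M_\beta$ and monotonicity in $n$; the symmetry $(u,\beta,c)\mapsto(-u,-\beta,-c)$ correctly reduces (2),(4) to (1),(3); and in the delicate regime $m_\beta<n<N_\beta$ with $0<\alpha^2\leq M_\beta$, the strategy of a uniform $H^1$ bound, weak $H^1$/strong $L^2$ limit by Rellich, and contradiction with $(\mathbf{E_+})$ is exactly the paper's.

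The gap is in your proposed mechanism for the uniform $H^1$ bound. You want to extract $\|\phi_j'\|_{L^2}\leq C$ directly from the energy identity $\langle\mathcal{L}_{c_j}\phi_j,\phi_j\rangle=-\alpha^2$ together with the Hardy-type estimates of Lemma \ref{aH1}. But the only part of Lemma \ref{aH1} that covers an interior singular point $a\in(y_1,y_2)$ with $u(a)=u_{\min}$ and $u''(a)<\beta\leq\tfrac{9}{8}u''(a)$ (Case 4 in the proof of Theorem \ref{eigenvalue asymptotic behavior-bound}(ii)) is part (1)(ii), and it requires $\phi(a)=0$. A Dirichlet eigenfunction of $\mathcal{L}_{c_j}$ will in general not vanish at these interior points, so the Hardy bound simply is not available there, and for exactly that reason the quantity $\int\frac{u''-\beta}{u-c_j}|\phi_j|^2\,dy$ can go to $-\infty$ near such points as $c_j\to u_{\min}^-$. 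The paper's Proposition \ref{uniformH1bound} instead runs a contradiction--compactness argument: normalize $\hat\psi_k=\psi_k/\|\psi_k\|_{H^1}$ so that $\hat\psi_k\rightharpoonup 0$ weakly in $H^1$ while $\|\hat\psi_k\|_{H^1}=1$, and then upgrades weak to strong $H^1$ convergence on a neighborhood of each point of $\{u=u_{\min}\}$ using the local compactness lemmas (Lemma \ref{critical-point-not-beta-u-sec-dao-equal0}, i.e.\ the analogue of Lemma 3.4 of \cite{WZZ}, together with the analogues of Lemmas 3.1, 3.5, 3.7 of \cite{WZZ}), where the standing hypothesis $\beta/u''(a)<9/8$ is exactly what makes the local lemma applicable at the interior Case 4 points; this forces $\hat\psi_k\to 0$ in $H^1$, contradicting $\|\hat\psi_k\|_{H^1}=1$. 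You have identified the correct bottleneck, but the direct energy-identity route you sketch stalls at the interior Case 4 points and needs to be replaced by this local compactness machinery.
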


In fact, by Theorem \ref{eigenvalue asymptotic behavior-bound} we have $M_{\beta}<\infty$ for $0<\beta\leq{9\over8}\kappa_+$ or ${9\over 8}\kappa_-\leq\beta<0$.
Here, we focus on sufficient conditions of \eqref{num-beta+} and \eqref{num-beta-}, it is unclear whether \eqref{num-beta+} is true for the case $\beta={9\over8}\kappa_+$ with $0<\alpha^2\leq M_{\beta}$, or the case $ 0<\beta<{9\over8}\kappa_+$ with $0<\alpha^2\leq M_{\beta}$ but no assumption $\bf{(E_+)}$.

Note that Theorem \ref{main-result} (3)-(4) is a direct consequence of Theorem \ref{eigenvalue asymptotic behavior-bound} (3)-(4).
\\
{\bf Case 2.}
Under  the ``good" endpoints assumption (i.e.  one of the conditions $(\rm{i})$--$(\rm{iii})$ in Theorem \ref{traveling wave construction}),  a delicate analysis near the endpoints is involved
to rule out oscillation, and the proof is in Subsection 4.2. In this case, we get that  no transitional $\beta$ values exist if $|\kappa_{\pm}|=\infty$.

\begin{theorem}\label{thm-flows good endpoints}
Let $\alpha>0$ and $u$ satisfy ${\bf (H1)}$.
 Assume that $u(y_1)\neq u(y_2)$, and one of the conditions $(\rm{i})$--$(\rm{iii})$ in Theorem \ref{traveling wave construction} holds.
Then
\begin{itemize}
\item[(1)] $\sharp(\sigma_d(\mathcal{R}_{\alpha,\beta})\cap\mathbf{R})<\infty$ for all $\beta\in(0,\infty)$ if and only if
$\{u'=0\}\cap\{u=u_{\min}\}=\emptyset$;

 \item[(2)] $\sharp(\sigma_d(\mathcal{R}_{\alpha,\beta})\cap\mathbf{R})<\infty$ for all $\beta\in(-\infty,0)$ if and only if
 $\{u'=0\}\cap\{u=u_{\max}\}=\emptyset$.
     \end{itemize}
 Consequently, $\sharp(\sigma_d(\mathcal{R}_{\alpha,\beta})\cap\mathbf{R})<\infty$ for all $\beta\in\mathbf{R}$ if and only if
 $\{u'=0\}\cap\{u=u_{\min}\}=\{u'=0\}\cap\{u=u_{\max}\}=\emptyset$.
\end{theorem}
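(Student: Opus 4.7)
\textbf{Proof plan for Theorem \ref{thm-flows good endpoints}.} The plan is to decouple the statement along the spectrum: since $\sigma_d(\mathcal{R}_{\alpha,\beta})\cap(u_{\max},\infty)=\emptyset$ for $\beta>0$, $\sigma_d(\mathcal{R}_{\alpha,\beta})\cap(-\infty,u_{\min})=\emptyset$ for $\beta<0$, and $\sigma_d(\mathcal{R}_{\alpha,\beta})\cap\mathbf{R}=\emptyset$ for $\beta=0$ (all recalled in the paragraph above \eqref{sturm-Liouville}), the conclusion in (1) is equivalent to finiteness of $\sharp(\sigma_d(\mathcal{R}_{\alpha,\beta})\cap(-\infty,u_{\min}))$ for all $\beta>0$, and similarly for (2). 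The final ``consequently'' clause is then immediate. Parts (1) and (2) are symmetric via $\beta\leftrightarrow-\beta$, $u\leftrightarrow -u$, so I would only treat (1).

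For part (1), the necessity direction is a direct application of Theorem \ref{main-result}(3): if $\{u'=0\}\cap\{u=u_{\min}\}\neq\emptyset$, then by (H1) we have $\kappa_+\in(0,\infty)$, so choosing any $\beta>\tfrac{9}{8}\kappa_+$ produces $\sharp(\sigma_d(\mathcal{R}_{\alpha,\beta})\cap(-\infty,u_{\min}))=\infty$, contradicting finiteness for all positive $\beta$. For sufficiency, assume $\{u'=0\}\cap\{u=u_{\min}\}=\emptyset$, so that $\kappa_+=\infty$ by \eqref{def-kappa-infty} and every $\beta>0$ satisfies $0<\beta<\tfrac{9}{8}\kappa_+$. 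Fix such $\beta$. By Theorem \ref{eigenvalue asymptotic behavior-bound}(1)(ii), $M_\beta<\infty$. Theorem \ref{main-result}(1) immediately yields finiteness when $\alpha^2>M_\beta$, and reduces the remaining range $0<\alpha^2\le M_\beta$ to verifying the assumption $\mathbf{(E_+)}$, namely that $u_{\min}$ is not an embedding eigenvalue of $\mathcal{R}_{\alpha,\beta}$.

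The main obstacle is thus to verify $\mathbf{(E_+)}$ under the good endpoints hypothesis. Since $u$ is continuous on $[y_1,y_2]$, every interior minimizer is a critical point, so the assumption $\{u'=0\}\cap\{u=u_{\min}\}=\emptyset$ forces $u_{\min}$ to be attained only at endpoints; combined with $u(y_1)\neq u(y_2)$, this pins down a unique endpoint $y_\ast\in\{y_1,y_2\}$ with $u(y_\ast)=u_{\min}$ and $u'(y_\ast)\neq 0$ (of the appropriate sign forcing a boundary minimum). Suppose for contradiction that there is a nontrivial $\phi\in H_0^1\cap H^2(y_1,y_2)$ with
\begin{equation*}
-\phi''+\frac{u''-\beta}{u-u_{\min}}\phi=-\alpha^{2}\phi.
\end{equation*}
Since $u-u_{\min}$ has a simple zero at $y_\ast$, the coefficient has a first-order pole there, making this a regular-singular ODE at $y_\ast$. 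I would analyze the local behavior of $\phi$ near $y_\ast$ in each of the three scenarios separately: under (i) the coefficient is a rational function near $y_\ast$ and the Frobenius indicial equation determines the two local solution branches explicitly; under (ii) a power expansion $u''-u''(y_\ast)\sim C(y-y_\ast)^{m_\ast}$ yields, after a standard asymptotic matching, two prescribed leading behaviors for $\phi$; and under (iii) the sign condition $\beta_\ast u'(y_\ast)(-1)^{i_\ast}\ge 0$ makes the singular coefficient nonnegative (or otherwise controllable) in a one-sided neighborhood, so a direct integration-by-parts energy argument applies. In each case the admissibility $\phi\in H^2$ near $y_\ast$ selects the regular branch, forcing $\phi(y_\ast)=\phi'(y_\ast)=0$; the other boundary condition $\phi(y_1)=\phi(y_2)=0$ together with standard unique continuation for second-order linear ODEs (away from the singular point the coefficient is smooth) then forces $\phi\equiv 0$, contradicting nontriviality. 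This establishes $\mathbf{(E_+)}$ and completes the sufficiency.

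Part (2) follows by the same argument applied to $-u$ (with the obvious relabeling $\kappa_-\leftrightarrow\kappa_+$, $u_{\max}\leftrightarrow u_{\min}$, and the mirrored endpoint condition $\beta_i u'(y_i)(-1)^i\ge 0$ giving the correct sign in the energy step), invoking Theorem \ref{main-result}(2) and (4) and Theorem \ref{eigenvalue asymptotic behavior-bound}(2). The hardest technical step in the whole proof is the case-by-case endpoint analysis in part (1); the remaining ingredients are direct quotations of Theorem \ref{main-result} and Theorem \ref{eigenvalue asymptotic behavior-bound}, which already encode the spectral content of the transitional value $\tfrac{9}{8}\kappa_+$.
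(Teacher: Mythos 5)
The sufficiency direction of your plan has a genuine gap. You propose to reduce to Theorem \ref{main-result}(1) by verifying the spectral assumption $\mathbf{(E_+)}$ under the good-endpoints hypothesis, but this reduction is mismatched with what the theorem claims. Theorem \ref{thm-flows good endpoints} asserts finiteness for \emph{all} $\alpha>0$ and \emph{all} $\beta\in(0,\infty)$, whereas $\mathbf{(E_+)}$ is a pointwise-in-$(\alpha,\beta)$ spectral condition that can simply fail for specific choices: for a given $\beta$, as $c\to u_{\min}^-$ the eigenvalue $\lambda_n(c)$ tends to some limit $\lambda_n(u_{\min})$, and if $\alpha^2=-\lambda_n(u_{\min})$ then $u_{\min}$ generically \emph{is} an embedding eigenvalue, yet the theorem still must hold for that $\alpha$. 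So the theorem is strictly stronger than what could ever follow from "$\mathbf{(E_+)}$ + Theorem \ref{main-result}(1)," and no Frobenius analysis can rescue the reduction.

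The Frobenius step itself is also incorrect. Near the endpoint $y_\ast$ where $u'(y_\ast)\neq 0$, the coefficient $\frac{u''-\beta}{u-u_{\min}}$ has a first-order pole, so in the variable $t=y-y_\ast$ the indicial roots are $r=0,1$. The branch with $\phi(y_\ast)=0$ behaves like $\phi_1(t)=t+O(t^2\log t)$; it has $\phi_1'(y_\ast)=1\neq 0$, and $\phi_1''=\frac{u''-\beta}{u-u_{\min}}\phi_1+\alpha^2\phi_1$ stays bounded, so $\phi_1\in H^2$ near $y_\ast$. Hence $H^2$-admissibility does \emph{not} force $\phi'(y_\ast)=0$, and your unique-continuation step never gets off the ground. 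The paper's actual proof does obtain $\varphi(y_1)=\varphi'(y_1)=0$ for $\varphi=\psi_{u_{\min}}$, but by an entirely different mechanism: it exploits the assumed accumulation of eigenvalues $c_k\to u_{\min}^-$ (so $\psi_{c_k}(y_1)=0$), applies Rolle's theorem to $\partial_c^n\psi_c(y_1)$, and pushes these repeated zeros through Wronskian identities $\partial_y W_{c,n}=F_{c,n}\psi_c$ together with the endpoint asymptotics of Lemmas \ref{lem1}--\ref{lem2} to derive the quantitative bound $\int_{y_1}^{y_1+\delta}\frac{\varphi^2}{|y-y_1|^3}\,dy<\infty$. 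That integral bound (not regularity alone) is what forces $\varphi(y_1)=\varphi'(y_1)=0$. This chain of estimates is the technical core of the proof and has no analogue in your proposal; without it, the contradiction cannot be closed.
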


Note that if $\alpha>\sqrt{M_{\beta}}$, then \eqref{num-beta+} and \eqref{num-beta-} are true, and the ``good" endpoints assumption (i.e.  one of the conditions $(\rm{i})$--$(\rm{iii})$ in Theorem \ref{traveling wave construction}) is not needed in Theorem \ref{thm-flows good endpoints}. Consequently, if ${2\pi\over T}>\sqrt{M_{\beta}}$, then Theorem \ref{traveling wave construction} (3)--(4) hold true without this assumption (see their proof).

Let $u$ be an analytic monotone flow and $\alpha>0$. Then
$\sharp(\sigma_d(\mathcal{R}_{\alpha,\beta})\cap\mathbf{R})<\infty$  for $\beta\neq0$.
This is a corollary of Theorem \ref{thm-flows good endpoints},  and  can also be deduced by the method used in  Lemma 3.2 and Theorem 4.1 of \cite{Rosencrans-Sattinger}.


 \noindent{\bf Case 3.} For flows in class $\mathcal{K}^+$,
the main tools
to rule out oscillation are Hamiltonian structure and index formula, and the proof is in Subsection 4.3.
This is also the main reason why the spectral and ``good" endpoints  assumptions can be removed  in Theorem \ref{traveling wave construction-classK+}.
\begin{theorem}\label{number for sinus flow}
Let $u$ be  in class $\mathcal{K}^+$ and $\alpha>0$.  Then
$m_{\beta}\leq\sharp(\sigma_d(\mathcal{R}_{\alpha,\beta})\cap(-\infty,u_{\min}))<\infty$ for $ 0<\beta\leq{9\over 8}\kappa_+$;
$m_{\beta}\leq\sharp(\sigma_d(\mathcal{R}_{\alpha,\beta})\cap(u_{\max},\infty))<\infty$ for $ {9\over8}\kappa_-\leq\beta<0$;
and
$\sharp(\sigma_d(\mathcal{R}_{\alpha,\beta})\cap\mathbf{R})=\infty$ for $\beta\notin[{9\over 8}\kappa_-,{9\over 8}\kappa_+]$.
Here, $\kappa_{\pm}$ and $m_{\beta}$ are defined in  \eqref{def-kappa+}--\eqref{def-kappa-negative-infty} and \eqref{def-m0finitebetapositive}, respectively.
\end{theorem}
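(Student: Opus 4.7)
My plan is to split the claim into three pieces: (a) the infinity statement for $\beta\notin[\tfrac{9}{8}\kappa_-,\tfrac{9}{8}\kappa_+]$; (b) the lower bound $m_\beta\le\sharp(\sigma_d(\mathcal{R}_{\alpha,\beta})\cap(-\infty,u_{\min}))$ (and its $u_{\max}$ counterpart) on the finite regime; and (c) the finiteness itself on that regime. Piece (a) is immediate: as noted just after Definition \ref{def-flow-in-class K+}, every $\mathcal{K}^+$ flow satisfies $\mathbf{(H1)}$, so Theorem \ref{main-result}(3)-(4) applies verbatim and gives $\sharp(\sigma_d(\mathcal{R}_{\alpha,\beta})\cap\mathbf{R})=\infty$ whenever $\beta>\tfrac{9}{8}\kappa_+$ or $\beta<\tfrac{9}{8}\kappa_-$. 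When $\kappa_+=\infty$ or $\kappa_-=-\infty$ the corresponding outer regime is empty by the conventions \eqref{def-kappa-infty}--\eqref{def-kappa-negative-infty}, so nothing is claimed there.

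For piece (b), fix $0<\beta\le\tfrac{9}{8}\kappa_+$ (the case $\tfrac{9}{8}\kappa_-\le\beta<0$ is symmetric). Theorem \ref{eigenvalue asymptotic behavior-bound}(1)(i) gives $\lambda_n(c)\to-\infty$ as $c\to u_{\min}^-$ for every $1\le n\le m_\beta$, whereas the excerpt recalled the standard fact $\lim_{c\to-\infty}\lambda_n(c)>0$. Since $\lambda_n$ is real-analytic on $(-\infty,u_{\min})$, the intermediate value theorem supplies $c_n\in(-\infty,u_{\min})$ with $\lambda_n(c_n)=-\alpha^2$ for each such $n$. Simplicity of the regular Sturm--Liouville spectrum of $\mathcal{L}_c$ (its potential is bounded for $c<u_{\min}$) forces $c_n\ne c_m$ whenever $n\ne m$, so these produce at least $m_\beta$ distinct elements of $\sigma_d(\mathcal{R}_{\alpha,\beta})\cap(-\infty,u_{\min})$.

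Piece (c) is the main obstacle. Theorem \ref{main-result}(1)-(2) delivered finiteness only under the spectral hypothesis $\mathbf{(E_\pm)}$ or under $\alpha^2>M_\beta$, and neither is available here. The branches $\lambda_n$ for $1\le n\le m_\beta$ already cross $-\alpha^2$ only finitely often (they escape to $-\infty$ by Theorem \ref{eigenvalue asymptotic behavior-bound}(1)(i)), and the branches for $n\ge N_\beta$ never cross it at all by Theorem \ref{eigenvalue asymptotic behavior-bound}(1)(iii). The remaining intermediate branches $m_\beta<n<N_\beta$ are real-analytic on the open interval $(-\infty,u_{\min})$, so the zero set of $\lambda_n(c)+\alpha^2$ is isolated there and can only accumulate at the endpoint $c=u_{\min}^-$; ruling out this oscillation is what must be done. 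My plan is to exploit the defining factorization of $\mathcal{K}^+$, namely $\beta-u''=K_\beta(u-u_\beta)$ with $K_\beta>0$ bounded on $[y_1,y_2]$, to deploy the Hamiltonian/index framework of \cite{LYZ}: in vorticity form the linearized Euler equation reads $\partial_t\omega=\mathcal{J}\mathcal{M}_{\alpha,\beta}\omega$, where $\mathcal{M}_{\alpha,\beta}$ is a self-adjoint energy-Casimir quadratic form whose negative (Morse) index is finite and controlled in terms of $K_\beta$. A Krein-type index identity then bounds $\sharp(\sigma_d(\mathcal{R}_{\alpha,\beta})\cap\mathbf{R})$ by this Morse index uniformly in $c$, in particular ruling out accumulation at $u_{\min}^-$.

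The hardest technical point is passing to the limit $c\to u_{\min}^-$ in the index identity: one must argue that any accumulating sequence of eigenvalues $c_j\to u_{\min}^-$ would produce, after a suitable $H^1$ compactness argument applied to the normalized eigenfunctions, a nontrivial limit eigenfunction turning $u_{\min}$ into an embedded eigenvalue of $\mathcal{R}_{\alpha,\beta}$, and then to use positivity and boundedness of $K_\beta$ together with the extended Rayleigh condition $\beta\in\text{Ran}(u'')$ inherent in $\mathcal{K}^+$ to forbid such an embedded mode. This is the ingredient that substitutes for $\mathbf{(E_+)}$ in Theorem \ref{main-result}(1) and is precisely what makes the finiteness unconditional for $\mathcal{K}^+$ flows; I expect this compactness/no-embedded-mode step, rather than the bookkeeping of the index formula, to be the genuine difficulty of the proof.
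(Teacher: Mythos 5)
Pieces (a) and (b) of your plan are sound and match the role that Theorem~\ref{main-result} and Theorem~\ref{eigenvalue asymptotic behavior-bound} play in the paper's proof. The gap is in piece (c), and it is not merely technical — the mechanism you propose is a different (and more demanding) one than what the paper actually uses, and one sub-claim is wrong as stated.

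First, the index identity of \cite{LYZ} does \emph{not} bound $\sharp(\sigma_d(\mathcal{R}_{\alpha,\beta})\cap\mathbf{R})$ by the Morse index $n^-(L_\alpha)$. The identity reads $k_c+k_r+k_i^{\leq 0}=n^-(L_\alpha)$, and $k_i^{\leq 0}$ only counts the contributions from purely imaginary eigenvalues of $J_\alpha L_\alpha$ whose restricted quadratic form $\langle L_\alpha\cdot,\cdot\rangle$ is \emph{non-positive}. Isolated real eigenvalues $c^*$ of $\mathcal{R}_{\alpha,\beta}$ with positive Krein signature do not enter this count, so your claimed uniform bound would be false in general. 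The paper's actual argument (Subsection 4.3) is a sign-change argument: by Theorem~4 of \cite{LYZ}, $\langle L_\alpha\omega^*,\omega^*\rangle=-(c^*-u_\beta)\lambda_n'(c^*)$ for each root $c^*$ of $\lambda_n(c)=-\alpha^2$. Since $c-u_\beta$ has fixed sign for $c$ near $u_{\min}^-$, if $\lambda_n$ oscillated with infinitely many roots $c^*\to u_{\min}^-$, real-analyticity would force $\lambda_n'$ to change sign infinitely many times among those roots, producing infinitely many roots with $-(c^*-u_\beta)\lambda_n'(c^*)\leq 0$. This makes $k_i^{\leq 0}=\infty$, contradicting $k_i^{\leq 0}\leq n^-(L_\alpha)\leq n^-(\tilde L_0)<\infty$ with $\tilde L_0=-\frac{d^2}{dy^2}-K_\beta$. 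No passage to the limit, no compactness argument, no control of embedded eigenvalues at $u_{\min}$ is needed. Your proposed ``compactness produces an embedded eigenvalue, which we then forbid'' route is precisely the $\mathbf{(E_\pm)}$-conditional argument used in Theorem~\ref{main-result}(1)--(2), not the unconditional one; the whole point of Theorem~\ref{number for sinus flow} is that the Krein-signature argument bypasses the need to rule out embedded eigenvalues at $u_{\min}$.

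Second, you apply the factorization $\beta-u''=K_\beta(u-u_\beta)$ as if it were available for all $\beta$, but it exists only for $\beta\in\text{Ran}(u'')$ (that is exactly what Definition~\ref{def-flow-in-class K+} gives). For $\beta\in(0,\tfrac{9}{8}\kappa_+]\setminus\text{Ran}(u'')$ the paper invokes a different fact — Corollary~1 of \cite{LYZ}, giving that each $\lambda_n(\cdot)$ is monotone decreasing on $(-\infty,u_{\min})$ — so that each branch crosses $-\alpha^2$ at most once, and then Theorem~\ref{eigenvalue asymptotic behavior-bound} limits the number of branches that can cross at all. Your plan as written would not cover this subcase.
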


The idea of the  proof   is as follows.  The linearized equation has  Hamiltonian structure and the energy quadratic form has finite negative directions. The key observation is that  oscillation of $\lambda_n(c)$ brings  infinite times  of   sign-changes of $\lambda_n'(c)$. This contributes infinite negative directions of quadratic form for  non-resonant neutral modes, which is a contradiction to the index formula. Thus,   the oscillation of $\lambda_n(c)$ can be ruled out unconditionally for flows in class  $\mathcal{K}^+$.

\section{Transitional  values $\beta={9\over8}\kappa_{\pm}$ for the $n$-th eigenvalue of Rayleigh-Kuo BVP}

We begin to study the asymptotic behavior of the $n$-th eigenvalue $\lambda_n(c)$  of Rayleigh-Kuo BVP.
In this section, we focus on the number $\sharp\{n\geq1:\lambda_n(c)\to -\infty \text{ as }c\to u_{\min}^-\;(\text{or }c\to u_{\max}^+)\}$. We prove that  the number is finite for $\beta\in [{9\over8}\kappa_{-},{9\over8}\kappa_{+}]$ and it is infinite  for $\beta\notin [{9\over8}\kappa_{-},{9\over8}\kappa_{+}]$, which is stated precisely in Theorem \ref{eigenvalue asymptotic behavior-bound}.
\subsection{Finite number for $\beta\in [{9\over8}\kappa_{-},{9\over8}\kappa_{+}]$}
The optimal constant in the following Hardy type inequality plays an important role in discovering the transitional  values $\beta={9\over8}\kappa_{\pm}$.



\if0
$\psi_c$ be an eigenfunction of $\lambda=-\alpha^2$ for (\ref{sturm-Liouville})  and $F_c={\psi_c\over u-c}$. Then
$$((u-c)^2F_c')'+(-\alpha^2(u-c)^2+\beta(u-c))F_c=0.$$
Integration by parts gives
$$\int_{y_1}^{y_2}(u-c)^2(|F_c'|^2+\alpha^2|F_c|^2)dy=\int_{y_1}^{y_2}\beta(u-c)|F_c|^2dy. $$
Thus, $\sigma_d(\mathcal{R}_{\alpha,\beta})\cap(u_{\max},\infty)=\emptyset$ if $\beta\geq0$, and  $\sigma_d(\mathcal{R}_{\alpha,\beta})\cap(-\infty,u_{\min})=\emptyset$ if $\beta\leq0$. Moreover, $\sigma_d(\mathcal{R}_{\alpha,0})\cap \mathbf{R}=\emptyset$.
This proves  Theorem \ref{main-result} (1).
\fi

\begin{lemma}\label{Hardy type inequality2}
Let $\phi\in H^1(a,b)$ and $\phi(y_0)=0$ for some $y_0\in [a,b]$. Then
\begin{align}\label{Hardy-finite-interval}
\left\|\phi\over y-y_0\right\|_{L^2(a,b)}^2+\frac{\int_{a}^{b}
|y-y_0|^{-1}{\phi(y)^2}{}dy}{\max(b-y_0,y_0-a)}\leq 4\|\phi'\|_{L^2(a,b)}^2.
\end{align}
Here the constant $4$ is optimal.
\end{lemma}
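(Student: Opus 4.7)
The plan is to split the inequality at $y_0$ into two one-sided Hardy-type estimates and then prove those via the substitution $\psi(t)=\sqrt{t}\,f(t)$. Without loss of generality I assume $R_+:=b-y_0\geq R_-:=y_0-a$, so $\max(b-y_0,y_0-a)=R_+$. Put $\phi_+(t)=\phi(y_0+t)$ on $[0,R_+]$ and $\phi_-(t)=\phi(y_0-t)$ on $[0,R_-]$; both lie in $H^1$ and vanish at $0$. The original inequality then breaks into the two half-line estimates
\begin{equation*}
\int_0^{R_\pm}\frac{\phi_\pm^2}{t^2}\,dt+\frac{1}{R_+}\int_0^{R_\pm}\frac{\phi_\pm^2}{t}\,dt\leq 4\int_0^{R_\pm}(\phi_\pm')^2\,dt,
\end{equation*}
and on the short side $1/R_+\leq 1/R_-$ reduces this to the same form with $R_+$ replaced by $R_-$. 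So it suffices to prove $(\star)$: for every $R>0$ and every $\psi\in H^1(0,R)$ with $\psi(0)=0$,
\begin{equation*}
\int_0^R\frac{\psi^2}{t^2}\,dt+\frac{1}{R}\int_0^R\frac{\psi^2}{t}\,dt\leq 4\int_0^R(\psi')^2\,dt.
\end{equation*}

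To prove $(\star)$ I set $f(t):=\psi(t)/\sqrt{t}$; the bound $|\psi(t)|\leq\sqrt t\,\|\psi'\|_{L^2(0,t)}$ implies $f(t)\to 0$ as $t\to 0^+$. From $\psi'=f/(2\sqrt t)+\sqrt t\,f'$ one obtains $(\psi')^2=f^2/(4t)+ff'+t(f')^2$; integrating over $[0,R]$ and using $\int_0^R ff'\,dt=\tfrac12 f(R)^2$ (since $f(0)=0$) yields the exact identity
\begin{equation*}
4\int_0^R(\psi')^2\,dt=\int_0^R\frac{\psi^2}{t^2}\,dt+\frac{2\psi(R)^2}{R}+4\int_0^R t\,(f')^2\,dt.
\end{equation*}
Since $\psi^2/t=f^2$ and $\psi(R)^2/R=f(R)^2$, $(\star)$ is equivalent to the auxiliary bound
\begin{equation*}
\frac{1}{R}\int_0^R f^2\,dt\leq 2f(R)^2+4\int_0^R t\,(f')^2\,dt.
\end{equation*}
I would establish this by writing $f(t)=f(R)-\int_t^R f'(s)\,ds$ and applying Cauchy--Schwarz with the splitting $f'(s)=s^{-1/2}\cdot s^{1/2}f'(s)$ to get $\bigl(\int_t^R f'\bigr)^2\leq \log(R/t)\int_0^R s(f')^2\,ds$; hence $f(t)^2\leq 2f(R)^2+2\log(R/t)\int_0^R s(f')^2\,ds$. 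Integrating in $t$ over $[0,R]$ and using $\int_0^R\log(R/t)\,dt=R$ even yields the stronger bound $(1/R)\int_0^R f^2\leq 2f(R)^2+2\int_0^R s(f')^2\,ds$.

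For the optimality of the constant $4$, I test $(\star)$ with $\psi_\varepsilon(t)=t^{1/2+\varepsilon}$ on $[0,R]$: as $\varepsilon\to 0^+$, $\int_0^R\psi_\varepsilon^2/t^2$ and $4\int_0^R(\psi_\varepsilon')^2$ are both $(1+O(\varepsilon))\cdot R^{2\varepsilon}/(2\varepsilon)$, while $(1/R)\int_0^R\psi_\varepsilon^2/t=O(1)$ is negligible; hence the ratio of the left- to the right-hand side of $(\star)$ tends to $1$, confirming that $4$ cannot be lowered. The main technical point is the substitution step: verifying $f(0)=0$ and justifying the boundary-term cancellations in the identity above for general $H^1$ data. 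I would handle this by first deriving the identity for smooth $\psi$ vanishing near $0$ (where all manipulations are classical), noting that the finiteness of $\int_0^R t(f')^2\,dt$ for $H^1$ data follows from the classical Hardy inequality, and then extending by density.
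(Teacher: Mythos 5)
Your proof is correct, and it takes a genuinely different route from the paper's. The paper argues directly on each half-interval by completing the square with the ansatz $\frac{\phi}{y-y_0}-\varepsilon\phi-2\phi'$, where $\varepsilon=1/\max(b-y_0,y_0-a)$: after two integrations by parts, the non-negativity of $\bigl\|\frac{\phi}{y-y_0}-\varepsilon\phi-2\phi'\bigr\|_{L^2}^2$ produces \emph{both} the Hardy term and the weighted $L^1$ term on the left-hand side simultaneously, with the boundary contribution at $b$ controlled because $\varepsilon\leq 1/(b-y_0)$. You instead make the eigenfunction substitution $\psi=\sqrt{t}\,f$, which yields the exact identity $4\int_0^R(\psi')^2\,dt=\int_0^R\psi^2/t^2\,dt+2f(R)^2+4\int_0^R t(f')^2\,dt$, and then supply the weighted $L^1$ term in a separate, independent step via Cauchy--Schwarz with the kernel $s^{-1/2}$ and the evaluation $\int_0^R\log(R/t)\,dt=R$. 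Your two-step decomposition is transparent and even yields the slightly stronger auxiliary bound $(1/R)\int_0^R f^2\leq 2f(R)^2+2\int_0^R s(f')^2\,ds$; the paper's one-parameter ansatz is more compact because the $\varepsilon$-perturbation absorbs the extra term automatically, at the cost of requiring the somewhat unmotivated choice of that ansatz. The optimality argument (testing with $t^{1/2+\varepsilon}$ and sending $\varepsilon\to0^+$) is identical in both. The regularity points you flag at the end are the right ones and in fact already close the argument directly for general $H^1$ data: $f(0^+)=0$ follows from $|\psi(t)|\leq\sqrt{t}\,\|\psi'\|_{L^2(0,t)}$, $\int_0^R t(f')^2<\infty$ follows from the classical Hardy inequality since $t(f')^2=(\psi'-\psi/(2t))^2$, and $ff'\in L^1(0,R)$ by AM--GM, so $\int_0^R(f^2)'=f(R)^2$ holds without any density approximation.
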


\begin{proof}\def\ve{\varepsilon} Suppose that $\phi$ is real-valued   without loss of generality. Let $ \ve=1/\max(b-y_0,y_0-a)$. First, we consider the integration on  $[y_0,b]$ (if $y_0<b$).
Since
\begin{align}\label{phiyto0}\left|\phi(y)^2\over y-y_0\right|\leq {\|\phi'\|^2_{L^2(y_0,y)}(y-y_0)\over y-y_0}=\|\phi'\|^2_{L^2(y_0,y)}\to0,\;\;as \;\;y\to y_0^+,
\end{align}
we have
\begin{align*}
&\left\|\phi\over y-y_0\right\|_{L^2(y_0,b)}^2=-\int_{y_0}^{b}
\phi(y)^2d\left(1\over y-y_0\right)
=-{\phi(y)^2\over y-y_0}\big|_{y_0}^{b}+\int_{y_0}^{b}{2\phi(y)\phi'(y)\over y-y_0}dy, \\ &\int_{y_0}^{b}{2\phi(y)\phi'(y)}dy={\phi(y)^2}\big|_{y_0}^{b}=\phi(b)^2,\\&\left\|\phi\over y-y_0\right\|_{L^2(y_0,b)}^2+\left\|{\phi\over y-y_0}-\ve\phi-2\phi'\right\|_{L^2(y_0,b)}^2\\
=& -2{\phi(b)^2\over b-y_0}+2\ve\phi(b)^2-2\ve\int_{y_0}^{b}
\frac{\phi(y)^2}{y-y_0}dy+\ve^2\|\phi\|_{L^2(y_0,b)}^2+4\|\phi'\|_{L^2(y_0,b)}^2\\ \leq&-\ve\int_{y_0}^{b}
\frac{\phi(y)^2}{y-y_0}dy+4\|\phi'\|_{L^2(y_0,b)}^2.
\end{align*}
Here we used $ \ve\leq {1\over b-y_0}$ and ${\phi(y)^2\over y-y_0}\big|_{y_0}^{b}={\phi(b)^2\over b-y_0}. $ Thus, $\big\|{\phi\over y-y_0}\big\|_{L^2(y_0,b)}^2+\ve\int_{y_0}^{b}
\frac{\phi(y)^2}{y-y_0}dy\leq 4\|\phi'\|_{L^2(y_0,b)}^2.$
Similarly, $\big\|{\phi\over y-y_0}\big\|_{L^2(a,y_0)}^2+\ve\int_a^{y_0}
\frac{\phi(y)^2}{y_0-y}dy\leq 4\|\phi'\|_{L^2(a,y_0)}^2$. This gives (\ref{Hardy-finite-interval}).
Letting $y_0=a$, $\phi(y)=(y-a)^{{1\over2}+\varepsilon_1}$ and sending $\varepsilon_1\to0^+$, we see that the constant $4$ is optimal.
\end{proof}

For other versions of Hardy type inequality, the readers are referred to \cite{Godfrey-Hardy-Littlewood,Masmoudi}.
To study the lower bound  of the $n$-th eigenvalue $\lambda_n(c)$ of Rayleigh-Kuo BVP  for  $c$  close to $u_{\min}^-$, it is important to estimate the energy expression \eqref{variation2} near singular points. To this end, we need the following lemma.
\begin{lemma}\label{aH1}
Assume that $a\in [y_1,y_2],\ u(a)=u_{\min},\ \phi\in H_0^1(y_1,y_2),\ c<u_{\min}$ and $\beta>0$. Then there exists a constant $ \delta_0>0$ (depending only on $u$ and $a$) such that for $0<\delta\leq \delta_0$,
\begin{itemize}
\item[(1)]
 if $(\rm{i})$ $u'(a)\neq 0$ or $(\rm{ii})$ $u'(a)= 0,\ \beta\leq 9u''(a)/8,\ \phi(a)=0$, then
\begin{align}\label{a1}
\int_{[a-\delta,a+\delta]\cap[y_1,y_2]}\left(|\phi'|^2+{u''-\beta\over u-c}|\phi|^2\right)dy\geq 0;
\end{align}
\item[(2)] if $(\rm{iii})$ $a\in(y_1,y_2),\ u'(a)= 0,\ \beta\leq u''(a)$, then\begin{align}\label{a2}
\int_{[a-\delta,a+\delta]\cap[y_1,y_2]}\left(|\phi'|^2+{u''-\beta\over u-c}|\phi|^2\right)dy\geq -C_{\delta}\|\phi\|_{L^2([a-\delta,a+\delta]\cap[y_1,y_2])}^2.
\end{align}
Here  $C_{\delta}$ is a positive constant depending only on $u,a$ and $\delta$.
\end{itemize}
\end{lemma}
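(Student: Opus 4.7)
The plan is a local analysis near the singular point $a$, combining Taylor expansion of $u$ with sharp Hardy-type inequalities tailored to each case.

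For \textbf{case (1)(i)} ($u'(a)\neq 0$), since $u(a)=u_{\min}$, $a$ must be an endpoint, say $a=y_1$ with $u'(y_1)>0$; then $u(y)-u_{\min}\geq c_0(y-y_1)$ on $[y_1,y_1+\delta]$, so $|u''-\beta|/(u-c)\leq C/(y-y_1)$. Combining with the Poincar\'e-type bound $\int_{y_1}^{y_1+\delta}\phi^2/(y-y_1)\,dy\leq \delta\int|\phi'|^2\,dy$ (which follows from $|\phi(y)|^2\leq (y-y_1)\int_{y_1}^{y}|\phi'|^2\,ds$ thanks to $\phi(y_1)=0$), for $\delta\leq c_0/C$ the term $\int|\phi'|^2$ absorbs the negative part of the potential, yielding \eqref{a1}.

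\textbf{Case (1)(ii)} is the heart of the argument: the threshold $\beta=9u''(a)/8$ is exactly the value for which the leading $1/(y-a)^2$ singularity of $(u''-\beta)/(u-c)$ matches the sharp Hardy constant $4$. The strategy is to prove the pointwise bound
\[
\frac{u''(y)-\beta}{u(y)-c}\geq -\frac{1}{4(y-a)^2}-\frac{C}{|y-a|}
\]
uniformly in $c<u_{\min}$ and $|y-a|\leq \delta_0$, and then absorb both singular pieces via Lemma \ref{Hardy type inequality2}. Clearing denominators and using the Taylor expansions $u(y)-c=(u_{\min}-c)+\tfrac{u''(a)}{2}(y-a)^2+O(|y-a|^3)$ and $u''(y)-\beta=(u''(a)-\beta)+O(|y-a|)$, the coefficient of $(y-a)^2$ in the numerator $4(u''(y)-\beta)(y-a)^2+(u(y)-c)$ is $4(u''(a)-\beta)+u''(a)/2$, which is non-negative precisely because $\beta\leq 9u''(a)/8$; dropping this and $u_{\min}-c\geq 0$ gives a numerator $\geq -C|y-a|^3$, while the denominator is $\geq u''(a)(y-a)^4$ for $\delta_0$ small. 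Now apply Lemma \ref{Hardy type inequality2} with $y_0=a$ and $\phi(a)=0$ to get $\int|\phi'|^2\geq \tfrac14\int\phi^2/(y-a)^2\,dy+\tfrac{1}{4\delta}\int \phi^2/|y-a|\,dy$; combined with the pointwise bound,
\[
\int|\phi'|^2\,dy+\int \frac{u''-\beta}{u-c}\phi^2\,dy\geq \Bigl(\frac{1}{4\delta}-C\Bigr)\int \frac{\phi^2}{|y-a|}\,dy\geq 0
\]
once $\delta\leq 1/(4C)$.

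For \textbf{case (2)} (condition (iii), $a$ interior, $\beta\leq u''(a)$), $\phi(a)$ need not vanish so Hardy is unavailable, but the stronger hypothesis on $\beta$ makes the potential essentially non-negative near $a$. Splitting $u''-\beta=(u''(y)-u''(a))+(u''(a)-\beta)$ leaves the second piece $\geq 0$. For the first piece, the substitution $\phi=(u-c)\psi$ followed by integration by parts gives
\[
\int |\phi'|^2\,dy+\int \frac{u''}{u-c}\phi^2\,dy=\Bigl[\frac{u'\phi^2}{u-c}\Bigr]_{y=a-\delta}^{y=a+\delta}+\int (u-c)^2|\psi'|^2\,dy\geq 0,
\]
the right-hand side being $\geq 0$ since $u'(a+\delta)>0>u'(a-\delta)$ at an interior minimum. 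One is left with $-\beta\int\phi^2/(u-c)\,dy$, which is controlled by decomposing $\phi=\phi(a)\chi+\widetilde\phi$ (with $\chi$ a smooth cut-off and $\widetilde\phi(a)=0$), applying Hardy to the second piece, and using the Sobolev-Poincar\'e estimate $\phi(a)^2\leq C_\delta(\|\phi\|_{L^2}^2+\delta\|\phi'\|_{L^2}^2)$ together with an integration-by-parts cancellation showing $\int(u''-u''(a))/(u-c)\,dy=O(1)$ uniformly in $c$, to yield \eqref{a2}. The \textbf{main obstacle} is case (1)(ii): at the critical threshold the leading $1/(y-a)^2$ singularity of the potential matches the sharp Hardy constant $4$ exactly, leaving no slack to absorb the sub-leading $O(1/|y-a|)$ error from Taylor expansion; only the \emph{improved} Hardy inequality of Lemma \ref{Hardy type inequality2}, which supplies the extra $\delta^{-1}\int \phi^2/|y-a|\,dy$ beyond the classical bound, allows this absorption.
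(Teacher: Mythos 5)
Your treatment of cases (i) and (ii) matches the paper's strategy closely: in (i) the Poincar\'e bound $|\phi(y)|^2\leq(y-y_1)\|\phi'\|_{L^2(y_1,y)}^2$, and in (ii) the pointwise estimate $\frac{u''-\beta}{u-c}\geq-\frac{1+C_0|y-a|}{4(y-a)^2}$ absorbed via the improved Hardy inequality of Lemma~\ref{Hardy type inequality2} after choosing $\delta\leq C_0^{-1}$. That part is correct and essentially identical to the paper's proof.

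Case (2), however, has a genuine gap. After your substitution $\phi=(u-c)\psi$ you write the identity
\[
\int|\phi'|^2+\int\frac{u''}{u-c}\phi^2=\Bigl[\tfrac{u'\phi^2}{u-c}\Bigr]_{a-\delta}^{a+\delta}+\int(u-c)^2|\psi'|^2\geq 0,
\]
and then claim you are "left with $-\beta\int\phi^2/(u-c)$", which you propose to bound by $-C_\delta\|\phi\|_{L^2}^2$. This cannot work: for $\phi$ with $\phi(a)\neq 0$ (which is allowed here since $a$ is interior and no vanishing is assumed), one has $\int_{a-\delta}^{a+\delta}\phi^2/(u-c)\,dy\gtrsim|\phi(a)|^2(u_{\min}-c)^{-1/2}\to\infty$ as $c\to u_{\min}^-$, so $-\beta\int\phi^2/(u-c)$ is \emph{not} bounded below uniformly in $c$. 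The term $\int(u-c)^2|\psi'|^2$, which you discard as "$\geq 0$", is exactly what compensates for this divergence — one can check for $\phi\equiv 1$ near $a$ that both $\int(u-c)^2|\psi'|^2$ and $\beta\int(u-c)\psi^2$ blow up like $(u_{\min}-c)^{-1/2}$ with matching leading coefficients when $\beta=u''(a)$, the difference being bounded precisely because $\beta\leq u''(a)$. Once you throw the positive integral away you have destroyed the cancellation, and the decomposition $\phi=\phi(a)\chi+\widetilde\phi$ plus the observation $\int(u''-u''(a))/(u-c)\,dy=O(1)$ does not rescue it, since the offending piece $\phi(a)^2\int\chi^2/(u-c)$ involves $\int 1/(u-c)$, not $\int(u''-u''(a))/(u-c)$.

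The paper avoids this issue by a different integration by parts: it sets $\widetilde\beta=u''(a)\geq\beta$, uses $\frac{u''-\beta}{u-c}\geq\frac{u''-\widetilde\beta}{u-c}$, and then integrates $\frac{u''-\widetilde\beta}{u'}\phi^2$ against $d\bigl(\ln(u-c)\bigr)$. The key structural fact is that $(u''-\widetilde\beta)/u'\in H^1$ near $a$ because $u''-\widetilde\beta$ vanishes at $a$ to the same order as $u'$; together with uniform $L^p$ bounds on $\ln(u-c)$, this yields boundary and bulk terms that are small of size $\delta^{1/4}+\Psi(\delta)$ times $C_\delta\|\phi\|_{L^2}^2+\|\phi'\|_{L^2}^2$, with no divergent piece ever appearing. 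That is the idea your argument is missing.
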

\begin{proof}\def\b{\widetilde{\beta}}First, we assume (i). Then we have $a\in\{y_1,y_2\}$ and thus $\phi(a)=0$. Without loss of generality, we assume that $a=y_1$. In this case, $u'(y_1)>0$. Choose $\delta_1\in(0,y_2-y_1)$ small enough such that $u'(y)>{u'(y_1)\over2}$ for $y\in(y_1,y_1+\delta_1)$, and thus, there exists $\xi_y\in(y_1,y)$ such that
$u(y)-c>u(y)-u_{\min}=u'(\xi_y)(y-y_1)>{u'(y_1)\over2}(y-y_1)>0$ for $c<u_{\min}$.
Note that for $y\in(y_1,y_1+\delta)$,
\begin{align*}
|\phi(y)|^2=\left|\int_{y_1}^{y}\phi'(s)ds\right|^2\leq \|\phi'\|_{L^2(y_1,y)}^2(y-y_1).
\end{align*}
 Now we take $\delta_0\in(0,\delta_1)$ to be small enough such that $\int_{y_1}^{y_1+\delta_0}{2|u''-\beta|\over u'(y_1)}dy\leq1. $ Then for $0<\delta\leq \delta_0$,
\begin{align*}
\left|\int_{y_1}^{y_1+\delta}{u''-\beta\over u-c}|\phi|^2dy\right|\leq\|\phi'\|_{L^2(y_1,y_1+\delta)}^2 \int_{y_1}^{y_1+\delta}{2|u''-\beta|\over u'(y_1)}dy\leq \|\phi'\|_{L^2(y_1,y_1+\delta)}^2,
\end{align*}which implies \eqref{a1} since $[a-\delta,a+\delta]\cap[y_1,y_2]= [y_1,y_1+\delta]$.

Now we assume (ii), then $u''(a)>0 $. Let $\delta_1\in(0,\max(y_2-a,a-y_1))$ be small enough such that $u''(y)>{u''(a)\over2}>0$ for $y\in[a-\delta_1,a+\delta_1]\cap[y_1,y_2]$. Since $u\in H^4(y_1,y_2)\subset C^3([y_1,y_2])$, we have $|u''(y)-u''(a)|\leq C|y-a|$ and $ |1/u''(y)-1/u''(a)|\leq C|y-a| $ for $y\in[a-\delta_1,a+\delta_1]\cap[y_1,y_2]$.
Then there exists $\xi_y\in\{z:|z-a|<|y-a|\}$ such that
$u(y)-c>u(y)-u_{\min}={u''(\xi_y)\over2}(y-a)^2$ $>{2\beta\over9}(y-a)^2>0$ for $c<u_{\min}$ and $y\in[a-\delta_1,a+\delta_1]\cap[y_1,y_2]$, and thus\begin{align*}
0&<{1\over u(y)-c}<{2\over u''(\xi_y)(y-a)^2}\leq{2+C|\xi_y-a|\over u''(a)(y-a)^2}\leq{2+C|y-a|\over u''(a)(y-a)^2},\\ u''(y)-\beta&\geq u''(a)-C|y-a|-9u''(a)/8
=-u''(a)/8-C|y-a|,\\ {u''(y)-\beta\over u(y)-c}&\geq{-u''(a)/8-C|y-a|\over u(y)-c}\geq-{u''(a)\over 8}{2+C|y-a|\over u''(a)(y-a)^2}-{C|y-a|\over (2\beta/9)(y-a)^2}
\\ &\geq-{1+C_0|y-a|\over 4(y-a)^2}.
\end{align*}
Now we take $\delta_0=\min(\delta_1,C_0^{-1})>0$. For $0<\delta\leq \delta_0$,  we have
\begin{align}\label{hardy-inequality-control-potential}
&-\int_{[a-\delta,a+\delta]\cap[y_1,y_2]}{u''-\beta\over u-c}|\phi|^2dy\leq \int_{[a-\delta,a+\delta]\cap[y_1,y_2]}{1+C_0|y-a|\over 4(y-a)^2}|\phi|^2dy\\\nonumber \leq&\int_{[a-\delta,a+\delta]\cap[y_1,y_2]}{1+\delta^{-1}|y-a|\over 4(y-a)^2}|\phi|^2dy\leq \left\|\phi'\right\|_{L^2([a-\delta,a+\delta]\cap[y_1,y_2])}^2,
\end{align}which implies \eqref{a1}. Here we used Lemma \ref{Hardy type inequality2} in the last step.

Next, we assume (iii).  In this case, $u'(a)=0$. Let $\b=u''(a)>0$. Then $\b\geq\beta$. Let $\delta_1\in(0,\min(y_2-a,a-y_1))$ be small enough such that $u''(y)>{u''(a)\over2}>0$ and $ |u(y)-u_{\min}|\leq1/2$ for $y\in[a-\delta_1,a+\delta_1]$. Then $0<u(y)-c<1$ for $y\in[a-\delta_1,a+\delta_1]$ and $c\in(u_{\min}-1/2,u_{\min})$.
Now we assume $0<\delta\leq \delta_1$. Direct computation implies
 \begin{align*}
 &\int_{a-\delta}^{a+\delta}{u''-\b\over u-c}|\phi|^2dy=\int_{a-\delta}^{a+\delta}{u''-\b\over u'}|\phi|^2d(\ln(u-c))\\
 =&\left({u''-\b\over u'}|\phi|^2(\ln(u-c))\right)\big|_{a-\delta}^{a+\delta}-\int_{a-\delta}^{a+\delta}\ln(u-c)\left({u''-\b\over u'}|\phi|^2\right)'dy=I_{c,\delta}(\phi)+II_{c,\delta}(\phi).
 \end{align*}
Since $\b-u''(a)=0$, it follows from the proof of Lemma 3.7 in \cite{WZZ} that ${u''-\b\over u'}\in H^1(a-\delta_1,a+\delta_1)$.
By interpolation, we have $\|\phi\|_{L^\infty(a-\delta,a+\delta)}\leq C_{\delta}\|\phi\|_{L^2(a-\delta,a+\delta)}+\|\phi'\|_{L^2(a-\delta,a+\delta)}$, and thus
\begin{align*}
&\left|\left({u''-\b\over u'}|\phi|^2\right)({a+\delta})-\left({u''-\b\over u'}|\phi|^2\right)({a-\delta})\right|
\leq C\left\|\left({u''-\b\over u'}|\phi|^2\right)'\right\|_{L^2(a-\delta,a+\delta)}\delta^{1\over2}\\
\leq&C\left\|\left({u''-\b\over u'}\right)'|\phi|^2+\left({u''-\b\over u'}\right)2\phi\phi'\right\|_{L^2(a-\delta,a+\delta)}\delta^{1\over2}\\
\leq&C\left(\|\phi\|^2_{L^\infty(a-\delta,a+\delta)}+\|\phi\|_{L^\infty(a-\delta,a+\delta)}\|\phi'\|_{L^2(a-\delta,a+\delta)}\right)\delta^{1\over2}\\
\leq&(C_{\delta}\|\phi\|^2_{L^2(a-\delta,a+\delta)}+C\|\phi'\|^2_{L^2(a-\delta,a+\delta)})\delta^{1\over2}
\end{align*}
 and $\left|\left({u''-\b\over u'}|\phi|^2\right)(a-\delta)\right|\leq C\|\phi\|^2_{L^\infty(a-\delta,a+\delta)}\leq C_{\delta}\|\phi\|^2_{L^2(a-\delta,a+\delta)}+C\|\phi'\|^2_{L^2(a-\delta,a+\delta)}$. Then
\begin{align*}
I_{c,\delta}(\phi)=&\left({u''-\b\over u'}|\phi|^2\right)\big|_{a-\delta}^{a+\delta}\ln(u(a+\delta)-c)+\left({u''-\b\over u'}|\phi|^2\right)|_{a-\delta}\ln(u-c)\big|_{a-\delta}^{a+\delta}\\ \nonumber
\leq &(C_{\delta}\|\phi\|^2_{L^2(a-\delta,a+\delta)}+C\|\phi'\|^2_{L^2(a-\delta,a+\delta)})
(\delta^{1\over2}|\ln(u(a+\delta)-c)|+|\ln(u-c)\big|_{a-\delta}^{a+\delta}|).
\end{align*}
Note that $C'(y-a)^2\leq |u(y)-u(a)|\leq C''(y-a)^2$ for $y\in [a-\delta,a+\delta]$. Then $|\ln(u(y)-c)|=-\ln(u(y)-c)\leq-\ln (u(y)-u(a))\leq-\ln( C'(y-a)^2)$ and $|\ln(u(a+\delta)-c)|\leq-\ln( C'\delta^2)\leq C(|\ln \delta|+1) $ for $|c-u_{\min}|<1/2$ and $y\in[a-\delta,a+\delta]$. Let $u_+=\max(u(a+\delta), u(a-\delta))$ and $u_-=\min(u(a+\delta), u(a-\delta))$. Then $u_+\geq u_->u(a)$, and\begin{align*}
|\ln(u-c)\big|_{a-\delta}^{a+\delta}|=\ln{u_+-c\over u_--c}\leq\ln{u_+-u(a)\over u_--u(a)}=\left|\ln{u(a+\delta)-u(a)\over u(a-\delta)-u(a)}\right|
\end{align*}for $c<u_{\min}=u(a)$. Thus,\begin{align}\label{estamite I-c-delta}
&|I_{c,\delta}(\phi)|\leq (C_{\delta}\|\phi\|^2_{L^2(a-\delta,a+\delta)}+C\|\phi'\|^2_{L^2(a-\delta,a+\delta)})\Psi(\delta)
\end{align}
for $c\in(u_{\min}-1/2,u_{\min})$,
where
\begin{align*}
\Psi(\delta)=\delta^{1\over2}(|\ln \delta|+1)+\left|\ln{u(a+\delta)-u(a)\over u(a-\delta)-u(a)}\right|.
\end{align*} Note that\begin{align*}
\lim_{\delta\to0^+}\left|\ln{u(a+\delta)-u(a)\over u(a-\delta)-u(a)}\right|=\lim_{\delta\to0^+}\left|\ln{u''(\xi_{a+\delta})\over u''(\xi_{a-\delta})}\right|=0,
\end{align*}
where $\xi_{a+\delta}\in (a, a+\delta)$ and $\xi_{a-\delta}\in (a-\delta,a)$.
Therefore $\lim_{\delta\to0^+}\Psi(\delta)=0$.

Next, we claim  that $\ln(u-c)$, $c\in(u_{\min}-1/2,u_{\min})$, is uniformly bounded in $L^p(a-\delta,a+\delta)$ for $1<p<\infty$. The proof is similar as that in Lemma 3.7 of \cite{WZZ}. Note that $|\ln(u(y)-c)|\leq-\ln( C'(y-a)^2)$ for $|c-u_{\min}|<1/2$ and $y\in[a-\delta,a+\delta]$. Therefore,
\begin{align*}
\int_{a-\delta}^{a+\delta}|\ln(u-c)|^pdy\leq C\int_{a-\delta}^{a+\delta}(|\ln(y-a)^2|^p+1)dy\leq C\int_{-\delta}^{\delta}(|\ln|z|^2|^p+1)dz\leq C.
\end{align*}

Now, we consider $II_{c,\delta}(\phi)$.
\begin{align}\label{estamite II-c-delta}
|II_{c,\delta}(\phi)|\leq& (2\delta)^{1\over4}\|\ln(u-c)\|_{L^4(a-\delta,a+\delta)}\left\|\left({u''-\beta\over u'}|\phi|^2\right)'\right\|_{L^2(a-\delta,a+\delta)}\\ \nonumber
\leq &(C_{\delta}\|\phi\|^2_{L^2(a-\delta,a+\delta)}+C\|\phi'\|^2_{L^2(a-\delta,a+\delta)})\delta^{1\over4}.
\end{align}
Combining (\ref{estamite I-c-delta}) and (\ref{estamite II-c-delta}), we get
 for  $c\in(u_{\min}-1/2, u_{\min})$,
\begin{align*}
\left|\int_{a-\delta}^{a+\delta}{u''-\b\over u-c}|\phi|^2dy\right|\leq &(C_{\delta}\|\phi\|^2_{L^2(a-\delta,a+\delta)}+
C_1\|\phi'\|^2_{L^2(a-\delta,a+\delta)})(\delta^{1\over4}+\Psi(\delta)).
\end{align*}
Since $\lim_{\delta\to0^+}\Psi(\delta)=0$, we have $ \lim_{\delta\to0^+}(\delta^{1\over4}+\Psi(\delta))=0$, and there exists $ \delta_0\in(0,\delta_1)$ such that
$ C_1(\delta^{1\over4}+\Psi(\delta))<1$. Then for $0<\delta\leq \delta_0$ and $c\in(u_{\min}-1/2,u_{\min})$, we have\begin{align*}
&\int_{[a-\delta,a+\delta]\cap[y_1,y_2]}{u''-\beta\over u-c}|\phi|^2dy\geq \int_{[a-\delta,a+\delta]\cap[y_1,y_2]}{u''-\b\over u-c}|\phi|^2dy\\ \geq& -(C_{\delta}\|\phi\|^2_{L^2(a-\delta,a+\delta)}+C_1\|\phi'\|^2_{L^2(a-\delta,a+\delta)})(\delta^{1\over4}+\Psi(\delta))\\
\geq&-C_{\delta}\|\phi\|^2_{L^2(a-\delta,a+\delta)}(\delta^{1\over4}+\Psi(\delta))-\|\phi'\|^2_{L^2(a-\delta,a+\delta)},
\end{align*}which implies \eqref{a2} since $[a-\delta,a+\delta]\cap[y_1,y_2]= [a-\delta,a+\delta]$. On the other hand, for $0<\delta\leq \delta_0$ and $c\leq u_{\min}-1/2$, we have\begin{align*}
&\int_{[a-\delta,a+\delta]\cap[y_1,y_2]}{u''-\beta\over u-c}|\phi|^2dy\geq \int_{[a-\delta,a+\delta]\cap[y_1,y_2]}{u''-\b\over u-c}|\phi|^2dy\\ \geq& -2\int_{[a-\delta,a+\delta]\cap[y_1,y_2]}|u''-\b||\phi|^2dy
\geq-C\|\phi\|^2_{L^2(a-\delta,a+\delta)},
\end{align*}which implies \eqref{a2}. This completes the proof.\end{proof}


Now, we are ready to prove  Theorem \ref{eigenvalue asymptotic behavior-bound} (1)-(2).
\begin{proof}[Proof of Theorem \ref{eigenvalue asymptotic behavior-bound} (1)-(2)] We first give the proof of (1), and (2) can be proved similarly.
Consider $1\leq n\leq m_{\beta}$. It suffices to show that $\lim_{c\to u_{\min}^-}\lambda_{m_{\beta}}(c)=-\infty$.
Let
\begin{align}\label{def-a1am0}\{a\in (y_1,y_2):u=u_{\min},u''(a)-\beta<0\}=\{a_1,\cdots,a_{m_{\beta}}\},\end{align}
and
\begin{align}\label{def-eta}
\eta(x)=\left\{\begin{array}{ll}
		\mu\exp\left({-1\over1-x^2}\right),\;\;\;\;\;\;\;\;\;\; x\in(-1,1),\\
		0,\;\;\;\;\;\;\;\;\;\;\;\;\;\;\;\;\;\;\;\;\;\;\;\;\;\;\;\;\;x\notin(-1,1),
		\end{array}\right.
\end{align}
where $\mu>0$ is a constant such that $\int_{-1}^1\eta(x)^2dx=1$. Then $\eta\in C_0^\infty(\mathbf{R})$.
Define
\begin{align*}
\varphi_i(y)=\delta_0^{-{1\over2}}\eta\left({y-a_i\over\delta_0}\right),\;\;y\in[y_1,y_2],
\end{align*}
where $1\leq i\leq m_{\beta}$, and $\delta_0>0$ is  small enough such that
$(a_i-\delta_0,a_i+\delta_0)\cap(a_j-\delta_0,a_j+\delta_0)=\emptyset$ for $i\neq j$ and $u''(y)-\beta<0$ for all $y\in\cup_{1\leq i\leq m_{\beta}}(a_i-\delta_0,a_i+\delta_0)\subset(y_1,y_2)$. Then $\|\varphi_i\|_{L^2(y_1,y_2)}=1$ and $\text{supp}\,(\varphi_i)=(a_i-\delta_0,a_i+\delta_0)$. Thus,
$\varphi_i\bot\varphi_j$ in the $L^2$ sense for $i\neq j$.
Let
$V_{m_{\beta}}=\text{span}\{\varphi_1,\cdots,\varphi_{m_{\beta}}\}.$ Then $V_{m_{\beta}}\subset H_0^1(y_1,y_2)$.
By (\ref{variation2}), there exist $b_{i,c}\in\mathbf{R}$, $i=1,\cdots,m_{\beta}$, with $\sum_{i=1}^{m_{\beta}}|b_{i,c}|^2=1 $ such that $\varphi_c=\sum_{i=1}^{m_{\beta}}b_{i,c}\varphi_i\in V_{m_{\beta}}$ with $\|\varphi_c\|^2_{L^2}=1$, and
\begin{align}\nonumber
\lambda_{m_{\beta}}(c)\leq &\sup_{\|\phi\|_{L^2}=1,\phi\in V_{m_{\beta}}}\int_{y_1}^{y_2}\left(|\phi'|^2+{u''-\beta\over u-c}|\phi|^2\right)dy=
\int_{y_1}^{y_2}\left(|\varphi_c'|^2+{u''-\beta\over u-c}|\varphi_c|^2\right)dy\\ \nonumber
=&\sum_{i=1}^{m_{\beta}}|b_{i,c}|^2\int_{a_i-\delta_0}^{a_i+\delta_0}\left(|\varphi_i'|^2+{u''-\beta\over u-c}|\varphi_i|^2\right)dy\\ \label{eigenvaluetendtonegativeinfty}
\leq&\max_{1\leq i\leq m_{\beta}}\int_{a_i-\delta_0}^{a_i+\delta_0}\left(|\varphi_i'|^2+{u''-\beta\over u-c}|\varphi_i|^2\right)dy\to-\infty\;\;\text{as}\;\;c\to u_{\min}^-.
\end{align}

Next, we prove (ii).
Let $\delta_1>0$ be a sufficiently small  constant such that $(a-\delta_1,a+\delta_1)\subset[y_1,y_2] $ for $a\in\{u=u_{\min}\}\setminus\{y_1,y_2\} $, and
$|a-b|>2\delta_1$ for $a,b\in\{u=u_{\min}\}$ and $ a\neq b$. 
There are four cases for zeros of $a\in\{u=u_{\min}\}$ as follows:

 Case 1. $a\in\{y_1,y_2\}$ and  $u'(a)\neq0$;

 Case 2. $a\in\{y_1,y_2\}$,   $u'(a)=0$ (thus $\beta\leq{9\over8}\kappa_+\leq 9u''(a)/8 $);

 Case 3. $a\in(y_1,y_2)$ and  $\beta\leq u''(a)$;

 Case 4. $a\in(y_1,y_2)$ and  $u''(a)<\beta\leq 9u''(a)/8$.\\
Then we divide our proof into four cases as above.
In fact, for  Cases 1--2, by Lemma \ref{aH1} (1) there exists $ \delta(a)>0$ such that for $0<\delta\leq \delta(a)$, $c<u_{\min}$ and $\phi\in H_0^1$,
\begin{align}\label{singular-point-integrate-estimate}
\int_{[a-\delta,a+\delta]\cap[y_1,y_2]}\left(|\phi'|^2+{u''-\beta\over u-c}|\phi|^2\right)dy\geq 0.
\end{align}
  For  Case 3, by Lemma \ref{aH1} (2) there exists $ \delta(a)>0$ such that for $0<\delta\leq \delta(a)$,  $c<u_{\min}$ and $\phi\in H_0^1$,
 \begin{align}\label{singular-point-integrate-estimate2}
\int_{[a-\delta,a+\delta]\cap[y_1,y_2]}\left(|\phi'|^2+{u''-\beta\over u-c}|\phi|^2\right)dy\geq-C(\delta,a)\int_{[a-\delta,a+\delta]\cap[y_1,y_2]}|\phi|^2dy.
\end{align}Here $C(\delta,a)$ depends only on $u,a,\delta$. Moreover, if $ \phi(a)=0$, then by Lemma \ref{aH1} (1),\begin{align}\label{a3}
\int_{[a-\delta,a+\delta]\cap[y_1,y_2]}\left(|\phi'|^2+{u''-\beta\over u-c}|\phi|^2\right)dy\geq0.
\end{align}
For  Case 4, we have $a\in \{a_1,\cdots,a_{m_{\beta}}\}.$ By Lemma \ref{aH1} (1), there exists $ \delta(a)>0$ such that for $0<\delta\leq \delta(a)$,
$c<u_{\min}$, $\phi\in H_0^1$ and  $\phi(a)=0$,
 \begin{align}\label{singular-point-integrate-estimate-case5}
\int_{[a-\delta,a+\delta]\cap[y_1,y_2]}\left(|\phi'|^2+{u''-\beta\over u-c}|\phi|^2\right)dy\geq 0.
\end{align}Now let $ \delta_0=\min(\{\delta(a):a\in\{u=u_{\min}\}\}\cup\{\delta_1\})$. Define
\begin{align*}
(q_1(y),q_1^0(y))&=\begin{cases}
({u''(y)-\beta\over u(y)-c},1) &
\;y\in [y_1,y_2]\setminus\cup_{a\in\{u=u_{\min}\}}(a-\delta_0,a+\delta_0),  \\
(0,0)  &
y\in\cup_{a\in\{u=u_{\min}\}}\left((a-\delta_0,a+\delta_0)\cap [y_1,y_2]\right),
\end{cases}\\
(q_2(y),q_2^0(y))&=({u''(y)-\beta\over u(y)-c}-q_1(y),1-q_1^0(y)) \;\;y\in[y_1,y_2].
\end{align*}Then there exists $C_0>0$ such that for $c<u_{\min}$,
 \begin{align*}
 |q_1(y)|\leq C_0\;\; \text{for}\;\; y\in[y_1,y_2].
 \end{align*}
For $\phi\in H_0^1$ and  $\|\phi\|_{L^2}=1$,
\begin{align*}&\int_{y_1}^{y_2}\left(|\phi'|^2+{u''-\beta\over u-c}|\phi|^2\right)dy\\
\nonumber=&
\int_{y_1}^{y_2}\left(q_1^0|\phi'|^2+q_1|\phi|^2\right)dy+\int_{y_1}^{y_2}\left(q_2^0|\phi'|^2+q_2|\phi|^2\right)dy
=I_c(\phi)+II_c(\phi).
\end{align*}

Let us first consider  $I_c(\phi)$.
For   $\phi\in H_0^1$ and $\|\phi\|_{L^2}=1$, we have
\begin{align}\label{estimate-for-I-new-ii}
I_c(\phi)\geq \int_{y_1}^{y_2}\left(-C_0|\phi|^2\right)dy
\geq-C_0
\end{align}
for  $c<u_{\min}$.
 We proceed to consider $II_c(\phi)$.
\begin{align}\label{decomposition-IIc}
II_c(\phi)=& \sum_{a\in\{u=u_{\min}\}}\int_{[a-\delta_0,a+\delta_0]\cap[y_1,y_2]}\left(|\phi'|^2+{u''-\beta\over u-c}|\phi|^2\right)dy\\ \nonumber
=&\left(\sum_{\text{Case 1}}+\cdots+\sum_{\text{Case 4}}\right)\int_{[a-\delta_0,a+\delta_0]\cap[y_1,y_2]}\left(|\phi'|^2+{u''-\beta\over u-c}|\phi|^2\right)dy.
\end{align}
Recall that $a_1,\cdots,a_{m_{\beta}}$ are defined in (\ref{def-a1am0}). For any $(m_{\beta}+1)$-dimensional subspace $V=\text{span}\{\psi_1,\cdots,\psi_{m_{\beta}+1}\}$ in $H_0^1(y_1,y_2)$,
there exists  $0\neq(\xi_1,\cdots,\xi_{m_{\beta}+1})\in\mathbf{R}^{m_{\beta}+1}$ such that
$
\xi_1\psi_1(a_i)+\cdots+\xi_{m_{\beta}+1}\psi_{m_{\beta}+1}(a_i)=0, i=1,\cdots,m_{\beta}.
$
Define
$
\tilde\psi=\xi_1\psi_1+\cdots+\xi_{m_{\beta}+1}\psi_{m_{\beta}+1}.
$
Then $\tilde\psi(a_i)=0$, $i=1,\cdots,m_{\beta}$. We normalize $\tilde\psi$ such that $\|\tilde\psi\|_{L^2(y_1,y_2)}=1$. Then by \eqref{singular-point-integrate-estimate}, \eqref{singular-point-integrate-estimate2}, \eqref{singular-point-integrate-estimate-case5} and \eqref{decomposition-IIc}, we have\begin{align*}
II_c(\tilde\psi)\geq& \sum_{\text{Case 3}}\int_{[a-\delta_0,a+\delta_0]\cap[y_1,y_2]}\left(|\tilde\psi'|^2+{u''-\beta\over u-c}|\tilde\psi|^2\right)dy\\
\geq& -\sum_{\text{Case 3}}C(\delta_0,a)\int_{[a-\delta_0,a+\delta_0]\cap[y_1,y_2]}|\tilde\psi|^2dy\geq -\max_{\text{Case 3}}C(\delta_0,a).
\end{align*}
This, along with (\ref{variation2}) and  (\ref{estimate-for-I-new-ii}), yields that
$
\inf_{c\in(-\infty,u_{\min})}\lambda_{m_{\beta}+1}(c)\geq -\max\limits_{\text{Case 3}}C(\delta_0,a)-C_0.
$
This proves (ii).
\if0
The rest is to give the proof of Cases 1--5.
Without loss of generality, we assume that $a=y_1$ for Cases 1--3.

{\bf Case 1.} $a\in\{y_1,y_2\}$ and  $u'(a)\neq0$.

In this case, $u'(y_1)>0$. Choose $\delta_1\in(0,\delta)$ small enough such that $u'(y)>{u'(y_1)\over2}$ for $y\in(y_1,y_1+\delta_1)$, and thus, there exists $\xi_y\in(y_1,y)$ such that
$u(y)-c>u(y)-u_{\min}=u'(\xi_y)(y-y_1)>{u'(y_1)\over2}(y-y_1)>0$ for $c<u_{\min}$.
Note that for $y\in(y_1,y_1+\delta)$,
\begin{align}
|\phi(y)|^2=\left|\int_{y_1}^{y}\phi'(s)ds\right|^2\leq \|\phi'\|_{L^2(y_1,y)}^2(y-y_1).
\end{align}
 Then when $\delta_2\in(0,\delta_1)$ is small enough,
\begin{align*}
\left|\int_{y_1}^{y_1+\delta_2}{u''-\beta\over u-c}|\phi|^2dy\right|\leq\|\phi'\|_{L^2(y_1,y_1+\delta_2)}^2 \int_{y_1}^{y_1+\delta_2}{2|u''-\beta|\over u'(y_1)}dy\leq \|\phi'\|_{L^2(y_1,y_1+\delta_2)}^2.
\end{align*}We also have $|{u''-\beta\over u-c}|\leq C $ for $y\in[y_1+\delta_2,y_1+\delta]$ and\begin{align*}
\left|\int_{y_1+\delta_2}^{y_1+\delta}{u''-\beta\over u-c}|\phi|^2dy\right|\leq C \|\phi\|_{L^2(y_1,y_1+\delta_)}^2.
\end{align*}
This proves (\ref{singular-point-integrate-estimate}) for  $c<u_{\min}$, $\phi\in H_0^1$ and  $\|\phi\|_{L^2}=1$.

 {\bf Case 2.} $a\in\{y_1,y_2\}$, $u'(a)=0$. In this case $0<\beta\leq 9u''(a)/8$.

Let $\delta_1\in(0,\delta)$ be small enough such that $u''(y)>{\beta\over2}$ for $y\in(y_1,y_1+\delta_1)$. Since $u\in H^4(y_1,y_2)\subset C^3([y_1,y_2])$, we have $|u''(y)-u''(y_1)|\leq C|y-y_1|,\ |1/u''(y)-1/u''(y_1)|\leq C|y-y_1| $ for $y\in(y_1,y_1+\delta_1)$. Then there exists $\xi_y\in(y_1,y)$ such that
$u(y)-c>u(y)-u_{\min}={u''(\xi_y)\over2}(y-y_1)^2>{\beta\over4}(y-y_1)^2>0$ for $c<u_{\min}$ and $y\in(y_1,y_1+\delta_1)$, thus\begin{align*}
0&<{1\over u(y)-c}<{2\over u''(\xi_y)(y-y_1)^2}\leq{2+|\xi_y-y_1|\over u''(y_1)(y-y_1)^2}\leq{2+C|y-y_1|\over u''(y_1)(y-y_1)^2},\\ u''(y)-\beta&\geq u''(y_1)-C|y-y_1|-9u''(y_1)/8=-u''(y_1)/8-C|y-y_1|,\\ {u''(y)-\beta\over u(y)-c}&\geq{-u''(y_1)/8-C|y-y_1|\over u(y)-c}\geq-{u''(y_1)\over 8}{2+C|y-y_1|\over u''(y_1)(y-y_1)^2}-{C|y-y_1|\over (\beta/4)(y-y_1)^2}\\ &\geq-{1+C|y-y_1|\over 4(y-y_1)^2},\quad\text{for}\quad y\in(y_1,y_1+\delta_1).
\end{align*}
 By (\ref{H1holder}) and the fact that $\beta=u''(y_1)$,  we get
\begin{align*}
\left|\int_{y_1}^{y_1+\delta}{u''-\beta\over u-c}|\phi|^2dy\right|\leq \|\phi'\|_{L^2(y_1,y_1+\delta)}^2\int_{y_1}^{y_1+\delta}{C\over u''(y_1)}dy\leq (1-\varepsilon_1)\|\phi'\|_{L^2(y_1,y_1+\delta)}^2
\end{align*}
for  $\delta>0$ small enough.
Thus in this case, (\ref{singular-point-integrate-estimate}) holds
 for  $c< u_{\min}$,  $\phi\in H_0^1$ and  $\|\phi\|_{L^2}=1$.

 {\bf Case 3.} $a\in\{y_1,y_2\}$, $u'(a)=0$ and $\beta-u''(a)\neq0$.

 If $u''(y_1)-\beta>0$, the proof is trivial. So we only  consider $u''(y_1)-\beta<0$.
Note that $u''(y_1)>0$.
We can choose $\delta>0$ small enough such that   for $y\in[y_1,y_1+\delta]$,

Then for  $c<u_{\min}$ and $y\in(y_1,y_1+\delta]$, by \eqref{def-varepsilon0} we have

where $\xi_y\in(y_1,y)$.
 By Lemma \ref{Hardy type inequality2},
\begin{align*}
\int_{y_1}^{y_1+\delta}\left({4\over 4+\varepsilon_0}|\phi'|^2+{u''-\beta\over u-c}|\phi|^2\right)dy\geq{1\over 4+\varepsilon_0}\int_{y_1}^{y_1+\delta}\left({4}|\phi'|^2-{1\over (y-y_1)^2}|\phi|^2\right)dy\geq0.
\end{align*}
 Then
 (\ref{singular-point-integrate-estimate}) holds for $c<u_{\min}$,  $\phi\in H_0^1$ and  $\|\phi\|_{L^2}=1$.


{\bf Case 4.}  $a\in (y_1,y_2)$  and $\beta-u''(a)=0$.

 In this case, $u'(a)=0$.
 Direct computation implies
 \begin{align*}
 &\int_{a-\delta}^{a+\delta}{u''-\beta\over u-c}|\phi|^2dy=\int_{a-\delta}^{a+\delta}{u''-\beta\over u'}|\phi|^2d(\ln(u-c))\\
 =&\left({u''-\beta\over u'}|\phi|^2(\ln(u-c))\right)\big|_{a-\delta}^{a+\delta}-\int_{a-\delta}^{a+\delta}\ln(u-c)\left({u''-\beta\over u'}|\phi|^2\right)'dy=I_{c,\delta}(\phi)+II_{c,\delta}(\phi).
 \end{align*}
Since $\beta-u''(a)=0$, it follows from the proof of Lemma 3.7 in \cite{WZZ} that ${u''-\beta\over u'}\in H^1(a-\delta,a+\delta)$.
Since $\phi\in H_0^1(y_1,y_2)$, we have
\begin{align*}
&\left|\left({u''-\beta\over u'}|\phi|^2\right)({a+\delta})-\left({u''-\beta\over u'}|\phi|^2\right)({a-\delta})\right|
\leq C\|{u''-\beta\over u'}|\phi|^2\|_{H^1(a-\delta,a+\delta)}\delta^{1\over2}\\
\leq&\left(C\|\left({u''-\beta\over u'}\right)'|\phi|^2+\left({u''-\beta\over u'}\right)2\phi\phi'\|_{L^2(a-\delta,a+\delta)}+C\||\phi|^2\|_{L^2(a-\delta,a+\delta)}\right)\delta^{1\over2}\\
\leq&C\left(\|\phi\|^2_{L^\infty(a-\delta,a+\delta)}+\|\phi\|_{L^\infty(a-\delta,a+\delta)}\|\phi'\|_{L^2(a-\delta,a+\delta)}+
\|\phi\|_{L^\infty(a-\delta,a+\delta)}\|\phi\|_{L^2(a-\delta,a+\delta)}\right)\delta^{1\over2}\\
\leq& C\|\phi\|^2_{H^1(a-\delta,a+\delta)}\delta^{1\over2}\leq C\|\phi'\|^2_{L^2(y_1,y_2)}\delta^{1\over2}
\end{align*}
 and $\left|\left({u''-\beta\over u'}|\phi|^2\right)(a-\delta)\right|\leq C\|{u''-\beta\over u'}|\phi|^2\|_{H^1(a-\delta,a+\delta)}\leq C\|\phi'\|^2_{L^2(y_1,y_2)}$. Then
\begin{align*}
I_{c,\delta}(\phi)=&\left({u''-\beta\over u'}|\phi|^2\right)\big|_{a-\delta}^{a+\delta}\ln(u(a+\delta)-c)+\left({u''-\beta\over u'}|\phi|^2\right)(a-\delta)\ln(u-c)\big|_{a-\delta}^{a+\delta}\\\nonumber
\leq &C\|\phi'\|^2_{L^2(y_1,y_2)}\delta^{1\over2}|\ln(u(a+\delta)-c)|+C\|\phi'\|^2_{L^2(y_1,y_2)}|\ln(u-c)\big|_{a-\delta}^{a+\delta}|\\\nonumber
\to&C\|\phi'\|^2_{L^2(y_1,y_2)}\left(\delta^{1\over2}|\ln(u(a+\delta)-u(a))|+\left|\ln{u(a+\delta)-u(a)\over u(a-\delta)-u(a)}\right|\right)
\end{align*}
 as $c\to u_{\min}^-$.
Note that $C'(y-a)^2\leq |u(y)-u(a)|\leq C''(y-a)^2$ for $y\in [a-\delta,a+\delta]$. Then
\begin{align}\label{icdeltaphi}
\limsup\limits_{c\to u_{\min}^-}|I_{c,\delta}(\phi)|\leq C\|\phi'\|^2_{L^2(y_1,y_2)}\left(\delta^{1\over2}(|\ln\delta^2|+C)+\left|\ln{u''(\xi_{a+\delta})\over u''(\xi_{a-\delta})}\right|\right),
\end{align}
and thus
\begin{align*}
\lim_{\delta\to0^+}\limsup\limits_{c\to u_{\min}^-}|I_{c,\delta}(\phi)|\leq C\|\phi'\|^2_{L^2(y_1,y_2)}\left(\lim_{\delta\to0^+}\delta^{1\over2}(|\ln\delta^2|+C)+\lim_{\delta\to0^+}\left|\ln{u''(\xi_{a+\delta})\over u''(\xi_{a-\delta})}\right|\right)\to0,
\end{align*}
where $\xi_{a+\delta}\in (a, a+\delta)$ and $\xi_{a-\delta}\in (a-\delta,a)$.
Therefore, when $\delta>0$ is small enough, there exists $c_1<u_{\min}$, independent of $\phi$, such that when  $c\in[c_1, u_{\min})$,

where $\varepsilon_1$ is given in \eqref{def-varepsilon0} and
$n_1=\sharp\{a\in(y_1,y_2):u(a)=u_{\min},\beta-u''(a)=0\}.$

Next, we claim  that $\ln(u-c)$, $c<u_{\min}$, is uniformly bounded in $L^p(a-\delta,a+\delta)$ for $1<p<\infty$. The proof is similar as that in Lemma 3.7 of \cite{WZZ}. By the assumption {\bf(H1)}, there exists $\hat C_0>0$ such that $|u(y)-u(z)|\geq\hat C_0(y-z)^2$ for $y,z\in[a,a+\delta]$. Then $|\ln(u(y)-c)|=-\ln(u(y)-c)\leq-\ln (u(y)-u(a))\leq-\ln(\hat C_0(y-a)^2)$ for $|c-u_{\min}|<1$ and $y\in[a,a+\delta]$. Therefore,
\begin{align*}
\int_{a}^{a+\delta}|\ln(u-c)|^pdy\leq C\int_{a}^{a+\delta}(|\ln(y-a)^2|^p+1)dy\leq C\int_{-\delta}^{\delta}(|\ln|z|^2|^p+1)dz\leq C.
\end{align*}
Similarly, $\ln(u-c)$,  $c<u_{\min}$, is uniformly bounded in $L^p(a-\delta,a)$ for $1<p<\infty$.

Now, we consider $II_{c,\delta}(\phi)$.
\begin{align}
|II_{c,\delta}(\phi)|\leq& (2\delta)^{1\over4}\|\ln(u-c)\|_{L^4(a-\delta,a+\delta)}\|\left({u''-\beta\over u'}|\phi|^2\right)'\|_{L^2(a-\delta,a+\delta)}\\ \nonumber
\leq &C\|\phi'\|^2_{L^2(y_1,y_2)}\delta^{1\over4}.
\end{align}
Combining (\ref{estamite I-c-delta}) and (\ref{estamite II-c-delta}), we get
if $C\delta^{1\over4}<{\varepsilon_1\over8 n_1}$, then for  $c\in[c_1, u_{\min})$,
\begin{align*}
\left|\int_{a-\delta}^{a+\delta}{u''-\beta\over u-c}|\phi|^2dy\right|\leq {\varepsilon_1\over4n_1}\|\phi'\|^2_{L^2(y_1,y_2)}.
\end{align*}
 Then for    $\delta>0$  small enough and $c\in[c_1, u_{\min})$,
(\ref{singular-point-integrate-estimate2}) holds for $\phi\in H_0^1$ and  $\|\phi\|_{L^2}=1$.

 {\bf Case 5.} $a\in(y_1,y_2)$ and  $\beta-u''(a)\neq0$.

 In this case, $u'(a)=0$ and $u''(a)>0$.
 It suffices to consider $u''(a)-\beta<0$.
Choose $\delta>0$  such that   for $1\leq i\leq m_{\beta}$ and  $y\in[a_i-\delta,a_i+\delta]$,
(\ref{u property}) holds.
Then for $c<u_{\min}$ and $y\in[a_i-\delta,a_i)\cup(a_i,a_i+\delta]$, we have (\ref{uybetaoveruyc}) holds,
where $y_1$ is replaced by $a_i$,  $\xi_y\in(a_i,y)$ if $y>a_i$ and $\xi_y\in(y,a_i)$ if $y<a_i$.
  For $c<u_{\min}$ and $1\leq i\leq m_{\beta}$, we get by Lemma \ref{Hardy type inequality2} that (\ref{singular-point-integrate-estimate-case5}) holds.
This finishes the proof of (ii).\fi

Finally, we prove (iii).\def\M{M}\def\Ma{M_1}
Let $q_1, q_2, I_c(\phi), II_c(\phi)$ and $ C_0$ be defined as in (ii). Let $\mu_1([a,b])$ be the principal eigenvalue of
\begin{align*}
-\phi''=\lambda\phi, \;\;\phi(a)=\phi(b)=0.
\end{align*}Then we have $\mu_1([a,b])=|\pi /(b-a)|^2,$ and \begin{align}\label{slp1}
\int_a^b|\phi'|^2dy\geq \mu_1([a,b])\int_a^b|\phi|^2dy \ \ \text{for}\ \phi\in H_0^1(a,b).
\end{align}
Let $ \delta_2=\pi/C_0^{\frac{1}{2}}$. Then we have $\mu_1([a,b])\geq C_0$ for $0<b-a\leq \delta_2$. Let\begin{align*}
\M=(\{n\delta_2:n\in\Z\}\cup\{a+b:a\in\{u=u_{\min}\},b\in\{-\delta_0,0,\delta_0\}\}\cup\{y_1,y_2\})\cap[y_1,y_2].
\end{align*}
Then $\M$ is a finite set, and we can write its elements in the increasing order\begin{align*}
\M=\{a_0',\cdots,a_{N_{\beta}}'\},\ y_1=a_0'<\cdots<a_{N_{\beta}}'=y_2.
\end{align*}
Then $0<a'_{k+1}-a_k'\leq \delta_2 $ and $ \mu_1([a_k',a'_{k+1}])\geq C_0$ for $0\leq k<N_{\beta}$. Let \begin{align*}
\Ma=\{k\in\Z:0\leq k<N_{\beta},\ [a_k',a'_{k+1}]\cap(a-\delta_0,a+\delta_0)=\emptyset,\ \forall\ a\in\{u=u_{\min}\}\}.
\end{align*}Then we have $
[y_1,y_2]\setminus(\cup_{a\in\{u=u_{\min}\}}(a-\delta_0,a+\delta_0))=\cup_{k\in\Ma}[a_k',a'_{k+1}].
$

For any $N_{\beta}$-dimensional subspace $V=\text{span}\{\psi_1,\cdots,\psi_{N_{\beta}}\}$ in $H_0^1(y_1,y_2)$,
there exists  $0\neq(\xi_1,\cdots,\xi_{N_{\beta}})\in\mathbf{R}^{N_{\beta}}$ such that
$
\xi_1\psi_1(a_i')+\cdots+\xi_{N_{\beta}}\psi_{N_{\beta}}(a_i')=0, i=1,\cdots,N_{\beta}-1.
$
Define
$
\tilde\psi=\xi_1\psi_1+\cdots+\xi_{N_{\beta}}\psi_{N_{\beta}}.
$
Then $\tilde\psi(a_i')=0$, $i=1,\cdots,N_{\beta}-1$. We normalize $\tilde\psi$ such that $\|\tilde\psi\|_{L^2(y_1,y_2)}=1$. Since $ \tilde\psi\in H_0^1(y_1,y_2)$, we also have $\tilde\psi(a_0')=\tilde\psi(y_1)=0,\ \tilde\psi(a_{N_{\beta}}')=\tilde\psi(y_2)=0$, and thus $\tilde\psi(a_i')=0$ for $i=0,\cdots,N_{\beta},$ i.e.  $\tilde\psi|_{\M}=0.$ By \eqref{slp1}, we have\begin{align}\label{def-k0}\int_{a_k'}^{a_{k+1}'}|\tilde\psi'|^2dy\geq \mu_1([a_k',a'_{k+1}])\int_{a_k'}^{a_{k+1}'}|\tilde\psi|^2dy\geq C_0\int_{a_k'}^{a_{k+1}'}|\tilde\psi|^2dy,\ k=0,\cdots,N_{\beta}-1.\end{align}
First, we consider  $I_c(\tilde\psi)$. By \eqref{def-k0}, we have
\begin{align}\label{estimate-for-I-new}
I_c(\tilde\psi)&\geq \int_{[y_1,y_2]\setminus(\cup_{a\in\{u=u_{\min}\}}(a-\delta_0,a+\delta_0))}\left(|\tilde\psi'|^2-C_0|\tilde\psi|^2\right)dy\\ \nonumber&
=\sum_{k\in\Ma}\int_{a_k'}^{a_{k+1}'}\left(|\tilde\psi'|^2-C_0|\tilde\psi|^2\right)dy\geq0.
\end{align}

Next, we consider $II_c(\tilde\psi)$. For $a\in\{u=u_{\min}\},$ we have $a\in\M$ and $\tilde\psi(a)=0. $ Then by  \eqref{singular-point-integrate-estimate}, \eqref{a3}, \eqref{singular-point-integrate-estimate-case5} and \eqref{decomposition-IIc}, we have $II_c(\tilde\psi)\geq 0.$ This, along with (\ref{variation2}) and  (\ref{estimate-for-I-new}), yields that
$
\inf_{c\in(-\infty,u_{\min})}\lambda_{N_{\beta}}(c)\geq 0.
$
This proves (iii).\if0\begin{align*}
II_c(\tilde\psi)\geq& 0.
\end{align*}
We  decompose  $II_c(\phi)$ as (\ref{decomposition-IIc}) and analyze   five cases for zeros of $u=u_{\min}$ as in (ii).
From (\ref{singular-point-integrate-estimate})--(\ref{singular-point-integrate-estimate2}) in the proof of (ii), it suffices to
 consider $u''(a_i)-\beta<0$ in Case 5.
Recall that $a_1,\cdots,a_{m_{\beta}}$ are defined in (\ref{def-a1am0}) and $k_0$ is determined by (\ref{def-k0}).
For any $(k_0+m_{\beta}+1)$-dimensional subspace $V$ in $H_0^1$, we infer that
\begin{align}\label{dimension formulae}
\dim\left(V\cap\overline{\text{span}\{\phi_n\}_{n=k_0+1}^\infty}\right)\geq m_{\beta}+1.\end{align}

In fact, suppose that $\dim\left(V\cap\overline{\text{span}\{\phi_n\}_{n=k_0+1}^\infty}\right)=\tilde m_0\leq m_{\beta}$ and let $V\cap\overline{\text{span}\{\phi_n\}_{n=k_0+1}^\infty}\\=\text{span}\{\alpha_1,\cdots,\alpha_{\tilde m_0}\}$.
Then  $V$ can be extended by $V=\text{span}\{\alpha_1,\cdots,\alpha_{\tilde m_0},\rho_1,\cdots,\rho_{n_0}\}$ with $\tilde m_0+n_0=k_0+m_{\beta}+1$, where $\rho_i\in V$.
We decompose $\rho_i$ by
$$\rho_i=\gamma_i+\tilde\gamma_i,\;\;\gamma_i\in\text{span}\{\phi_n\}_{n=1}^{k_0},\;\;\tilde\gamma_i\in\overline{\text{span}\{\phi_n\}_{n=k_0+1}^\infty},$$
where $i=1,\cdots,n_0$. Then $\gamma_i\neq0$ for $i=1,\cdots,n_0$. We show that $\gamma_i$, $i=1,\cdots,n_0,$ are linearly independent in $\text{span}\{\phi_n\}_{n=1}^{k_0}$. Assuming this is true, then $n_0\geq k_0+1>\dim\left(\text{span}\{\phi_n\}_{n=1}^{k_0}\right)$, which is a contradiction.
Suppose that $l_1\gamma_1+\cdots+ l_{n_0}\gamma_{n_0}=0$. Since $\gamma_i=\rho_i-\tilde\gamma_i$, we have
$$l_1\rho_1+\cdots+l_{n_0}\rho_{n_0}=l_1\tilde\gamma_1+\cdots+l_{n_0}\tilde\gamma_{n_0}\subset V\cap\overline{\text{span}\{\phi_n\}_{n=k_0+1}^\infty}.$$
Thus, there exist $p_i$, $i=1,\cdots,\tilde m_0$, such that
$l_1\rho_1+\cdots+l_{n_0}\rho_{n_0}-p_1\alpha_1-\cdots-p_{\tilde m_0}\alpha_{\tilde m_0}=0.$
Therefore, $l_1=\cdots=l_{n_0}=p_1=\cdots=p_{\tilde m_0}=0$. This proves (\ref{dimension formulae}).

Thus, we can choose $\hat\psi_1,\cdots,\hat\psi_{m_{\beta}+1}$ to be  linearly independent in $V\cap\overline{\text{span}\{\phi_n\}_{n=k_0+1}^\infty}$.
Then there exists  $0\neq(\zeta_1,\cdots,\zeta_{m_{\beta}+1})\in\mathbf{R}^{m_{\beta}+1}$ such that
$
\zeta_1\hat\psi_1(a_i)+\cdots\zeta_{m_{\beta}+1}\hat\psi_{m_{\beta}+1}(a_i)=0, i=1,\cdots,m_{\beta}.
$
Define
$
\hat\psi=\zeta_1\hat\psi_1+\cdots+\zeta_{m_{\beta}+1}\hat\psi_{m_{\beta}+1}.
$
Then $\hat\psi\in V\cap\overline{\text{span}\{\phi_n\}_{n=k_0+1}^\infty}$ and $\hat\psi(a_i)=0$, $i=1,\cdots,m_{\beta}$. We normalize $\hat\psi$ such that $\|\hat\psi\|_{L^2}=1$.
Using the property $\hat\psi(a_i)=0$, $i=1,\cdots,m_{\beta}$, and with a similar approach  to Case 5 in (ii), we get
 (\ref{singular-point-integrate-estimate-case5}) holds for $c<u_{\min}$, where
  $\tilde \psi$ is replaced  by $\hat\psi$.

 We have shown that for any $(k_0+m_{\beta}+1)$-dimensional subspace $V\subset H_0^1(y_1,y_2)$, there exists $\hat\psi\in V\cap\overline{\text{span}\{\phi_n\}_{n=k_0+1}^\infty}$ such that
$
II_c(\hat \psi)\geq 0
$
 for  $c\in[c_1,u_{\min})$.
This, along with (\ref{estimate-for-I-new}), yields that
$
\inf_{c\in[c_1,u_{\min})}\lambda_{k_0+m_{\beta}+1}(c)\geq 0.
$
Clearly, there exists $N_{\beta}>k_0+m_{\beta}+1$ such that
$
\inf_{c\in(-\infty,c_1)}\lambda_{N_{\beta}}(c)\geq 0.
$
This proves (iii).\fi
\end{proof}

\subsection{Infinite number for $\beta\notin [{9\over8}\kappa_{-},{9\over8}\kappa_{+}]$}
In this subsection, we  prove Theorem \ref{eigenvalue asymptotic behavior-bound} (3)-(4). The proof is based on
construction of suitable test functions such that the energy in  \eqref{variation2} converges  to $-\infty$ as $c\to u_{\min}^-$ or $c\to u_{\max}^+$.

\begin{proof} [Proof of Theorem \ref{eigenvalue asymptotic behavior-bound}  (3)-(4)]
We only prove Theorem \ref{eigenvalue asymptotic behavior-bound}  (3), since (4) can be proved similarly. Let $\beta >{9\over8}\kappa_+$.
Then there exists  $a\in[y_1,y_2]$ such that  $ \beta/u''(a)>9/8$, $u'(a)=0$ and $u(a)=u_{\min}$.
 If $a\in[y_1,y_2)$, our analysis  is completely on   $[a,a+\delta]\subset [y_1,y_2)$  for  $\delta>0$ small enough. If $a=y_2$, the analysis is only on  $[a-\delta,a]$ and the proof is similar as $a\in[y_1,y_2)$. Now we assume that $a=0\in [y_1,y_2)$.  Then $u''(0)>0$ and there exists  $\varepsilon_0>0$ such that
$u''(z)>0$  and
\begin{align*}
{2(u''(y)-\beta)\over u''(z)}<-{1\over 4-\varepsilon_0}
\end{align*}
for $y,z\in[0,\delta]\subset[y_1,y_2)$ and $\delta>0$ small enough.
 Let $\nu_0=\min_{z\in[0,\delta]}\{u''(z)\}>0$ and
$J(x)=\eta(2x-1), x\in\mathbf{R},$
 where $\eta$ is defined in (\ref{def-eta}).
Define $$\varphi_{i,R}(y)=
\begin{cases}
y^{1\over2}J\left({\ln y \over R}+i+1\right), &
y\in(0,y_2],  \\
0,  &
y\in[y_1,0],
\end{cases}$$
 where $i=1,\cdots,n$, and $R$ is large enough such that $e^{-R}<\delta$. Then $\varphi_{i,R}\in H_0^1(y_1,y_2)$ and  $\text{supp}\, \varphi_{i,R}=[e^{-(i+1)R}, e^{-iR}]$, $i=1,\cdots,n$.
 Thus, $\varphi_{i,R}\perp\varphi_{j,R}$ in the $L^2$  sense for $i\neq j$.
 Note that $u''(y)-\beta<0$ for $y\in[0,\delta]$. For $1\leq i\leq n$, we define
  $$\tilde \varphi_{i,R}={1\over\|\varphi_{i,R}\|_{L^2}}\varphi_{i,R},\;\;\text{and}\;\;
  \tilde V_{n,R}=\text{span}\{\tilde\varphi_{1,R},\cdots,\tilde\varphi_{n,R}\}.$$
   Choose $R>0$ such that $2(u_{\min}-c)={\varepsilon_0\nu_0e^{-2(n+1)R}\over8}$. Then $c\to u_{\min}^-\Leftrightarrow R\to\infty$. We shall show that for $1\leq i\leq n,$
 \begin{align}\label{the point we prove}
\lim_{c\to u_{\min}^-}\int_{y_1}^{y_2}\left(|\tilde\varphi_{i,R}'|^2+{u''-\beta\over u-c}|\tilde\varphi_{i,R}|^2\right)dy=-\infty.
\end{align}
Assume that (\ref{the point we prove}) is true. Similar to (\ref{eigenvaluetendtonegativeinfty}), there exist  $d_{i,c}\in\mathbf{R}, i=1,\cdots,n,$ with  $\sum_{i=1}^n|d_{i,c}|^2=1$ such that
\begin{align}\label{kth eigenvalue tends to -infty}
\lambda_n(c)\leq \sum_{i=1}^n|d_{i,c}|^2\int_{y_1}^{y_2}\left(|\tilde\varphi_{i,R}'|^2+{u''-\beta\over u-c}|\tilde\varphi_{i,R}|^2\right)dy\to -\infty\;\;\text{as}\;\;c\to u_{\min}^-.
\end{align}
Now we prove (\ref{the point we prove}).
 Direct computation gives
 \begin{align*}
 {u''(y)-\beta\over u(y)-c}=&{u''(y)-\beta\over u(y)-u_{\min}+u_{\min}-c}={2(u''(y)-\beta)\over u''(\xi_y)y^2+2(u_{\min}-c)}
 <{2(u''(y)-\beta)\over \left(u''(\xi_y)+{\varepsilon_0u''(\xi_y)\over 8}\right)y^2}\\
 <&{-{1\over 4-\varepsilon_0}u''(\xi_y)\over \left(u''(\xi_y)+{\varepsilon_0u''(\xi_y)\over 8}\right)y^2}
  =-{1\over(4-\varepsilon_0) \left(1+{\varepsilon_0\over 8}\right)y^2}
 =-{1\over(4-\varepsilon_1)y^2}
 \end{align*}
 for $y\in[e^{-(i+1)R}, e^{-iR}]$ and $2(u_{\min}-c)={\varepsilon_0\nu_0e^{-2(n+1)R}\over8}$, where $\xi_y\in(0,y)$ and $\varepsilon_1={\varepsilon_0\over 2}+{\varepsilon^2_0\over8}$.
Then
\begin{align}\label{estimate-inf}
\int_{y_1}^{y_2}\left(|\varphi_{i,R}'|^2+{u''-\beta\over u-c}|\varphi_{i,R}|^2\right)dy=&\int_{e^{-(i+1)R}}^{e^{-iR}}\left(|\varphi_{i,R}'|^2+{u''-\beta\over u-c}|\varphi_{i,R}|^2\right)dy\\\nonumber
<&\int_{e^{-(i+1)R}}^{e^{-iR}}\left(|\varphi_{i,R}'|^2-{1\over(4-\varepsilon_1)y^2}|\varphi_{i,R}|^2\right)dy.
\end{align}
Note that for $y\in[e^{-(i+1)R},e^{-iR}]$,
\begin{align*}
|\varphi_{i,R}'(y)|^2=&\left|J(x){-(2x-1)\over 4x^2(x-1)^2}{1\over Ry}y^{1\over2}+{1\over2}y^{-{1\over2}}J(x)\right|^2\\
=&J(x)^2{(2x-1)^2\over 16x^4(x-1)^4}{1\over R^2y}-J(x)^2{2x-1\over 4x^2(x-1)^2}{1\over Ry}+{1\over 4y} J(x)^2,
\end{align*}
where $x={\ln y \over R}+i+1$. Since $\left|J(x)^2{(2x-1)^2\over 16x^4(x-1)^4}\right|\leq C$ and $\left|J(x)^2{2x-1\over 4x^2(x-1)^2}\right|\leq C$ for  $x\in[0,1]$, we get
\begin{align*}
&\left|\int_{e^{-(i+1)R}}^{e^{-iR}}
\left(J(x)^2{(2x-1)^2\over 16x^4(x-1)^4}{1\over R^2y}-J(x)^2{2x-1\over 4x^2(x-1)^2}{1\over Ry}\right)dy\right|\\
\leq&{C\over R^2}\int_{e^{-(i+1)R}}^{e^{-iR}}{1\over y}dy+{C\over R}\int_{e^{-(i+1)R}}^{e^{-iR}}{1\over y}dy={C\over R}+C\leq C.
\end{align*}
Then we infer from (\ref{estimate-inf}) that
\begin{align}\label{estimate-inf-varphi}
&\int_{y_1}^{y_2}\left(|\varphi_{i,R}'|^2+{u''-\beta\over u-c}|\varphi_{i,R}|^2\right)dy
\leq C+
\int_{e^{-(i+1)R}}^{e^{-iR}}\left({1\over 4y} J(x)^2-{1\over(4-\varepsilon_1)y^2}|\varphi_{i,R}|^2\right)dy\\\nonumber
=&C+\int_{e^{-(i+1)R}}^{e^{-iR}}{-\varepsilon_1\over4(4-\varepsilon_1)}J(x)^2{1\over y}dy
= C-{\varepsilon_1R\over 8(4-\varepsilon_1)}<0
\end{align}
when $R$ is large enough.
Direct computation gives
\begin{align}\label{estimate-inf-varphi-normalize}
\|\varphi_{i,R}\|_{L^2}^2=&\int_{e^{-(i+1)R}}^{e^{-iR}}yJ\left({\ln y\over R}+i+1\right)^2dy
=\int_0^1Re^{2R(x-i-1)}J(x)^2dx\\\nonumber
\leq&CR\int_0^1e^{2R(x-i-1)}dx=C(e^{-2iR}-e^{-2(i+1)R})\leq C e^{-2iR}.
\end{align}
Combining (\ref{estimate-inf-varphi}) and (\ref{estimate-inf-varphi-normalize}), we have
\begin{align*}
&\int_{y_1}^{y_2}\left(|\tilde\varphi_{i,R}'|^2+{u''-\beta\over u-c}|\tilde\varphi_{i,R}|^2\right)dy
={1\over\|\varphi_{i,R}\|_{L^2}^2}\int_{y_1}^{y_2}\left(|\varphi_{i,R}'|^2+{u''-\beta\over u-c}|\varphi_{i,R}|^2\right)dy\\
\leq&\left(C-{\varepsilon_1R\over 8(4-\varepsilon_1)}\right){1\over\|\varphi_{i,R}\|_{L^2}^2}
\leq \left(C-{\varepsilon_1R\over 8(4-\varepsilon_1)}\right){e^{2iR}\over C}\to-\infty
\end{align*}
as $R\to\infty$.
This proves (\ref{the point we prove}).
\end{proof}

\section{Rule out oscillation of the $n$-th eigenvalue of Rayleigh-Kuo BVP}

Let $\beta\in[{9\over8}\kappa_-,{9\over8}\kappa_+]$. By Theorem \ref{eigenvalue asymptotic behavior-bound} (1)-(2),   $\lambda_n(c)=-\alpha^2$ has only finite number of solutions  $c$ outside the range of $u$ for  $1\leq n\leq m_{\beta}$, and  no solutions exist for $n\geq N_{\beta}$. It is non-trivial to study whether the  number of solutions is finite for $m_{\beta}<n<N_{\beta}$. Recall that $N_\beta$ is obtained in Theorem \ref{eigenvalue asymptotic behavior-bound} such that $
\inf\limits_{c\in(-\infty,u_{\min})}\lambda_{N_{\beta}}(c)\geq 0
$ for $0<\beta\leq{9\over8}\kappa_+$, $
\inf\limits_{c\in(u_{\max},\infty)}\lambda_{N_{\beta}}(c)\geq 0
$ for ${9\over 8}\kappa_-\leq\beta<0$.
The main difficulty is that  $\lambda_n(c)$ might  oscillate when $c$ is close to $u_{\min}$ or $u_{\max}$. In this section, we rule out the oscillation in the following three cases.
\subsection{Rule out oscillation under the spectral assumption}
We rule out the oscillation of $\lambda_n(c)$ under   the spectral assumption $\bf{(E_\pm)}$, which is stated in Theorem \ref{main-result} (1)-(2).
To this end, we first consider the compactness near a class of singular points.
\begin{lemma}\label{critical-point-not-beta-u-sec-dao-equal0}
Let $c\in \text{Ran}(u)$, $y_0\in u^{-1}\{c\}\cap (y_1,y_2)$, $u'(y_0)=0$  and $\delta>0$ so that $(u''(y_0)-\beta)(u''(y)-\beta)>0$ on $[y_0-\delta,y_0+\delta]\subset[y_1,y_2]$ and $[y_0-\delta,y_0+\delta]\cap u^{-1}\{c\}=\{y_0\}$. Assume that $ \beta/u''(y_0)< 9/8.$ Let $\phi_n,\omega_n\in H^1(y_0-\delta,y_0+\delta)$ and $c_n\in\mathbf{C}$ so that $c_n^i>0$, $c_n\to c$, $\phi_n\rightharpoonup0,\omega_n\to0 $ in $H^1(y_0-\delta,y_0+\delta)$ and
\begin{align*}
(u-c_n)(\phi_n''-\alpha^2\phi_n)-(u''-\beta)\phi_n=\omega_n
\end{align*}
 holds on $[y_0-\delta,y_0+\delta]$.
Then $\phi_n\to0$ in $H^1(y_0-\delta,y_0+\delta)$.
\end{lemma}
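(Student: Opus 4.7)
The plan is to show $\phi_n\to 0$ strongly in $H^1(y_0-\delta,y_0+\delta)$ by first localizing any failure of compactness to the single point $y_0$, and then ruling out concentration there via a Hardy absorption argument that uses the hypothesis $\beta/u''(y_0)<9/8$ in a sharp way.

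First I use that $H^1\hookrightarrow C^0$ is compact on a bounded one-dimensional interval, so $\phi_n\rightharpoonup 0$ in $H^1$ implies $\phi_n\to 0$ uniformly on $[y_0-\delta,y_0+\delta]$; similarly $\omega_n\to 0$ uniformly. Away from $y_0$ the modulus $|u-c_n|$ is bounded below, so the ODE bootstraps $\phi_n$ to a uniform bound in $H^2_{\mathrm{loc}}([y_0-\delta,y_0+\delta]\setminus\{y_0\})$ and Sobolev embedding gives $\phi_n\to 0$ in $C^1_{\mathrm{loc}}$ off $y_0$. Testing the equation by $\chi\bar\phi_n$ for a cutoff $\chi\in C_c^\infty$ equal to $1$ on a slightly smaller interval, integrating by parts and taking the real part reduces, after every other term is shown to be $o(1)$ via the uniform smallness of $\phi_n,\omega_n$ and of $|c_n-c|$, to $\int\chi(u-c_n^r)|\phi_n'|^2\,dy\to 0$; passing $c_n^r\to c$ and using that $u-c$ has a nondegenerate double zero at $y_0$ (because $u'(y_0)=0$ and $u''(y_0)\ne\beta$ forces $u''(y_0)\ne 0$) yields $\int_{r\le|y-y_0|\le\delta/2}|\phi_n'|^2\,dy\to 0$ for every fixed $r>0$.

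Next I rule out concentration of $|\phi_n'|^2$ at $y_0$. Writing $\phi_n=\phi_n(y_0)\chi_0+\psi_n$ with a fixed cutoff $\chi_0$ equal to $1$ at $y_0$, the function $\psi_n$ vanishes at $y_0$, converges weakly to $0$ in $H^1$, and satisfies the same equation with a right-hand side still tending to $0$ in $H^1$ (since $\phi_n(y_0)\to 0$). Hardy's inequality (Lemma~3.1) then gives $\int|\psi_n|^2/(y-y_0)^2\le 4\int|\psi_n'|^2$, and the proof of Lemma~3.3(1)(ii) supplies the pointwise bound
\[
\frac{u''(y)-\beta}{u(y)-c}\;\ge\;-\frac{1+C_0|y-y_0|}{4(y-y_0)^2}
\]
near $y_0$ (with the symmetric estimate when $u''(y_0)<0$), valid precisely because $\beta/u''(y_0)<9/8$. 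Testing the equation against $\bar\psi_n$ on a shrinking neighborhood $(y_0-r,y_0+r)$, using the $C^1$-off-$y_0$ control to discard boundary terms, and comparing the singular integral $\int\mathrm{Re}\,[(u''-\beta)/(u-c_n)]|\psi_n|^2$ with its limiting value via the Hardy bound above, closes the estimate with a strictly positive coefficient on $\int_{|y-y_0|<r/2}|\psi_n'|^2$ for $r$ sufficiently small. Combined with the previous step this forces $\|\phi_n'\|_{L^2}\to 0$ and hence $\phi_n\to 0$ in $H^1$.

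The main technical obstacle is the complex shift $u-c_n=(u-c_n^r)-ic_n^i$ with $c_n^i\to 0^+$: the real part $u-c_n^r$ can change sign in any shrinking neighborhood of $y_0$, so the relevant $(u-c_n^r)/|u-c_n|^2=\mathrm{Re}\,(1/(u-c_n))$ must be compared with the limiting $1/(u-c)$ while uniformity degenerates as $c_n^i\to 0$. The strict inequality $\beta/u''(y_0)<9/8$ provides the positive buffer beyond the sharp Hardy constant $1/4$ that is needed to survive this passage to the limit and close the absorption.
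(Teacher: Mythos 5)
Your overall strategy—compactness to upgrade weak $H^1$ convergence to uniform convergence, ODE bootstrap to get $C^1_{\mathrm{loc}}$ control away from $y_0$, and a Hardy absorption at $y_0$ turning on the threshold $\beta/u''(y_0)<9/8$—is the right collection of ideas, and your first step (extracting $\int_{r\le|y-y_0|\le\delta/2}|\phi_n'|^2\to 0$ from the real part of the quadratic form, after observing that the negative contribution near $y_0$ is $O(|c_n^r-c|)\|\phi_n'\|_{L^2}^2)$) is sound. The gap is in the second step, and you identify it yourself without closing it.

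Concretely, the comparison of $\mathrm{Re}\,[(u''-\beta)/(u-c_n)]$ with $(u''-\beta)/(u-c)$ fails on the critical scale. Writing $t=y-y_0$, $\kappa=u''(y_0)$, $a_n=c_n^r-c$, $b_n=c_n^i>0$, one has $u-c_n\approx \tfrac{\kappa}{2}t^2-a_n-ib_n$. When $a_n>0$ the real part of $(u-c_n)^{-1}$ develops spikes of height $\sim 1/b_n$ localized near the \emph{resonance points} $t=\pm\sqrt{2a_n/\kappa}$, not at $t=0$. The pointwise lower bound $\frac{u''-\beta}{u-c}\ge -\frac{1+o(1)}{4t^2}$ is an inequality centered at $t=0$ and does not majorize these spikes; the Hardy weight $t^{-2}$ is the wrong weight there, and the uniformity degenerates precisely as $b_n\to 0^+$. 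In the same region the forcing term $\int\tilde\omega_n\bar\psi_n/(u-c_n)$ is only bounded by $b_n^{-1}\|\tilde\omega_n\|_{L^2}\|\psi_n\|_{L^2}$, which is not $o(1)$ since the lemma assumes no rate for $\omega_n\to 0$ relative to $c_n^i$. Your argument needs, and does not supply, an estimate on the blow-up scale $|t|\sim\sqrt{|c_n-c|}$, where $u-c_n$ nearly vanishes at points other than $y_0$.

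The paper does not give a self-contained proof: it cites Lemma~3.4 of \cite{WZZ}, noting that the argument there only uses the local hypothesis $\beta/u''(y_0)<9/8$ rather than a global one. That proof is a blow-up/matching argument: rescale by $\sqrt{|c-c_n|}$ around $y_0$, show $\|\phi_n'\|_{L^2}\to 0$ on the rescaled window, then perform weighted estimates with $\|\omega_n/y\|_{L^2}$ on an intermediate region, and patch with the exterior $C^1$ control. You would need to reproduce some version of that rescaling and intermediate-region estimate to make your Step~2 rigorous; a direct Hardy absorption with a fixed center does not see the $n$-dependent singular structure of $\mathrm{Re}\,(u-c_n)^{-1}$.
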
Here $c_n^i=\text{Im}(c_n)$. The proof of Lemma \ref{critical-point-not-beta-u-sec-dao-equal0} is the same as that of Lemma 3.4 in \cite{WZZ}, where we only used the condition $\beta/u''(y_0)< 9/8 $ rather than the stronger condition:\beno
(\textbf{H}) \quad u\in H^{4}(y_{1},y_{2}), \,\,  u''(y_c)\neq0,\,\, \beta/u''(y_c)<9/8\,\, \text{at critical points } u'(y_c)=0.
\eeno Since otherwise, we can construct $ \widetilde{u}$ such that $\widetilde{u}\in H^{4}(y_{1},y_{2}),\ \widetilde{u}|_{[y_0-\delta,y_0+\delta]}= u|_{[y_0-\delta,y_0+\delta]}$ and $ (\widetilde{u}')^{-1}\{0\}=\{y_0\}$.
 Recall that all the conditions and conclusions depend only on $u|_{[y_0-\delta,y_0+\delta]}$.
 \if0With this modification we can prove the following result. Let $I$ be a closed subset of $ \text{Ran }(u).$\begin{align}\label{def-p}
p(z)=\prod\limits_{y\in A}(z-y),\quad A=\big\{y\in[y_1,y_2]|u'(y)=0,u''(y)=\beta\big\}.
\end{align}\beno
(\textbf{H1})(I) \quad u\in H^{4}(y_{1},y_{2}), \,\,  u''(y_c)\neq0,\,\, \beta/u''(y_c)<9/8\,\, \text{at points } u'(y_c)=0,\ u(y_c)\in I.
\eeno\begin{proposition}\label{asymptotic behavior of solutions at a class of singular point}
Let $\alpha>0$ and $\beta\in\mathbb{R}$.
Assume that $u$ satisfies \textbf{(H1)}(I), $\mathcal{R}_{\alpha,\beta}$ has no embedding eigenvalues, $\omega(y)=0$ for any $y\in\{y_1,y_2\}\cap(u')^{-1}\{0\}$, and ${\omega\over p}\in H^1(y_1,y_2)$,  where $p$ is given in (\ref{def-p}).
Then there exists $\varepsilon_0>0$ such that $\Omega_{\varepsilon_0}(I)\cap\sigma_d(\mathcal{R}_{\alpha,\beta})=\emptyset$ and
for any $c\in \Omega_{\varepsilon_0}(I)\setminus \text{Ran }(u)$, the unique solution $\Phi$ to the boundary value problem
\begin{align}\label{rayleigh-equation-Phi}
(u-c)(\Phi''-\alpha^2\Phi)-(u''-\beta)\Phi=\omega, \;\Phi(y_1)=\Phi(y_2)=0
\end{align}
has the uniform $H^1$ bound
\begin{align}\label{uniform bound-Phi}
\|\Phi\|_{H^1(y_1,y_2)}\leq C\|{\omega\over p}\|_{H^1(y_1,y_2)},
\end{align}
where $\Omega_{\varepsilon_0}(I)=\big\{c\in\mathbb{C}|\exists\ c_0\in I\ \text{such that}\ |c-c_0|<\varepsilon_0\big\}.$
\end{proposition}
\begin{proposition}\label{asymptotic behavior of solutions at a class of singular point}
Let $\alpha>0$, $\beta>0$ and  $a\in\{u=u_{\min}\}\cap(y_1,y_2)\cap\{u''-\beta\neq0\}$.
Let $\{c_k\}\subset(-\infty, u_{\min})$, $c_k\to u_{\min}^-$ and $\phi_{k}\in H^1(a-\delta,a+\delta)$  such that
$
-\phi_{k}''+{u''-\beta\over u-c_k}\phi_{k}=-\alpha^2\phi_{k}
$
on $[a-\delta,a+\delta]$,
where $\delta\in(0,1)$ is small enough such that $(u''(y)-\beta)(u''(a)-\beta)>0$. Then there exists $k_0>0$ such that for  $k>k_0$,
\begin{align}\label{asymptotic behavior at a}
|\phi_{k}(a)|\leq C\tau_k^{3\over4}\|\phi_{k}\|_{L^2(a,a+\sqrt{\tau_k})}+C\tau_k^{1\over4}\|\phi'_{k}\|_{L^2(a,a+\sqrt{\tau_k})},
\end{align}
where $\tau_k=u_{\min}-c_k.$
Consequently,
\begin{align}\label{estimate-singular-point}
|\phi_{k}(y)|\leq C\tau_k^{3\over4}\|\phi_{k}\|_{L^2(a,a+\sqrt{\tau_k})}+C\tau_k^{1\over4}\|\phi'_{k}\|_{L^2(a,a+\sqrt{\tau_k})}+\|\phi_{k}'\|_{L^2(a,y)}|y-a|^{1\over2} \end{align}
for  $k>k_0$ and $y\in(a,a+\delta)$. Similarly, (\ref{asymptotic behavior at a})--(\ref{estimate-singular-point}) hold for   $k>k_0$ and $y\in(a-\delta,a)$, where $L^2(a,a+\sqrt{\tau_k})$ and $L^2(a,y)$ are replaced by $L^2(a-\sqrt{\tau_k},a)$ and $L^2(y,a)$, respectively.
\end{proposition}
\begin{proof} We only prove the conclusion on $[a,a+\delta]$, since others can be shown similarly.
 Let
\begin{align}\label{def-u1}
u_1(y)=u(y)-{\beta(y-a)^2\over2}  \;\text{on}\;[a,a+\delta].\end{align}
Then
\begin{align}\label{equ-with-variabe-change-u1}
g_k:=((u-c_k)\phi_k'-u_1'\phi_k)'=\alpha^2(u-c_k)\phi_k+\beta(y-a)\phi_k'.
\end{align}
 Choose $c_0>0$ such that $|\beta-u''|>c_0>0$ on $[a,a+\delta]$.  For  $y,\tilde y\in[a,a+\delta]$,
$|u_1'(y)|\leq C|y-a|$ and
$|u_1'(y)-u_1'(\tilde y)|=|u'(y)-u'(\tilde y)-\beta(y-\tilde y)|\geq c_0|y-\tilde y|.$
Choose $k_0>0$ such that $|\sqrt{\tau_k}|<\delta$ for $k>k_0$.
For $k>k_0$ and $y\in(a,a+\sqrt{\tau_k})$,
$
|u(y)-c_k|\leq |u(y)-u_{\min}|+|u_{\min}-c_k|\leq C\tau_k.
$
Thus,
\begin{align*}\|g_k\|_{L^1(a,a+\sqrt{\tau_k})}
\leq& C\tau_k^{{5\over4}}\|\phi_{k}\|_{L^2(a,a+\sqrt{\tau_k})}+C\|y-a\|_{L^2(a,a+\sqrt{\tau_k})}\|\phi_k'\|_{L^2(a,a+\sqrt{\tau_k})}\\
=&C\tau_k^{{5\over4}}\|\phi_{k}\|_{L^2(a,a+\sqrt{\tau_k})}+C\tau_k^{3\over4}\|\phi_k'\|_{L^2(a,a+\sqrt{\tau_k})}.
\end{align*}
We can choose $z_1\in(a,a+\sqrt{\tau_k}/3)$ and $z_2\in(a+2\sqrt{\tau_k}/3,a+\sqrt{\tau_k})$ such that
$|\phi_k'(z_1)|^2+|\phi_k'(z_2)|^2\leq 6\tau_k^{-{1\over2}}\|\phi_k'\|^2_{L^2(a,a+\sqrt{\tau_k})}$. Then
\begin{align*}
\big|(u-c_k)\phi_k'|_{z_1}^{z_2}\big|\leq& (|\phi_k'(z_1)|+|\phi_k'(z_2)|)\|u-c_k\|_{L^\infty([z_1,z_2])}\\
\leq& C\tau_k^{-{1\over4}}\tau_k\|\phi_k'\|_{L^2(a,a+\sqrt{\tau_k})}=C\tau_k^{3\over4}\|\phi_k'\|_{L^2(a,a+\sqrt{\tau_k})},\\
\big|((u-c_k)\phi_k'-(u_1'\phi_k))\big|_{z_1}^{z_2}\big|=&\left|\int_{z_1}^{z_2}g_k(y)dy\right|\leq C\tau_k^{{5\over4}}\|\phi_{k}\|_{L^2(a,a+\sqrt{\tau_k})}+C\tau_k^{3\over4}\|\phi_k'\|_{L^2(a,a+\sqrt{\tau_k})},\\
|(u_1'\phi_k)|_{z_1}^{z_2}|\leq&\big|(u-c_k)\phi_k'|_{z_1}^{z_2}\big|+ \big|((u-c_k)\phi_k'-(u_1'\phi_k))\big|_{z_1}^{z_2}\big|\\
\leq&C\tau_k^{{5\over4}}\|\phi_{k}\|_{L^2(a,a+\sqrt{\tau_k})}+C\tau_k^{3\over4}\|\phi_k'\|_{L^2(a,a+\sqrt{\tau_k})}.
\end{align*}
Since  $(u_1'\phi_k)|_{z_1}^{z_2}=\phi_k(a)u_1'|_{z_1}^{z_2}+u_1'(z_1)\phi_k|_{z_1}^{a}+u_1'(z_2)\phi_k|_{a}^{z_2},$
we have
\begin{align*}
{1\over3} \sqrt{\tau_k} c_0|\phi_k(a)|\leq |\phi_k(a)u_1'|_{z_1}^{z_2}|&\leq |(u_1'\phi_k)|_{z_1}^{z_2}|+2\|u_1'\|_{L^\infty(a,z_2)}\int_{a}^{z_2}|\phi_k'(z)|dz\\
&\leq C\tau_k^{{5\over4}}\|\phi_{k}\|_{L^2(a,a+\sqrt{\tau_k})}+C\tau_k^{3\over4}\|\phi_k'\|_{L^2(a,a+\sqrt{\tau_k})}.
\end{align*}
This shows that
$|\phi_k(a)|\leq C\tau_k^{{3\over4}}\|\phi_{k}\|_{L^2(a,a+\sqrt{\tau_k})}+C\tau_k^{1\over4}\|\phi_k'\|_{L^2(a,a+\sqrt{\tau_k})}.$

Finally, we prove (\ref{estimate-singular-point}). For  $k>k_0$ and $y\in(a,a+\delta)$, we have
\begin{align*}
|\phi_k(y)|\leq& |\phi_k(a)|+|\phi_k(y)-\phi_k(a)|\\
\leq& C\tau_k^{{3\over4}}\|\phi_{k}\|_{L^2(a,a+\sqrt{\tau_k})}+C\tau_k^{1\over4}\|\phi_k'\|_{L^2(a,a+\sqrt{\tau_k})}+\int_{a}^{y}|\phi_k'(z)|dz\\
\leq& C\tau_k^{{3\over4}}\|\phi_{k}\|_{L^2(a,a+\sqrt{\tau_k})}+C\tau_k^{1\over4}\|\phi_k'\|_{L^2(a,a+\sqrt{\tau_k})}+\|\phi_k'\|_{L^2(a,y)}|y-a|^{1\over2}.
\end{align*}
\end{proof}
We need further  to estimate $\|\phi'_{k}\|_{L^2(a-\sqrt{\tau_k},a+\sqrt{\tau_k})}$.
\begin{proposition}\label{prop-estimate-phikL2aasqrttauk}
Under the assumption of Proposition \ref{asymptotic behavior of solutions at a class of singular point}, we have
\begin{align*}
&{\|\phi'_{k}\|_{L^2(a-\sqrt{\tau_k},a+\sqrt{\tau_k})}}=o(\|\phi_{k}\|_{H^1(a-\delta,a-\delta)})
\end{align*}
as $k\to\infty$.
\end{proposition}
\begin{proof}
Let $\hat \phi_{k}={\phi_k\over \|\phi_k\|_{H^1(a-\delta,a+\delta)}}$ on $[a-\delta,a+\delta]$. Then
$
-\hat\phi_{k}''+{u''-\beta\over u-c_k}\hat\phi_{k}=-\alpha^2\hat\phi_{k}
$
on $[a-\delta,a+\delta]$ and $\|\hat \phi_{k}\|_{H^1(a-\delta,a+\delta)}=1$.
By Proposition \ref{asymptotic behavior of solutions at a class of singular point},
we have
$|\hat\phi_{k}(a)|\leq C\tau_k^{3\over4}+C\tau_k^{1\over4}\leq C\tau_k^{1\over4}$.
Let $\varphi_k(z)=\tau_k^{-{1\over4}}\hat\phi_k(\tau_k^{1\over2}z+a)$ and $u_k(z)=\tau_k^{-1}(u(\tau_k^{1\over2}z+a)-u_{\min})$ for $z\in[-{\delta\over\sqrt{\tau_k}},{\delta\over\sqrt{\tau_k}}]$.
Then $|\varphi_k(0)|\leq C$ and $\varphi_k$ solves
\begin{align}\label{equa-varphi-k}
-\varphi_k''+{u_k''-\beta\over u_k+1}\varphi_k=-\alpha^2\tau_k\varphi_k\;\;\text{on}\;\;[-{\delta\over\sqrt{\tau_k}},{\delta\over\sqrt{\tau_k}}].
\end{align}
Clearly, $\|\varphi'_k\|_{L^2(-{\delta\over\sqrt{\tau_k}},{\delta\over\sqrt{\tau_k}})}=\|\hat\phi_k'\|_{L^2(a-\delta,a+\delta)}\leq 1$.
Thus, there exists  $\varphi_0\in H^1_{\text{loc}}(\mathbf{R})$ such that $\varphi_k\rightharpoonup\varphi_0$ in $H^1_{\text{loc}}(\mathbf{R})$ and $\|\varphi'_0\|_{ L^2(\mathbf{R})}\leq1$.
Since $u_k(z)=z^2\int_0^1t\int_0^1u''(\tau_k^{1\over2}zts+a)dsdt\to {u''(a)\over2}z^2,$ $u'_k(z)\to {u''(a)}z$, $u_k''(z)\to u''(a)$, and $u'''(z)\to0$ in $L^2_{\text{loc}}(\mathbf{R})$, we have $u_k\to {u''(a)\over2}z^2$ in $H^3_{\text{loc}}(\mathbf{R})$.
Since $u''(a)>0$, $\{\varphi_k\}$ is uniformly bounded in $ H^2_{\text{loc}}(\mathbf{R})$ and thus, $\varphi_k\to\varphi_0$ in $ C^1_{\text{loc}}(\mathbf{R})$. Taking limits in (\ref{equa-varphi-k}), we have
\begin{align*}
-\varphi_0''+{u''(a)-\beta\over {u''(a)\over2}z^2+1}\varphi_0=0\;\;\text{on}\;\;\mathbf{R}.
\end{align*}
Let $V(z)=-{u''(a)-\beta\over {u''(a)\over2}z^2+1}.$ Then $V$ does not change sign  on $\mathbf{R}$ and $(|\varphi_0'|^2+V|\varphi_0|^2)'=V'|\varphi_0|^2$.
Then using the  cut-off function's technique, we get
$$-\int_{\mathbf{R}}zV'|\varphi_0|^2dz=\int_{\mathbf{R}}(|\varphi_0'|^2+V|\varphi_0|^2)dz=2\int_{\mathbf{R}}V|\varphi_0|^2dz.$$
Thus,
$$\int_{\mathbf{R}}(2V+zV')|\varphi_0|^2dz=0.$$
Since $2V(z)+zV'(z)=-{2(u''(a)-\beta)\over\left({u''(a)\over2}z^2+1\right)^2}$ does not change sign,  we infer that $\varphi_0\equiv0$ on $\mathbf{R}$.
Thus,
\begin{align*}
\varphi_k\to0\; \text{in}\;  C^1_{\text{loc}}(\mathbf{R}).
\end{align*}
 In particular,
\begin{align*}&\|\hat\phi_k'\|_{L^2(a-{\sqrt{\tau_k}},a+{\sqrt{\tau_k}})}=\|\varphi'_k\|_{L^2(-{1},{1})}\to0.
\end{align*}
The proposition then  follows from the definition of $\hat \phi_{k}$.
\end{proof}\fi
Then we prove the uniform $H^1$ bound for the eigenfunctions. More precisely, we have the following result.
\begin{proposition}\label{uniformH1bound}
Let $0<\beta<{9\over8}\kappa_+$.
Assume that $m_{\beta}<n<N_\beta$, $\{c_k\}\subset(-\infty,u_{\min})$, $c_k\to u_{\min}^-$ and $-\lambda_n(c_k)=\alpha^2>0$, where $m_{\beta}$ and $N_{\beta}$ are given in \eqref{def-m0finitebetapositive} and Theorem \ref{eigenvalue asymptotic behavior-bound}, and  $\lambda_n(c_k)$ is the $n$-th eigenvalue of
\begin{align}\label{eigenfunction equation boundary condition}
-\psi_{k}''+{u''-\beta\over u-c_k}\psi_{k}=\lambda_n(c_k)\psi_{k},\;\;\;\;\psi_k(y_1)=\psi_k(y_2)=0
\end{align}
with   the $L^2$ normalized eigenfunction $\psi_k$.
Then
\begin{align}\label{uniform-H1-bound}
\|\psi_{k}\|_{H^1(y_1,y_2)}\leq C,\;\;k\geq1.
\end{align}
\end{proposition}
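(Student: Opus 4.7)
The plan is to argue by contradiction. Suppose, passing to a subsequence, that $\|\psi_k\|_{H^1(y_1,y_2)}\to\infty$. Rescale $\tilde\psi_k=\psi_k/\|\psi_k\|_{H^1}$, so that $\|\tilde\psi_k\|_{H^1(y_1,y_2)}=1$ while $\|\tilde\psi_k\|_{L^2(y_1,y_2)}=1/\|\psi_k\|_{H^1}\to 0$. The rescaled functions satisfy the same boundary value problem \eqref{eigenfunction equation boundary condition}. By weak compactness in $H^1$, Rellich's theorem, and the one-dimensional Sobolev embedding $H^1(y_1,y_2)\hookrightarrow C([y_1,y_2])$, after a further subsequence I obtain $\tilde\psi_k\rightharpoonup\tilde\psi_0$ weakly in $H^1$ and $\tilde\psi_k\to\tilde\psi_0$ uniformly with $\tilde\psi_0\equiv 0$; in particular $\tilde\psi_k(a)\to 0$ for every $a\in[y_1,y_2]$.

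Testing \eqref{eigenfunction equation boundary condition} against $\tilde\psi_k$ and integrating by parts yields the energy identity
\begin{align*}
\|\tilde\psi_k'\|_{L^2(y_1,y_2)}^2+\int_{y_1}^{y_2}\frac{u''-\beta}{u-c_k}|\tilde\psi_k|^2\,dy=-\alpha^2\|\tilde\psi_k\|_{L^2(y_1,y_2)}^2=o(1).
\end{align*}
I then split the $y$-integral into the set $\mathcal{A}_\delta=\bigcup_{a\in u^{-1}\{u_{\min}\}}[a-\delta,a+\delta]\cap[y_1,y_2]$, with $\delta>0$ small enough that the intervals are disjoint, and its complement, where $(u''-\beta)/(u-c_k)$ stays uniformly bounded and contributes $O(\|\tilde\psi_k\|_{L^2}^2)=o(1)$. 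Near each singular point $a\in u^{-1}\{u_{\min}\}$ I distinguish the same four cases used in the proof of Theorem \ref{eigenvalue asymptotic behavior-bound}(1)(ii): $(1)$ $a\in\{y_1,y_2\}$ with $u'(a)\neq 0$; $(2)$ $a\in\{y_1,y_2\}$ with $u'(a)=0$ (hence $\beta<\tfrac{9}{8}u''(a)$ since $\beta<\tfrac{9}{8}\kappa_+$); $(3)$ $a\in(y_1,y_2)$ with $\beta\leq u''(a)$; $(4)$ $a\in(y_1,y_2)$ with $u''(a)<\beta<\tfrac{9}{8}u''(a)$, i.e.\ $a\in\{a_1,\dots,a_{m_\beta}\}$. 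In Cases $(1)$--$(3)$ the condition $\tilde\psi_k\in H_0^1$ either supplies $\tilde\psi_k(a)=0$ automatically or is unnecessary, and Lemma \ref{aH1} gives directly
\begin{align*}
\int_{[a-\delta,a+\delta]\cap[y_1,y_2]}\Bigl(|\tilde\psi_k'|^2+\frac{u''-\beta}{u-c_k}|\tilde\psi_k|^2\Bigr)dy\geq -C_\delta\|\tilde\psi_k\|_{L^2}^2=o(1).
\end{align*}

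The only delicate contribution is Case $(4)$, where Lemma \ref{aH1}(1)(ii) requires the exact vanishing $\tilde\psi_k(a_i)=0$ but we only know $\tilde\psi_k(a_i)\to 0$. To repair this I introduce, for each $i\in\{1,\dots,m_\beta\}$, a fixed cutoff $\chi_i\in C_c^\infty((a_i-\delta,a_i+\delta))$ with $\chi_i(a_i)=1$ and decompose $\tilde\psi_k=\Phi_k+R_k$ with $R_k=\sum_{i=1}^{m_\beta}\tilde\psi_k(a_i)\chi_i$. Then $\Phi_k\in H_0^1(y_1,y_2)$, $\Phi_k(a_i)=0$ for every $i$, and $\|R_k\|_{H^2}\leq C\max_i|\tilde\psi_k(a_i)|\to 0$. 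Applying Lemma \ref{aH1}(1)(ii) to $\Phi_k$ on each $[a_i-\delta,a_i+\delta]$, combining with Cases $(1)$--$(3)$ above and reinserting into the energy identity, I obtain $\|\tilde\psi_k'\|_{L^2}^2\leq o(1)+\mathcal{E}_k$, where $\mathcal{E}_k$ collects the cross term $2\,\mathrm{Re}\int\frac{u''-\beta}{u-c_k}\Phi_k\overline{R_k}\,dy$ and the self term $\int\frac{u''-\beta}{u-c_k}|R_k|^2\,dy$ supported in $\mathcal{A}_\delta$.

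The main obstacle, and the place where the assumption $(\mathbf{E_+})$ enters essentially, is to prove $\mathcal{E}_k=o(1)$. A direct estimate is insufficient: near $a_i$ the coefficient $(u-c_k)^{-1}$ behaves like $\bigl(u_{\min}-c_k+\tfrac12u''(a_i)(y-a_i)^2\bigr)^{-1}$, so $|R_k|^2(u-c_k)^{-1}$ has integral of order $|\tilde\psi_k(a_i)|^2(u_{\min}-c_k)^{-1/2}$, which need not vanish. To overcome this I will invoke Lemma \ref{critical-point-not-beta-u-sec-dao-equal0} (in a version allowing $c_k\in\mathbf{R}$ approaching $u_{\min}$ from outside $\mathrm{Ran}(u)$, a slight extension whose proof follows the same lines because $c_k\notin\mathrm{Ran}(u)$ avoids the critical-layer singularity): the hypothesis $\beta<\tfrac{9}{8}\kappa_+$ ensures $\beta/u''(a_i)<9/8$ at each $a_i$, so together with $(\mathbf{E_+})$ (which rules out the nontrivial $H^1$ weak limit that would be an embedding eigenfunction at $u_{\min}$) we upgrade the weak convergence $\tilde\psi_k\rightharpoonup 0$ in $H^1$ to strong convergence $\tilde\psi_k\to 0$ in $H^1$ on a neighborhood of each $a_i$. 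Inserting this strong local convergence into $\mathcal{E}_k$ yields $\mathcal{E}_k=o(1)$, hence $\|\tilde\psi_k'\|_{L^2}\to 0$ and $\|\tilde\psi_k\|_{H^1}\to 0$, contradicting $\|\tilde\psi_k\|_{H^1}=1$ and proving \eqref{uniform-H1-bound}.
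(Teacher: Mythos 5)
Your overall strategy (rescale, observe the weak $H^1$ limit vanishes because $\|\tilde\psi_k\|_{L^2}\to0$, then upgrade to strong local convergence near the singular set $\{u=u_{\min}\}$ and contradict $\|\tilde\psi_k\|_{H^1}=1$) is the paper's strategy. But the middle of your argument takes an unnecessary detour, and the repair you propose for the delicate Case $(4)$ contains a real error.

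First, the hypothesis $(\mathbf{E_+})$ is \emph{not} part of this Proposition, and it is not needed. Lemma \ref{critical-point-not-beta-u-sec-dao-equal0} is a purely local compactness statement: it requires only that the sequence converge weakly to $0$ in $H^1$ near the critical point and that $\beta/u''(a_i)<9/8$. Both are already available here: weak convergence to $0$ is automatic since $\|\tilde\psi_k\|_{L^2}\to0$, and $\beta<\tfrac{9}{8}\kappa_+\leq\tfrac{9}{8}u''(a_i)$ gives the pointwise condition at each interior minimum. No spectral assumption enters. The condition $(\mathbf{E_+})$ plays its role one step later, in the proof of Theorem \ref{main-result}, where the relevant sequence is $L^2$-normalized and its weak $H^1$ limit could be nonzero; there $(\mathbf{E_+})$ rules out an embedding eigenfunction. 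Importing it here conflates the two arguments and would make the proposition conditional when it is in fact unconditional.

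Second, once you invoke Lemma \ref{critical-point-not-beta-u-sec-dao-equal0} (and the analogous local lemmas from \cite{WZZ} for the other types of singular points: $u'(a)\neq0$, endpoint with $u'(a)=0$ and $u''(a)\neq\beta$, and $u''(a)=\beta$), you obtain strong $H^1$ convergence $\tilde\psi_k\to0$ on a neighborhood of \emph{every} $a\in\{u=u_{\min}\}$. Together with the elementary $C^2_{\mathrm{loc}}$ convergence on $[y_1,y_2]\setminus\{u=u_{\min}\}$, this already gives $\tilde\psi_k\to0$ in $H^1(y_1,y_2)$ and the contradiction — which is exactly the paper's proof. The energy identity and the cutoff decomposition $\tilde\psi_k=\Phi_k+R_k$ add nothing at that point; they are discarded scaffolding. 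Worse, the energy-identity branch cannot close on its own even in Cases $(1)$--$(3)$: Lemma \ref{aH1} bounds the \emph{combined} local energy
\begin{align*}
\int_{[a-\delta,a+\delta]\cap[y_1,y_2]}\Bigl(|\tilde\psi_k'|^2+\frac{u''-\beta}{u-c_k}|\tilde\psi_k|^2\Bigr)dy\geq -C_\delta\|\tilde\psi_k\|_{L^2}^2=o(1),
\end{align*}
from below, so the $|\tilde\psi_k'|^2$ near $a$ is absorbed on the left and you cannot extract an upper bound on $\|\tilde\psi_k'\|_{L^2(\mathcal{A}_\delta)}$ from it. Controlling $\|\tilde\psi_k'\|_{L^2}$ near the singular set requires the local compactness lemmas, which then make the energy identity superfluous. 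The cleaner and correct path is to drop the energy-identity/cutoff machinery altogether, apply the [WZZ]-type local lemmas plus Lemma \ref{critical-point-not-beta-u-sec-dao-equal0} (the perturbation $\tilde c_k=c_k+i\epsilon_k$, $\omega_k=-i\epsilon_k(\hat\psi_k''-\alpha^2\hat\psi_k)$ handles the requirement $c_n^i>0$), and conclude directly.
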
\begin{proof}
Suppose that (\ref{uniform-H1-bound}) is not true. Up to a subsequence, we can assume that $\|\psi_{k}\|_{H^1(y_1,y_2)}\\ \geq k.$ Let $\hat \psi_{k}={\psi_k\over \|\psi_k\|_{H^1(y_1,y_2)}}$ on $[y_1,y_2]$. Then
$
-\hat\psi_{k}''+{u''-\beta\over u-c_k}\hat\psi_{k}=-\alpha^2\hat\psi_{k}
$
on $[y_1,y_2]$, $\|\hat \psi_{k}\|_{H^1(y_1,y_2)}\\=1$ and  $\|\hat \psi_{k}\|_{L^2(y_1,y_2)}=1/\|\psi_{k}\|_{H^1(y_1,y_2)}\leq 1/k\to 0$. Thus, $\hat\psi_k\rightharpoonup0$ in $H^1(y_1,y_2)$.

Similar to Lemma 3.1
 in \cite{WZZ}, we have $\hat \psi_k\to0$ in $H^1((a-\delta,a+\delta)\cap[y_1,y_2])$ for $a\in\{u=u_{\min}\}\cap\{u'\neq0\}.$
Similar to
Lemma 3.5 and Remark 3.6 in \cite{WZZ}, we have $\hat \psi_k\to0$ in $H^1((a-\delta,a+\delta)\cap[y_1,y_2])$ for $a\in\{u=u_{\min}\}\cap\{y_1,y_2\}\cap\{u'=0\}\cap\{u''\neq\beta\}.$ Similar to
Lemma 3.7 and Remark 3.8 in \cite{WZZ}, we have $\hat \psi_k\to0$ in $H^1((a-\delta,a+\delta)\cap[y_1,y_2])$ for $a\in\{u=u_{\min}\}\cap\{u'=0\}\cap\{u''=\beta\}.$
The main difference is that $c_k\in\R$ rather than $\text{Im}({c}_k)>0$, and we can overcome this difficulty by perturbation of $ {c}_k$ as in the next case.

If  $a\in\{u=u_{\min}\}\cap(y_1,y_2)\cap\{u''\neq\beta\}$, then $0<\beta<{9\over8}\kappa_+\leq 9u''(a)/8 $. Take $\delta\in(0,\min(y_2-a,a-y_1))$ small enough so that $(u''(a)-\beta)(u''(y)-\beta)>0$ on $[a-\delta,a+\delta]\subset[y_1,y_2]$ and $[a-\delta,a+\delta]\cap \{u=u_{\min}\}=\{a\}$. Noting that $\hat \psi_{k},{u''-\beta\over u-c_k}\in H^1(a-\delta,a+\delta)$, we have $\hat\psi_{k}''-\alpha^2\hat\psi_{k}={u''-\beta\over u-c_k}\hat\psi_{k}\in H^1(a-\delta,a+\delta), $ and there exists $ \epsilon_k>0$ such that $ \epsilon_k(1+\|\hat\psi_{k}''-\alpha^2\hat\psi_{k}\|_{H^1(a-\delta,a+\delta)})\to 0.$ Let $ \widetilde{c}_k=c_k+i\epsilon_k$ and $\omega_k=-i\epsilon_k(\hat\psi_{k}''-\alpha^2\hat\psi_{k})$. Then we have $(u-\widetilde{c}_k)(\hat \psi_{k}''-\alpha^2\hat \psi_{k})-(u''-\beta)\hat \psi_{k}=\omega_k,\ \|\omega_k\|_{H^1(a-\delta,a+\delta)}\to 0,\ \widetilde{c}_k\to u_{\min},\ \text{Im}(\widetilde{c}_k)>0$ and $\hat\psi_k\rightharpoonup0$ in $H^1(a-\delta,a+\delta)$. By Lemma \ref{critical-point-not-beta-u-sec-dao-equal0}, we have $\hat \psi_{k}\to0$ in $H^1((a-\delta,a+\delta)\cap[y_1,y_2])$. Note that $\hat \psi_{k}\to0$ in $C^2_{loc}([y_1,y_2]\setminus \{u=u_{\min}\})$. Therefore, $\hat \psi_{k}\to0$ in $H^1(y_1,y_2)$, which contradicts $\|\hat \psi_{k}\|_{H^1(y_1,y_2)}=1$. Thus, (\ref{uniform-H1-bound}) is true.\end{proof}
\if0
\begin{proof}
Let $I_{c_k}(\psi_k)$ and $II_{c_k}(\psi_k)$ be defined in (\ref{def-IcphiIIcphi}). It follows from
(\ref{estimate-for-I-new-ii}) that
$I_{c_k}(\psi_k)\geq -C_0$.
 We divide  $II_{c_k}(\psi_k)$ into three parts as follows.
\begin{align}
\nonumber II_{c_k}(\psi_k)\geq&\left(\sum_{\text{Case 1}}+\sum_{\text{Case 2}}\right)\int_{[a-\delta_0,a+\delta_0]\cap[y_1,y_2]}\left(|\psi_k'|^2+{u''-\beta\over u-c_k}|\psi_k|^2\right)dy\\\nonumber
&+\left(\sum_{\text{Case 3}}\int_{[a-\delta_0,a+\delta_0]}\left(|\psi_k'|^2+{u''-\beta\over u-c_k}|\psi_k|^2\right)dy\right)\\\nonumber
&+\left(\sum_{\text{Case 4}}\int_{[a-\delta_0,a+\delta_0]}\left(|\psi_k'|^2+{u''-\beta\over u-c_k}|\psi_k|^2\right)dy\right)\\
=&II_{1,c_k}(\psi_k)+II_{2,c_k}(\psi_k)+II_{3,c_k}(\psi_k).\nonumber
\end{align}
Here $\delta_0>0$ and Cases 1--4 are defined in the proof of Theorem \ref{eigenvalue asymptotic behavior-bound}. (\ref{singular-point-integrate-estimate}) ensures that $II_{1,c_k}(\psi_k)\geq0$. We infer from (\ref{singular-point-integrate-estimate2}) that $II_{2,c_k}(\psi_k)\geq-\max\limits_{\text{Case 3}}C(\delta_0,a)$. We claim that for  $k>0$ large enough,
\begin{align}\label{estimate-II3ck}
II_{3,c_k}(\psi_k)\geq-\|\psi_{k}\|_{H^1}^2/2.
\end{align}
Assuming  this is true, we now prove (\ref{uniform-H1-bound}). Summing up the above conclusions, there exists  $M>0$  such that
\begin{align*}\int_{y_1}^{y_2}\left(|\psi_k'|^2+{u''-\beta\over u-c_k}|\psi_k|^2\right)dy
=I_{c_k}(\psi_k)+II_{c_k}(\psi_{k})>-M-\|\psi_{k}\|_{H^1}^2/2.
\end{align*}
It follows that
\begin{align*}\int_{y_1}^{y_2}|\psi_k'|^2dy\leq& \int_{y_1}^{y_2}|\psi_k'|^2dy+\alpha^2= \int_{y_1}^{y_2}|\psi_k'|^2dy-\lambda_n(c_k)=\int_{y_1}^{y_2}{\beta-u''\over u-c_k}|\psi_k|^2dy\\
\leq&M+\int_{y_1}^{y_2}(1-{\varepsilon_1\over4})|\psi_k'|^2dy.
\end{align*}
This proves
\begin{align*}\|\psi_k'\|_{L^2(y_1,y_2)}^2\leq {4M\over\varepsilon_1}.
\end{align*}

The rest is to prove (\ref{estimate-II3ck}). We need more accurate estimate than that in Theorem \ref{eigenvalue asymptotic behavior-bound}.
Fix any $a\in(y_1,y_2)$ such that $u(a)=u_{\min}$ and  $\beta-u''(a)\neq0$.
If $u''(a)-\beta>0$,  the proof is trivial.
Next, we consider $u''(a)-\beta<0$.
Since $\beta/u''(a)<{9\over8}$, we can choose $\delta>0$ small enough such that   for $y,z\in[a-\delta,a+\delta]$,
(\ref{def-varepsilon0}) and  (\ref{u property})  hold.
Then for $y\in[a-\delta,a+\delta]$,
\begin{align*}
{u''(y)-\beta\over u(y)-c_k}=&{2(u''(y)-\beta)\over u''(\xi_y)(y-a)^2+2(u_{\min}-c_k)} >
-{1\over(4+\varepsilon_0)\left((y-a)^2+\tilde\tau_k\right)},
\end{align*}
where
\begin{align}\label{def-C2}\tilde\tau_k={2\over C_2}(u_{\min}-c_k)={2\over C_2}\tau_k>0,
\end{align}
 $\xi_y\in(y,a)$ if $y<a$, and $\xi_y\in(a,y)$ if $y>a$.
 Then $\tilde\tau_k\to 0^+$ as $c_k\to u_{\min}^-$.
We first consider the right half-interval $[a,a+\delta]$. Recall that $\varepsilon_1={\varepsilon_0\over 4+\varepsilon_0}$ in (\ref{def-varepsilon1}). Then
\begin{align*}
\int_{a}^{a+\delta}\left((1-\varepsilon_1)|\psi_k'|^2+{u''-\beta\over u-c_k}|\psi_k|^2\right)dy\geq{1\over 4+\varepsilon_0}\int_{a}^{a+\delta}\left({4}|\psi_k'|^2-{1\over (y-a)^2+\tilde\tau_k}|\psi_k|^2\right)dy.
\end{align*}
Direct computation gives
\begin{align}\label{hardy-for-right-internal}
&\int_{a}^{a+\delta}\left|2\psi_k'-{\psi_k\over \sqrt{(y-a)^2+\tilde\tau_k}}\right|^2dy\\
=&\int_{a}^{a+\delta}\left(4|\psi_k'|^2+{|\psi_k|^2\over (y-a)^2+\tilde\tau_k}-{2\over \sqrt{(y-a)^2+\tilde\tau_k}}(|\psi_k|^2)'\right)dy\nonumber\\
=&\int_{a}^{a+\delta}\left(4|\psi_k'|^2+{|\psi_k|^2\over (y-a)^2+\tilde\tau_k}\right)dy-{2|\psi_k|^2\over\sqrt{(y-a)^2+\tilde\tau_k}}\big|_{a}^{a+\delta}\nonumber\\\nonumber
&-
\int_{a}^{a+\delta}{2(y-a)|\psi_k|^2\over((y-a)^2+\tilde\tau_k)^{3\over2}}dy\nonumber\\\nonumber
=&\int_{a}^{a+\delta}\left(4|\psi_k'|^2-{|\psi_k|^2\over (y-a)^2+\tilde\tau_k}\right)dy-{2|\psi_k|^2\over\sqrt{(y-a)^2+\tilde\tau_k}}\big|_{a}^{a+\delta}\\\nonumber
&+
2\int_{a}^{a+\delta}{\tilde\tau_k\over((y-a)^2+\tilde\tau_k)^{3\over2}\left(((y-a)^2+\tilde\tau_k)^{1\over2}+(y-a)\right)}|\psi_k|^2dy.
\end{align}
By Proposition \ref{asymptotic behavior of solutions at a class of singular point}, we have for $y\in(a,a+\delta)$ and $k>0$ large enough,
\begin{align}\label{term2}|\psi_{k}(y)|^2\leq C\tilde\tau_k^{3\over2}\|\psi_{k}\|^2_{L^2(a,a+\sqrt{\tau_k})}+C\tilde\tau_k^{1\over2}
\|\psi'_{k}\|^2_{L^2(a,a+\sqrt{\tau_k})}+3\|\psi_{k}'\|^2_{L^2(a,y)}|y-a|.\end{align}
Then
\begin{align*}
&\int_{a}^{a+\delta}{\tilde\tau_k\over((y-a)^2+\tilde\tau_k)^{3\over2}\left(((y-a)^2+\tilde\tau_k)^{1\over2}+(y-a)\right)}|\psi_k|^2dy\\\nonumber
\leq&C\tilde\tau_k^{1\over2}\int_{a}^{a+\delta}|\psi_k|^2dy+C\tilde\tau_k\|\psi_k'\|^2_{L^2(a,a+\sqrt{\tau_k})}\int_{a}^{a+\delta}{1\over
((y-a)^2+\tilde\tau_k)^{3\over2}}dy\\\nonumber
&+3\tilde\tau_k\int_{a}^{a+\delta}{\|\psi_k'\|^2_{L^2(a,y)}\over((y-a)^2+\tilde\tau_k)^{3\over2}}dy\\\nonumber
\leq &C\tilde\tau_k^{1\over2}\|\psi'_k\|^2_{L^2(y_1,y_2)}+{C\delta\over\sqrt{\delta^2+\tilde\tau_k}}\|\psi_k'\|^2_{L^2(a,a+\sqrt{\tau_k})}+
{3\delta\|\psi_k'\|_{L^2(a,a+\delta)}^2\over\sqrt{\delta^2+\tilde\tau_k}}\\\nonumber
&-3\int_{a}^{a+\delta}{y-a\over\sqrt{(y-a)^2+\tilde\tau_k}}|\psi_k'|^2dy\\\nonumber
\leq & {\varepsilon_1\over24m_{\beta}}\|\psi'_k\|^2_{L^2(y_1,y_2)}+{C\delta\over\sqrt{\delta^2+\tilde\tau_k}}\|\psi_k'\|^2_{L^2(a,a+\sqrt{\tau_k})}\\\nonumber
&+3\int_{a+\sqrt{\tau_k}}^{a+\delta}\left({\delta\over\sqrt{\delta^2+\tilde\tau_k}}-{y-a\over\sqrt{(y-a)^2+\tilde\tau_k}}\right)|\psi_k'|^2dy
\end{align*}
for $k>0$  large enough.
For the second term, we get by Proposition \ref{prop-estimate-phikL2aasqrttauk} that for $k>0$ large enough,
\begin{align*}
{C\delta\over\sqrt{\delta^2+\tilde\tau_k}}\|\psi_k'\|^2_{L^2(a,a+\sqrt{\tau_k})}\leq {\varepsilon_1\over 24m_{\beta}C}\|\psi_k\|^2_{H^1(a-\delta,a+\delta)}\leq {\varepsilon_1\over 24m_{\beta}}\|\psi'_k\|^2_{L^2(y_1,y_2)}.
\end{align*}
By (\ref{def-C2}), $\tau_k={C_2\over 2}\tilde \tau_k$. We choose $C_2>0$ in (\ref{u property}) large enough such that
${\sqrt{C_2/ 2}\over\sqrt{{C_2/ 2}+1}}\geq1-{\varepsilon_1\over72m_{\beta}}$.
 For the third term, we observe that
\begin{align*}
0\leq\left({\delta\over\sqrt{\delta^2+\tilde\tau_k}}-{y-a\over\sqrt{(y-a)^2+\tilde\tau_k}}\right)\leq & {\delta\over\sqrt{\delta^2+\tilde\tau_k}}-
{\sqrt{\tau_k}\over\sqrt{{\tau_k}+\tilde\tau_k}}\\
=&{\delta\over\sqrt{\delta^2+\tilde\tau_k}}-
{\sqrt{C_2/ 2}\over\sqrt{{C_2/ 2}+1}}
\leq{\varepsilon_1\over72m_{\beta}}
\end{align*}
on $[a+\sqrt{\tau_k},a+\delta]$. Thus,
\begin{align*}
3\int_{a+\sqrt{\tau_k}}^{a+\delta}\left({\delta\over\sqrt{\delta^2+\tilde\tau_k}}-{y-a\over\sqrt{(y-a)^2+\tilde\tau_k}}\right)|\psi_k'|^2dy
\leq{\varepsilon_1\over24m_{\beta}}\|\psi'_k\|^2_{L^2(y_1,y_2)}.
\end{align*}
Thus, if $k>0$ is large enough, we have
\begin{align*}\int_{a}^{a+\delta}{\tilde\tau_k\over((y-a)^2+\tilde\tau_k)^{3\over2}\left(((y-a)^2+\tilde\tau_k)^{1\over2}+(y-a)\right)}|\psi_k|^2dy
\leq{\varepsilon_1\over8m_{\beta}}\|\psi_k'\|_{L^2(y_1,y_2)}^2.
\end{align*}
By Propositions \ref{asymptotic behavior of solutions at a class of singular point} and \ref{prop-estimate-phikL2aasqrttauk}, if $k>0$ is large enough,
\begin{align*}
&{|\psi_k(a)|^2\over\sqrt{\tilde\tau_k}}\leq C\tilde\tau_k\|\psi_k\|^2_{L^2(a,a+\sqrt{\tau_k})}+C\|\psi_k'\|^2_{L^2(a,a+\sqrt{\tau_k})} \\\leq& {\varepsilon_1\over16m_{\beta}}\|\psi_k'\|_{L^2(y_1,y_2)}^2+{\varepsilon_1\over16m_{\beta}C}\|\psi_k\|_{H^1(a-\delta,a+\delta)}^2\leq {\varepsilon_1\over8m_{\beta}}\|\psi_k'\|_{L^2(y_1,y_2)}^2.
\end{align*}
and then
\begin{align*}
&\int_{a}^{a+\delta}\left((1-\varepsilon_1)|\psi_k'|^2+{u''-\beta\over u-c_k}|\psi_k|^2\right)dy+{\varepsilon_1\over8m_{\beta}}\|\psi_k'\|^2_{L^2(y_1,y_2)}\\\nonumber
\geq&{1\over 4+\varepsilon_0}\int_{a}^{a+\delta}\left({4}|\psi_k'|^2-{1\over (y-a)^2+\tilde\tau_k}|\psi_k|^2\right)dy+{\varepsilon_1\over8m_{\beta}}\|\psi_k'\|^2_{L^2(y_1,y_2)}\\
\geq&
{1\over4+\varepsilon_0}\left(-{2|\psi_k(a)|^2\over\sqrt{\tilde\tau_k}}-
2\int_{a}^{a+\delta}{\tilde\tau_k\over((y-a)^2+\tilde\tau_k)^{3\over2}\left(((y-a)^2+\tilde\tau_k)^{1\over2}+(y-a)\right)}|\psi_k|^2dy\right)
\nonumber\\\nonumber
&+{\varepsilon_1\over8m_{\beta}}\|\psi_k'\|^2_{L^2(y_1,y_2)}\\\nonumber
\geq&{1\over4+\varepsilon_0}\left(-{\varepsilon_1\over4m_{\beta}}\|\psi_k'\|_{L^2(y_1,y_2)}^2-{\varepsilon_1\over4m_{\beta}}\|\psi_k'\|_{L^2(y_1,y_2)}^2\right)
+{\varepsilon_1\over8m_{\beta}}\|\psi_k'\|^2_{L^2(y_1,y_2)}\geq0.
\end{align*}
The proof of
\begin{align*}
&\int_{a-\delta}^{a}\left((1-\varepsilon_1)|\psi_k'|^2+{u''-\beta\over u-c_k}|\psi_k|^2\right)dy+{\varepsilon_1\over8m_{\beta}}\|\psi_k'\|^2_{L^2(y_1,y_2)}\geq0
\end{align*}
is similar. The major difference is that instead of using
(\ref{hardy-for-right-internal}) as in $[a,a+\delta]$, we need to use
\begin{align*}
&\int_{a-\delta}^{a}\left|2\psi_k'+{\psi_k\over \sqrt{(y-a)^2+\tilde\tau_k}}\right|^2dy\\
=&\int_{a-\delta}^{a}\left(4|\psi_k'|^2+{|\psi_k|^2\over (y-a)^2+\tilde\tau_k}+{2\over \sqrt{(y-a)^2+\tilde\tau_k}}(|\psi_k|^2)'\right)dy\nonumber\\
=&\int_{a-\delta}^{a}\left(4|\psi_k'|^2+{|\psi_k|^2\over (y-a)^2+\tilde\tau_k}\right)dy+{2|\psi_k|^2\over\sqrt{(y-a)^2+\tilde\tau_k}}\big|_{a-\delta}^{a}+
\int_{a-\delta}^{a}{2(y-a)|\psi_k|^2\over((y-a)^2+\tilde\tau_k)^{3\over2}}dy\nonumber\\\nonumber
=&\int_{a-\delta}^{a}\left(4|\psi_k'|^2-{|\psi_k|^2\over (y-a)^2+\tilde\tau_k}\right)dy+{2|\psi_k|^2\over\sqrt{(y-a)^2+\tilde\tau_k}}\big|_{a-\delta}^{a}\\\nonumber
&+
2\int_{a-\delta}^{a}{\tilde\tau_k\over((y-a)^2+\tilde\tau_k)^{3\over2}\left(((y-a)^2+\tilde\tau_k)^{1\over2}-(y-a)\right)}|\psi_k|^2dy.
\end{align*}
This proves (\ref{estimate-II3ck}).
\end{proof}\fi

Following Definition 3.10 in \cite{WZZ}, we call $u_{\min}$ (or $u_{\max}$)  to be an embedding eigenvalue of $\mathcal{R}_{\alpha,\beta}$ if
there exists a nontrivial $\psi\in H_0^1(y_1,y_2)$ such that for any $\varphi\in H_0^1(y_1,y_2)$ and $\text{supp} \ \varphi\subset(y_1,y_2)\setminus\{y\in(y_1,y_2):u(y)=u_{\min}, u''(y)\neq\beta\}$,
$$\int_{y_1}^{y_2}(\psi'\varphi'+\alpha^2\psi\varphi)dy+p.v.\int_{y_1}^{y_2}{(u''-\beta)\psi\varphi\over u-u_{\min}}dy=0.$$
Equivalently, $u_{\min}$ (or $u_{\max}$) is an embedding eigenvalue of the linearized operator of (\ref{Euler equation}) (in  velocity form) defined  on $L^2\times L^2$. In fact, $\vec{v}=(\psi',-i\alpha\psi)\neq0$ is the corresponding eigenfunction.

We are now in a position to prove Theorem \ref{main-result} (1)-(2).

\begin{proof}[Proof of Theorem \ref{main-result} (1)-(2)]
First, we prove Theorem \ref{main-result} (1).
Suppose $\sharp(\sigma_d(\mathcal{R}_{\alpha,\beta})\cap(-\infty,u_{\min}))=\infty$. Then by Theorem \ref{eigenvalue asymptotic behavior-bound}, there exist
$m_{\beta}<n<N_{\beta}$ and $\{c_k\}\subset(-\infty,u_{\min})$ with $c_k\to u_{\min}^-$ such that $\lambda_n(c_k)=-\alpha^2$ is the $n$-th eigenvalue of
 (\ref{eigenfunction equation boundary condition}) with   the $L^2$ normalized eigenfunction $\psi_k$. By the definition of $M_{\beta} $ we have $\alpha^2=-\lambda_n(c_k)\leq M_{\beta}$, which implies the second statement of Theorem \ref{main-result} (1). To prove the first statement, we now assume that $0<\alpha^2\leq M_{\beta}$ and $ 0<\beta<{9\over8}\kappa_+$. By Proposition \ref{uniformH1bound}, up to a subsequence, there exists
$\psi_0\in H_0^1(y_1,y_2)$ such that $\psi_k\rightharpoonup\psi_0$ in $H_0^1(y_1,y_2)$.  Similar to
 (3.28) in \cite{WZZ},
$$\lim_{k\to\infty}\int_{a-\delta}^{a+\delta}{(u''-\beta)\psi_k\varphi\over u-c_k}dy=p.v.\int_{a-\delta}^{a+\delta}{(u''-\beta)\psi_0\varphi\over u-u_{\min}}dy$$
for any $a\in\{u=u_{\min}\}\cap(y_1,y_2)\cap\{u''=\beta\}$ and $\varphi\in H_0^1(a-\delta,a+\delta)$.
Since ${(u''-\beta)\psi_k\varphi\over u-{c_k}}\to{(u''-\beta)\psi_0\varphi\over u-u_{\min}}$ in $C^0_{\text{loc}}((y_1,y_2)\setminus\{u=u_{\min}\})$,   taking limits in
$$\int_{y_1}^{y_2}(\psi_k'\varphi'+\alpha^2\psi_k\varphi)+{(u''-\beta)\psi_k\varphi\over u-c_k}dy=0$$
 for any $\varphi\in H_0^1(y_1,y_2)$ and $\text{supp} \ \varphi\subset(y_1,y_2)\setminus\{y\in(y_1,y_2):u(y)=u_{\min}, u''(y)\neq\beta\}$,
we get
$$\int_{y_1}^{y_2}(\psi_0'\varphi'+\alpha^2\psi_0\varphi)dy+p.v.\int_{y_1}^{y_2}{(u''-\beta)\psi_0\varphi\over u-u_{\min}}dy=0.$$
If $\psi_0$ is nontrivial, $u_{\min}$ is an embedding eigenvalue of $\mathcal{R}_{\alpha,\beta}$, which is a contradiction. Therefore, $\psi_k\rightharpoonup\psi_0\equiv0$ in $H^1(y_1,y_2)$, which contradicts that $\|\psi_k\|_{L^2(y_1,y_2)}=1$, $k\geq1$.
Thus, $\sharp(\sigma_d(\mathcal{R}_{\alpha,\beta})\cap(-\infty,u_{\min}))<\infty$. Theorem \ref{main-result} (2) can be proved similarly.
\end{proof}
\subsection{Rule out oscillation under   ``good" endpoints assumption}
We rule out the oscillation of $\lambda_n(c)$ under  the ``good" endpoints assumption (i.e. one of the conditions (i)--(iii) in Theorem \ref{traveling wave construction}). The statement is given in Theorem \ref{thm-flows good endpoints}. To this end, we need the following  two lemmas.

\begin{lemma}\label{lem1} Let $u\in C^2([y_1,y_2])$, $u(y_1)=u_{\min}$ and $u'(y_1)\neq0$. For fixed $ \gamma\in(0,1/2],$ there exist constants $C>0$ and $\delta\in(0,y_2-y_1)$ such that if $ \delta_1\in(0,\delta],\ z=y_1+\delta_1,\ 0<u_{\min}-c<1,$ $ \phi\in C^2([y_1,z])$ and $\phi''=F$, then
\begin{align}\label{phi1}&|\phi|_{L^{\infty}(z)}\leq C(\delta_1^{\gamma}|(u-c)^{2-\gamma}F|_{L^{\infty}(z)}+|\phi(z)|+\delta_1^{\gamma}|\phi'(z)|),\\& \label{phi2}|(u-c)^{1-\gamma}\phi|_{L^{\infty}(z)}+\delta_1^{\gamma}|(u-c)^{2-2\gamma}\phi'|_{L^{\infty}(z)}\\ \nonumber\leq& C(\delta_1^{\gamma}|(u-c)^{3-2\gamma}F|_{L^{\infty}(z)}+|\phi(z)|+\delta_1^{\gamma}|\phi'(z)|),\\& \label{phi3}|(u-c)^{\gamma-1}\phi|_{L^{\infty}(z)}\\ \leq& \nonumber C(|(u-c)^{\gamma+1}F|_{L^{\infty}(z)}+|(u_{\min}-c)^{\gamma-1}\phi(y_1)|+|\phi(z)|+|\phi'(z)|),\\&\label{phi4}|(u-c)^{-1}\phi|_{L^{\infty}(z)} \leq C(\delta_1^{\gamma}|(u-c)^{1-\gamma}F|_{L^{\infty}(z)}+|(u_{\min}-c)^{-1}\phi(y_1)|+|\phi'(z)|),
\end{align}
where $ |f|_{L^{\infty}(z)}:=\sup_{y\in[y_1,z]}|f(y)|$.
\end{lemma}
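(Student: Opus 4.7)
The plan is to exploit that $u(y_1)=u_{\min}$ with $u'(y_1)\ne 0$ forces $u'(y_1)>0$: setting $c_1:=u'(y_1)/2$ and shrinking $\delta\in(0,y_2-y_1)$ so that $u'\ge c_1$ on $[y_1,y_1+\delta]$, one obtains the two-sided comparison
\[
c_1(t-y_1)\ \le\ u(t)-u_{\min}\ \le\ C(t-y_1),\qquad t\in[y_1,z],
\]
which (since $u_{\min}-c>0$) gives the crucial lower bound $u(t)-c\ge c_1(t-y_1)$ together with the uniform upper bound $u(t)-c\le C$ (because $\delta_1<1$ and $u_{\min}-c<1$).

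For \eqref{phi1}--\eqref{phi2}, I would integrate $\phi''=F$ from $z$ to write
\[
\phi(y)=\phi(z)-(z-y)\phi'(z)+\int_y^z(t-y)F(t)\,dt,\qquad \phi'(y)=\phi'(z)-\int_y^z F(t)\,dt,
\]
extract the weighted sup-norm of $F$, and reduce matters to elementary weighted integrals. Using $u(t)-c\ge c_1(t-y_1)$ and $\gamma>0$, the key estimate
\[
\int_y^z(t-y)(u(t)-c)^{\gamma-2}\,dt\ \le\ c_1^{\gamma-2}\!\int_{y_1}^z(t-y_1)^{\gamma-1}\,dt\ \le\ C\delta_1^\gamma
\]
immediately yields \eqref{phi1}. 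For \eqref{phi2}, the comparison $(u(y)-c)^{1-\gamma}\le(u(t)-c)^{1-\gamma}$ for $y\le t$ (valid since $1-\gamma\ge 0$ and $u$ is increasing) reduces the $\phi$-piece to the same integral, while a change of variables $s=(t-y_1)+(u_{\min}-c)/c_1$ shows that $(u(y)-c)^{2-2\gamma}\int_y^z(u(t)-c)^{2\gamma-3}\,dt\le C$, handling the $\phi'$-piece after multiplication by $\delta_1^\gamma$.

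For \eqref{phi3}--\eqref{phi4}, I would instead integrate from $y_1$ to obtain
\[
\phi(y)=\phi(y_1)+(y-y_1)\phi'(z)-\int_{y_1}^y(t-y_1)F(t)\,dt-(y-y_1)\!\int_y^z F(t)\,dt,
\]
and multiply by $(u(y)-c)^{\gamma-1}$ or $(u(y)-c)^{-1}$. The boundary term is absorbed via the monotonicity $(u(y)-c)^\alpha\le(u_{\min}-c)^\alpha$ for $\alpha<0$; the $\phi'(z)$ term uses the cancellation $(u(y)-c)^{-\sigma}(y-y_1)\le c_1^{-\sigma}(y-y_1)^{1-\sigma}\le C$ for $\sigma\in\{1-\gamma,1\}$; and the two $F$-integrals reduce, via $u-c\ge c_1(t-y_1)$, to explicit power-law integrals such as $\int_{y_1}^y(t-y_1)^{-\gamma}dt$ and $\int_y^z(t-y_1)^{\gamma-1}dt$, each of the correct order.

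The main obstacle is that essentially all intermediate integrals appear to diverge as $u_{\min}-c\to 0^+$; the cancellation depends on carefully matching the sign of each weight exponent ($1-\gamma$, $2-2\gamma$, $\gamma-1$, or $-1$) with the direction of the monotonicity $u(y)\le u(t)$ for $y\le t$, and on the observation that whenever a negative power of $u-c$ appears in the prefactor it is systematically balanced by a factor of $y-y_1$ produced by integrating $\phi'$. The uniform substitution $s=(t-y_1)+(u_{\min}-c)/c_1$ encodes this balance and reduces each estimate to a one-dimensional power-law integral on an interval of length $\le\delta_1+(u_{\min}-c)/c_1$, whose size is controlled by $\delta_1^\gamma$ (or by a constant) in exactly the way required.
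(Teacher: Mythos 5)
Your proof is correct and rests on the same core mechanism as the paper's: integrate $\phi''=F$ with boundary data at $z$ (for \eqref{phi1}--\eqref{phi2}) or at $y_1$ together with $\phi'(z)$ (for \eqref{phi3}--\eqref{phi4}), use $u(t)-c\ge c_1(t-y_1)$ with $c_1=u'(y_1)/2$, and balance each negative power of $u-c$ against a factor of $t-y_1$ or $y-y_1$ according to the sign of the weight exponent. The only presentational difference is that you write out the two-fold integrated Taylor formulas directly and estimate each resulting term by power-law integrals (or by the substitution $s=(t-y_1)+(u_{\min}-c)/c_1$), whereas the paper first establishes three abstract integral estimates (their (4.4)--(4.6)) and four intermediate bounds coupling the weighted sup-norms $A_\mu=|(u-c)^\mu F|_{L^\infty(z)}$ and $B_\mu=|(u-c)^\mu\phi'|_{L^\infty(z)}$, then plugs in specific values of $\mu$; the content of those intermediate bounds is exactly what you obtain by hand. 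One small point worth making explicit in a written version: you implicitly use $\delta_1\le\delta_1^\gamma$ (e.g.\ in bounding $(z-y)|\phi'(z)|$ by $\delta_1^\gamma|\phi'(z)|$), so you should require $\delta\le 1$ in addition to $u'\ge c_1$ on $[y_1,y_1+\delta]$, as the paper effectively does.
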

\begin{proof} Since $0<u_{\min}-c<1$ and $-C\leq u_{\min}\leq u(y)\leq C,$ we have $0<u(y)-c\leq C$ for $y\in [y_1,z].$ Let $A_{\mu}=|(u-c)^{\mu}F|_{L^{\infty}(z)}$
 and $B_{\mu}=|(u-c)^{\mu}\phi'|_{L^{\infty}(z)}$ for $\mu\in\R$. Let $\delta>0$ be small enough such that $u'(y)>{u'(y_1)\over2}>0$ for
 $y\in[y_1,z]\subset[y_1,y_1+\delta]\subset[y_1,y_2].$  Then
 \begin{align*}&\int_y^z(u(s)-c)^{-\mu}ds\leq \frac{2}{u'(y_1)}\int_y^zu'(s)(u(s)-c)^{-\mu}ds=\frac{2(u(s)-c)^{1-\mu}|_{s=z}^{s=y}}{u'(y_1)(\mu-1)}
 \leq\frac{2(u(y)-c)^{1-\mu}}{u'(y_1)(\mu-1)}
\end{align*}
for fixed $\mu>1$ and  $y\in[y_1,z]$, and thus\begin{align}\label{u-c1}
&\int_y^z(u(s)-c)^{-\mu}ds\leq \frac{2(u(y)-c)^{1-\mu}}{u'(y_1)(\mu-1)}\leq C(u(y)-c)^{1-\mu}.
\end{align}Similarly, for fixed $\mu<1$ and $y\in[y_1,z]$,  we have\begin{align}\label{u-c2}
&\int_{y_1}^y(u(s)-c)^{-\mu}ds\leq \frac{2(u(s)-c)^{1-\mu}|_{s=y_1}^{s=y}}{u'(y_1)(1-\mu)}\leq C(u(y)-c)^{1-\mu}.
\end{align}Since   $ u(s)-c\geq u(s)-u_{\min}\geq u'(y_1)(s-y_1)/2$ and $ u(s)-c\geq u(y)-c$ for $y_1\leq y\leq s\leq z,$  we have
for fixed $\mu\geq 1-\gamma,$
\begin{align*}\numberthis\label{u-c3}&\int_y^z(u(s)-c)^{-\mu}ds\leq \int_y^z(u'(y_1)(s-y_1)/2)^{\gamma-1}(u(y)-c)^{1-\gamma-\mu}ds\\ \leq& C(u(y)-c)^{1-\gamma-\mu}\int_y^z(s-y_1)^{\gamma-1}ds\leq C(u(y)-c)^{1-\gamma-\mu}(z-y_1)^{\gamma}\\=&C\delta_1^{\gamma}(u(y)-c)^{1-\gamma-\mu}.
\end{align*} For fixed $\mu>1,$ using \eqref{u-c1} and the definition of $A_{\mu}$, we have for $ y\in[y_1,z]$,
\begin{align*}&|\phi'(y)-\phi'(z)|\leq\int_y^z|\phi''(s)|ds=\int_y^z|F(s)|ds\leq \int_y^z(u(s)-c)^{-\mu}A_{\mu}ds\leq C(u(y)-c)^{1-\mu}A_{\mu},\end{align*}and
\begin{align*}&|\phi'(y)|\leq|\phi'(z)|+ C(u(y)-c)^{1-\mu}A_{\mu},\\&|(u(y)-c)^{\mu-1}\phi'(y)|
\leq(u(y)-c)^{\mu-1}|\phi'(z)|+ CA_{\mu}\leq C|\phi'(z)|+ CA_{\mu}.\end{align*}
Then by the definition of $B_{\mu}$, we have
\begin{align}\label{Bmu}&B_{\mu-1}
=\sup_{y\in[y_1,z]}|(u(y)-c)^{\mu-1}\phi'(y)|\leq CA_{\mu}+C|\phi'(z)| \ \ \text{for fixed}\ \mu>1.\end{align}
Similarly, for fixed $\mu\geq1-\gamma,$ using \eqref{u-c3} and the definition of $A_{\mu}$, we have for $ y\in[y_1,z]$,
\begin{align*}&|\phi(y)-\phi(z)|\leq\int_y^z|\phi'(s)|ds\leq \int_y^z(u(s)-c)^{-\mu}B_{\mu}ds\leq C\delta_1^{\gamma}(u(y)-c)^{1-\gamma-\mu}B_{\mu}.
\end{align*}
This implies
\begin{align}\label{umu}&|(u-c)^{\mu+\gamma-1}\phi|_{L^{\infty}(z)}\leq C\delta_1^{\gamma}B_{\mu}+ C|\phi(z)|\ \ \text{for fixed}\ \mu\geq1-\gamma.\end{align}
Using \eqref{umu} for $\mu=1-\gamma$ and \eqref{Bmu} for $\mu=2-\gamma$, we have\begin{align*}&|\phi|_{L^{\infty}(z)}\leq C\delta_1^{\gamma}B_{1-\gamma}+ C|\phi(z)|\leq C\delta_1^{\gamma}(A_{2-\gamma}+|\phi'(z)|)+ C|\phi(z)|,\end{align*}which implies \eqref{phi1} by recalling the definition of $A_{\mu}$.
Using \eqref{umu} for $\mu=2-2\gamma$ and \eqref{Bmu} for $\mu=3-2\gamma$, we have\begin{align*}&|(u-c)^{1-\gamma}\phi|_{L^{\infty}(z)}+\delta_1^{\gamma}B_{2-2\gamma}\leq C\delta_1^{\gamma}B_{2-2\gamma}+ C|\phi(z)|\leq C\delta_1^{\gamma}(A_{3-2\gamma}+|\phi'(z)|)+ C|\phi(z)|,\end{align*}which implies \eqref{phi2} by recalling the definition of $A_{\mu}$ and $B_{\mu}$.

For fixed $\mu<1,$ using \eqref{u-c2} and the definition of $A_{\mu} $, we have for $y\in[y_1,z]$,
\begin{align*}&|\phi(y)-\phi(y_1)|\leq\int_{y_1}^y|\phi'(s)|ds\leq \int_{y_1}^y(u(s)-c)^{-\mu}B_{\mu}ds\leq C(u(y)-c)^{1-\mu}B_{\mu},
\end{align*}and
\begin{align*}&|\phi(y)|\leq|\phi(y_1)|+ C(u(y)-c)^{1-\mu}B_{\mu},\\&|(u(y)-c)^{\mu-1}\phi(y)|\leq(u(y)-c)^{\mu-1}|\phi(y_1)|+
CB_{\mu}\leq (u_{\min}-c)^{\mu-1}|\phi(y_1)|+ CB_{\mu},\end{align*}
where we used $u(y)-c\geq u_{\min}-c>0$ and $\mu-1<0.$ Thus,
\begin{align}\label{umu1}&|(u-c)^{\mu-1}\phi|_{L^{\infty}(z)}\leq (u_{\min}-c)^{\mu-1}|\phi(y_1)|+ CB_{\mu} \ \ \text{for fixed}\ \mu<1.\end{align}
Using \eqref{umu1} for $\mu=\gamma$ and \eqref{Bmu} for $\mu=1+\gamma$, we have
\begin{align*}|(u-c)^{\gamma-1}\phi|_{L^{\infty}(z)}&\leq (u_{\min}-c)^{\gamma-1}|\phi(y_1)|+ CB_{\gamma}
\\&\leq (u_{\min}-c)^{\gamma-1}|\phi(y_1)|+ C(A_{1+\gamma}+|\phi'(z)|),\end{align*}which implies \eqref{phi3} by recalling the definition of $A_{\mu}$.
Using \eqref{u-c3} for $\mu=1-\gamma$ and the definition of $A_{\mu}$, we have for $y\in[y_1,z]$,
\begin{align*}&|\phi'(y)-\phi'(z)|\leq\int_y^z|\phi''(s)|ds=\int_y^z|F(s)|ds\leq \int_y^z(u(s)-c)^{\gamma-1}A_{1-\gamma}ds\leq C\delta_1^{\gamma}A_{1-\gamma}.\end{align*}
Then by the definition of $B_{\mu}$, we have
\begin{align}\label{Bmu1}&B_{0}
=\sup_{y\in[y_1,z]}|\phi'(y)|\leq C\delta_1^{\gamma}A_{1-\gamma}+C|\phi'(z)| .\end{align}Using \eqref{umu1} for $\mu=0$ and \eqref{Bmu1}, we have
\begin{align*}|(u-c)^{-1}\phi|_{L^{\infty}(z)}&\leq (u_{\min}-c)^{-1}|\phi(y_1)|+ CB_{0}
\leq (u_{\min}-c)^{-1}|\phi(y_1)|+ C(\delta_1^{\gamma}A_{1-\gamma}+|\phi'(z)|),\end{align*}which implies \eqref{phi4} by recalling the definition of $A_{\mu}$.
\end{proof}\begin{lemma}\label{lem2} Let $u\in C^2([y_1,y_2])$, $u(y_1)=u_{\min}$ and $u'(y_1)\neq0$.  For fixed $ \gamma\in(0,1/2],$ there exist constants $C>0$ and $\delta_1>0$ such that if $z=y_1+\delta_1,\ 0<u_{\min}-c<1,$ $ \phi\in C^2([y_1,z])$ and\begin{align}\label{phi5}
-\phi''+{u''-\beta\over u-c}\phi=-\alpha^2\phi-F \ \ \text{on}\ [y_1,z],
\end{align}then the inequalities \eqref{phi1}--\eqref{phi4} are still true.\end{lemma}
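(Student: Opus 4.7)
\textbf{Proof proposal for Lemma \ref{lem2}.} The plan is to reduce Lemma \ref{lem2} to Lemma \ref{lem1} by rewriting \eqref{phi5} as
\[
\phi'' = \widetilde F, \qquad \widetilde F := \frac{u''-\beta}{u-c}\phi + \alpha^2\phi + F,
\]
applying the four inequalities of Lemma \ref{lem1} with $F$ replaced by $\widetilde F$, and then absorbing the extra $\phi$-terms on the right back into the left-hand side. Throughout, I will use that $u \in C^2([y_1,y_2])$ and $0<u_{\min}-c<1$ imply $0 < u-c \le C$ on $[y_1,z]$, together with $|u''-\beta|\le C$.

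First I will prove \eqref{phi1}. From the definition of $\widetilde F$,
\[
|(u-c)^{2-\gamma}\widetilde F|_{L^{\infty}(z)} \le |(u-c)^{2-\gamma}F|_{L^{\infty}(z)} + C|\phi|_{L^{\infty}(z)},
\]
because $(u-c)^{1-\gamma}|u''-\beta|+\alpha^2(u-c)^{2-\gamma}\le C$. Applying \eqref{phi1} of Lemma \ref{lem1} to $\phi$ and $\widetilde F$ gives
\[
|\phi|_{L^{\infty}(z)} \le C\delta_1^{\gamma}|(u-c)^{2-\gamma}F|_{L^{\infty}(z)} + C\delta_1^{\gamma}|\phi|_{L^{\infty}(z)} + C|\phi(z)| + C\delta_1^{\gamma}|\phi'(z)|.
\]
Choosing $\delta_1$ small enough that $C\delta_1^{\gamma}\le\tfrac{1}{2}$ absorbs the $|\phi|$-term on the right and yields \eqref{phi1}. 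The proofs of \eqref{phi2} and \eqref{phi4} follow the same template: bound $|(u-c)^{3-2\gamma}\widetilde F|_{L^{\infty}(z)}\le |(u-c)^{3-2\gamma}F|_{L^{\infty}(z)} + C|(u-c)^{1-\gamma}\phi|_{L^{\infty}(z)}$ (using $(u-c)^{2-2\gamma}\phi=(u-c)^{1-\gamma}\cdot(u-c)^{1-\gamma}\phi$), apply \eqref{phi2} of Lemma \ref{lem1}, and absorb the $\delta_1^{\gamma}|(u-c)^{1-\gamma}\phi|_{L^{\infty}(z)}$ term; analogously, $|(u-c)^{1-\gamma}\widetilde F|_{L^{\infty}(z)}\le|(u-c)^{1-\gamma}F|_{L^{\infty}(z)} + C|(u-c)^{-1}\phi|_{L^{\infty}(z)}$ feeds \eqref{phi4} of Lemma \ref{lem1} and the extra term is absorbed by the $C\delta_1^{\gamma}$ prefactor.

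The hard part is \eqref{phi3}, because on the right-hand side the $F$-term carries no $\delta_1^{\gamma}$ factor, so direct absorption fails. The plan is to use \eqref{phi1} (which we already have) to control $|\phi|_{L^{\infty}(z)}$, and then estimate
\[
|(u-c)^{\gamma+1}\widetilde F|_{L^{\infty}(z)}
\le |(u-c)^{\gamma+1}F|_{L^{\infty}(z)} + C|\phi|_{L^{\infty}(z)},
\]
since $(u-c)^{\gamma}|u''-\beta|+\alpha^2(u-c)^{\gamma+1}\le C$. Because $\gamma+1\le 2-\gamma$ and $u-c$ is bounded, $|(u-c)^{2-\gamma}F|_{L^{\infty}(z)}\le C|(u-c)^{\gamma+1}F|_{L^{\infty}(z)}$, and the already proven \eqref{phi1} yields
\[
|\phi|_{L^{\infty}(z)} \le C|(u-c)^{\gamma+1}F|_{L^{\infty}(z)} + C|\phi(z)| + C|\phi'(z)|.
\]
Plugging this into \eqref{phi3} of Lemma \ref{lem1} (applied to $\widetilde F$) gives \eqref{phi3}, with the extra $|(u_{\min}-c)^{\gamma-1}\phi(y_1)|$ on the right left untouched.

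The main obstacle is indeed \eqref{phi3}: the absence of a small prefactor in Lemma \ref{lem1}'s estimate means that straightforward absorption does not work, and one must combine the weaker pointwise control of $\phi$ coming from \eqref{phi1} with the comparison $|(u-c)^{2-\gamma}F|\le C|(u-c)^{\gamma+1}F|$ valid for $\gamma\in(0,1/2]$. The remaining work is the careful bookkeeping of constants so that a single choice of $\delta_1$ (depending only on $\gamma$ and $u$) works simultaneously for all four inequalities.
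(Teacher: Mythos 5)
Your proof is correct and follows essentially the same route as the paper: define $\widetilde F = \frac{u''-\beta}{u-c}\phi + \alpha^2\phi + F$, apply Lemma \ref{lem1} to $\phi'' = \widetilde F$, bound $|(u-c)^{\mu}\widetilde F|_{L^\infty(z)}$ by $|(u-c)^{\mu}F|_{L^\infty(z)} + C|(u-c)^{\mu-1}\phi|_{L^\infty(z)}$, absorb the extra $\phi$-term for \eqref{phi1}, \eqref{phi2}, \eqref{phi4} by taking $\delta_1$ small, and for \eqref{phi3} use $\gamma+1\leq 2-\gamma$ together with the already-established \eqref{phi1} to control $|\phi|_{L^\infty(z)}$. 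Your observation that \eqref{phi3} is the delicate one because its $F$-term carries no small prefactor, and that \eqref{phi1} plus the exponent comparison is the correct fix, matches the paper's argument exactly.
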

\begin{proof}Let $ \widetilde{F}={u''-\beta\over u-c}\phi+\alpha^2\phi+F$ and $\delta_1\in(0,\delta]$ be given in Lemma \ref{lem1}. Then $\phi''=\widetilde{F}$ on $[y_1,z]$. By Lemma \ref{lem1}, \eqref{phi1}--\eqref{phi4} are still true with $F$ replaced by $\widetilde{F}$. As $|u''-\beta|\leq C$ and $|u-c|\leq C$, we have $ |F-\widetilde{F}|\leq C|\phi/(u-c)|$ for $y\in[y_1,z]$. Thus, for $\mu\in\R$, we have \begin{align}\label{mu1}|(u-c)^{\mu}\widetilde{F}|_{L^{\infty}(z)}&\leq |(u-c)^{\mu}{F}|_{L^{\infty}(z)}+|(u-c)^{\mu}(F-\widetilde{F})|_{L^{\infty}(z)}\\ \nonumber&\leq |(u-c)^{\mu}{F}|_{L^{\infty}(z)}+C|(u-c)^{\mu-1}\phi|_{L^{\infty}(z)}.
\end{align}Using \eqref{phi1} with $F$ replaced by $\widetilde{F}$ and \eqref{mu1} for $\mu=2-\gamma$, we have\begin{align}\label{phi-f1}&|\phi|_{L^{\infty}(z)}\leq C(\delta_1^{\gamma}|(u-c)^{2-\gamma}\widetilde{F}|_{L^{\infty}(z)}+|\phi(z)|+\delta_1^{\gamma}|\phi'(z)|)\\ \nonumber \leq& C\delta_1^{\gamma}|(u-c)^{1-\gamma}\phi|_{L^{\infty}(z)}+C(\delta_1^{\gamma}|(u-c)^{2-\gamma}{F}|_{L^{\infty}(z)}+|\phi(z)|+\delta_1^{\gamma}|\phi'(z)|)\\
\nonumber \leq& C_1\delta_1^{\gamma}|\phi|_{L^{\infty}(z)}+C(\delta_1^{\gamma}|(u-c)^{2-\gamma}{F}|_{L^{\infty}(z)}+|\phi(z)|+\delta_1^{\gamma}|\phi'(z)|).
\end{align}Using \eqref{phi2} with $F$ replaced by $\widetilde{F}$ and \eqref{mu1} for $\mu=3-2\gamma$, we have\begin{align}\label{phi-f2}&|(u-c)^{1-\gamma}\phi|_{L^{\infty}(z)}+\delta_1^{\gamma}|(u-c)^{2-2\gamma}\phi'|_{L^{\infty}(z)}\\ \nonumber\leq& C(\delta_1^{\gamma}|(u-c)^{3-2\gamma}\widetilde{F}|_{L^{\infty}(z)}+|\phi(z)|+\delta_1^{\gamma}|\phi'(z)|)\\ \nonumber \leq& C(\delta_1^{\gamma}|(u-c)^{3-2\gamma}F|_{L^{\infty}(z)}+|\phi(z)|+\delta_1^{\gamma}|\phi'(z)|)+C\delta_1^{\gamma}|(u-c)^{2-2\gamma}\phi|_{L^{\infty}(z)}
\\ \nonumber \leq& C(\delta_1^{\gamma}|(u-c)^{3-2\gamma}F|_{L^{\infty}(z)}+|\phi(z)|+\delta_1^{\gamma}|\phi'(z)|)+
C_1\delta_1^{\gamma}|(u-c)^{1-\gamma}\phi|_{L^{\infty}(z)}.
\end{align}Using \eqref{phi4} with $F$ replaced by $\widetilde{F}$ and \eqref{mu1} for $\mu=1-\gamma$, we have\begin{align}\label{phi-f3}&|(u-c)^{-1}\phi|_{L^{\infty}(z)} \leq C(\delta_1^{\gamma}|(u-c)^{1-\gamma}\widetilde{F}|_{L^{\infty}(z)}+|(u_{\min}-c)^{-1}\phi(y_1)|+|\phi'(z)|)\\ \nonumber \leq& C(\delta_1^{\gamma}|(u-c)^{1-\gamma}F|_{L^{\infty}(z)}+|(u_{\min}-c)^{-1}\phi(y_1)|+|\phi'(z)|)
+C\delta_1^{\gamma}|(u-c)^{-\gamma}\phi|_{L^{\infty}(z)}\\ \nonumber \leq& C(\delta_1^{\gamma}|(u-c)^{1-\gamma}F|_{L^{\infty}(z)}+|(u_{\min}-c)^{-1}\phi(y_1)|+|\phi'(z)|)
+C_1\delta_1^{\gamma}|(u-c)^{-1}\phi|_{L^{\infty}(z)}.
\end{align}Here, $C_1>0$ is a constant depending only on $ \gamma,\ \alpha,\ \beta,\ u,\ \delta$ (and independent of $ \delta_1$). Taking $ \delta_1\in(0,\delta]$ small enough such that $C_1\delta_1^{\gamma}\leq 1/2$ in \eqref{phi-f1}--\eqref{phi-f3}, we obtain \eqref{phi1}, \eqref{phi2} and \eqref{phi4}.

Note that $\gamma>0$ and $2-\gamma\geq \gamma+1.$
Using \eqref{mu1} for $\mu=\gamma+1$ and \eqref{phi1}, we have\begin{align*}\numberthis\label{F1}& |(u-c)^{\gamma+1}\widetilde{F}|_{L^{\infty}(z)}\leq |(u-c)^{\gamma+1}F|_{L^{\infty}(z)}+C|(u-c)^{\gamma}\phi|_{L^{\infty}(z)}\\ \leq& |(u-c)^{\gamma+1}F|_{L^{\infty}(z)}+C|\phi|_{L^{\infty}(z)}\\ \leq& |(u-c)^{\gamma+1}F|_{L^{\infty}(z)}+C(\delta_1^{\gamma}|(u-c)^{2-\gamma}F|_{L^{\infty}(z)}+|\phi(z)|+\delta_1^{\gamma}|\phi'(z)|)\\ \leq& C(|(u-c)^{\gamma+1}F|_{L^{\infty}(z)}+|\phi(z)|+|\phi'(z)|).
\end{align*}Using \eqref{phi3} with $F$ replaced by $\widetilde{F}$ and \eqref{F1}, we have
\begin{align*}&|(u-c)^{\gamma-1}\phi|_{L^{\infty}(z)}
\\ \leq& C(|(u-c)^{\gamma+1}\widetilde{F}|_{L^{\infty}(z)}+|(u_{\min}-c)^{\gamma-1}\phi(y_1)|+|\phi(z)|+|\phi'(z)|)
\\ \leq& C(|(u-c)^{\gamma+1}F|_{L^{\infty}(z)}+|(u_{\min}-c)^{\gamma-1}\phi(y_1)|+|\phi(z)|+|\phi'(z)|).
\end{align*}Thus, \eqref{phi3} is also true.
\end{proof}

\if Let $u$ satisfy ${\bf (H1)}$ and  be analytic at two endpoints. First,
we prove that $\kappa_{\pm}=\pm\infty$ if and only if $\sharp(\sigma_d(\mathcal{R}_{\alpha,\beta})\cap\mathbf{R})<\infty$ for all $\beta\in\mathbf{R}^{\pm}$, where $\kappa_{\pm}$ are defined in \eqref{def-kappa+} and \eqref{def-kappa-}. Here the assumption that $u_{\min}$ or $u_{\max}$ is not an embedding eigenvalue of $\mathcal{R}_{\alpha,\beta}$ for small $\alpha>0$ is dropped.\fi


\if0{\bf Question:} Let $u$ take the form  $u(y)=a(y-y_i)^2+b(y-y_i)+c$ near the endpoint  $y_i$, where $b\neq0$. The condition that
``the order of the endpoint  $y_i$ as a zero of $u''=\beta_i$ is finite" can not cover these flows. However, these flows can be covered in the condition that
``$u$ is analytic at the endpoints." This exception makes the ``finite-order zero" condition not weaken the ``local analytic" condition completely.
Shall we add these flows and its proof in Theorem \ref{thm-flows good endpoints} ? After adding them, the condition in
Theorem \ref{thm-flows good endpoints} weakens the ``local analytic" condition.\fi

We are now in a position to prove Theorem \ref{thm-flows good endpoints}.
\begin{proof} [Proof of Theorem \ref{thm-flows good endpoints}] We only prove (1), and the proof of (2) is similar. If $\{u'=0\}\cap\{u=u_{\min}\}\neq\emptyset$, then
$0<\kappa_+<\infty$. By Theorem \ref{main-result} (3), $\sharp(\sigma_d(\mathcal{R}_{\alpha,\beta})\cap\mathbf{R})=\infty$ for $\beta>{9\over 8}\kappa_+$.
If $\{u'=0\}\cap\{u=u_{\min}\}=\emptyset$, then $\{u=u_{\min}\}\subset\{y_1,y_2\}$ (i.e. $u=u_{\min}$ can be achieved only at the endpoints).
We assume that $u(y_1)=u_{\min}$. Then $u(y_2)>u(y_1)$ and $u'(y_1)>0$. By taking $\delta\in(0,y_2-y_1)$ smaller, we can assume that $u'>{u'(y_1)\over2}$ on $y\in[y_1,y_1+\delta]$.
Let $\psi_c, c\in\mathbf{C}\setminus\text{Ran}(u)$, be the solution of
\begin{align}\label{orginalRayleigh-Kuo}
-\partial_y^2\psi_c+{u''-\beta\over u-c}\psi_c=-\alpha^2\psi_c\ \ \text{on} \ \ [y_1,y_2],\ \psi_c(y_2)=0,\ \partial_y\psi_c(y_2)=1.
\end{align} Note that for $c\in(-\infty,u_{\min})$,
 $c\in\sigma_d(\mathcal{R}_{\alpha,\beta})$ if and only if $\psi_c(y_1)=0$.  Suppose that $\sigma_d(\mathcal{R}_{\alpha,\beta})\cap(-\infty,u_{\min})=\{c_k\}_{k=1}^{\infty}$.
 Then $c_k\to u_{\min}^-$. Note that if (iii) is true for $i=1$, then $\beta_1\leq0$ and thus $\beta\neq\beta_1$ for all $\beta>0$. So we divide the discussion into two cases.

Case 1. $\beta=\beta_1$ and (i) holds for $i=1$.

Case 2.  $\beta=\beta_1$ and (ii) holds for $i=1$; or $\beta\neq\beta_1$.



 If Case 1 is true,  then $u''-\beta=0 $ on $ [y_1,y_1+\delta]$. By \eqref{orginalRayleigh-Kuo}, $\psi_c $ can be extended to an analytic function in $\mathbf{C}\setminus u([y_1+\delta,y_2]).$ Since $u_{\min}\not\in u([y_1+\delta,y_2]),$  $\psi_c(y_1)$ has a finite number of zeros in a neighborhood of $c=u_{\min}$, which contradicts that $\sharp(\sigma_d(\mathcal{R}_{\alpha,\beta})\cap(-\infty,u_{\min}))={\infty}$.

 Now, we assume Case 2 is true. If $\beta\neq\beta_1 $, define $m=0;$ if $\beta=\beta_1 $ and (ii) is true, define $m=m_1.$ Then $m\geq 0,$ by taking $\delta>0$ smaller, we can assume that \begin{align}\label{u''}C^{-1}|y-y_1|^{m}\leq |u''(y)-\beta|\leq C|y-y_1|^{m}\quad \text{for}\quad y\in[y_1,y_1+\delta].\end{align}
 As $|u''(y)-\beta|/|y-y_1|^{m}\in C([y_1+\delta,y_2]),$ $|u''(y)-\beta|\leq C|y-y_1|^{m} $ is also true for $ y\in[y_1+\delta,y_2]$.
 Since $u\in C^1([y_1,y_2])$, we have 
 $ u(y)-u_{\min}=u(y)-u(y_1)=\int_{y_1}^yu'(z)dz=(y-y_1)v(y)$, here $v(y)=\int_0^1u'(y_1+s(y-y_1))ds$ and $v\in C([y_1,y_2])$. If $y\in(y_1,y_2]$, we have
 $u(y)>u_{\min}$ and $ v(y)>0$. If $y=y_1$, we have $v(y)=u'(y_1)>0$. Thus, $v(y)>0$ in $ [y_1,y_2]$  and there exists a constant $C>1$ such that
 $C^{-1}\leq v(y)\leq C,$ which implies \begin{align}\label{umin}C^{-1}|y-y_1|\leq u(y)-u_{\min}\leq C|y-y_1|,\end{align} 
 \begin{align}\label{uminc}C^{-1}|y-y_1|\leq u(y)-c\quad \text{for}\quad c\leq u_{\min}.\end{align}

 Let $n\in\mathbf{N}$ and $\psi_{c,n}=\partial_c^n\psi_{c}.$ By Rolle's Theorem, there exists $\{c_{k,n}\}_{k=1}^{\infty}\subset(-\infty,u_{\min})$ such that $\psi_{c_{k,n},n}(y_1)=0 $ and $c_{k,n}\to u_{\min}^-$ as $k\to\infty$. For fixed $c<u_{\min},$ let $k>0$ be large enough such that $c_{k,n}\in(c,u_{\min}).$ Then $\psi_{c,n}(y_1)=\int_{c_{k,n}}^c\partial_s\psi_{s,n}(y_1)ds=\int_{c_{k,n}}^c\psi_{s,n+1}(y_1)ds,$ and\begin{align}\label{ckn}
|\psi_{c,n}(y_1)|\leq\int_c^{c_{k,n}}|\psi_{s,n+1}(y_1)|ds\leq \int_c^{u_{\min}}|\psi_{s,n+1}(y_1)|ds.
\end{align}
Moreover, $\psi_{c,n} $ satisfies \begin{align}\label{orginalpsi_{c,n}}
-\partial_y^2\psi_{c,n}+{u''-\beta\over u-c}\psi_{c,n}=-\alpha^2\psi_{c,n}-F_{c,n},\quad F_{c,n}=\sum_{k=1}^n{n!\over (n-k)!}{u''-\beta\over (u-c)^{k+1}}\psi_{c,n-k},
\end{align}
where $\psi_{c,0}=\psi_{c}$ and $  F_{c,0}=0$.
Note that $\psi_c(y) $ is continuous on $(\mathbf{C}\setminus\text{Ran}(u))\times[y_1,y_2] $, and analytic in $c$.  Let $u_+(y)=\inf u([y,y_2])$. Then $\psi_c(y) $ can be extended to a continuous function on $D_1:=\{(c,y):c< u_+(y),\ y\in[y_1,y_2]\},$ still satisfying \eqref{orginalRayleigh-Kuo} in $D_1.$ Moreover, $ u_+$ is increasing and continuous on $[y_1,y_2],$ $u_+(y_1)=u_{\min}$ and $u_+(y)>u_{\min}$ for $ y\in(y_1,y_2].$ By standard theory of ODE, $\partial_c^n\psi_{c}=\psi_{c,n}\in C^2(D_1)$ and $\psi_{c,n}|_{D_1} $ is real-valued   for $n\in\mathbf{N}$. Using this extension, $\psi_c|_{c=u_{\min}}$ is well-defined and satisfies \eqref{orginalRayleigh-Kuo} for $y\in(y_1,y_2].$
For fixed $ \delta_1\in(0,\delta]$ and $n\in\mathbf{N}$, we have\begin{align}\label{psicn}
|\psi_{c,n}(y)|+|\partial_y\psi_{c,n}(y)|\leq C(n,\delta_1)\ \text{for}\ y\in[y_1+\delta_1,y_2]\text{ and } 0\leq u_{\min}-c\leq1,
\end{align}
since a continuous function is bounded in a compact set. Let  $m_0\in\Z$ be such that $ m_0-1<m\leq m_0$ and $ \gamma=(m+1-m_0)/2$. Then $m_0\geq 0$
and $\gamma\in(0,1/2].$ We claim that the following uniform bounds\begin{align}\label{uniformsi_{c,n}}
|\psi_{c,n}|\leq C,\ |\psi_{c,0}|\leq C|u-c|,\ |\psi_{c,1}|\leq C|u-c|^{1-\gamma},\ |\partial_y\psi_{c,m_0+2}|\leq C|u-c|^{2\gamma-2}
\end{align}
hold {for} $0<u_{\min}-c<1,\ y\in[y_1,y_2]$ and  $n\in\Z\cap[0,m_0+1].$ Assume that the uniform bounds \eqref{uniformsi_{c,n}} are true, which will be verified later.  Let $W_{c,n}=\partial_y\psi_{c,n}\psi_{c}-\partial_y\psi_{c}\psi_{c,n}.$
Then we get by \eqref{orginalpsi_{c,n}} that $\partial_yW_{c,n}=F_{c,n}\psi_{c}. $ By  \eqref{uniformsi_{c,n}} and using $ u(y_1)=u_{\min}$, we have for $0<u_{\min}-c<1$,\begin{align*}
|\psi_{c}(y_1)|\leq \int_c^{u_{\min}}|\psi_{s,1}(y_1)|ds\leq C\int_c^{u_{\min}}|u_{\min}-s|^{1-\gamma}ds\leq C|u_{\min}-c|^{2-\gamma},
\end{align*}
and thus \begin{align*}
|\partial_y\psi_{c,m_0+2}\psi_{c}|(y_1)=|\partial_y\psi_{c,m_0+2}(y_1)||\psi_{c}(y_1)|\leq C|u_{\min}-c|^{2\gamma-2}|u_{\min}-c|^{2-\gamma}=C|u_{\min}-c|^{\gamma},
\end{align*}
which implies $\lim\limits_{c\to u_{\min}^-}\partial_y\psi_{c,m_0+2}\psi_{c}(y_1)=0.$ Since $\psi_{c_{k,m_0+2},m_0+2}(y_1)=0 $ for $k\geq1$ and $c_{k,m_0+2}\to u_{\min}^-$,
we have $\liminf\limits_{c\to u_{\min}^-}|\partial_y\psi_{c}\psi_{c,m_0+2}|(y_1)=0,$ and thus $\liminf\limits_{c\to u_{\min}^-}|W_{c,m_0+2}|(y_1)=0.$

Since $\psi_c(y_2)=0,\ \partial_y\psi_c(y_2)=1$ and recall that $\psi_{c,n}=\partial_c^n\psi_{c}, $ we have $\psi_{c,n}(y_2)=0,\ \partial_y\psi_{c,n}$ $(y_2)=0 $
and $W_{c,n}(y_2)=0 $ for $n>0.$ Thus, $-W_{c,n}(y_1)=\int_{y_1}^{y_2}\partial_yW_{c,n}(y)dy=\int_{y_1}^{y_2}F_{c,n}\psi_{c}(y)dy.$ Note that\begin{align*}
&F_{c,n}-n!{u''-\beta\over (u-c)^{n+1}}\psi_{c}=\sum_{k=1}^{n-1}{n!\over (n-k)!}{u''-\beta\over (u-c)^{k+1}}\psi_{c,n-k}.\end{align*}
If $n\in\Z\cap[2,m_0+2]$, by \eqref{uniformsi_{c,n}} and \eqref{u''}, we have for $0<u_{\min}-c<1$ and $ y\in[y_1,y_2]$,\begin{align*}&\left|F_{c,n}-{n!(u''-\beta)\psi_{c}\over (u-c)^{n+1}}\right|\leq\sum_{k=1}^{n-1}{n!\over (n-k)!}{|u''-\beta|\over (u-c)^{k+1}}|\psi_{c,n-k}|\\ \leq&C\sum_{k=1}^{n-2}{|y-y_1|^m\over (u-c)^{k+1}}+C{|y-y_1|^m|u-c|^{1-\gamma}\over (u-c)^{n}}\leq C{|y-y_1|^m\over |u-c|^{n-1+\gamma}},\end{align*}\begin{align*}&\left|F_{c,n}\psi_{c}-{n!(u''-\beta)\psi_{c}^2\over (u-c)^{n+1}}\right|\leq C{|y-y_1|^m|\psi_{c}|\over |u-c|^{n-1+\gamma}}\leq C{|y-y_1|^m|u-c|\over |u-c|^{n-1+\gamma}}=C{|y-y_1|^m\over |u-c|^{n-2+\gamma}},
\end{align*}
and thus using $m-m_0=2\gamma-1$ and \eqref{uminc}, we have
\begin{align*}&\left|F_{c,m_0+2}\psi_{c}-{(m_0+2)!(u''-\beta)\psi_{c}^2\over (u-c)^{m_0+3}}\right|\leq C{|y-y_1|^m\over |u-c|^{m_0+\gamma}}\leq C{|y-y_1|^m\over |y-y_1|^{m_0+\gamma}}= C|y-y_1|^{\gamma-1}.
\end{align*}
 Integrating it on $[y_1,y_2]$ and using $-W_{c,n}(y_1)=\int_{y_1}^{y_2}F_{c,n}\psi_{c}(y)dy$, we have for $0<u_{\min}-c<1$, \begin{align*}\left|\int_{y_1}^{y_2}{(m_0+2)!(u''-\beta)\psi_{c}^2\over (u-c)^{m_0+3}}dy\right|&\leq \left|\int_{y_1}^{y_2}F_{c,m_0+2}\psi_{c}dy\right|+ C\int_{y_1}^{y_2}|y-y_1|^{\gamma-1}dy\\&\leq |W_{c,m_0+2}|(y_1)+C,
\end{align*}
and as $\liminf\limits_{c\to u_{\min}^-}|W_{c,m_0+2}|(y_1)=0$, we have\begin{align*}&\liminf\limits_{c\to u_{\min}^-}\left|\int_{y_1}^{y_2}{(m_0+2)!(u''-\beta)\psi_{c}^2\over (u-c)^{m_0+3}}dy\right|\leq C,&\liminf\limits_{c\to u_{\min}^-}\left|\int_{y_1}^{y_2}{(u''-\beta)\psi_{c}^2\over (u-c)^{m_0+3}}dy\right|\leq C.
\end{align*}
Since $u''-\beta $ is continuous and real-valued, and $C^{-1}|y-y_1|^{m}\leq |u''(y)-\beta|$,  it does not change sign on
$y\in[y_1,y_1+\delta]$. Then for $0<u_{\min}-c<1$,
\begin{align*}&\left|\int_{y_1}^{y_1+\delta}{(u''-\beta)\psi_{c}^2\over (u-c)^{m_0+3}}dy\right|
=\int_{y_1}^{y_1+\delta}{|u''-\beta|\psi_{c}^2\over (u-c)^{m_0+3}}dy\geq C^{-1}\int_{y_1}^{y_1+\delta}{|y-y_1|^{m}\psi_{c}^2\over (u-c)^{m_0+3}}dy.\end{align*}
As $|\psi_{c}|\leq C,\ |u''-\beta|\leq C$, $ u-c\geq u-u_{\min}\geq C^{-1}$ for $ y\in[y_1+\delta,y_2],$ and $0<u_{\min}-c<1$, we have \begin{align*}&\left|\int_{y_1+\delta}^{y_2}{(u''-\beta)\psi_{c}^2\over (u-c)^{m_0+3}}dy\right|
\leq\int_{y_1+\delta}^{y_2}Cdy\leq C.\end{align*}Thus,\begin{align*}&
\liminf\limits_{c\to u_{\min}^-}\int_{y_1}^{y_1+\delta}{|y-y_1|^{m}\psi_{c}^2\over (u-c)^{m_0+3}}dy\leq
C\liminf\limits_{c\to u_{\min}^-}\left|\int_{y_1}^{y_1+\delta}{(u''-\beta)\psi_{c}^2\over (u-c)^{m_0+3}}dy\right|\\ \leq&
C\liminf\limits_{c\to u_{\min}^-}\left|\int_{y_1}^{y_2}{(u''-\beta)\psi_{c}^2\over (u-c)^{m_0+3}}dy\right|+
C\limsup\limits_{c\to u_{\min}^-}\left|\int_{y_1+\delta}^{y_2}{(u''-\beta)\psi_{c}^2\over (u-c)^{m_0+3}}dy\right|\leq C.\end{align*}
Since $\psi_{c}\in C(D_1)$, we have for fixed $y\in(y_1,y_1+\delta]$,\begin{align*}&
\lim\limits_{c\to u_{\min}^-}\psi_{c}(y)=\psi_{u_{\min}}(y),&\lim\limits_{c\to u_{\min}^-}{|y-y_1|^{m}\psi_{c}^2\over (u-c)^{m_0+3}}
={|y-y_1|^{m}\psi_{u_{\min}}^2\over (u-u_{\min})^{m_0+3}}.\end{align*}Thus, by Fatou's Lemma, we have\begin{align*}&
\int_{y_1}^{y_1+\delta}{|y-y_1|^{m}\psi_{u_{\min}}^2\over (u-u_{\min})^{m_0+3}}dy=
\int_{y_1}^{y_1+\delta}\!\!\lim\limits_{c\to u_{\min}^-}{|y-y_1|^{m}\psi_{c}^2\over (u-c)^{m_0+3}}dy\leq
\liminf\limits_{c\to u_{\min}^-}\int_{y_1}^{y_1+\delta}{|y-y_1|^{m}\psi_{c}^2\over (u-c)^{m_0+3}}dy\leq C.\end{align*}
By \eqref{umin}, we have\begin{align*}&
{|y-y_1|^{m}\psi_{u_{\min}}^2\over (u-u_{\min})^{m_0+3}}\geq {|y-y_1|^{m}\psi_{u_{\min}}^2\over (C(y-y_1))^{m_0+3}}
\geq {C^{-1}\psi_{u_{\min}}^2\over |y-y_1|^{m_0-m+3}}\geq {C^{-1}\psi_{u_{\min}}^2\over |y-y_1|^{3}}\end{align*}
for $y\in (y_1,y_1+\delta]$, where we used $m\leq m_0$. Thus,\begin{align*}&
\int_{y_1}^{y_1+\delta}{\psi_{u_{\min}}^2\over |y-y_1|^{3}}dy\leq C\int_{y_1}^{y_1+\delta}{|y-y_1|^{m}\psi_{u_{\min}}^2\over (u-u_{\min})^{m_0+3}}dy\leq C.\end{align*}
Now we take $ \varphi=\psi_{u_{\min}}$. Then $\varphi $ is real-valued  and  for $y\in(y_1,y_2]$, it satisfies \begin{align}\label{eq:phi}&
-\varphi''+{u''-\beta\over u-u_{\min}}\varphi=-\alpha^2\varphi,\ \varphi(y_2)=0,\ \varphi'(y_2)=1,\
\int_{y_1}^{y_1+\delta}{\varphi^2\over |y-y_1|^{3}}dy\leq C.\end{align}Thus, $\varphi,\varphi/|y-y_1|\in L^2(y_1,y_1+\delta). $ By \eqref{umin}, we have  for $y\in (y_1,y_1+\delta]$,\begin{align*}&
\left|{u''-\beta\over u-u_{\min}}\varphi\right|\leq {C|\varphi|\over u-u_{\min}}\leq {C|\varphi|\over |y-y_1|},\quad
{u''-\beta\over u-u_{\min}}\varphi\in L^2(y_1,y_1+\delta).\end{align*}Thus, $\varphi''\in L^2(y_1,y_1+\delta),\ \varphi\in H^2(y_1,y_1+\delta)
$ and $ \varphi\in C^1([y_1,y_1+\delta])$ by defining $ \varphi(y_1)=\lim_{y\to y_1^+}\varphi(y)$. If $ \varphi(y_1)\neq 0,$
then there exists $ \delta_1\in(0,\delta]$ such that $ |\varphi(y)|\geq |\varphi(y_1)|/2\geq C^{-1}|y-y_1|$ for $y\in (y_1,y_1+\delta_1]$. If $\varphi(y_1)=0$ and $ \varphi'(y_1)\neq 0,$
then there exists $ \delta_1\in(0,\delta]$ such that $ |\varphi'(y)|\geq |\varphi'(y_1)|/2$ for $y\in (y_1,y_1+\delta_1]$, and
$ |\varphi(y)|=|\varphi(y)-\varphi(y_1)|=|(y-y_1)\varphi'(\xi_y)|\geq |y-y_1||\varphi'(y_1)|/2$ for $\xi_y\in(y_1,y)$ and $y\in (y_1,y_1+\delta_1]$. Therefore,  if $\varphi(y_1)\neq 0$ or $ \varphi'(y_1)\neq 0,$
then there exists $ \delta_1\in(0,\delta]$ and $ C>0$ such that
$ |\varphi(y)|\geq C^{-1}|y-y_1|$ for $y\in (y_1,y_1+\delta_1]$, and\begin{align*}&
\int_{y_1}^{y_1+\delta}{\varphi^2\over |y-y_1|^{3}}dy\geq
C^{-2}\int_{y_1}^{y_1+\delta_1}{|y-y_1|^2\over |y-y_1|^{3}}dy=+\infty,\end{align*}which contradicts \eqref{eq:phi}.
Thus, we must have $\varphi(y_1)=\varphi'(y_1)= 0.$ Then by the proof of Lemma 3 in \cite{LYZ}, we have
 $\varphi\equiv0$ on $[y_1,y_2]$, which contradicts $\varphi'(y_2)=1 $. This proves (1)  for Case 2.

It remains to prove
 \eqref{uniformsi_{c,n}}.
Let $ \delta_1\in(0,\delta]$ be fixed such that Lemma \ref{lem2} is true. Recall that $z=y_1+\delta_1$ and $|f|_{L^{\infty}(z)}=\sup_{y\in[y_1,z]}|f(y)|. $
 By \eqref{psicn} we know that \eqref{uniformsi_{c,n}} is true for $y\in[z,y_2]$. Now we assume that $y\in[y_1,z], 0<u_{\min}-c<1,$ and that \eqref{phi1}--\eqref{phi4} are used for $F$ satisfying \eqref{phi5} (i.e. the condition in Lemma \ref{lem2}).
The proof of \eqref{uniformsi_{c,n}} for $y\in[y_1,z]$  is divided into 7 steps as follows.
\\
 {\bf Step 1.} $|\psi_{c,n}|\leq C$ for $y\in[y_1,z]$ and $n\in\Z\cap[0,m_0].$

 For $n=0,$ by \eqref{orginalpsi_{c,n}}, \eqref{phi1}, \eqref{psicn} and $F_{c,0}=0 $, we have $|\psi_{c,0}|_{L^{\infty}(z)}\leq C(|\psi_{c,0}(z)|+\delta_1^{\gamma}|\psi_{c,0}'(z)|)\leq C$, and thus $|\psi_{c,0}|\leq C$ for $y\in[y_1,z].$ Now, we prove the result by induction. Assume that $n\in\Z\cap(0,m_0]$ and $|\psi_{c,k}|\leq C$ for $k\in\Z\cap[0,n)$ and $y\in[y_1,z].$ Then by \eqref{orginalpsi_{c,n}}, \eqref{u''}, \eqref{uminc} and $m-m_0=2\gamma-1$, we have for $y\in[y_1,z]$,\begin{align}\label{Fcn}&|F_{c,n}|\leq C\sum_{k=1}^n{|y-y_1|^m|\psi_{c,n-k}|\over (u-c)^{k+1}}\leq C\sum_{k=1}^n{|y-y_1|^m\over (u-c)^{k+1}}\leq C{(u-c)^m\over (u-c)^{n+1}},\\&\label{FCn1}|(u-c)^{2-\gamma}F_{c,n}|\leq C(u-c)^{2-\gamma+m-n-1}\leq  C(u-c)^{m-m_0+1-\gamma}=C(u-c)^{\gamma}\leq C.
\end{align}
By \eqref{phi1}, \eqref{psicn} and \eqref{FCn1}, we have
  \begin{align}\label{Fcn2}|\psi_{c,n}|_{L^{\infty}(z)}\leq C(\delta_1^{\gamma}|(u-c)^{2-\gamma}F_{c,n}|_{L^{\infty}(z)}+|\psi_{c,n}(z)|+\delta_1^{\gamma}|\psi_{c,n}'(z)|)\leq C,\end{align}
   which means $|\psi_{c,n}|\leq C$ for $y\in[y_1,z].$ Thus, the result in { Step 1} is true.
\\
  {\bf Step 2.} $|\psi_{c,n}|\leq C|u-c|^{\gamma-1}$ for $y\in[y_1,z]$ and $n=m_0+1.$

 Let $n=m_0+1.$
  By { Step 1}, we know that $|\psi_{c,k}|\leq C$ for $y\in[y_1,z]$ and $k\in\Z\cap[0,n)$. Thus, \eqref{Fcn} is still true and for $y\in[y_1,z]$,\begin{align*}&|(u-c)^{3-2\gamma}F_{c,n}|\leq C(u-c)^{3-2\gamma+m-n-1}=  C(u-c)^{m-m_0+1-2\gamma}=C,
\end{align*}
 which, along with \eqref{phi2} and \eqref{psicn}, implies that \begin{align*}|(u-c)^{1-\gamma}\psi_{c,n}|_{L^{\infty}(z)}\leq C(\delta_1^\gamma|(u-c)^{3-2\gamma}F_{c,n}|_{L^{\infty}(z)}+|\psi_{c,n}(z)|+\delta_1^\gamma|\psi_{c,n}'(z)|)\leq C.\end{align*}Then $|(u-c)^{1-\gamma}\psi_{c,n}|\leq C$, and thus $|\psi_{c,n}|\leq C|u-c|^{\gamma-1}$ for $y\in[y_1,z].$
\\
  {\bf Step 3.} $|\psi_{c,0}|\leq C|u-c|^{1-\gamma}$ for $y\in[y_1,z]$.

  If $m_0=0,$ then $m=0$ and $\gamma=1/2.$ By { Step 2}, we have $|\psi_{c,1}|\leq C|u-c|^{\gamma-1}=C|u-c|^{-\gamma}$ for $y\in[y_1,z]$. If $m_0>0,$ then by { Step 1}, we have $|\psi_{c,1}|\leq C\leq C|u-c|^{-\gamma}$ for $y\in[y_1,z]$. Thus, $|\psi_{c,1}|\leq C|u-c|^{-\gamma}$ is always true for $y\in[y_1,z]$. Then by \eqref{ckn} and $u(y_1)=u_{\min}$, we have $|\psi_{c,0}(y_1)|\leq \int_c^{u_{\min}}|\psi_{s,1}(y_1)|ds\leq C\int_c^{u_{\min}}|u_{\min}-s|^{-\gamma}ds\leq C|u_{\min}-c|^{1-\gamma}$. By \eqref{phi3}, \eqref{psicn} and $F_{c,0}=0 $, we have $|(u-c)^{\gamma-1}\psi_{c,0}|_{L^{\infty}(z)}\leq C(|(u_{\min}-c)^{\gamma-1}\psi_{c,0}(y_1)|+|\psi_{c,0}(z)|+|\psi_{c,0}'(z)|)\leq C$. Then $|(u-c)^{\gamma-1}\psi_{c,0}|\leq C$, and thus $ |\psi_{c,0}|\leq C|u-c|^{1-\gamma}$ for $y\in[y_1,z].$
\\
 {\bf Step 4.} $|\psi_{c,n}|\leq C$ for $y\in[y_1,z]$ and $n=m_0+1.$

 Let $n=m_0+1.$ By \eqref{orginalpsi_{c,n}}, \eqref{u''}, \eqref{umin}, { Step 1}  and { Step 3}, we have for $y\in[y_1,z]$,  \begin{align*}&|F_{c,n}|\leq C\sum_{k=1}^n{|y-y_1|^m|\psi_{c,n-k}|\over (u-c)^{k+1}}\leq C\sum_{k=1}^{n-1}{|y-y_1|^m\over (u-c)^{k+1}}+C{|y-y_1|^m|\psi_{c,0}|\over (u-c)^{n+1}}\\ \leq& C{(u-c)^m\over (u-c)^{n}}+C{|y-y_1|^m|u-c|^{1-\gamma}\over (u-c)^{n+1}}\leq C(u-c)^{m-\gamma-n},\\&|(u-c)^{2-\gamma}F_{c,n}|\leq C(u-c)^{2-2\gamma+m-n}=  C(u-c)^{m-m_0+1-2\gamma}=C.
\end{align*}Here, we used $n=m_0+1$ and $m-m_0=2\gamma-1$. Thus, \eqref{Fcn2} is still true for $n=m_0+1,$ i.e. $|\psi_{c,n}|\leq C$ for $y\in[y_1,z]$.
\\
  {\bf Step 5.} $|\psi_{c,0}|\leq C|u-c|$ for $y\in[y_1,z]$.

  By { Step 1} and { Step 4}, we have $|\psi_{c,1}|\leq C$ for $y\in[y_1,z]$. Then by \eqref{ckn}, we have $|\psi_{c,0}(y_1)|\leq \int_c^{u_{\min}}|\psi_{s,1}(y_1)|ds\leq C\int_c^{u_{\min}}ds\leq C|u_{\min}-c|$. By \eqref{phi4}, \eqref{psicn} and $F_{c,0}=0 $, we have $|(u-c)^{-1}\psi_{c,0}|_{L^{\infty}(z)}\leq C(|(u_{\min}-c)^{-1}\psi_{c,0}(y_1)|+|\psi_{c,0}'(z)|)\leq C$, which gives $|(u-c)^{-1}\psi_{c,0}|\leq C$ and  $|\psi_{c,0}|\leq C|u-c|$ for $y\in[y_1,z].$
\\
 {\bf Step 6.} $|\partial_y\psi_{c,n}|\leq C|u-c|^{2\gamma-2}$ and  $|\psi_{c,n}|\leq C|u-c|^{\gamma-1}$ for $y\in[y_1,z]$ and $n=m_0+2.$

  Since $n=m_0+2$ and $m-m_0=2\gamma-1$,  we have by \eqref{orginalpsi_{c,n}}, \eqref{u''}, \eqref{uminc}, { Step 1} and { Steps 4-5} that for $y\in[y_1,z]$, \begin{align*}&|F_{c,n}|\leq C\sum_{k=1}^n{|y-y_1|^m|\psi_{c,n-k}|\over (u-c)^{k+1}}\leq C\sum_{k=1}^{n-1}{|y-y_1|^m\over (u-c)^{k+1}}+C{|y-y_1|^m|\psi_{c,0}|\over (u-c)^{n+1}}\\ \leq& C{(u-c)^m\over (u-c)^{n}}+C{|y-y_1|^m|u-c|\over (u-c)^{n+1}}\leq C(u-c)^{m-n}=C(u-c)^{m-m_0-2},\\&|(u-c)^{3-2\gamma}F_{c,n}|\leq C(u-c)^{1-2\gamma+m-m_0}=C.
\end{align*}Then by \eqref{phi2} and \eqref{psicn}, we have\begin{align*}&|(u-c)^{1-\gamma}\psi_{c,n}|_{L^{\infty}(z)}+\delta_1^{\gamma}|(u-c)^{2-2\gamma}\partial_y\psi_{c,n}|_{L^{\infty}(z)}\\ \leq& C(|(u-c)^{3-2\gamma}F_{c,n}|_{L^{\infty}(z)}+|\psi_{c,n}(z)|+|\partial_y\psi_{c,n}(z)|)\leq C.\end{align*}Therefore, $|\partial_y\psi_{c,n}|\leq C\delta_1^{-\gamma}|u-c|^{2\gamma-2}\leq C|u-c|^{2\gamma-2}$ and $|\psi_{c,n}|\leq C|u-c|^{\gamma-1}$ for $y\in[y_1,z].$
\\
  {\bf Step 7.} $|\psi_{c,1}|\leq C|u-c|^{1-\gamma}$ for $y\in[y_1,z]$.

   If $m_0=0,$ then $m=0$ and $\gamma=1/2.$ By {Step 6}, we have
  $|\psi_{c,2}|\leq C|u-c|^{\gamma-1}=C|u-c|^{-\gamma}$ for $y\in[y_1,z]$. If $m_0>0,$ then by {Step 1} and {Step 4}, we have $|\psi_{c,2}|\leq C\leq C|u-c|^{-\gamma}$ for $y\in[y_1,z]$.
  Thus, $|\psi_{c,2}|\leq C|u-c|^{-\gamma}$ is always true for $y\in[y_1,z]$. Then by \eqref{ckn} and $u(y_1)=u_{\min}$, we have $|\psi_{c,1}(y_1)|\leq \int_c^{u_{\min}}|\psi_{s,2}(y_1)|ds\leq C\int_c^{u_{\min}}|u_{\min}-s|^{-\gamma}ds\leq C|u_{\min}-c|^{1-\gamma}$. By \eqref{orginalpsi_{c,n}}, \eqref{u''} and { Step 5}, we have  for $y\in[y_1,z]$, \begin{align*}&|F_{c,1}|\leq C{|y-y_1|^m|\psi_{c,0}|\over (u-c)^{2}}\leq C{|y-y_1|^m|u-c|\over (u-c)^{2}}=C{|y-y_1|^m\over u-c},\\&|(u-c)^{\gamma+1}F_{c,1}|\leq C(u-c)^{\gamma}|y-y_1|^m\leq C.
\end{align*}Then by \eqref{phi3} and \eqref{psicn}, we have\begin{align*}&|(u-c)^{\gamma-1}\psi_{c,1}|_{L^{\infty}(z)}\\ \leq& C(|(u-c)^{\gamma+1}F_{c,1}|_{L^{\infty}(z)}+|(u_{\min}-c)^{\gamma-1}\psi_{c,1}(y_1)|+|\psi_{c,1}(z)|+|\partial_y\psi_{c,1}(z)|)\leq C,\end{align*}which gives $|(u-c)^{\gamma-1}\psi_{c,1}|\leq C$ and $|\psi_{c,1}|\leq C|u-c|^{1-\gamma}$ for $y\in[y_1,z].$

By { Step 1} and { Steps 4-7} we know that \eqref{uniformsi_{c,n}} is true for $y\in[y_1,z]=[y_1,y_1+\delta_1]$. This completes the proof of \eqref{uniformsi_{c,n}} and thus Case 2.
\if0Next, we consider Case 3.
If $\beta=\beta_1=\beta_2$ and (i) holds for $i=1, 2$, then $u''-\beta=0$ on $ [y_1,y_1+\delta]\cup[y_2-\delta,y_2]$. By \eqref{orginalRayleigh-Kuo}, $\psi_c$ can be extended to an analytic function in $\mathbf{C}\setminus u([y_1,y_1+\delta]\cup[y_2-\delta,y_2])$. Since $u_{\min}\notin u([y_1,y_1+\delta]\cup[y_2-\delta,y_2])$, $\psi_c(y_1)$ has a finite number of zeros in a neighborhood of $c=u_{\min}$, which contradicts $\sharp(\sigma_d(\mathcal{R}_{\alpha,\beta})\cap\mathbf{R})=\infty$.

If $\beta=\beta_1=\beta_2$, (ii) holds for $i=1$ and (i) holds for $i=2$, then $u''-\beta=0$ on $[y_2-\delta,y_2]$.
By \eqref{orginalRayleigh-Kuo}, $\psi_c(y_2-\delta)=:a$ and $\partial_y\psi_c(y_2-\delta)=:b$ are independent of   $c\in\mathbf{C}\setminus u([y_1,y_2-\delta])$ and are uniquely determined such that
 $|a|+|b|>0$.
Note that $u(y_2-\delta)>u_{\min}=u(y_1)$ and consider
\begin{align*}
-\partial_y^2\psi_c+{u''-\beta\over u-c}\psi_c=-\alpha^2\psi_c\ \ \text{on} \ \ [y_1,y_2-\delta],\ \psi_c(y_2-\delta)=a,\ \partial_y\psi_c(y_2-\delta)=b.
\end{align*}
Then repeating  the proof of Case 2 in $[y_1,y_2-\delta]$ shows that $\sharp(\sigma_d(\mathcal{R}_{\alpha,\beta})\cap\mathbf{R})<\infty$.

If $\beta=\beta_1=\beta_2$, (i) holds for $i=1$ and (ii) holds for $i=2$, then the proof is similar as above.

Finally, we consider Case 4.

Then $\varphi_c={\psi_c\over u-c}$ solves
$((u-c)^2\varphi_c')'+(-\alpha^2(u-c)^2+\beta(u-c))\varphi_c=0.$
Let $\tau=u-c$ and $\varphi_c(y)=\varphi_c((u-c)^{-1}(\tau)):=g_c(\tau)$ on $[y_1,y_1+\delta]$. Then $g_c$ solves
\begin{align}\label{equa-gc}
\tau^2(u'(y))^2\partial_\tau^2 g_c+\tau(u''(y)\tau+2(u'(y))^2)\partial_\tau g_c+(-\alpha^2\tau^2+\beta\tau) g_c=0,
\end{align}
where $y=(u-c)^{-1}(\tau)$.
Since $\tau=0$ is a regular-singular point of (\ref{equa-gc}),  we infer by Frobenius method that  two linearly independent solutions of (\ref{equa-gc}) are
$
g_{1,c}(\tau)=h_{1,c}(\tau),\;\;g_{2,c}(\tau)=C_ch_{1,c}(\tau)\ln\tau+{1\over\tau}h_{2,c}(\tau),
$
where $h_{i,c}(\tau)$, $i=1,2$, are analytic functions on  $\{(c,\tau):c\in\mathbf{C},y\in[y_1,y_1+\delta]\}$.
Then we get two linearly independent solutions of (\ref{orginalRayleigh-Kuo}) and they are $\psi_{i,c}= (u-c)\varphi_{i,c}=(u-c) g_{i,c}\circ(u-c)$,  $i=1,2$, on $[y_1,y_1+\delta]$. More precisely,
\begin{align*}
\psi_{1,c}=(u-c)h_{1,c}\circ(u-c),\;\;\psi_{2,c}=C_c(u-c)h_{1,c}\circ(u-c)\ln(u-c)+h_{2,c}\circ(u-c)
\end{align*}
 on  $[y_1,y_1+\delta]$.
Let $\hat\psi_c$ be the solution of (\ref{orginalRayleigh-Kuo}) and satisfy the initial data  $\hat\psi_c(y_2)=0, \hat\psi_c'(y_2)=1$. Then
$\hat\psi_c$ takes the form
\begin{align}\label{hat-psi-c-y-sec4.2}
\hat\psi_{c}(y)=\hat h_{1,c}(y)\ln(u(y)-c)+\hat h_{2,c}(y)
\end{align}
on $y\in[y_1,y_1+\delta]$, where $\hat h_{1,c}=C_{3,c}(u-c)h_{1,c}\circ(u-c)$ and $\hat h_{2,c}= C_{1,c}(u-c)h_{1,c}\circ(u-c)+C_{2,c}h_{2,c}\circ(u-c)$.
, and $\hat\psi_{c_k}(y_1)=\hat h_{1,c_k}(y_1)\ln\tau_k+\hat h_{2,c_k}(y_1)=0$, where $k\geq1$ and $\tau_k=u_{\min}-c_k$. Note that
 $\hat h_{i,c}(y_1), i=1,2,$ are analytic at $c=u(y_1)=u_{\min}$. Then $\hat h_{1,u_{\min}}(y_1)=\lim_{k\to\infty}C_{3,c_k}\tau_kh_{1,c_k}(\tau_k)=0$ and $\hat h_{2,u_{\min}}(y_1)=-\lim_{k\to\infty}\hat h_{1,c_k}(y_1)\ln\tau_k=-\lim_{k\to\infty}C_{3,c_k}\tau_kh_{1,c_k}(\tau_k)\ln\tau_k=0.$

 Next, we show by induction that  $\partial_c^j \hat h_{i,c}(y_1)|_{c=u_{\min}}=0$ for $j\geq1$ and $i=1, 2$.
 Suppose that $\partial_c^j \hat h_{i,c}(y_1)|_{c=u_{\min}}=0$ for $1\leq j\leq j_0$ and $i=1, 2$.
 Then $\hat h_{i,c_k}(y_1)=(-\tau_k)^{j_0+1}\sum_{l=j_0+1}^{\infty}$ $\partial_c^l\hat h_{i,c}(y_1)|_{c=u_{\min}}{(-\tau_k)^{l-j_0-1}\over l!}$ for $i=1,2$ and $k\geq1$. Since $\hat\psi_{c_k}(y_1)=0$, we infer by (\ref{hat-psi-c-y-sec4.2}) that
 \begin{align}\label{Taylor expansion}
\sum_{l=j_0+1}^{\infty}\partial_c^l\hat h_{1,c}(y_1)|_{c=u_{\min}}\frac{(-\tau_k)^{l-j_0-1}}{l!}\ln\tau_k
 +\sum_{l=j_0+1}^{\infty}\partial_c^l\hat h_{2,c}(y_1)|_{c=u_{\min}}\frac{(-\tau_k)^{l-j_0-1}}{l!}=0.
 \end{align}
 Then $\partial_c^{j_0+1}\hat h_{1,c}(y_1)|_{c=u_{\min}}=0$, since otherwise the LHS of (\ref{Taylor expansion}) is not zero for $k>0$ large enough. It then follows that $\partial_c^{j_0+1}\hat h_{2,c}(y_1)|_{c=u_{\min}}=0$.

 Thus,
$\hat\psi_{c}(y_1)=\hat h_{1,c}(y_1)\ln(u(y_1)-c)+\hat h_{2,c}(y_1)\equiv0$ for $|c-u_{\min}|<\delta_1$, where $\delta_1>0$ is  small enough. This implies $\lambda_n(c)\equiv-\alpha^2$, $c<u_{\min}$, for some $n\geq1$, where $\lambda_n$ is the $n$-th eigenvalue of (\ref{sturm-Liouville}). This contradicts the fact that $\lim_{c\to-\infty}\lambda_n(c)={{n^2}\over4} \pi^2>0$ by Proposition 4.2 in \cite{LYZ}.\fi
\end{proof}

\if0
Finally, combining Theorems \ref{main-result}--\ref{thm-flows good endpoints}, we get the following consequence.

\begin{corol}\label{minimal-critical point equa transitional value}
Let $\alpha>0$ and $u$ satisfy ${\bf (H1)}$. Then
\begin{itemize}
\item[(1)] if
$\{u'=0\}\cap\{u=u_{\min}\}\neq\emptyset$
and  $\bf{(E_+)}$ holds for $\beta\in(0,{9\over8}\kappa_+)$, then a unique transitional $\beta$ value ${9\over8}\kappa_+$ exists in $(0,\infty)$ such that
 $\sharp(\sigma_d(\mathcal{R}_{\alpha,\beta})\cap\mathbf{R})<\infty$ for  $\beta\in(0,{9\over8}\kappa_+)$ and $\sharp(\sigma_d(\mathcal{R}_{\alpha,\beta})\cap\mathbf{R})=\infty$ for  $\beta\in({9\over8}\kappa_+,\infty)$;
 \item[(2)] if
$\{u'=0\}\cap\{u=u_{\max}\}\neq\emptyset$
and   $\bf{(E_-)}$ holds for  $\beta\in({9\over8}\kappa_-,0)$, then a unique transitional $\beta$ value ${9\over8}\kappa_-$ exists  in $(-\infty,0)$ such that
 $\sharp(\sigma_d(\mathcal{R}_{\alpha,\beta})\cap\mathbf{R})<\infty$ for  $\beta\in({9\over8}\kappa_-,0)$ and $\sharp(\sigma_d(\mathcal{R}_{\alpha,\beta})\cap\mathbf{R})=\infty$ for  $\beta\in(-\infty,{9\over8}\kappa_-)$.
 \end{itemize}
If $u(y_1)\neq u(y_2)$, then  under one of the conditions $(\rm{i})$--$(\rm{iii})$ in Theorem \ref{traveling wave construction}, we have
 \begin{itemize}
 \item[(3)]
 if
$\{u'=0\}\cap\{u=u_{\min}\}=\emptyset$, then $\sharp(\sigma_d(\mathcal{R}_{\alpha,\beta})\cap\mathbf{R})<\infty$ for  $\beta\in(0,\infty)$;
 \item[(4)]
 if
$\{u'=0\}\cap\{u=u_{\max}\}=\emptyset$, then $\sharp(\sigma_d(\mathcal{R}_{\alpha,\beta})\cap\mathbf{R})<\infty$ for  $\beta\in(-\infty,0)$.
\end{itemize}
\end{corol}
\fi
\subsection{Rule out oscillation for flows in class $\mathcal{K}^+$}
We rule out the oscillation of $\lambda_n(c)$ for flows in class $\mathcal{K}^+$, which  is stated in Theorem \ref{number for sinus flow}.
The proof is based on Hamiltonian structure and index theory.



\if0
\begin{theorem}\label{number for sinus flow}
Let $u$ be a flow in class $\mathcal{K}^+$ and $\alpha>0$.  Then
$m_{\beta}\leq\sharp(\sigma_d(\mathcal{R}_{\alpha,\beta})\cap\mathbf{R})<\infty$ for $\beta\in[{9\over 8}\kappa_-,{9\over 8}\kappa_+]$, and
$\sharp(\sigma_d(\mathcal{R}_{\alpha,\beta})\cap\mathbf{R})=\infty$ for $\beta\notin[{9\over 8}\kappa_-,{9\over 8}\kappa_+]$.
\end{theorem}
\fi
\if0
\begin{remark} \label{number for sinus flow-rem} $(1)$
 We provide more precise  conclusion here.
$m_{\beta}\leq\sharp(\sigma_d(\mathcal{R}_{\alpha,\beta})\cap(-\infty,u_{\min}))<\infty$ for $\alpha>0$ and $ 0<\beta\leq{9\over 8}\kappa_+$.
$m_{\beta}\leq\sharp(\sigma_d(\mathcal{R}_{\alpha,\beta})\cap(u_{\max},\infty))<\infty$ for  $\alpha>0$ and $ {9\over8}\kappa_-\leq\beta<0$.

$(2)$ The conditions $\bf{(E_\pm)}$ for $\beta\in ({9\over8}\kappa_-,{9\over8}\kappa_+)$ in Theorem $\ref{main-result}$ $(1)$-$(2)$ and
Theorem $\ref{traveling wave construction}$ $(1)$-$(2)$ can be dropped for  flows in class  $\mathcal{K}^+$. The conditions $u(y_1)\neq u(y_2)$ and  $(\rm{i})$--$(\rm{iii})$ in Theorems $\ref{traveling wave construction}$  and  $\ref{thm-flows good endpoints}$ can be dropped for  flows in class  $\mathcal{K}^+$.
\end{remark}
\fi
\begin{proof}[Proof of Theorem \ref{number for sinus flow}] The assumption ({\bf{H1}}) is satisfied for a flow $u$  in class $\mathcal{K}^+$.
By  Theorem \ref{main-result}, it suffices to prove $\sharp(\sigma_d(\mathcal{R}_{\alpha,\beta})\cap(-\infty,u_{\min}))<\infty$ for $0<\alpha^2\leq M_{\beta}$ and  $ 0<\beta\leq {9\over 8}\kappa_+$. Similar proof is valid for $ {9\over 8}\kappa_-\leq\beta<0$. First, we consider $\beta\in \text{Ran} (u)\cap(0,{9\over 8}\kappa_+]$. Define the non-shear space
$$X:=\{\omega\in L^2(D_T):\int_0^T\omega(x,y)dx=0, \text{T-periodic in }x  \}.$$
Note that as $\omega=\partial_{x}v_{2}-\partial_{y}v_{1}$, $\int_0^T\omega(x,y)dx=0$ is equivalent to $\int_0^Tv_1(x,y)dx=$constant. Thus, $\int_0^Tv_1(x,y)dx=0$ implies  $\int_0^T\omega(x,y)dx=0$.

The linearized equation (\ref{linearized Euler equation}) has a Hamiltonian structure
in the traveling frame $(x-u_\beta t,y,t)$:
\begin{align*}
\omega_{t}=-(\beta-u^{\prime\prime})\partial_{x}\left(  {\omega}/%
{K_{\beta}}-\psi\right)  =JL\omega,
\end{align*}
where
$
J=-(\beta-u^{\prime\prime})\partial_{x}:X^*\rightarrow X,L=
{1}/{K_{\beta}}-\left(  -\Delta\right)  ^{-1}:X\rightarrow X^*.
$
Let $
J_{\alpha}=-i\alpha(\beta-u^{\prime\prime})$ and  $L_{\alpha}=\frac
{1}{K_{\beta} }-(  -\frac{d^{2}}{dy^{2}}+\alpha^{2})^{-1}
$ on
$L_{\frac
{1}{K_{\beta} }}^{2}$.
It follows from Theorem 3 in \cite{LYZ}
  that
 $$k_c+k_r+k_i^{\leq0}=n^-(L_\alpha),$$
where $n^-(L_\alpha)$ is the Morse index of $L_\alpha$,
${k}%
_{r}$ is the sum of algebraic multiplicities of positive eigenvalues of
${J}_{\alpha}{L}_{\alpha}$, ${k}_{c}$ is the sum of algebraic multiplicities
of eigenvalues of ${J}_{\alpha}{L}_{\alpha}$ in the first and the fourth
quadrants and ${k}_{i}^{\leq0}$ is the total number of non-positive dimensions of
$\langle{L}_{\alpha}\cdot,\cdot\rangle$ restricted to the
generalized eigenspaces of nonzero purely imaginary eigenvalues of
${J}_{\alpha}{L}_{\alpha}$.

Suppose that $\sharp(\sigma_d(\mathcal{R}_{\alpha,\beta})\cap(-\infty,u_{\min}))=\infty$.
 Then it follows from  Theorem \ref{eigenvalue asymptotic behavior-bound} that there exists $m_{\beta}<n<N_{\beta}$ such that $\sharp(\{\lambda_n(c)=-\alpha^2, c<u_{\min}\})=\infty$.
  Let $c^*<u_{\min}$ be a solution of $\lambda_n(c)=-\alpha^2$ with eigenfunction $ \phi^*$.
  $c^*$ can be chosen  sufficiently close to $u_{\min}$. Then $-i\alpha(c^*-u_\beta)$ is a purely imaginary eigenvalue of $J_\alpha L_\alpha$ with eigenfunction $\omega^*=-{\phi^{*}}''+\alpha^2\phi^*$.
By Theorem 4 in \cite{LYZ},
$$\langle L_\alpha\omega^*,\omega^*\rangle=-(c^*-u_\beta)\lambda'_n(c^*).$$
Note that $c-u_\beta$ does not change sign when $c<u_{\min}$ is sufficiently close to $u_{\min}$.
Then
\begin{align*}
\sharp(\{-(c-u_\beta)\lambda'_n(c)\leq 0,c<u_{\min}\}\cap \{\lambda_n(c)=-\alpha^2, c<u_{\min}\})=\infty.
 \end{align*}
Hence, $k_i^{\leq0}=\infty$. This contradicts that
\begin{align*}
k_i^{\leq0}\leq n^-(L_\alpha)=n^-(\tilde L_0+\alpha^2)\leq n^-(\tilde L_0)<\infty,
\end{align*}
 where $\tilde L_0=-{d^2\over dy^2}-K_\beta: H^2\cap H_0^1\to L^2$. Therefore, $\sharp(\sigma_d(\mathcal{R}_{\alpha,\beta})\cap(-\infty,u_{\min}))<\infty$.

Then, we consider $\beta\in (0,{9\over 8}\kappa_+]\setminus\text{Ran} (u'')$. By Corollary 1 in \cite{LYZ}, $\lambda_n(c)$ is decreasing on
$c\in (-\infty,u_{\min})$ for any fixed $n\geq 1$. By Theorem \ref{eigenvalue asymptotic behavior-bound}, $\sharp(\sigma_d(\mathcal{R}_{\alpha,\beta})\cap(-\infty,u_{\min}))<N_{\beta}$.
\end{proof}

\section{Relations between a traveling wave family  and an isolated real eigenvalue}
\label{appendix-traveling wave concentration}

In this section, we establish the correspondence between a traveling wave family near a shear flow and an isolated real eigenvalue of $\mathcal{R}_{k\alpha,\beta}$.
For a given  isolated real eigenvalue $c_0$,
 we prove that there exists  a set of
 traveling wave solutions near $(u,0)$ with traveling speeds converging to  $c_0$, which is stated precisely in Lemma \ref{traveling wave construction-lemma}.
 We assume $k,k_0\in\Z$.

\begin{proof}[Proof of Lemma \ref{traveling wave construction-lemma}]
We assume that  $\beta>0$, and the case for $\beta<0$ is similar.
Since $c_0\in\sigma_d(\mathcal{R}_{k_0\alpha,\beta})\cap\mathbf{R}$ for some $k_0\geq1$, we have $c_0<u_{\min}$ and we choose $\delta_0>0$ such that $c_0+\delta_0<u_{\min}$.
By (\ref{vorticity-eqn}),  $\vec{u}\left(  x-c t,y\right)  $ is a solution of \eqref{Euler equation}--\eqref{boundary condition for euler} if
and only if $(\psi,c)$ solves
\begin{align}\label{j-traveling}
\frac{\partial\left(  \omega+\beta y,\psi-c y\right)  }{\partial\left(
x,y\right)  }=0
\end{align}
and $\psi$ takes constant values on $\left\{  y=y_i\right\}  $, where  $i=1,2$,
 $\omega=\operatorname{curl}%
\vec{u}$ and $\vec{u}=(\pa_y\psi,-\pa_x\psi)$.
Let $\psi_{0}$ be a stream function associated with
the shear flow $\left(  u,0\right)  $, i.e., $\psi_{0}^{\prime}=u$.
Since $u-c>0$ for  $c\in[c_0-\delta_0,c_0+\delta_0]$, $\psi_{0}-cy$ is increasing on $[y_1,y_2]$.
Let $I_c=\{\psi_{0}\left(  y\right)-cy:y\in[y_1,y_2]\}$  for  $c\in[c_0-\delta_0,c_0+\delta_0]$, and then we can define a function $\tilde  f_{c}\in
C^{2}(I_c)$ such that
\begin{equation}
\tilde f_{c}\left(  \psi_{0}\left(  y\right) -cy \right)  =\omega_{0}\left(  y\right)
+\beta y=-\psi_{0}^{\prime\prime}\left(  y\right)  +\beta y.
\label{eqn-f-psi-0}%
\end{equation}
Moreover,
\[
\tilde f'_{c}\left(  \psi_{0}\left(  y\right)-cy \right)  =\frac{\beta
-u^{\prime\prime}\left(  y\right)  }{u\left(  y\right)  -c}=:\mathcal{K}%
_{c}\left(  y\right)
\]
for  $c\in[c_0-\delta_0,c_0+\delta_0]$.
We extend $\tilde f_{c}$ to $f_c\in C_{0}^{2}\left(  \mathbf{R}\right)  $ such
that $f_c=\tilde f_{c}$ on $I_c$ and $\partial_z^2\partial_c f_c(z)$ is continuous  for   $c\in[c_0-\delta_0,c_0+\delta_0]$ and $z\in \mathbf{R}$. Taking
$c$ as the bifurcation parameter, we now construct steady solutions $\vec{u}  =\left(  \partial_y\psi,-\partial_x\psi\right)  $ near
$\left(  u,0\right)  $ by solving the elliptic equations
\begin{align}\label{eqn-psi-tilde}
-\Delta\psi+\beta y=f_c\left(  \psi-cy\right)
\end{align}
with the boundary conditions that ${\psi}$ takes constant values on
$\left\{  y=y_i\right\}  $, $i=1,2$.
  Define the perturbation of the stream function
by
\[
\phi\left(  x,y\right)  ={\psi}\left(  x,y\right)  -\psi_{0}\left(
y\right).
\]
Then by \eqref{eqn-f-psi-0}--\eqref{eqn-psi-tilde}, we have
\begin{equation*}
-\Delta \phi-\left(  f_c(\phi+\psi_{0}-cy)-f_c\left(  \psi_{0}-cy\right)  \right)
=0.
\end{equation*}
Define the spaces%
\begin{align*}
B= \{\varphi\in H^{4}(D_T):\text{ }\varphi(x
,y_i)=0,\,i=1,2,
\;
\varphi \text{ is even and }  T\text{-periodic in }x\}
\end{align*}
and
\[
C=\left\{  \varphi\in H^{2}(D_T):\text{ }%
T\text{-periodic in }x\right\},
\]
where $T={2\pi/ \alpha}$. Consider the mapping
\begin{align*}
&F:B\times[c_0-\delta_0,c_0+\delta_0]\longrightarrow C,\\
 &\;\;\;\;\;\;\;\;\;\;\;\;\;\;\;\;\;\;\;\;\;\;\;\;\;\;\;\;\;\;(\phi,c)\longmapsto-\Delta\phi-\left(  f_c(\phi+\psi_{0}-cy)-f_c\left(
\psi_{0}-cy\right)  \right)  .
\end{align*}
Then $F(0,c)=0$ for $c\in[c_0-\delta_0,c_0+\delta_0]$.  We study the bifurcation near the trivial solution $(0,c_0)$ of the equation
$F(\phi,c)=0$ in $B$, whose solutions give steady flows of \eqref{j-traveling}.\

\if0
First, we obverse that $F(\cdot,c)$ is a potential operator from $B$ to $C$ for all $c\in[c_0-\delta_0,c_0+\delta_0]$. In fact, let $F_c(s)=\int_0^sf_c(\tau)d\tau$ for $s\in \mathbf{R}$ and define
a functional
\begin{align*}
g(\phi,c)=\int_0^T\int_{y_1}^{y_2}\left({1\over2}|\nabla\phi|^2-F_c(\phi+\psi_0-cy)+f_c(\psi_0-cy)\phi\right) dydx,
\end{align*}
where $\phi\in B$ and
$c\in[c_0-\delta_0,c_0+\delta_0]$.
Then $g$ is continuously differentiable on $B\times [c_0-\delta_0,c_0+\delta_0]$ and direct computation implies that  its Fr\'{e}chet derivative with respect to $\phi$ is
\begin{align*}
\partial_\phi g (\phi,c)h=\lim_{\varepsilon\to0}\partial_\varepsilon g(\phi+\varepsilon h,c)=\int_0^T\int_{y_1}^{y_2}F(\phi,c)hdydx
\end{align*}
for $h\in B$, where $(\phi,c)\in B\times [c_0-\delta_0,c_0+\delta_0]$.
By Definition  I.3.1 in \cite{Kielhofer2012}, $F(\cdot,c)$ is a potential operator from $B$ into $C$ for $c\in[c_0-\delta_0,c_0+\delta_0]$.
\fi

For fixed $c\in[c_0-\delta_0,c_0+\delta_0]$, by linearizing $F$
around $\phi=0$, we have
\[
\partial_{\phi}F(0,c)=-\Delta-f_{c}^{\prime}(\psi
_{0}-cy)=-\Delta-\mathcal{K}_{c}=\mathcal{G}_c|_B,
\]
where $\mathcal{G}_c|_B$ is the restriction of $\mathcal{G}_c$ in $B$ and $\mathcal{G}_c$ is defined in \eqref{Glinearized elliptic operator}.
Then we divide the discussion of bifurcation near  $(0,c_0)$ of the equation
$F(\phi,c)=0$ into three cases.
Since $c_0\in\sigma_d(\mathcal{R}_{k_0\alpha,\beta})\cap\mathbf{R}$, there exists $n_0\geq1$ such that
 $(k_0\alpha)^2=-\lambda_{n_0}(c_0)$, where $\lambda_{n_0}(c_0)$ is the $n_0$-th eigenvalue of $\mathcal{L}_{c_0}$
and $\mathcal{L}_{c_0}$
 is defined in \eqref{sturm-Liouville}. Let
\begin{align}\label{def-k*-constructed traveling wave}
k_*=&\max\limits_{k\geq1}\{k:\text{there exists } n_k\geq1 \text{ such that } -(k\alpha)^2=\lambda_{n_k}(c_0)\}.
\end{align}
 Then $k_*$ exists by our assumption and $1\leq k_0\leq k_*< \infty$. Now we denote $n_*=n_{k_*}.$
\\
{\bf Case 1.} $\lambda_{n_*}'(c_0)\neq0$ (the transversal crossing condition)
and $c_0\notin\sigma_d(\mathcal{R}_{0,\beta})\cap\mathbf{R}$.

 In this case, we have $0\notin\sigma(\mathcal{L}_{c_0})$.
Let $B_*=\{\varphi\in B:{2\pi\over k_*\alpha}\text{-periodic in } x\}$ and
$C_*=\{\varphi\in C:{2\pi\over k_*\alpha}\text{-periodic in } x\}$.
Consider the restriction $F|_{B_*}$ and $\mathcal{G}_c|_{B_*}$. Then
by the definition of $k_*$, we have
\begin{align}\label{dim-ker-G-c}
\ker(%
{\mathcal{G}_{c_0}|_{B_*}})={\rm{span}}\{\phi_{c_0,n_*}(y)\cos(k_*\alpha x)\}\quad\text{and}\quad
\dim(\ker(\mathcal{G}_{c_0}|_{B_*}))=1,
\end{align}
where $\phi_{c_0,n_*}$ is a real-valued eigenfunction of
$\lambda_{n_*}(c_0)\in\sigma(\mathcal{L}_{c_0})$.
Note that
\begin{align*}
\partial_{c}\partial_{\phi}F(0,c_0)\left(  \phi_{c_0,n_*}%
(y)\cos(k_*\alpha x)\right)  =-{\beta-u''\over (u-c_0)^2}  \phi
_{c_0,n_*}(y)\cos(k_*\alpha x).
\end{align*}
Then by Lemma $11$ in \cite{LYZ}, we have
\begin{align*}
&\int_0^T\int_{y_1}^{y_2} \phi_{c_0,n_*}(y)\cos(k_*\alpha x)\left[\partial_{c}\partial_{\phi}F(0,c_0)\left(  \phi_{c_0,n_*}%
(y)\cos(k_*\alpha x)\right)\right] dydx\\
 =&-\int_0^T\int_{y_1}^{y_2}{\beta-u''\over (u-c_0)^2}  |\phi
_{c_0,n_*}(y)|^2\cos^2(k_*\alpha x)dydx
={\pi\over\alpha}\lambda_{n_*}'(c_0)\neq0,
\end{align*}
where we used that $\phi_{c_0,n_*}$ is real-valued. By \eqref{dim-ker-G-c}, we have  $\phi_{c_0,n_*}(y)\cos(k_*\alpha x)\in\ker(%
{\mathcal{G}_{c_0}|_{B_*}})$ and thus, $\partial_{c}\partial_{\phi}F(0,c_0)\left(  \phi_{c_0,n_*}%
(y)\cos(k_*\alpha x)\right)\notin$ ${\rm Ran}\,(%
{\mathcal{G}_{c_0}|_{B_*}})$. Then by Theorem $1.7$ in \cite{CR71}, there exist $\delta>0$ and
a nontrivial $C^1$ bifurcating curve $\{\left(  \phi_{\gamma},c(\gamma)\right), \gamma\in(-\delta,\delta)\}$
of $F(\phi,c)=0$, which intersects the trivial curve $\left(
0,c\right)  $ at $c=c_0$, such that
\begin{align*}
\phi_{\gamma}(x,y)=\gamma\phi_{c_0,n_*}(y)\cos(k_*\alpha x)+o(|\gamma|).
\end{align*}
 So the stream functions take the form\begin{align*}
&\psi_{\gamma}(x,y)=\psi_0(y)+\phi_{\gamma}(x,y)=\psi_{0}(y)+\gamma\phi_{c_0,n_*}(y)\cos(k_*\alpha x)+o(|\gamma|).
\end{align*}Let the velocity $\vec{u}_{\gamma}=(u_{(\gamma)},v_{(\gamma)})  =\left(  \partial_y\psi_{\gamma},-\partial_x\psi_{\gamma}\right) . $  Since $c_0<u_{\min}$, we have
\begin{align}\label{horizontal-velocity does not change sign}
&u_{(\gamma)}(x,y)-c{(\gamma)}= \partial_y\psi_{\gamma}(x,y)-c{(\gamma)}\\\nonumber
=&u(y)-c{(\gamma)}+\gamma\phi'_{c_0,n_*}(y)\cos(k_*\alpha x)+o(|\gamma|)>0,
\end{align}
and
\begin{align}\label{vertical velocity}
&v_{(\gamma)}(x,y)=-\partial_x\psi_{\gamma}(x,y)=k_*\alpha\gamma\phi_{c_0,n_*}(y)\sin(k_*\alpha x)+o(|\gamma|)\neq0
\end{align}
when $ \gamma$
 is small. Moreover, $ \|(u_{(\gamma)},v_{(\gamma)})-(u,0)\|_{H^3(D_T)} +|c({\gamma})-c_0|\leq
C_0\gamma$ for some constant $C_0>0$ large enough. Thus,  we can take $\delta>0$ smaller and $\varepsilon_0=C_0\delta$ such that for $\varepsilon\in(0,\varepsilon_0)$, $(u_{\varepsilon},v_{\varepsilon},c_{\varepsilon}) :=(u_{(\gamma)},v_{(\gamma)},c({\gamma}))|_{\gamma=\varepsilon/C_0}$ satisfies that $\|(u_{\varepsilon},v_{\varepsilon})-(u,0)\|_{H^3(D_T)} \leq
\varepsilon$, $c_\varepsilon\to c_0$, $u_{\varepsilon}(x,y)-c_{\varepsilon}>0$ and $\|v_{\varepsilon}\|_{L^2(D_T)}\neq0$.
By \eqref{vertical velocity},
${\tilde{v}_{\varepsilon}}\longrightarrow \sqrt{\alpha/\pi} \phi_{c_0,n_*}(y)\sin(k_*\alpha x)$ in  ${H^{2}\left(
D_T\right)  }$, where $\tilde v_\varepsilon=v_\varepsilon/\|v_\varepsilon\|_{L^2(D_T)}$.
\if0
Since $\mathcal{G}_c|_{B_*}$ is self-adjoint, we have that $0$ is an isolated eigenvalue of $\mathcal{G}_{c_0}|_{B_*}$ with
  multiplicity $m_*$, and there exists $\delta_*\in(0,\delta_0)$ such that the eigenvalue $0$ of $\mathcal{G}_{c_0}|_{B_*}$ perturbs to exactly $m_*$ real eigenvalues (counting multiplicities), denoted by $\mu_1(\mathcal{G}_c|_{B_*})\leq \cdots\leq\mu_{m_*}(\mathcal{G}_c|_{B_*})$, of $\mathcal{G}_c|_{B_*}$ near $0$ for $c\in[c_0-\delta_*,c_0+\delta_*]$ (see \cite{Kato1980}).
  Denote
  \begin{align*}
  \{-(kk_*\alpha)^2:k\geq1\}\cap\sigma(\mathcal{L}_{c_0})=\{-(k_1k_*\alpha)^2<-(k_2k_*\alpha)^2<\cdots<-(k_{m_*}k_*\alpha)^2\}
  \end{align*}
  with $k_{m_*}=1$, and $\lambda_{l_i}(c_0)=-(k_ik_*\alpha)^2,$ $l_1<l_2<\cdots<l_{m_*}$, where $1\leq i\leq m_*$.
  Let $\phi_{l_i,c}$ be a corresponding eigenfunction of $\lambda_{l_i}(c)\in\sigma(\mathcal{L}_{c})$ for  $1\leq i\leq m_*$ and $c\in[c_0-\delta_*,c_0+\delta_*]$.
  By the choice of $k_*$, we have
  \begin{align}\label{choice-k*1}
  & c_0 \text{ is a local minimum or maximum point  of } \lambda_{l_i} \text{ for } 1\leq i<m_*, \\\label{choice-k*2}
  &\lambda_{l_{m_*}} \text{ is monotone in a real neighborhood of } c_0 \text{ with } \lambda_{l_{m_*}}(c_0)=-(k_*\alpha)^2.
  \end{align}
 Since $\mathcal{L}_c\phi_{l_i,c}=\lambda_{l_i}(c)\phi_{l_i,c}$ for $c\in[c_0-\delta_*,c_0+\delta_*]$ and $1\leq i\leq m_*$, we have
 \begin{align*}
 \mathcal{G}_c|_{B_*}(\phi_{l_i,c}(y)\cos(k_ik_*\alpha x))=(\lambda_{l_i}(c)+(k_ik_*\alpha)^2)(\phi_{l_i,c}(y)\cos(k_ik_*\alpha x)),
 \end{align*}
 which implies that
 \begin{align*}
 \{\mu_i(\mathcal{G}_c|_{B_*}):1\leq i\leq m_*\}=\{\lambda_{l_i}(c)+(k_ik_*\alpha)^2:1\leq i\leq m_*\}
 \end{align*}
are the above perturbed  $m_*$ real eigenvalues  of $\mathcal{G}_c|_{B_*}$ near $0$ with corresponding  eigenfunctions
$\{\phi_{l_i,c}(y)\cos(k_ik_*\alpha x):1\leq i\leq m_*\}$.
Recall that the crossing number of the family $\mathcal{G}_c^*$ at $c=c_0$ through $0$, denoted by $\chi(\mathcal{G}_c^*,c_0)$, is defined in Definition II.7.1 of \cite{Kielhofer2012}.
By \eqref{choice-k*1}--\eqref{choice-k*2},
 we obtain that $\chi(\mathcal{G}_c^*,c_0)$ exists and
$\chi(\mathcal{G}_c^*,c_0)\in\{\pm1\}$ is nonzero. The local Morse index of $\mathcal{G}_c^*$ at $c=c_0$ is changed.  By Kielh\"{o}fer's bifurcation
theorem for potential operators  \cite{Kielhofer1988},
there exists $\varepsilon_0>0$ such that for $\varepsilon\in(0,\varepsilon_0)$, there exists
a nontrivial bifurcating solution $\left(  \phi_{c_\varepsilon},c_\varepsilon\right)\in B_*\times\mathbf{R}$
of $F(\phi,c)=0$  such that $ \|(u_\varepsilon,v_\varepsilon)-(u,0)\|_{H^3(0,T)\times(y_1,y_2)} +|c_\varepsilon-c_0|\leq
C\|(\phi_{c_\varepsilon},c_\varepsilon)-(0,c_0)\|_{B_*\times\mathbf{R}}\leq \varepsilon$,
$\phi_{c_\varepsilon}=\xi_{1,c_\varepsilon}+\xi_{2,c_\varepsilon}$, $0\neq\xi_{1,c_\varepsilon}\in\ker(\mathcal{G}_{c_0}|_{B_*})=
\text{span}\{\phi_{l_i,c_0}(y)\cos(k_ik_*\alpha x):1\leq i\leq m_*\}$ and $\xi_{2,c_\varepsilon}\in\text{Ran}(\mathcal{G}_{c_0}|_{B_*})$,
where $u_{\varepsilon}=\partial_y(\psi_0+\phi_{c_\varepsilon})$ and $v_\varepsilon=-\partial_x\phi_{c_\varepsilon}$.
We also refer to Theorem II.7.3 in  \cite{Kielhofer2012} for this  bifurcation theorem.
 Thus, the stream function $\psi_\varepsilon$
 of the perturbed steady flow $(u_\varepsilon,v_\varepsilon)$
is
$
\psi_{\varepsilon}(x,y)=\psi_{0}\left(  y\right)  +\phi_{c_\varepsilon}(x,y)$.
\fi
\\
{\bf Case 2.}
$\lambda_{n_*}'(c_0)=0$ and
$c_0\notin\sigma_d(\mathcal{R}_{0,\beta})\cap\mathbf{R}$.


In this case, there exist $\delta_1\in(0,\delta_0]$ and $a\in\{\pm1\}$ such that $a\lambda_{n_*}$ is increasing in $[c_0, c_0+\delta_1]$, and thus,
\begin{align}\label{monotone in half interval}
a\lambda_{n_*}(c)>a\lambda_{n_*}(c_0)=-a(k_*\alpha)^2,\quad \forall\ c\in(c_0, c_0+\delta_1].
\end{align}
Let $\zeta_1\in C^{\infty}([y_1,y_2])$ be a positive function, $u_1$ be a solution of the regular ODE
\begin{align}\label{constructed ode for perturbation of original shear flow}
u_1''(u-c_0)-(u''-\beta)u_1=\zeta_1\quad\text{on}\quad[y_1,y_2],
\end{align}
and $\tau_0>0$ be such that $[c_0,c_0+\delta_1]\cap\text{Ran}(u+\tau u_1)=\emptyset$ for $\tau\in[-\tau_0,\tau_0]$. Since $u\in H^4(y_1,y_2)$ and $\zeta_1\in C^\infty([y_1,y_2])$, we have $u_1\in H^4(y_1,y_2)$. Let $\lambda_n(c,\tau)$ denote the $n$-th eigenvalue of $\mathcal{L}_{c,\tau}:H^2\cap H_0^1(y_1,y_2)\longrightarrow L^2(y_1,y_2)$ defined by
\begin{align*}
\mathcal{L}_{c,\tau}\phi=-\phi''+{u''+\tau u_1''-\beta\over u+\tau u_1-c}\phi
\end{align*}
for $c\in[c_0,c_0+\delta_1]$ and $\tau\in[-\tau_0,\tau_0]$.
Then by \eqref{constructed ode for perturbation of original shear flow} and the fact that $\zeta_1$ is a positive function, we have
\begin{align*}
\partial_\tau\lambda_{n_*}(c_0,0)=&\int_{y_1}^{y_2}\partial_{\tau}\left({u''+\tau u_1''-\beta\over u+\tau u_1-c_0}\right)\big|_{\tau=0}\phi_{n_*,c_0}^2dy\\
=&\int_{y_1}^{y_2}{u_1''(u-c_0)-(u''-\beta)u_1\over (u-c_0)^2}\phi_{n_*,c_0}^2dy=\int_{y_1}^{y_2}{\zeta_1\over (u-c_0)^2}\phi_{n_*,c_0}^2dy>0,
\end{align*}
where $\phi_{n_*,c_0}$ is a $L^2$ normalized eigenfunction of $\lambda_{n_*}(c_0)\in\sigma(\mathcal{L}_{c_0})$. By the definition of $k_*$, $-(k\al)^2\notin\sigma(\mathcal{L}_{c_0,0})$ for $k>k_*$.
Since $c_0\notin\sigma_d(\mathcal{R}_{0,\beta})\cap\mathbf{R}$, we have $0\notin\sigma(\mathcal{L}_{c_0,0})$. By the continuity of $\partial_\tau\lambda_{n_*}$ and the small perturbation of $\sigma(\mathcal{L}_{c,\tau})$, we can take $\tau_0>0$ and $\delta_1>0$ smaller such that $\partial_\tau\lambda_{n_*}(c,\tau)>0$ and
\begin{align}\label{0-no-spectrum}
0\notin \sigma(\mathcal{L}_{c,\tau})\quad\text{and}\quad -(k\al)^2\notin \sigma(\mathcal{L}_{c,\tau}),\quad \forall\ k>k_*
\end{align}
{for} $(c,\tau)\in[c_0,c_0+\delta_1]\times[-\tau_0,\tau_0]$.
By taking $\delta_1>0$ smaller and the Implicit Function Theorem, there exists $ \widetilde\gamma\in C^1([c_0,c_0+\delta_1])$  such that
$\lambda_{n_*}(c,\widetilde\gamma(c))=\lambda_{n_*}(c_0,0)=-(k_*\alpha)^2,\ \widetilde\gamma(c_0)=0$ and $|\widetilde\gamma(c)|\leq \tau_0$
 for $c\in[c_0,c_0+\delta_1]$. By \eqref{monotone in half interval}, we have
 $\lambda_{n_*}(c,\widetilde\gamma(c))=\lambda_{n_*}(c_0,0)\neq\lambda_{n_*}(c,0)$ and $\widetilde\gamma(c)\neq0$ for $c\in(c_0,c_0+\delta_1]$.
%
 Then for fixed $\tau\in(0,\tau_0]$, there exists $c_{\tau}\in[c_0,c_0+\delta_1]$  such that $\widetilde\gamma'(c_{\tau})\neq0$ and $|\widetilde\gamma(c_{\tau})|\leq\tau$.
 Note that $0=\frac{d}{dc}[\lambda_{n_*}(c,\widetilde\gamma(c))]=\partial_c\lambda_{n_*}(c,\widetilde\gamma(c))+
 \widetilde\gamma'(c)\partial_{\tau}\lambda_{n_*}(c,\widetilde\gamma(c))$.
 Let $ \tau_1=\widetilde\gamma(c_{\tau})$. Then we have $\partial_c\lambda_{n_*}(c_{\tau},\tau_1)=-\widetilde\gamma'(c_{\tau})\partial_{\tau}\lambda_{n_*}(c_{\tau},\tau_1)\neq0.$

 Fix any $\varepsilon\in(0,1)$. Then we can choose $\tau\in(0,\tau_0]$ and $\delta_1>0$ smaller such that for $ \tau_1=\widetilde\gamma(c_{\tau})$,
\begin{align}\label{shear flow perturbation u+tau-varepsilon-u-1-case 2a}
\|(u+\tau_1 u_1,0)-(u,0)\|_{H^3(y_1,y_2)}\leq \tau_1\|u_1\|_{H^3(y_1,y_2)}< {\varepsilon\over2}\quad \text{and} \quad |c_{\tau}-c_0|<\delta_1<{\varepsilon\over2}.
\end{align}
By \eqref{0-no-spectrum}, $\lambda_{n_*}(c_\tau,\tau_1)=-(k_*\alpha)^2$ and $\partial_c\lambda_{n_*}(c_{\tau},\tau_1)\neq0$, we can apply Case 1 to the shear flow $(u+\tau_1u_1,0)$: there exists a traveling wave solution $(u_\varepsilon(x-c_\varepsilon t,y),v_\varepsilon(x-c_\varepsilon t,y))$ to
\eqref{Euler equation}--\eqref{boundary condition for euler} which has
 period $T={2\pi}/{\alpha}$ in $x$,
\begin{align}\label{traveling wave to u+tau-varepsilon-u-1-case 2a}
 \|(u_\varepsilon,v_\varepsilon)-(u+\tau_1 u_1,0)\|_{H^3(D_T)} \leq{\varepsilon\over2} \quad \text{and} \quad |c_{\varepsilon}-c_{\tau}|\leq{\varepsilon\over2},
\end{align}
$u_{\varepsilon}\left(  x,y\right)-c_\varepsilon  \neq0$ and $\|v_\varepsilon\|_{L^2\left(  D_T\right)}\neq0$. Then by \eqref{shear flow perturbation u+tau-varepsilon-u-1-case 2a}--\eqref{traveling wave to u+tau-varepsilon-u-1-case 2a}, we have
$
 \|(u_\varepsilon,v_\varepsilon)-(u,0)\|_{H^3(D_T)}$ $<{\varepsilon}$ {and}  $|c_{\varepsilon}-c_{0}|<{\varepsilon}.
$
\if0 \\
{\bf Case 2b.} $c_0$  is a local minimum point of $\lambda_{n_0}$.

In this case, there exists $\delta_1\in(0,\delta_0]$ such that $-\lambda_{n_0}$ is decreasing in $[c_0,c_0+\delta_1]$. Let $\zeta_1\in C^4([y_1,y_2])$ be a negative function, $u_1$ solves \eqref{constructed ode for perturbation of original shear flow}, and $\tau_0>0$ be such that
$[c_0,c_0+\delta_1]\cap \text{Ran}(u+\tau u_1)=\emptyset$ for $\tau\in[0,\tau_0]$. Then
$\partial_\tau\lambda_{n_0}(c_0,0)=\int_{y_1}^{y_2}{\zeta_1\over (u-c_0)^2}\phi_{n_0,c_0}^2dy<0$.
Thus, we can take $\tau_0>0$ and $\delta_1>0$ smaller such that $\partial_\tau\lambda_{n_0}(c,\tau)<0$ and
$
0\notin \sigma(\mathcal{L}_{c,\tau})
$
{for} $(c,\tau)\in[c_0,c_0+\delta_1]\times[0,\tau_0]$.
Then
$
(k_0\alpha)^2=-\lambda_{n_0}(c_0,0)<-\lambda_{n_0}(c_0,\tau)
$
for $\tau\in(0,\tau_0]$. Since $-\lambda_{n_0}(c_0+\delta_1,0)<(k_0\alpha)^2$, we can choose $\tau_1\in(0,\tau_0]$ such that $-\lambda_{n_0}(c_0+\delta_1,\tau_1)<(k_0\alpha)^2<-\lambda_{n_0}(c_0,\tau_1)$. Then there exists $c_{\tau_1}\in[c_0,c_0+\delta_1]$ and $\delta_{\tau_1}>0$ small enough such that
\begin{align}\label{weak nondegenerate condition for eigenvalue in}
-\lambda_{n_0}(c_{\tau_1},\tau_1)=(k_0\alpha)^2 \text{ and }\lambda_{n_0}(\cdot,\tau_1) \text{ is increasing in } [c_{\tau_1}-\delta_{\tau_1},c_{\tau_1}+\delta_{\tau_1}]
\end{align}
with $[c_{\tau_1}-\delta_{\tau_1},c_{\tau_1}+\delta_{\tau_1}]
\subset[c_0,c_0+\delta_1]$.
Now, for fixed $\varepsilon\in(0,\varepsilon_0)$, we can first perturb the shear flow $(u,0)$ to $(u+\tau_1u_1,0)$ as in \eqref{shear flow perturbation u+tau-varepsilon-u-1-case 2a}, and then complete the proof of this subcase by applying Case 1 to $(u+\tau_1u_1,0)$
 as in \eqref{traveling wave to u+tau-varepsilon-u-1-case 2a}.\fi
\\
{\bf Case 3.} $c_0\in\sigma_d(\mathcal{R}_{0,\beta})\cap\mathbf{R}$.

In this case, $0\in\sigma(\mathcal{L}_{c_0})$ and there exists $j_0>n_0\geq n_*$ such that $\lambda_{j_0}(c_0)=0$. \if0We still consider the half interval $[c_0,c_0+\delta_0]$ and divide the discussion into four subcases.
\\
{\bf Case 3a.} There exists $\delta_1\in(0,\delta_0]$ such that $\lambda_{n_0}$ is decreasing and $\lambda_{j_0}$ is increasing in $[c_0,c_0+\delta_1]$.

The proof is the same as that of Case 2a except that we need to verify that $0\notin\sigma(\mathcal{L}_{c_{\tau_1},\tau_1})$ before we apply Case 1 to the shear flow $(u+\tau_1 u_1,0)$. In fact, since $u_1$ solves \eqref{constructed ode for perturbation of original shear flow} and $\zeta_1$ is a positive function in Case 2a, we have $\partial_\tau\lambda_{j_0}(c_0,0)=\int_{y_1}^{y_2}{\zeta_1\over (u-c_0)^2}\phi_{j_0,c_0}^2dy>0$.

{\bf Case 3b.} There exists $\delta_1\in(0,\delta_0]$ such that $\lambda_{n_0}$ is increasing and $\lambda_{j_0}$ is decreasing in $[c_0,c_0+\delta_1]$.

The proof is the same as that of Case 2b except that we still  need to verify that $0\notin\sigma(\mathcal{L}_{c_{\tau_1},\tau_1})$. In fact, since $u_1$
solves \eqref{constructed ode for perturbation of original shear flow} and $\zeta_1$ is a negative function in Case 2b, we have
$\partial_\tau\lambda_{j_0}(c_0,0)=\int_{y_1}^{y_2}{\zeta_1\over (u-c_0)^2}\phi_{j_0,c_0}^2dy<0$.
Thus, we can choose $\tau_0>0$ and $\delta_1>0$ smaller in Case 2b such that $\partial_{\tau}\lambda_{j_0}(c,\tau)<0$ and
\eqref{j0-1j0+1} holds
for $(c,\tau)\in[c_0,c_0+\delta_1]\times [0,\tau_0]$. Since $\lambda_{j_0}(\cdot,0)$ is decreasing in $[c_0,c_0+\delta_1]$, we have
$
\lambda_{j_0}(c,\tau)<\lambda_{j_0}(c,0)<\lambda_{j_0}(c_0,0)=0,
$
 which, along with \eqref{j0-1j0+1}, gives
 \begin{align}\label{j0j0+1}
 \lambda_{j_0}(c,\tau)<0<\lambda_{j_0+1}(c,\tau),\quad (c,\tau)\in(c_0,c_0+\delta_1]\times(0,\tau_0].
 \end{align}
  Since $(c_{\tau_1},\tau_1)\in(c_0,c_0+\delta_1]\times(0,\tau_0]$, we have $0\notin\sigma(\mathcal{L}_{c_{\tau_1},\tau_1})$.
 \\
 {\bf Case 3c.}\fi There exist $\delta_1\in(0,\delta_0]$ and $a,b\in\{\pm1\}$ such that both $a\lambda_{n_*}$  and $b\lambda_{j_0}$ are decreasing in $[c_0,c_0+\delta_1]$.

 Since $\phi_{n_*,c_0}^2$ is linearly independent of $\phi_{j_0,c_0}^2$, there exists $\xi_1\in C^{\infty}([y_1,y_2])$ such that
 \begin{align}\label{int-ab}
 \int_{y_1}^{y_2}\xi_1{\phi_{n_*,c_0}^2\over(u-c_0)^2}dy=a \quad \text{and} \quad \int_{y_1}^{y_2}\xi_1{\phi_{j_0,c_0}^2\over(u-c_0)^2}dy=-b.
 \end{align}
 Let $u_1$ be a solution of \eqref{constructed ode for perturbation of original shear flow} with $\zeta_1=\xi_1$, and $\tau_0>0$ be such that
$[c_0,c_0+\delta_1]\cap \text{Ran}(u+\tau u_1)=\emptyset$ for $\tau\in[-\tau_0,\tau_0]$.
  Then by \eqref{int-ab}, we have
$
 a\partial_\tau\lambda_{n_*}(c_0,0)=a^2>0$
 and
$
 b\partial_\tau\lambda_{j_0}(c_0,0)=-b^2<0.
$
As in Case 2, we can take $\tau_0>0$ and $\delta_1>0$ smaller such that \begin{align*}
a\partial_\tau\lambda_{n_*}(c,\tau)>0\quad\text{and}\quad -(k\al)^2\notin \sigma(\mathcal{L}_{c,\tau}),\quad \forall\ k>k_*
\end{align*} for $(c,\tau)\in[c_0,c_0+\delta_1]\times[-\tau_0,\tau_0]$.
Note that $\lambda_{j_0-1}(c_0,0)<\lambda_{j_0}(c_0,0)=0<\lambda_{j_0+1}(c_0,0)$. By the continuity of $\partial_\tau\lambda_{j_0}$, $\lambda_{j_0-1}$ and $\lambda_{j_0+1}$, we can choose $\tau_0>0$ and $\delta_1>0$ smaller such that
\begin{align}\label{j0-1j0+1}
 b\partial_{\tau}\lambda_{j_0}(c,\tau)<0\quad\text{and}\quad \lambda_{j_0-1}(c,\tau)<0<\lambda_{j_0+1}(c,\tau)
\end{align}
for $(c,\tau)\in[c_0,c_0+\delta_1]\times [-\tau_0,\tau_0]$.

As $ a\partial_\tau\lambda_{n_*}(c_0,0)>0$ and $a\lambda_{n_*}(\cdot,0)$ is decreasing in $[c_0,c_0+\delta_1]$,
   we can choose $\tau_1\in(0,\tau_0]$ such that
 $a\lambda_{n_*}(c_0+\delta_1,\tau_1)<a\lambda_{n_*}(c_0,0)=-a(k_*\alpha)^2<a\lambda_{n_*}(c_0,\tau_1)$.
 Then there exists $c_{\tau_1}\in(c_0,c_0+\delta_1)$ such that $\lambda_{n_*}(c_{\tau_1},\tau_1)=-(k_*\alpha)^2. $

Since $b\partial_{\tau}\lambda_{j_0}(c,\tau)<0$ and $b\lambda_{j_0}(\cdot,0)$ is decreasing in $[c_0,c_0+\delta_1]$, we have
$
b\lambda_{j_0}(c,\tau)<b\lambda_{j_0}(c,0)<b\lambda_{j_0}(c_0,0)=0,
$
 which, along with \eqref{j0-1j0+1}, gives
 \begin{align*}
 b\lambda_{j_0+b}(c,\tau)>0>b\lambda_{j_0}(c,\tau),\quad (c,\tau)\in(c_0,c_0+\delta_1]\times(0,\tau_0].
 \end{align*}
 Since $(c_{\tau_1},\tau_1)\in(c_0,c_0+\delta_1]\times(0,\tau_0]$, we have $0\notin\sigma(\mathcal{L}_{c_{\tau_1},\tau_1})$.

Now, we can construct  a desired traveling wave solution $(u_\varepsilon(x-c_\varepsilon t,y),v_\varepsilon(x-c_\varepsilon t,y))$    by first  perturbing the shear flow $(u,0)$ to $(u+\tau_1u_1,0)$ and then applying Case 1 or Case 2 to $(u+\tau_1u_1,0)$
 as in \eqref{shear flow perturbation u+tau-varepsilon-u-1-case 2a}--\eqref{traveling wave to u+tau-varepsilon-u-1-case 2a}.
\if0 \\
 {\bf Case 3d.} There exists $\delta_1\in(0,\delta_0]$ such that both $\lambda_{n_0}$  and $\lambda_{j_0}$ are increasing in $[c_0,c_0+\delta_1]$.

 We choose $u_1$ to be a solution of \eqref{constructed ode for perturbation of original shear flow} with $\zeta_1=-\xi_1$, where $\xi_1$ is given in Case 3c. Then by \eqref{int-ab}, we have $\partial_\tau\lambda_{n_0}(c_0,0)=-a<0$ and $\partial_\tau\lambda_{j_0}(c_0,0)=-b>0$.
 Since
$\partial_\tau\lambda_{n_0}(c_0,0)<0$ and   $\lambda_{n_0}$ is increasing in $[c_0,c_0+\delta_1]$,
 by  taking $\tau_0>0$ and $\delta_1>0$ smaller  we can choose $\tau_1\in(0,\tau_0]$ such that
 $-\lambda_{n_0}(c_0+\delta_1,\tau_1)<(k_0\alpha)^2<-\lambda_{n_0}(c_0,\tau_1)$. Then there exists $c_{\tau_1}\in[c_0,c_0+\delta_1]$
 and $\delta_{\tau_1}>0$ such that \eqref{weak nondegenerate condition for eigenvalue in} holds
with $[c_{\tau_1}-\delta_{\tau_1},c_{\tau_1}+\delta_{\tau_1}]
\subset[c_0,c_0+\delta_1]$.
Since $\partial_\tau\lambda_{j_0}(c_0,0)>0$, $\lambda_{j_0}(c_0,0)=0$ and $\lambda_{j_0}(\cdot,0)$ is increasing in $[c_0,c_0+\delta_1]$, by  taking
$\tau_0>0$ and $\delta_1>0$ smaller we obtain
 \eqref{j0-1j0}, which implies  $0\notin\sigma(\mathcal{L}_{c_{\tau_1},\tau_1})$.
Finally,  by perturbing  $(u,0)$ to $(u+\tau_1u_1,0)$ and applying Case 1 to $(u+\tau_1u_1,0)$ as in \eqref{shear flow perturbation u+tau-varepsilon-u-1-case 2a}--\eqref{traveling wave to u+tau-varepsilon-u-1-case 2a} again, we  complete the proof.\fi
\end{proof}

\if0
In the following remark,
 we construct  traveling waves  by bifurcation at other perturbed shear flows near $(u,0)$ under the assumption that $\lambda_{n_0}'(c_0)\neq0$ and $c_0\in\sigma_d(\mathcal{R}_{0,\beta})\cap\mathbf{R}$.

\begin{remark} Let $\lambda_{n_0}'(c_0)\neq0$ and $c_0\in\sigma_d(\mathcal{R}_{0,\beta})\cap\mathbf{R}$.
Without loss of generality, we assume that $\lambda_{n_0}'(c_0)<0$. In this case, we have $0\in\sigma(\mathcal{L}_{c_0})$ and there exists $j_0> n_0$ such that $\lambda_{j_0}(c_0)=0$. Recall that $\lambda_n(c,\tau)$ denotes the $n$-th eigenvalue of $\mathcal{L}_{c,\tau}$ defined by \eqref{def-L-c-tau},
where $u_1\in C^4(y_1,y_2)$ is to be determined.
Since $\partial_c\lambda_{n_0}(c_0,0)=\lambda_{n_0}'(c_0)<0$, we can choose $\tau_0>0$ and $\delta_1>0$ small enough such that
$\partial_c\lambda_{n_0}(c,\tau)<0$ for $c\in [c_0-\delta_1,c_0+\delta_1]$ and $\tau\in[-\tau_0,\tau_0],$
and $\lambda_{n_0}(c_0-\delta_1,\tau)>\lambda_{n_0}(c_0,0)=-(k_0\alpha)^2>\lambda_{n_0}(c_0+\delta_1,\tau)$ for $\tau\in[-\tau_0,\tau_0]$.
Then there exists a unique $c_\tau\in[c_0-\delta_1,c_0+\delta_1]$ such that
\begin{align}\label{lambdan0-c-tau-tau-property}
\lambda_{n_0}(c_\tau,\tau)=-(k_0\alpha)^2 \quad\text{and}\quad\partial_c\lambda_{n_0}(c_{\tau},\tau)<0
\end{align}
for any fixed $\tau\in[-\tau_0,\tau_0]$. Since $\lambda_{n_0}'(c_0)<0$,  by the Implicit Function Theorem we have
\begin{align}\label{c-tau-0-derivative}
c_\tau'|_{\tau=0}=-{\partial_\tau\lambda_{n_0}(c_\tau,\tau)\over\partial_c\lambda_{n_0}(c_\tau,\tau)}\big|_{\tau=0}
=-{\partial_\tau\lambda_{n_0}(c_\tau,\tau)|_{\tau=0}\over \lambda_{n_0}'(c_0)}.
\end{align}
Since
\begin{align*}
\partial_\tau\lambda_n(c_\tau,\tau)|_{\tau=0}=&\int_{y_1}^{y_2}{u_1''(u-c_0)-(u''-\beta)u_1\over (u-c_0)^2}\phi_{n,c_0}^2dy
\end{align*}
with $\phi_{n,c_0}$ to be a $L^2$ normalized eigenfunction of $\lambda_n(c_0)\in\sigma(\mathcal{L}_{c_0,0})$ for $n\geq1$,
we get by \eqref{c-tau-0-derivative} and direct computation that
\begin{align}
\lambda_{n_0}'(c_0){d\over d\tau}\lambda_{j_0}(c_\tau,\tau)|_{\tau=0}
=&\lambda_{n_0}'(c_0)[(\partial_c\lambda_{j_0}(c_\tau,\tau)c_\tau')|_{\tau=0}+\partial_\tau\lambda_{j_0}(c_\tau,\tau)|_{\tau=0}]\nonumber\\
=&-\lambda_{j_0}'(c_0){\partial_\tau\lambda_{n_0}(c_\tau,\tau)|_{\tau=0}}+\lambda_{n_0}'(c_0)\partial_\tau\lambda_{j_0}(c_\tau,\tau)|_{\tau=0}\nonumber\\
=&-\lambda_{j_0}'(c_0)\int_{y_1}^{y_2}{u_1''(u-c_0)-(u''-\beta)u_1\over (u-c_0)^2}\phi_{n_0,c_0}^2dy\nonumber\\\label{derivative-lambda-j0-c-tau}
&+\lambda_{n_0}'(c_0)\int_{y_1}^{y_2}{u_1''(u-c_0)-(u''-\beta)u_1\over (u-c_0)^2}\phi_{j_0,c_0}^2dy.\end{align}
We decompose $L^2(y_1,y_2)$ to be  $L^2(y_1,y_2)={\rm{span}}\{\phi_{n_0,c_0}^2/(u-c_0)^2\}\oplus W'$. Then
$\phi_{j_0,c_0}^2/(u-c_0)^2=b\phi_{n_0,c_0}^2/(u-c_0)^2+\xi_2$ for some $b\in\mathbf{R}$ and $0\neq\xi_2\in W'$. Now, we choose $u_1$ to
be a solution of the ODE \eqref{constructed ode for perturbation of original shear flow} with $\zeta_1=\xi_2$.
Then by \eqref{derivative-lambda-j0-c-tau} we have
\begin{align*}
\lambda_{n_0}'(c_0){d\over d\tau}\lambda_{j_0}(c_\tau,\tau)|_{\tau=0}
=&-\lambda_{j_0}'(c_0)\int_{y_1}^{y_2}\xi_2{\phi_{n_0,c_0}^2\over(u-c_0)^2}dy
+\lambda_{n_0}'(c_0)\int_{y_1}^{y_2}\xi_2{\phi_{j_0,c_0}^2\over(u-c_0)^2}dy\\
=&0+\lambda_{n_0}'(c_0)\int_{y_1}^{y_2}\xi_2\left(b{\phi_{n_0,c_0}^2\over(u-c_0)^2}+\xi_2\right)dy
=\lambda_{n_0}'(c_0)\int_{y_1}^{y_2}\xi_2^2dy.\end{align*}
 Thus, ${d\over d\tau}\lambda_{j_0}(c_\tau,\tau)|_{\tau=0}=\int_{y_1}^{y_2}\xi_2^2dy>0$.
Since $\lambda_{j_0}(c_0,0)=0$ and ${d\over d\tau}\lambda_{j_0}(c_\tau,\tau)|_{\tau=0}>0$, by taking $\tau_0>0$ smaller we have $\lambda_{j_0}(c_\tau,\tau)\neq0$ for $\tau\in[-\tau_0,0)\cup(0,\tau_0]$ and $\lambda_{j_0-1}(c_\tau,\tau)<0<\lambda_{j_0+1}(c_\tau,\tau)$ for $\tau\in[-\tau_0,\tau_0]$. This implies that
\begin{align}\label{0-no-eigenvalue}
0\notin\sigma(\mathcal{L}_{c_\tau,\tau})\quad\text{for}\quad\tau\in[-\tau_0,0)\cup(0,\tau_0].
\end{align}

Let $\varepsilon_0>0$ be given in Case 1 of the proof of Lemma $\ref{traveling wave construction-lemma}$ and fix any $\varepsilon\in(0,\varepsilon_0).$ By taking $\tau_0>0$ and $\delta_1>0$ smaller, we obtain
\eqref{shear flow perturbation u+tau-varepsilon-u-1-case 2a}  for any fixed $\tau_1\in[-\tau_0,0)\cup(0,\tau_0]$.
By \eqref{lambdan0-c-tau-tau-property} and \eqref{0-no-eigenvalue},
we can apply Case 1 to the shear flow $(u+\tau_1 u_1,0)$: there exists a traveling wave solution $(u_\varepsilon(x-c_\varepsilon t,y),v_\varepsilon(x-c_\varepsilon t,y))$ to \eqref{Euler equation}--\eqref{boundary condition for euler}  such that  \eqref{traveling wave to u+tau-varepsilon-u-1-case 2a} holds, it has
 period $T={2\pi}/{\alpha}$ in $x$,
$u_{\varepsilon}\left(  x,y\right)-c_\varepsilon  \neq0$ and $\|v_\varepsilon\|_{L^2\left(  0,T\right)  \times\left(
y_1,y_2\right)}\neq0$. By \eqref{shear flow perturbation u+tau-varepsilon-u-1-case 2a}--\eqref{traveling wave to u+tau-varepsilon-u-1-case 2a}, we have
$
 \|(u_\varepsilon,v_\varepsilon)-(u,0)\|_{H^3(0,T)\times(y_1,y_2)} \leq{\varepsilon}$ {and} $|c_{\varepsilon}-c_{0}|\leq{\varepsilon}
$.
\end{remark}

In Case 1 of the above proof,
the local monotone condition of $\lambda_{n_0}$ near $c_0$ is weaker than the transversal crossing condition (i.e. $\lambda_{n_0}'(c_0)\neq0$).
Moreover, the dimension of the kernel of $\mathcal{G}_c^*$ could be larger than $1$, see \eqref{dim-ker-G-c}. Thus, the Crandall-Rabinowitz local
bifurcation theorem  \cite{CR71}, which was used in \cite{Li-Lin,LZ}, can not be applied here.
Instead, we apply a generalized local bifurcation theorem for potential operators due to  Kielh\"{o}fer \cite{Kielhofer1988}.
The conclusion is slightly weaker but enough for our purpose; that is, the trivial solution  $(0,c_0)$ is a cluster point of  bifurcating solutions $\left(  \phi_{c_\varepsilon},c_\varepsilon\right)\in B_*\times\mathbf{R}, \varepsilon\in(0,\varepsilon_0)$,
of $F(\phi,c)=0$, where $F$ is defined in \eqref{def-F-B-C}.
The bifurcating solutions  form a $C^1$ curve near $(0,c_0)$ under  the following strong conditions.

Then we divide our discussion in two cases.\\
Case 1. $k^2\alpha^2\neq -\lambda_n(c_0)$ for $k\geq2$ and $n\geq1$.
This is the generic case. We  constructed steady solutions  of \eqref{j-traveling} near $\psi_0$ such that the minimal period is $\frac{2\pi}{\alpha}$ and their vertical velocities are dominated by an eigenfunction of $-\alpha^2 \in\sigma(\mathcal{L}_{c_0})$.
As this case is generic and of its own interest, we give its proof separately.

In this case, $-\alpha^{2}$ is a negative eigenvalue of  $\mathcal{L}_{c_0}$  with the
eigenfunction $\phi_{c_0}$, and $-k^2\alpha^{2}$ is not an eigenvalue of  $\mathcal{L}_{c_0}$ for  $k\geq 2$, which implies
\begin{align}\label{ker-G}
\ker(\mathcal{G}_{c_0})=\left\{  \phi_{c_0}(y)\cos({\alpha}x)\right\},
\end{align}
where we used the property $\varphi(x,y)=\varphi( T-x,y)$ in the definition of the space $B$.
Since $\mathcal{G}_c$ is self-adjoint, there exists $\delta_1\in(0,\delta_0)$ such that the simple isolated eigenvalue $0$ of $\mathcal{G}_{c_0}$ perturbs to exactly  one simple real eigenvalue, denoted by $\mu(\mathcal{G}_c)$, of $\mathcal{G}_c$ near $0$ for $c\in[c_0-\delta_1,c_0+\delta_1]$.
 Note that $\lambda_{n_0}(c)$ is a simple real eigenvalue of $\mathcal{L}_{c}$ with the eigenfunction $\phi_c$ for $c\in[c_0-\delta_1,c_0+\delta_1]$ and $\lambda_{n_0}(c_0)=-\alpha^2$. Thus,
$$\mathcal{G}_c(\phi_c(y)\cos(\alpha x))=(\lambda_{n_0}(c)+\alpha^2)(\phi_c(y)\cos(\alpha x)),$$
which implies that $ \lambda_{n_0}(c)+\alpha^2=\mu(\mathcal{G}_c)$ is the above uniquely perturbed eigenvalue of $ \mathcal{G}_c$ near $0$ with the eigenfunction $\phi_c(y)\cos(\alpha x)$.
  By our assumption that $\lambda_{n_0}$ is monotone in a real neighborhood of $c_0$, we obtain that  the crossing number of the family $\mathcal{G}_c$ at $c=c_0$ through $0$, which is defined in Definition II.7.1 in \cite{Kielhofer2012} and denoted by $\chi(\mathcal{G}_c,c_0)$, is well-defined and
$\chi(\mathcal{G}_c,c_0)\in\{\pm1\}$ is nonzero. By Kielh\"{o}fer's bifurcation theorem for potential operators  \cite{Kielhofer1988} and \eqref{ker-G},
there exist $\delta\in(0,\delta_1)$ and
a nontrivial bifurcating curve $\{\left(  \phi_{\gamma},c(\gamma)\right), \gamma\in(-\delta,\delta)\}$
of $F(\phi,c)=0$, which intersects the trivial curve $\left(
0,c\right)  $ at $c=c_0$, such that
\[
\phi_{\gamma}(x,y)=\gamma\phi_{c_0}(y)\cos(\alpha x)+o(\gamma),
\]
$c(\cdot)$ is  continuous on $(-\delta,\delta)$, and $c(0)=c_{0}$. So the stream functions of the perturbed steady flows of \eqref{j-traveling}
 take the form
\begin{equation*}
\psi_{\gamma}(x,y)=\psi_{0}\left(  y\right)  +\gamma\phi_{c_0}(y)\cos(\alpha x)+o(\gamma).
\end{equation*}
Let the velocity be given by $\vec{u}_{\gamma}=\left(  u_{\gamma},v_{\gamma}\right)
=\left(  \partial_{y}\psi_{\gamma},-\partial_{x}\psi_{\gamma}\right)  $. Since $c(0)=c_0\notin {\rm Ran}\ (u)$ and $c(\cdot)$ is continuous on $(-\delta,\delta)$, by taking $\delta$ smaller, we have
\begin{align}
u_{\gamma}(x,y)-c(\gamma)=u\left(  y\right)  -c(\gamma)+\gamma\phi_{c_0}^{\prime}(y)\cos (\alpha x)+o(1)\neq0.\label{para-velocity-no-cats-eye}
\end{align}

\begin{remark}\label{B-rem}
Assume that $c_0\in\sigma_d(\mathcal{R}_{k_0\alpha,\beta})\cap\mathbf{R}$ for some $ k_0>0$ and  $c_0\notin \sigma_d(\mathcal{R}_{k\alpha,\beta})\cap\mathbf{R}$
 for $k\neq k_0$.
Then there exists  $n_0\geq1$ such that
 $(k_0\alpha)^2=-\lambda_{n_0}(c_0)$, and $(k\alpha)^2\neq-\lambda_{n}(c_0)$ for $0\leq k\neq k_0$ and $n\geq1$.
If one of the following conditions holds,
\begin{itemize}
\item[(1)] $u$ is analytic on $[y_1,y_2]$, and $\lambda_{n_0}$ is monotone in a real neighborhood of $c_0$,
\item[(2)] $\lambda_{n_0}'(c_0)\neq0$ (the transversal crossing condition),
\end{itemize}


Now, we prove the existence of nontrivial bifurcating curve \eqref{bifurcating curve} in the two cases $(1)$--$(2)$.
Since $c_0\in\sigma_d(\mathcal{R}_{k_0\alpha,\beta})\cap\mathbf{R}$ and  $c_0\notin \sigma_d(\mathcal{R}_{k\alpha,\beta})\cap\mathbf{R}$
 for $k\neq k_0$, we have
\begin{align*}
\ker(\mathcal{G}_{c_0}|_{B})={\rm{span}}\left\{  \phi_{c_0,n_0}(y)\cos(k_0{\alpha}x)\right\}\Longrightarrow\dim(\ker(\mathcal{G}_{c_0}|_{B}))=1.
\end{align*}
Moreover, there exists $\delta_0>0$ such that
$\mathcal{G}_c(\phi_{c,n_0}(y)\cos(k_0\alpha x))=(\lambda_{n_0}(c)+(k_0\alpha)^2)(\phi_{c,n_0}(y)$ $\cos(k_0\alpha x))$ for $c\in[c_0-\delta_0,c_0+\delta_0]$,
 and thus, $ \lambda_{n_0}(c)+(k_0\alpha)^2$ is the only perturbed eigenvalue of $ \mathcal{G}_c|_{B}$ near $0$.

 If $(1)$ is true,  then $F$   can be chosen such that it is analytic near $(0,c_0)$ by adjusting $f_c$.
Since $\lambda_{n_0}$ is monotone in a real neighborhood of $c_0$, there exists $j\geq0$ such that $\lambda_{n_0}^{(i)}(c_0)=0$ for $1\leq i\leq 2j$ and $\lambda_{n_0}^{(2j+1)}(c_0)\neq0$. By Theorem $5.2$ in \cite{Kielhofer1980}, there exists a nontrivial bifurcating curve of $F(\phi,c)=0$, which intersects the trivial curve $\left(
0,c\right)  $ at $c=c_0$.
By Theorem $5.3$ in \cite{Kielhofer1980}, we obtain   \eqref{bifurcating curve}.

Assume that  $(2)$ is true.
\end{remark}\fi
To prove Corollary \ref{traveling wave construction-corollary}, we only need to modify the spaces $B$ and $C$ from $H^{4}$ and $H^{2}$ to $H^{s+1}$ and $H^{s-1}$
 in the proof of Lemma \ref{traveling wave construction-lemma}. We also use the fact  that  $\tilde{f}_c\in C^{\infty}(I_c)$, $f_c\in C_0^{\infty}(\mathbf{R})$
  and $u_1\in C^{\infty}([y_1,y_2])$ due to the assumption that $u\in C^{\infty}([y_1,y_2])$.

Conversely,  for  a set of
 traveling wave solutions near $(u,0)$ with traveling speeds converging to  $c_0$,  we show that $c_0$ is an isolated real eigenvalue besides $u_{\min}$ and $u_{\max}$, which is given in Lemma \ref{traveling wave concentration}.

\begin{proof}[Proof of Lemma \ref{traveling wave concentration}] It suffices to show that if $c_0\notin\{u_{\max},u_{\min}\}$, then
$c_0\in\bigcup_{k\geq1}(\sigma_d(\mathcal{R}_{k\alpha,\beta})\cap\mathbf{R})$ and \eqref{eigenfunction-convergence} holds.
 Note that $(u_\varepsilon,v_\varepsilon)$
 solves
 \begin{equation}\label{nonlinear}(u_\varepsilon-c_\varepsilon)\partial_x\omega_\varepsilon+v_\varepsilon\partial_y\omega_\varepsilon+\beta v_\varepsilon=0.
 \end{equation}
 Moreover,
 \begin{align*}
 \|\omega_\varepsilon-\omega_0\|_{H^2(D_T)}\leq C\|(u_\varepsilon,v_\varepsilon)-(u,0)\|_{H^3(D_T)}\leq C\varepsilon.
\end{align*}
By taking $\varepsilon_0>0$ smaller,
 \begin{align}\label{u-c-estimate}|u_\varepsilon-c_\varepsilon|\geq|u-c_\varepsilon|-|u-u_\varepsilon|\geq C^{-1}\end{align}
  for $\varepsilon\in(0,\varepsilon_0)$ and $y\in[y_1,y_2]$.
Note that ${\pi\over y_2-y_1}\|v_\varepsilon\|_{L^2(D_T)}\leq \|\nabla v_\varepsilon\|_{L^2(D_T)}$. By Sobolev embedding, we have
\begin{align}\label{sobolev1}&\|v_\varepsilon\|_{L^4(D_T)}\leq C\| v_\varepsilon\|_{H^1(D_T)}\leq C\| \nabla v_\varepsilon\|_{L^2(D_T)},\\\label{sobolev2}
&\|\partial_y(\omega_\varepsilon-\omega_0)\|_{L^4(D_T)}\leq C
\|\partial_y(\omega_\varepsilon-\omega_0)\|_{H^1(D_T)}
\leq C
\|\omega_\varepsilon-\omega_0\|_{H^2(D_T)}\leq C\varepsilon.
\end{align}

Since $\partial_x\omega_\varepsilon=\partial_x(\partial_xv_\varepsilon-\partial_yu_\varepsilon)=\Delta v_\varepsilon$, \if0 we have by \eqref{u-c-estimate}--\eqref{sobolev2} that
\begin{align*}
&\|\nabla v_\varepsilon\|^2_{L^2(0,T)\times(y_1,y_2)}=-\int_{0}^T\int_{y_1}^{y_2}v_\varepsilon\Delta v_\varepsilon dydx=-\int_{0}^T\int_{y_1}^{y_2}v_\varepsilon\partial_x \omega_\varepsilon dydx\\
\leq&\int_0^T\int_{y_1}^{y_2}|v_\varepsilon|\left|{v_\varepsilon\partial_y\omega_\varepsilon+\beta v_\varepsilon\over u_\varepsilon-c_\varepsilon}\right|dydx\leq C\int_0^T\int_{y_1}^{y_2}\left|v^2_\varepsilon\partial_y\omega_\varepsilon+\beta v^2_\varepsilon\right|dydx\\
\leq&C\int_0^T\int_{y_1}^{y_2}\left(|v_\varepsilon^2\partial_y(\omega_\varepsilon-\omega_0)|+|\beta-u''||v_\varepsilon^2|\right)dydx\\
\leq&C\|v_\varepsilon\|_{L^2(0,T)\times(y_1,y_2)}\|v_\varepsilon\|_{L^4(0,T)\times(y_1,y_2)}
\|\partial_y(\omega_\varepsilon-\omega_0)\|_{L^4(0,T)\times(y_1,y_2)}+C\|v_\varepsilon\|^2_{L^2(0,T)\times(y_1,y_2)}\\
\leq& C\varepsilon\|v_\varepsilon\|_{L^2(0,T)\times(y_1,y_2)}\|\nabla v_\varepsilon\|_{L^2(0,T)\times(y_1,y_2)}+C\| v_\varepsilon\|_{L^2(0,T)\times(y_1,y_2)}\|\nabla v_\varepsilon\|_{L^2(0,T)\times(y_1,y_2)}.
\end{align*}
Thus,
\begin{align*}
\|\nabla v_\varepsilon\|_{L^2(0,T)\times(y_1,y_2)}\leq C\varepsilon\|v_\varepsilon\|_{L^2(0,T)\times(y_1,y_2)}+C\| v_\varepsilon\|_{L^2(0,T)\times(y_1,y_2)}\leq C\| v_\varepsilon\|_{L^2(0,T)\times(y_1,y_2)}.
\end{align*}
 Then  $\|\tilde v_\varepsilon\|_{L^2(0,T)\times(y_1,y_2)}=1$ and $\|\tilde v_\varepsilon\|_{H^1(0,T)\times(y_1,y_2)}\leq C$,

Since $\partial_x\omega_\varepsilon=\Delta v_\varepsilon$,\fi we get by \eqref{nonlinear} that
\begin{align}\label{equation-for-normalized vertical velocity}
\Delta\tilde v_\varepsilon+{\partial_y(\omega_\varepsilon-\omega_0)\over u_\varepsilon-c_\varepsilon}\tilde v_\varepsilon+{\beta-u''\over u_\varepsilon-c_\varepsilon}\tilde v_\varepsilon=0,
\end{align}where
 $\tilde v_\varepsilon={v_\varepsilon/\|v_\varepsilon\|_{L^2(D_T)}}$.
By \eqref{u-c-estimate}, we have  $\left|{\beta-u''\over u_\varepsilon-c_\varepsilon}\right|\leq C$ for $y\in[y_1,y_2]$ and
\begin{align*}
&\|\Delta\tilde v_\varepsilon\|_{L^2(D_T)}\leq C\|\partial_y(\omega_\varepsilon-\omega_0)\|_{L^4(D_T)}\|\tilde v_\varepsilon\|_{L^4(D_T)}+C\|\tilde v_\varepsilon\|_{L^2(D_T)}\\
\leq& C\varepsilon\|\tilde v_\varepsilon\|_{H^1(D_T)}+C\|\tilde v_\varepsilon\|_{L^2(D_T)}\leq C\varepsilon\|\tilde v_\varepsilon\|_{H^2(D_T)}^{1/2}+C,
\end{align*}
where we used \eqref{sobolev1}--\eqref{sobolev2} and $\|\tilde v_\varepsilon\|_{H^1(D_T)}\leq C\|\tilde v_\varepsilon\|_{H^2(D_T)}^{1/2}\|\tilde v_\varepsilon\|_{L^2(D_T)}^{1/2}$. Since $v_\varepsilon(x,y_i)=0$ for $i=1,2$, we have $\|\tilde v_\varepsilon\|_{H^2(D_T)}\leq C\|\Delta\tilde v_\varepsilon\|_{L^2(D_T)}.$ Thus, $\|\tilde v_\varepsilon\|_{H^2(D_T)}\leq C$ and
\begin{align}\label{c0norm normalized vertical velocity}
&\|\tilde v_\varepsilon\|_{C^0([0,T]\times[y_1,y_2])}\leq C\|\tilde v_\varepsilon\|_{H^2(D_T)}\leq C,\\\label{L4norm normalized vertical velocity}
&\|\partial_x\tilde v_\varepsilon\|_{L^4(D_T)}+\|\partial_y\tilde v_\varepsilon\|_{L^4(D_T)}\leq
C\|\tilde v_\varepsilon\|_{H^2(D_T)}\leq C.
\end{align}Up to a subsequence, there exists $\tilde v_0\in{H^2(D_T)}$ such that $\tilde v_\varepsilon\rightharpoonup\tilde v_0$ in $H^2(D_T)$, $\tilde v_\varepsilon\rightarrow\tilde v_0$ in ${H^1(D_T)}$ and $\|\tilde v_0\|_{L^2(D_T)}=1$.
Taking derivative in \eqref{equation-for-normalized vertical velocity} with respect to $x$ and $y$, we get by \eqref{sobolev2} and \eqref{c0norm normalized vertical velocity}--\eqref{L4norm normalized vertical velocity} that
\begin{align*}
&\|\partial_x\Delta\tilde v_\varepsilon\|_{L^2(D_T)}\\
\leq& \left\|\partial_x\left({\partial_y(\omega_\varepsilon-\omega_0)\over u_\varepsilon-c_\varepsilon}\right)\tilde v_\varepsilon+{\partial_y(\omega_\varepsilon-\omega_0)\over u_\varepsilon-c_\varepsilon}\partial_x\tilde v_\varepsilon+\partial_x\left({\beta-u''\over u_\varepsilon-c_\varepsilon}\right)\tilde v_\varepsilon+{\beta-u''\over u_\varepsilon-c_\varepsilon}\partial_x\tilde v_\varepsilon\right\|_{L^2(D_T)}\\
\leq& C\left(\|\partial_{xy}(\omega_\varepsilon-\omega_0)\|_{L^2(D_T)}+\|\partial_{y}(\omega_\varepsilon-\omega_0)\|_{L^2(D_T)}\right)\|\tilde v_\varepsilon\|_{C^0([0,T]\times[y_1,y_2])}+\\
&C\|\partial_y(\omega_\varepsilon-\omega_0)\|_{L^4(D_T)}\|\partial_x\tilde v_\varepsilon\|_{L^4(D_T)}
+C\|\tilde v_\varepsilon\|_{L^2(D_T)}+C\|\partial_x\tilde v_\varepsilon\|_{L^2(D_T)}\leq C,
\end{align*}
and
\begin{align*}
&\|\partial_y\Delta\tilde v_\varepsilon\|_{L^2(D_T)}\\
\leq& \left\|\partial_y\left({\partial_y(\omega_\varepsilon-\omega_0)\over u_\varepsilon-c_\varepsilon}\right)\tilde v_\varepsilon+
{\partial_y(\omega_\varepsilon-\omega_0)\over u_\varepsilon-c_\varepsilon}\partial_y\tilde v_\varepsilon
+\partial_y\left({\beta-u''\over u_\varepsilon-c_\varepsilon}\right)\tilde v_\varepsilon+{\beta-u''\over u_\varepsilon-c_\varepsilon}\partial_y\tilde v_\varepsilon\right\|_{L^2(D_T)}\\
\leq& C\left(\|\partial_y^2(\omega_\varepsilon-\omega_0)\|_{L^2(D_T)}
+\|\partial_y(\omega_\varepsilon-\omega_0)\|_{L^2(D_T)}\right)\|\tilde v_\varepsilon\|_{C^0([0,T]\times[y_1,y_2])}+
\\& C\|\partial_y(\omega_\varepsilon-\omega_0)\|_{L^4(D_T)}
\|\partial_y\tilde v_\varepsilon\|_{L^4(D_T)}+
C\|\tilde v_\varepsilon\|_{L^2(D_T)}+
C\|\partial_y\tilde v_\varepsilon\|_{L^2(D_T)}\leq C,
\end{align*}
which implies that $\|\tilde v_\varepsilon\|_{H^3(D_T)}\leq C$ and thus, $\tilde v_\varepsilon\longrightarrow\tilde v_0 $ in $H^2(D_T)$.
For any $\phi\in H^1(D_T)$ with periodic boundary condition in $x$ and Dirichlet boundary condition in $y$, we have
\begin{align}\label{equa-in weak sense}
\int_0^T\int_{y_1}^{y_2}\left(-\nabla \tilde v_\varepsilon\cdot \nabla\phi+{\partial_y(\omega_\varepsilon-\omega_0)\over u_\varepsilon-c_\varepsilon}\tilde v_\varepsilon\phi+{\beta-u''\over u_\varepsilon-c_\varepsilon}\tilde v_\varepsilon\phi\right)dydx=0.
\end{align}
Since  $\|\tilde v_\varepsilon\|_{L^4(D_T)}\leq C\|\tilde v_\varepsilon\|_{H^1(D_T)}\leq C$,  we have by \eqref{u-c-estimate} and \eqref{sobolev2} that
\begin{align*}
&\left|\int_0^T\int_{y_1}^{y_2}{\partial_y(\omega_\varepsilon-\omega_0)\over u_\varepsilon-c_\varepsilon}\tilde v_\varepsilon\phi dydx\right|\leq
C\int_0^T\int_{y_1}^{y_2}|\partial_y(\omega_\varepsilon-\omega_0)||\tilde v_\varepsilon||\phi| dydx\\
\leq &C\|\partial_y(\omega_\varepsilon-\omega_0)\|_{L^4(D_T)}\|\tilde v_\varepsilon\|_{L^4(D_T)}\|\phi\|_{L^2(D_T)}
\leq C\varepsilon\|\phi\|_{L^2(D_T)}\longrightarrow0 \ \ \text{ as } \varepsilon\to0^+.
\end{align*}
Noting that $\tilde v_\varepsilon\longrightarrow\tilde v_0$ in ${H^2(D_T)}$ and  sending $\varepsilon\to0^+$ in \eqref{equa-in weak sense}, we have
\begin{align*}
\int_0^T\int_{y_1}^{y_2}\left(-\nabla \tilde v_0\cdot \nabla\phi+{\beta-u''\over u-c_0}\tilde v_0\phi\right)dydx=0.
\end{align*}
Thus, $\tilde v_0\in H^2(D_T)$ is a weak solution of
\begin{align}\label{limit-equa}
\mathcal{G}_{c_0}\tilde v_0=-\Delta \tilde v_0-{\beta-u''\over u-c_0}\tilde v_0=0.
\end{align}
Since $c_0\notin \text{Ran}(u)$, we have $\left|{\beta-u''\over u-c_0}\right|\leq C$ for $y\in[y_1,y_2]$.   Then by elliptic regularity theory,
we have $\tilde v_0$ is a classical solution of \eqref{limit-equa}. Thus, $\varphi_{c_0}:=\tilde v_0\in\ker(\mathcal{G}_{c_0})$. Since $-\Delta \phi=0$
has no nontrivial solutions satisfying the boundary conditions, we have $|c_0|<\infty$.
Since
 $\varphi_{c_0}=\sum_{k\in\mathbf{Z}}\widehat{\varphi}_{{c_0},k}(y)e^{ik\alpha x}\neq0$
  solves \eqref{limit-equa}, there exists $k_0\in\mathbf{Z}$ such that
$\widehat{\varphi}_{{c_0},k_0}\neq0$ solves
\begin{align*}
-\widehat{\varphi}_{{c_0},k_0}''+(k_0\alpha)^2\widehat{\varphi}_{{c_0},k_0}-{\beta-u''\over u-c_0}\widehat{\varphi}_{{c_0},k_0}=0
\end{align*}
with $\widehat{\varphi}_{{c_0},k_0}(y_1)=\widehat{\varphi}_{{c_0},k_0}(y_2)=0$. Now we show that $k_0\neq 0$. Let $P_0f(x,y)={1\over T}\int_0^Tf(x,y)dx$ for $f\in L^2(D_T)$. Then $P_0$ is a bounded linear operator on $L^2(D_T)$.
Since $\tilde v_\varepsilon=v_\varepsilon/\|v_\varepsilon\|_{L^2(D_T)}=-\partial_x\psi_\varepsilon/\|v_\varepsilon\|_{L^2(D_T)}$, we have $P_0\tilde v_\varepsilon=P_0v_\varepsilon=0$. Taking limit as $\varepsilon\to0^+$, we have $P_0\varphi_{c_0}=P_0\tilde v_0=0$ and thus, $\widehat{\varphi}_{{c_0},0}(y)=P_0\varphi_{c_0}(x,y)\equiv0$, which implies that $k_0\neq 0$.
Thus, $c_0\in\bigcup_{k\neq0}(\sigma_d(\mathcal{R}_{k\alpha,\beta})\cap\mathbf{R})=\bigcup_{k\geq1}(\sigma_d(\mathcal{R}_{k\alpha,\beta})\cap\mathbf{R})$, where we used the fact that $\sigma_d(\mathcal{R}_{k\alpha,\beta})=\sigma_d(\mathcal{R}_{-k\alpha,\beta}).$
\if0
Finally, we prove that if
$c_0\in\sigma_d(\mathcal{R}_{0,\beta})\cap\mathbf{R}$, then $c_0\in\bigcup_{k\geq1}(\sigma_d(\mathcal{R}_{k\alpha,\beta})\cap\mathbf{R})$.
Suppose that $c_0\in\sigma_d(\mathcal{R}_{0,\beta})\cap\mathbf{R}$ and $c_0\notin\sigma_d(\mathcal{R}_{k\alpha,\beta})\cap\mathbf{R}$ for every ${k\geq1}$. Then
 $\varphi_{c_0}=\widehat{\varphi}_{{c_0},0}$ is a function independent of $x$.  If there exists $y_0\in[y_1,y_2]$ such that $\varphi_{c_0}(y_0)\neq0$, then we choose $\varepsilon\in(0,\varepsilon_0)$ small enough such that $\|\tilde v_\varepsilon-\varphi_{c_0}\|_{C^0([0,T]\times[y_1,y_2])}\leq C\|\tilde v_\varepsilon-\varphi_{c_0}\|_{H^2([0,T]\times[y_1,y_2])}\leq |\varphi_{c_0}(y_0)|/2$, and for $x\in[0,T]$,
  we have
\begin{align*}
\left|\partial_x\psi_\varepsilon(x,y_0)/\|v_\varepsilon\|_{L^2(0,T)\times(y_1,y_2)}\right|\geq&| \varphi_{c_0}(y_0)|-\left|-\partial_x\psi_\varepsilon(x,y_0)/\|v_\varepsilon\|_{L^2(0,T)\times(y_1,y_2)}-\varphi_{c_0}(y_0)\right|\\
=&| \varphi_{c_0}(y_0)|-\left|\tilde v_\varepsilon(x,y_0)-\varphi_{c_0}(y_0)\right|\\
\geq&| \varphi_{c_0}(y_0)|-\|\tilde v_\varepsilon(\cdot,y_0)-\varphi_{c_0}(y_0)\|_{C^0([0,T])}\\
\geq&| \varphi_{c_0}(y_0)|-\|\tilde v_\varepsilon-\varphi_{c_0}\|_{C^0([0,T]\times[y_1,y_2])}\\
\geq&| \varphi_{c_0}(y_0)|/2>0.
\end{align*}
Then $\psi_\varepsilon(\cdot,y_0)$ is monotone in $x$ and $\psi_\varepsilon(0,y_0)\neq\psi_\varepsilon(T,y_0)$, which is impossible since $\psi_\varepsilon$ is $T$-periodic in $x$. Thus, $\varphi_{c_0}\equiv0$ on $[y_1,y_2]$, which contradicts that $\|\varphi_{c_0}\|_{L^2(0,T)\times(y_1,y_2)}=1$. Therefore, $c_0\in\bigcup_{k\geq1}(\sigma_d(\mathcal{R}_{k\alpha,\beta})\cap\mathbf{R})$.\fi
\end{proof}

\if0
\begin{proof}[Proof of Lemma \ref{traveling wave concentration}] It suffices to show that if $c\notin\text{Ran}\ (u)$, then $c\in\sigma_d(\mathcal{R}_{\alpha,\beta})\cap\mathbf{R}$ and \eqref{eigenfunction-convergence} holds.
 We assume that $\int_0^T\int_{y_1}^{y_2}u_{\varepsilon}(x,y)dxdy=0$. Then $(u_\varepsilon,v_\varepsilon)$ is uniquely determined by $\omega_\varepsilon$, and solves
 \begin{align*}(u_\varepsilon-c_\varepsilon)\partial_x\omega_\varepsilon+v_\varepsilon\partial_y\omega_\varepsilon+\beta v_\varepsilon=0.
 \end{align*}
 Moreover,
 \begin{align*}
 &\|\psi_\varepsilon-\psi_0\|_{H^4(0,T)\times(y_1,y_2)}\\
 \leq& C\|(u_\varepsilon,v_\varepsilon)-(u,0)\|_{H^3(0,T)\times(y_1,y_2)}\leq C\|\omega_\varepsilon-\omega_0\|_{H^2(0,T)\times(y_1,y_2)}\leq C\varepsilon.
\end{align*}
Since ${\omega_\varepsilon-\omega_0\over\varepsilon}$  is uniformly bounded in $H^2(0,T)\times(y_1,y_2)$ and ${\psi_\varepsilon-\psi_0\over\varepsilon}$ is uniformly bounded in $H^4(0,T)\times(y_1,y_2)$,  there exist $\omega\in H^2(0,T)\times(y_1,y_2)$ and $\phi\in H^4(0,T)\times(y_1,y_2)$ such that ${\omega_\varepsilon-\omega_0\over\varepsilon}\rightharpoonup\omega $ in $H^2(0,T)\times(y_1,y_2)$ and  ${\psi_\varepsilon-\psi_0\over\varepsilon}\rightharpoonup\phi$ in $H^4(0,T)\times(y_1,y_2)$. Thus,  ${\omega_\varepsilon-\omega_0\over\varepsilon}\to\omega $ in $C^0(0,T)\times(y_1,y_2)$ and  ${\psi_\varepsilon-\psi_0\over\varepsilon}\to\phi$ in $C^2(0,T)\times(y_1,y_2)$.
By taking $\varepsilon_0>0$ smaller,  $|u_\varepsilon-c_\varepsilon|\geq|u-c_\varepsilon|-|u-u_\varepsilon|\geq C^{-1}$ for $\varepsilon\in(0,\varepsilon_0)$ and $y\in[y_1,y_2]$.
Since $\partial_y\omega_\varepsilon=\partial_y(\partial_xv_\varepsilon-\partial_yu_\varepsilon)=-\Delta u_\varepsilon$, we have
\begin{align*}&\|\partial_x\omega_\varepsilon\|_{H^2(0,T)\times(y_1,y_2)}=\left\|{v_\varepsilon\partial_y\omega_\varepsilon+\beta v_\varepsilon\over u_\varepsilon-c_\varepsilon}\right\|_{H^2(0,T)\times(y_1,y_2)}\\
\leq& C(\|v_\varepsilon\Delta u_{\varepsilon}\|_{H^2(0,T)\times(y_1,y_2)}+ \|v_\varepsilon\|_{H^2(0,T)\times(y_1,y_2)})\\
\leq &C(\|\nabla\cdot(v_\varepsilon\nabla u_\varepsilon)-\nabla v_\varepsilon\cdot\nabla u_\varepsilon\|_{H^2(0,T)\times(y_1,y_2)}+ \|v_\varepsilon\|_{H^2(0,T)\times(y_1,y_2)})\\
\leq&C(\|v_\varepsilon\nabla u_\varepsilon\|_{H^3(0,T)\times(y_1,y_2)}+\|\nabla v_\varepsilon\|_{L^\infty(0,T)\times(y_1,y_2)}\|\nabla u_\varepsilon\|_{H^2(0,T)\times(y_1,y_2)}+ \|v_\varepsilon\|_{H^2(0,T)\times(y_1,y_2)})\\
\leq &C\left(\|v_\varepsilon\|_{H^3(0,T)\times(y_1,y_2)}(\|\nabla u_\varepsilon\|_{L^\infty(0,T)\times(y_1,y_2)}+\|u_\varepsilon\|_{H^3(0,T)\times(y_1,y_2)})+ \|v_\varepsilon\|_{H^2(0,T)\times(y_1,y_2)}\right)\\
\leq &C(\|v_\varepsilon\|_{H^3(0,T)\times(y_1,y_2)}\|u_\varepsilon\|_{H^3(0,T)\times(y_1,y_2)}+ \|v_\varepsilon\|_{H^2(0,T)\times(y_1,y_2)})\leq C\varepsilon.
 \end{align*}
 We need to show that $\|\partial_y(\omega_\varepsilon-\omega_0)\|_{H^2(0,T)\times(y_1,y_2)}\leq C\varepsilon$. Thus,
 ${\omega_\varepsilon-\omega_0\over\varepsilon}$  is uniformly bounded in $H^3(0,T)\times(y_1,y_2)$, ${\omega_\varepsilon-\omega_0\over\varepsilon}\rightharpoonup\omega $ in $H^3(0,T)\times(y_1,y_2)$ and  ${\omega_\varepsilon-\omega_0\over\varepsilon}\to\omega $ in $C^1(0,T)\times(y_1,y_2)$.
Since
\begin{align*}
\partial_y\psi_\varepsilon\partial_x\omega_\varepsilon-c_\varepsilon\partial_x\omega_\varepsilon-\partial_x\psi_\varepsilon\partial_y\omega_\varepsilon
-\beta\partial_x\psi_\varepsilon=0,\;\partial_y\psi_0\partial_x\omega_0-c\partial_x\omega_0-\partial_x\psi_0\partial_y\omega_0-\beta\partial_x\psi_0=0,
\end{align*}
we have
\begin{align*}
&\partial_y\left({\psi_\varepsilon-\psi_0\over\varepsilon}\right)\partial_x\omega_\varepsilon
+\partial_y\psi_0\partial_x\left({\omega_\varepsilon-\omega_0\over\varepsilon}\right)-
c_\varepsilon\partial_x\left({\omega_\varepsilon-\omega_0\over\varepsilon}\right)
-\left({c_\varepsilon-c\over \varepsilon}\right)\partial_x\omega_0\\
-&\partial_x\left({\psi_\varepsilon-\psi_0\over\varepsilon}\right)\partial_y\omega_\varepsilon-
\partial_x\psi_0\partial_y\left({\omega_\varepsilon-\omega_0\over\varepsilon}\right)-
\beta\partial_x\left({\psi_\varepsilon-\psi_0\over\varepsilon}\right)=0.
\end{align*}
Letting $\varepsilon\to0^+$, we have $\partial_y\psi_0\partial_x\omega-c\partial_x\omega-\partial_x\phi\partial_y\omega_0-\beta\partial_x\phi=0$, where we used $\partial_x\omega_0=\partial_x\psi_0=0$. Since $\partial_y\psi_0=u$ and $\partial_y\omega_0=-u''$, we have
\begin{align*}
\partial_x\omega+{u''-\beta\over u-c}\partial_x\phi=0.
\end{align*}
By taking Fourier transform in $x$, we have
 \begin{align*}
-{\widehat\phi}''+\alpha^2\widehat\phi+{u''-\beta\over u-c}\widehat\phi=0,
 \end{align*}
and the boundary condition is $\widehat\phi(\alpha,y_1)=\widehat\phi(\alpha,y_2)=0$, which is the same as \eqref{sturm-Liouville}.
 Since $\left\|{\omega_\varepsilon-\omega_0\over\varepsilon}\right\|_{H^{2}\left(  0,T\right)  \times\left(
y_1,y_2\right)  }=1$ and $c\notin\text{Ran}\ (u)$, we have $\widehat \phi\not\equiv0$, $c\in\sigma_d(\mathcal{R}_{\alpha,\beta})\cap\mathbf{R}$
and there exists an eigenfunction $\phi_c$ of \eqref{sturm-Liouville} such that $\widehat \phi=\phi_c$.
\end{proof}\fi

We give two remarks to Lemma
 \ref{traveling wave concentration}: the first is to study the Fourier expansion  of the limit function $\varphi_{c_0}$, and the second is to  show that the asymptotic behavior
 of $L^2$ normalized vertical velocities $\tilde v_\varepsilon$ might be complicated if $c_{\varepsilon}\rightarrow c_0\in\{u_{\min},u_{\max}\}$.

\begin{remark}\label{The function varphi c0 superposition of finite normal modes}
The function $\varphi_{c_0}$ in
 Lemma
 \ref{traveling wave concentration} is a superposition of finite normal modes.
In fact,
since $0\neq\varphi_{c_0}(x,y)=\sum_{k\in\mathbf{Z}}\widehat{\varphi}_{{c_0},k}(y)e^{ik\alpha x}\in\ker{(\mathcal{G}_{c_0})}$ and $\inf\sigma(\mathcal{L}_{c_0})>-\infty$, we have $n_*:=\sharp(\{k\in\mathbf{Z}:-(k\alpha)^2\in\sigma(\mathcal{L}_{c_0})\})\in[1,\infty)$.
Let $\{k\in\mathbf{Z}:-(k\alpha)^2\in\sigma(\mathcal{L}_{c_0})\}=\{k_n:1\leq n\leq n_*\}$. Then
$\varphi_{c_0}(x,y)=\sum_{n=1}^{n_*}\widehat{\varphi}_{{c_0},k_n}(y)e^{ik_n\alpha x}$ and  $\widehat{\varphi}_{{c_0},k_n}$ is an eigenfunction of $-(k_n\alpha)^2\in\sigma(\mathcal{L}_{c_0})$.
\end{remark}

\begin{remark}\label{asymptotic behavior of vertical velocities}
 Consider a flow $u$ satisfying $(\bf{H1})$, $\{u'=0\}\cap\{u=u_{\min}\}\neq \emptyset$, $\alpha>0$ and $\beta>{9\over 8} \kappa_+$.
By Theorem $\ref{eigenvalue asymptotic behavior-bound}$ $(3)$,  there exists $\{c_n\}_{n=1}^\infty=\sigma_d(\mathcal{R}_{\alpha,\beta})\cap\mathbf{R}$ such that $c_n\to u_{\min}^-$, $c_{n+1}>c_n$ and $\alpha^2=-\lambda_n(c_n)$  for $n\geq1$. By Lemma
 $\ref{traveling wave construction-lemma}$, we can choose $\varepsilon_n\to0^+$ and nearby traveling wave solutions
 $\vec{u}_{\varepsilon_n}\left(
x-c_{\varepsilon_n}t,y\right)  =(  u_{\varepsilon_n}\left(  x-c_{\varepsilon_n
}t,y\right),$ $  v_{\varepsilon_n}\left(  x-c_{\varepsilon_n}t,y\right)  )  $
with period ${2\pi/\alpha}$ in $x$ such that
 $\|(u_{\varepsilon_n},v_{\varepsilon_n})-(u,0)\|_{H^3(D_T)} \leq\varepsilon_n,$
$|c_{\varepsilon_n}-c_n|\to0^+$,  $\|v_{\varepsilon_n}\|_{L^2\left(  D_T\right)}\neq0$
and there exists $\varphi_{c_n}\in\ker(\mathcal{G}_{c_n})$ such that
  $\left\|\tilde{v}_{\varepsilon_n}-\varphi_{c_n}\right\|_{C^0}$ is small enough for large $n$, where $\tilde{v}_{\varepsilon_n}={{v}_{\varepsilon_n}/\|{v}_{\varepsilon_n}\|_{L^2\left( D_T\right)}}$.
 Then $c_{\varepsilon_n}\to u_{\min}^-$.
A possible case is that  there exists a subsequence $\{n_j\}_{j=1}^\infty$ such that
$c_{n_j}\notin \bigcup_{k>1}(\sigma_d(\mathcal{R}_{k\alpha,\beta})\cap\mathbf{R})$ for $j\geq1$. In this case,
$\varphi_{c_{n_j}}(x,y)=\sqrt{\alpha/\pi}\phi_{c_{n_j}}(y)\sin(\alpha x)$ since it is odd in $x$ (see the construction in Lemma \ref{traveling wave construction-lemma}),
where $\phi_{c_{n_j}}$ is a $L^2$ normalized eigenfunction of $\lambda_{n_j}(c_{n_j})=-\alpha^2\in \sigma(\mathcal{L}_{c_{n_j}})$.
  Since  $ \phi_{c_{n_j}}$ has $n_j-1$ sign-changed zeros in $(y_1,y_2)$, $\tilde{v}_{\varepsilon_{n_j}}$ oscillates frequently in the $y$-direction for large $j$.
\end{remark}

The minimal period of any nearby traveling wave solution  in $x$ can be determined under the following condition.

\begin{lemma}\label{traveling wave concentration ad}
 Under the assumption of Lemma \ref{traveling wave concentration},
  if
\begin{align}\label{c0-ad}
c_0\in\sigma_d(\mathcal{R}_{\alpha,\beta})\cap\mathbf{R}\quad\text{and}\quad c_0\notin\bigcup_{k\geq2}(\sigma_d(\mathcal{R}_{k\alpha,\beta})\cap\mathbf{R}),\end{align}
then
$\vec{u}_{\varepsilon}\left(
x-c_{\varepsilon}t,y\right)$ has minimal period ${2\pi/\alpha}$ in $x$ for $\varepsilon>0$ small enough.
\end{lemma}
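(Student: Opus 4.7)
The plan is to combine the convergence $\tilde v_{\varepsilon}\to\varphi_{c_0}$ in $H^{2}(D_T)$ from Lemma \ref{traveling wave concentration} with a careful Fourier analysis in $x$ of the limit $\varphi_{c_0}$. First I will expand $\varphi_{c_0}(x,y)=\sum_{k\in\mathbf{Z}}\widehat{\varphi}_{c_0,k}(y)e^{ik\alpha x}$, and recall from the proof of Lemma \ref{traveling wave concentration} that every nonzero $\widehat{\varphi}_{c_0,k}$ must be a Dirichlet eigenfunction of the Rayleigh--Kuo operator $\mathcal{L}_{c_0}$ corresponding to the eigenvalue $-(k\alpha)^{2}$; equivalently, $c_{0}\in\sigma_d(\mathcal{R}_{k\alpha,\beta})\cap\mathbf{R}$ whenever $\widehat{\varphi}_{c_0,k}\not\equiv 0$ for some $k\neq 0$, and $c_{0}\in\sigma_d(\mathcal{R}_{0,\beta})\cap\mathbf{R}$ whenever $\widehat{\varphi}_{c_0,0}\not\equiv 0$.

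The hypothesis \eqref{c0-ad} rules out all Fourier modes with $|k|\geq 2$, while the observation $P_{0}\tilde v_{\varepsilon}=P_{0}v_{\varepsilon}=-P_0\partial_x\psi_\varepsilon=0$ for every $\varepsilon$, used already in the proof of Lemma \ref{traveling wave concentration}, forces $\widehat{\varphi}_{c_0,0}\equiv 0$ in the limit. Therefore $\varphi_{c_0}$ reduces to
\[
\varphi_{c_0}(x,y)=\widehat{\varphi}_{c_0,1}(y)e^{i\alpha x}+\widehat{\varphi}_{c_0,-1}(y)e^{-i\alpha x},
\]
and since $\|\varphi_{c_0}\|_{L^{2}(D_T)}=1$, at least one of $\widehat{\varphi}_{c_0,\pm 1}$ is nontrivial in $L^{2}(y_{1},y_{2})$.

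Next I will argue by contradiction. Suppose along some subsequence $\{\varepsilon_{j}\}$ the minimal horizontal period of $\vec u_{\varepsilon_j}$ is strictly smaller than $T=2\pi/\alpha$; since this minimal period must divide $T$, it has the form $T/k_{\varepsilon_j}$ with $k_{\varepsilon_j}\in\mathbf{Z}^{+}\cap[2,\infty)$. Then $v_{\varepsilon_j}$, and hence $\tilde v_{\varepsilon_j}$, contains only Fourier frequencies $k\in k_{\varepsilon_j}\mathbf{Z}$ in $x$, so in particular $P_{\pm 1}\tilde v_{\varepsilon_j}\equiv 0$ for every $j$. Passing to the limit using $\tilde v_{\varepsilon_j}\to\varphi_{c_0}$ in $H^{2}(D_T)\hookrightarrow L^{2}(D_T)$ yields $P_{\pm 1}\varphi_{c_0}=\widehat{\varphi}_{c_0,\pm 1}\equiv 0$, contradicting the previous paragraph.

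Consequently, for all sufficiently small $\varepsilon$ the integer $k_{\varepsilon}$ equals $1$, which is precisely the assertion that the minimal period of $\vec u_{\varepsilon}(x-c_{\varepsilon}t,y)$ in $x$ is $2\pi/\alpha$. The only delicate point is verifying that the limit $\varphi_{c_0}$ actually carries the $e^{\pm i\alpha x}$ modes and no others; this follows directly from \eqref{c0-ad} together with the vanishing of the zeroth mode noted above, so no genuine obstruction arises beyond correctly bookkeeping the Fourier decomposition.
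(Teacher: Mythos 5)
Your proposal matches the paper's own proof almost exactly: both argue by contradiction, observe that a minimal period $T/k_\varepsilon$ with $k_\varepsilon\geq 2$ kills the first Fourier mode $\int_0^T e^{i\alpha x}\tilde v_\varepsilon\,dx$, pass to the limit using $\tilde v_\varepsilon\to\varphi_{c_0}$ from Lemma \ref{traveling wave concentration}, and then note that the assumption \eqref{c0-ad} together with $\widehat\varphi_{c_0,0}\equiv 0$ forces $\varphi_{c_0}$ to live entirely in the $\pm 1$ Fourier modes with $\widehat\varphi_{c_0,1}\not\equiv 0$, a contradiction. The argument is correct and is essentially the same as the paper's.
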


\begin{proof}
Let the minimal horizontal period  of the traveling wave solution $\vec{u}_{\varepsilon}\left(
x-c_{\varepsilon}t,y\right)$  be $T/n_{\varepsilon}$ for $\varepsilon\in(0,\varepsilon_0)$, where $n_\varepsilon\in\mathbf{Z}^+$ and $T={{2\pi}/\alpha}$.
Fix $\varepsilon\in(0,\varepsilon_0)$ and let $\tilde v_\varepsilon=v_\varepsilon/\|v_\varepsilon\|_{L^2(D_{T})}$.
Since $\tilde v_\varepsilon(x,y)=\tilde v_\varepsilon(x+T/n_{\varepsilon},y) $ for $x\in\mathbf{R}$ and $y\in[y_1,y_2]$,
we have $\int_{0}^{T}e^{i\alpha x}\tilde v_\varepsilon(x,y) dx=e^{i\alpha T/n_{\varepsilon}}\int_{0}^{T}
e^{i\alpha x}\tilde v_\varepsilon(x,y) dx$.
Thus, if $n_{\varepsilon}>1$, then $e^{i\alpha T/n_{\varepsilon}}=e^{2\pi i/n_{\varepsilon}}\neq 1 $ and $\int_{0}^{T}e^{i\alpha x}\tilde v_\varepsilon(x,y) dx=0$ for $y\in[y_1,y_2]$.
\if0 Since $\tilde v_\varepsilon$ is $(T/n_{\varepsilon})$-periodic
in $x$,  by Wirtinger inequality we have
\begin{align*}(n_\varepsilon\alpha)^2\int_{0}^{T/n_{\varepsilon}}\tilde v^2_\varepsilon(x,y) dx\leq \int_{0}^{T/n_{\varepsilon}}(\partial_x\tilde v_{\varepsilon})^2(x,y) dx\end{align*} for $y\in[y_1,y_2]$, and thus,
\begin{align*}(n_\varepsilon\alpha)^2\int_{y_1}^{y_2}\int_{0}^{T/n_{\varepsilon}}\tilde v^2_\varepsilon(x,y) dxdy\leq \int_{y_1}^{y_2}\int_{0}^{T/n_{\varepsilon}}(\partial_x\tilde v_{\varepsilon})^2(x,y) dxdy.\end{align*}
Then \begin{align*}(n_\varepsilon\alpha)^2\int_{y_1}^{y_2}\int_{0}^{T}\tilde v^2_\varepsilon(x,y) dxdy\leq \int_{y_1}^{y_2}\int_{0}^{T}(\partial_x\tilde v_{\varepsilon})^2(x,y) dxdy\leq\|\tilde  v_{\varepsilon}\|^2_{H^1(D_{T})}\leq C,\end{align*}
where  the uniform $H^1$ bound of $\tilde  v_{\varepsilon}$ was proved in Lemma \ref{traveling wave concentration}.
This proves that $|n_\varepsilon|\leq C$ for $\varepsilon\in(0,\varepsilon_0)$.\fi

Suppose that  there exists a sequence $\{\varepsilon_k:k\geq1\}\subset(0,\varepsilon_0)$ such that $\varepsilon_k\to0^+$ and    $\vec{u}_{\varepsilon_k}\left(
x-c_{\varepsilon_k}t,y\right)$ has minimal period ${T/n_{\varepsilon_k}}<T$ in $x$ for $k\geq1$. Then
$\int_{0}^{T}e^{i\alpha x}\tilde v_{\varepsilon_k}(x,y) dx=0$  for $k\geq1$ and $y\in[y_1,y_2]$.
By Lemma \ref{traveling wave concentration},
 ${\tilde{v}_{\varepsilon_k}}\longrightarrow\varphi_{c_0}$ {in}  ${H^{2}\left(D_{T}\right)}$, where $\varphi_{c_0}\in \ker(\mathcal{G}_{c_0})$ and $\mathcal{G}_{c_0}$ is defined in \eqref{Glinearized elliptic operator}.
 Then
\begin{align}\label{x0-x0+Tn0}
\int_{0}^{T}
e^{i\alpha x}\varphi_{c_0}(x,y) dx=
\lim_{k\to\infty}\int_{0}^{T}
e^{i\alpha x}\tilde v_{\varepsilon_k}(x,y) dx=0
\end{align}
 for  $y\in [y_1,y_2]$.
 By \eqref{c0-ad} and $\widehat{\varphi}_{c_0,0}=0$, we have $\varphi_{c_0}(x,y)=\widehat{\varphi}_{{c_0},1}(y)e^{i\alpha x}+
 \overline{\widehat{\varphi}_{{c_0},1}(y)}e^{-i\alpha x}
 $, where  $\widehat{\varphi}_{{c_0},1}\neq0$ is an eigenfunction of
 $-\alpha^2\in\sigma(\mathcal{L}_{c_0})$. On the other hand, we have
 \begin{align*}
 &\int_{0}^{T}e^{i\alpha x}\varphi_{c_0}(x,y)dx=T\overline{\widehat{\varphi}_{{c_0},1}(y)}\not\equiv0,
 \end{align*}
\if0 We assume that ${\rm Re}(\widehat{\varphi}_{{c_0},1})\neq0$ without loss of generality. Since both ${\rm Re}(\widehat{\varphi}_{{c_0},1})$ and ${\rm Im}(\widehat{\varphi}_{{c_0},1})$ solve $\mathcal{L}_{c_0}\phi=-\alpha^2\phi$, there exists $a\in\mathbf{R}$ such that ${\rm Im}(\widehat{\varphi}_{{c_0},1})=a{\rm Re}(\widehat{\varphi}_{{c_0},1})$.
 Then $\varphi_{c_0}(x,y)=2{\rm Re}(\widehat{\varphi}_{{c_0},1}(y))[\cos( \alpha x)-a\sin(\alpha x)]$. By choosing $x_0\in[0,T]$ such that $\sin(\alpha x_0+\pi/n_0)=0$, we have
 \begin{align*}
 &\int_{x_0}^{x_0+T/n_0}\varphi_{c_0}(x,y)dx=2{\rm Re}(\widehat{\varphi}_{{c_0},1}(y))\int_{x_0}^{x_0+T/n_0}[\cos( \alpha x)-a\sin(\alpha x)]dx\\
 =&{(4/\alpha)}\sin({\pi/ n_0}){\rm Re}(\widehat{\varphi}_{{c_0},1}(y))[\cos(\alpha x_0+{\pi/ n_0})-a\sin(\alpha x_0+{\pi/n_0})]\\
 =&{(4/\alpha)}\sin({\pi/ n_0}){\rm Re}(\widehat{\varphi}_{{c_0},1}(y))\cos(\alpha x_0+{\pi/ n_0})\not\equiv0,
 \end{align*}\fi
which contradicts \eqref{x0-x0+Tn0}.
\end{proof}

\begin{remark}\label{rem-Prop 7-LYZ}
$(1)$ Let $c_0\in\bigcup_{k\geq1}(\sigma_d(\mathcal{R}_{k\alpha,\beta})\cap\mathbf{R})$ and $k_*$ be defined in \eqref{def-k*-constructed traveling wave}. It follows from Lemma $\ref{traveling wave concentration ad}$ that under the assumption of Lemma $\ref{traveling wave concentration}$, if $\vec{u}_{\varepsilon}\left(x-c_{\varepsilon}t,y\right)$ has period ${2\pi}/({k_*\alpha})$ in $x$ for $\varepsilon\in(0,\varepsilon_0)$, then the period  ${2\pi}/({k_*\alpha})$ is minimal for  $\varepsilon>0$ small enough.

$(2)$
In Proposition $7$ of \cite{LYZ}, it should be corrected  that the minimal period of constructed traveling wave solutions in $x$ might be less than $2\pi/\alpha_0$,
since it is possible that  $(c_0,k\alpha_0, \beta, \phi_k)$ is a non-resonant neutral mode  for some  $k\geq2$ and $\phi_k\in H_0^1\cap H^2(y_1,y_2)$.
   Consequently,
   the   minimal period of constructed  traveling wave solutions near the sinus profile in Theorem $7$ $(\rm{i})$ of \cite{LYZ}  might be less than $2\pi/\alpha_0$, see Example $\ref{example-sinus flow}$ for systematic study of traveling wave families near the sinus profile.
   If \eqref{c0-ad} holds true for $\alpha=\alpha_0$, then   the   minimal period of these  traveling wave solutions in  Proposition $7$ and  Theorem $7$ $(\rm{i})$ of \cite{LYZ} is $2\pi/\alpha_0$.
\end{remark}

\if0
\begin{proof}[Proof of Lemma \ref{traveling wave construction-lemma}]
We assume that  $\beta>0$, and the case for $\beta<0$ is similar. Since $c_0\in\sigma_d(\mathcal{R}_{\alpha,\beta})\cap\mathbf{R}$, we have $c_0<u_{\min}$ by Theorem \ref{main-result} (1).
By (\ref{vorticity-eqn}),  $\vec{u}\left(  x-c t,y\right)  $ is a solution of \eqref{Euler equation}--\eqref{boundary condition for euler} if
and only if
\begin{align}\label{j-traveling}
\frac{\partial\left(  \omega+\beta y,\psi-c y\right)  }{\partial\left(
x,y\right)  }=0
\end{align}
and $\psi$ takes constant values on $\left\{  y=y_i\right\}  $, where  $i=1,2$,
 $\omega=\operatorname{curl}%
\vec{u}$ and $\vec{u}=(\pa_y\psi,-\pa_x\psi)$.
Let $\psi_{0}$ be a stream function associated with
the shear flow $\left(  u,0\right)  $, i.e., $\psi_{0}^{\prime}=u$.
Since $u-c_0>0$, $\psi_{0}-c_0y$ is increasing on $[y_1,y_2]$.
Let $I=\{\psi_{0}\left(  y\right)-c_0y:y\in[y_1,y_2]\}$. Then we can define a function $f_{0}\in
C^{2}(I)$ such that
\begin{equation}
f_{0}\left(  \psi_{0}\left(  y\right) -c_0y \right)  =\omega_{0}\left(  y\right)
+\beta y=-\psi_{0}^{\prime\prime}\left(  y\right)  +\beta y.
\label{eqn-f-psi-0}%
\end{equation}
Moreover,
\[
f_{0}^{\prime}\left(  \psi_{0}\left(  y\right)-c_0y \right)  =\frac{\beta
-u^{\prime\prime}\left(  y\right)  }{u\left(  y\right)  -c_0}=:\mathcal{K}%
_{c_0}\left(  y\right)  .
\]
We extend $f_{0}$ to $f\in C_{0}^{2}\left(  \mathbf{R}\right)  $ such
that $f=f_{0}$ on $I$. Taking
$c$ as the bifurcation parameter, we construct steady solutions $\vec{u}  =\left(  \partial_y\psi,-\partial_x\psi\right)  $ near
$\left(  u,0\right)  $ by solving the elliptic equation
\begin{align}\label{eqn-psi-tilde}
-\Delta\psi+\beta y=f\left(  \psi-cy\right)
\end{align}
with the boundary conditions that ${\psi}$ takes constant values on
$\left\{  y=y_i\right\}  $, $i=1,2$.
  Define the perturbation of the stream function
by
\[
\phi\left(  x,y\right)  ={\psi}\left(  x,y\right)  -\psi_{0}\left(
y\right)  .
\]
Then by \eqref{eqn-f-psi-0}--\eqref{eqn-psi-tilde}, we have
\begin{equation}
-\Delta \phi-\left(  f(\phi+\psi_{0}-cy)-f\left(  \psi_{0}-c_0y\right)  \right)
=0. \label{eqn-phi-traveling}%
\end{equation}
By our assumption, the operator $\mathcal{L}_{c_0}=-\frac{\partial^{2}}{\partial y^{2}%
}-\mathcal{K}_{c_0}:H_{0}^{1}\cap H^{2}\left(  y_1,y_2\right)  \rightarrow
L^{2}(y_1,y_2)$ has a negative eigenvalue $-\alpha^{2}$ with the
eigenfunction $\phi_{c_0}$.
Define the spaces%
\begin{align*}
B=& \{\varphi(x,y)\in H^{3}(0,T)\times( y_1,y_2):\text{ }\varphi(x
,y_i)=0,\,i=1,2,\\
&\;\int_0^T\int_{y_1}^{y_2} \varphi(x,y)\phi_{c_0}(y)\sin({\alpha}x)dydx=0,\;
 T\text{-periodic in }x\}
\end{align*}
and
\[
C=\left\{  \varphi(x,y)\in H^{1}(0,T)\times (y_1,y_2):\text{ }%
T\text{-periodic in }x\right\}  .
\]
Consider the mapping
\begin{align*}
&F:B\times\mathbf{R}\rightarrow C,\\
 &\;\;\;\;\;\;\;\;(\phi,c)\mapsto-\Delta\phi-\left(  f(\phi+\psi_{0}-cy)-f\left(
\psi_{0}-c_0y\right)  \right)  .
\end{align*}
We study the bifurcation near the trivial solution $\phi=0$ of the equation
$F(\phi,c)=0$ in $B$, whose solutions give steady flows of \eqref{j-traveling} with
$x$-period $T=\frac{2\pi}{\alpha}$.\ For fixed $c_0$, the linearized operator of $F$
around $\phi=0  $ has the form
\[
\mathcal{G}:=\partial_{\phi}F(0,c_0)=-\Delta-f^{\prime}(\psi
_{0}-c_0y)=-\Delta-\mathcal{K}_{c_{0}}.
\]
Since $\mathcal{L}_{c_0}$ has a negative eigenvalue $-\alpha^{2}$ with the
eigenfunction $\phi_{c_0}$,
 the kernel of $\mathcal{G}:$
$\ B\rightarrow C\ $is
\[
\ker(\mathcal{G})=\left\{  \phi_{c_0}(y)\cos({\alpha}x)\right\},
\]
and thus,  $\dim\ker({\mathcal{G}})=1$. Since
{$\mathcal{G}$} is self-adjoint on $L^2(0,T)\times (y_1,y_2)$, we have
\begin{align}\label{Ker v Ran}
\int_0^T\int_{y_1}^{y_2}\phi_{c_0}(y)\cos(\alpha x)\varphi(x,y) dydx=0\end{align}
 for any $\varphi\in {\rm Ran}\,($%
{$\mathcal{G}$}$)$. Note that $\partial_{c}\partial_{\phi}%
F(\phi,c)$ is continuous and
\[
\partial_{c}\partial_{\phi}F(0,c_0)\left(  \phi_{c_0}%
(y)\cos(\alpha x)\right)  =-{\beta-u''\over (u-c_0)^2}  \phi
_{c_0}(y)\cos(\alpha x).
\]
Thus, by Lemma 11 in \cite{LYZ} and our assumption that ${\partial\lambda_n\over\partial c}(c_0)\neq0$, we have
\begin{align*}
&\int_0^T\int_{y_1}^{y_2} \phi_{c_0}(y)\cos(\alpha x)\left[\partial_{c}\partial_{\phi}F(0,c_0)\left(  \phi_{c_0}%
(y)\cos(\alpha x)\right)\right] dydx\\
 =&-\int_0^T\int_{y_1}^{y_2}{\beta-u''\over (u-c_0)^2}  |\phi
_{c_0}(y)|^2\cos^2(\alpha x)dydx\\
=&\left(\int_0^T\cos^2(\alpha x)dx\right)\left(-\int_{y_1}^{y_2}{\beta-u''\over (u-c_0)^2}  |\phi
_{c_0}(y)|^2dy\right)\\
=&{\pi\over\alpha}{\partial\lambda_n\over\partial c}(c_0)\neq0,
\end{align*}
where we used that $\phi_{c_0}$ is real-valued. Then by \eqref{Ker v Ran}, we have $\partial_{c}\partial_{\phi}F(0,c_0)\left(  \phi_{c_0}%
(y)\cos(\alpha x)\right)\notin {\rm Ran}\,($%
{$\mathcal{G}$}$)$.
By Theorem 1.7 in \cite{CR71}, there exist $\delta>0$ and
a uniquely nontrivial bifurcating curve $\{\left(  \phi_{\gamma},c(\gamma)\right), \gamma\in(-\delta,\delta)\}$
of $F(\phi,c)=0$, which intersects the trivial curve $\left(
0,c\right)  $ at $c=c_0$, such that
\[
\phi_{\gamma}(x,y)=\gamma\phi_{c_0}(y)\cos(\alpha x)+o(\gamma),
\]
$c(\cdot)$ is  continuous on $(-\delta,\delta)$, and $c(0)=c_{0}$. So the stream functions of the perturbed steady flows of \eqref{j-traveling}
 take the form
\begin{equation}
\psi_{\gamma}(x,y)=\psi_{0}\left(  y\right)  +\gamma\phi_{c_0}(y)\cos(\alpha x)+o(\gamma). \label{cats-eye}%
\end{equation}
Let the velocity be given by $\vec{u}_{\gamma}=\left(  u_{\gamma},v_{\gamma}\right)
=\left(  \partial_{y}\psi_{\gamma},-\partial_{x}\psi_{\gamma}\right)  $. Since $c(0)=c_0\notin {\rm Ran}\ (u)$ and $c(\cdot)$ is continuous on $(-\delta,\delta)$, by taking $\delta$ smaller, we have
\[
u_{\gamma}(x,y)-c(\gamma)=u\left(  y\right)  -c(\gamma)+\gamma\phi_{c_0}^{\prime}(y)\cos (\alpha x)+o(1)\neq0.
\]
\end{proof}\fi

\section{The number of traveling wave families  near a shear flow}

In this section, we prove the main theorems-Theorems \ref{traveling wave construction} and \ref{traveling wave construction-classK+}.
The proof is based on the study on the number of isolated real eigenvalues of the linearized Euler operator in Sections 3-4, and correspondence between a traveling wave family near the shear flow and an isolated real eigenvalue in Section 5. We only prove Theorem \ref{traveling wave construction}, since the other is similar.

\begin{proof}[Proof of Theorem \ref{traveling wave construction}.]
Let the number of traveling wave families near $(u,0)$ be denoted by $\theta$. By Theorem \ref{the explicit number of traveling wave families},
$\theta=\sharp(\bigcup_{k\geq1}(\sigma_d(\mathcal{R}_{k\alpha,\beta})\cap\mathbf{R}))$. Here $\alpha=2\pi/T$.
\\
{\it Proof of (1):} Since $\{u'=0\}\cap\{u=u_{\min}\}\neq\emptyset$, we have $0<\kappa_+<\infty$. First, let $\{u=u_{\min}\}\cap(y_1,y_2)\neq\emptyset$ and we divide the discussion into two cases.
\\
{\bf Case 1a.} $\beta\in(0,\min\{{9\over8}\kappa_+,\mu_+\})$.

By Corollary \ref{sturm-liouville-eigen-asym} (1),
\begin{align}\label{first-eigen-bounded}
\inf_{c\in(-\infty,u_{\min})}\lambda_1(c)>-\infty.
\end{align}
Thus, there exists $1\leq k_0<\infty$ such that
\begin{align}\label{large-k-alpha-empty}
\bigcup_{k> k_0}(\sigma_d(\mathcal{R}_{k\alpha,\beta})\cap\mathbf{R})=\emptyset.
\end{align}
By Theorem \ref{main-result} (1), we have
\begin{align}\label{small-k-alpha-finte}
\theta=\sharp\big(\bigcup_{k\geq1}(\sigma_d(\mathcal{R}_{k\alpha,\beta})\cap\mathbf{R})\big)\leq
\sum_{k\geq1}\sharp(\sigma_d(\mathcal{R}_{k\alpha,\beta})\cap\mathbf{R})=
\sum_{k=1}^{k_0}\sharp(\sigma_d(\mathcal{R}_{k\alpha,\beta})\cap\mathbf{R})<\infty.
\end{align}
\\
{\bf Case 1b.} $\beta\in(\min\{{9\over8}\kappa_+,\mu_+\},\infty)$.

By Corollary \ref{sturm-liouville-eigen-asym} (1) for $\beta\in(\min\{{9\over8}\kappa_+,\mu_+\},{9\over8}\kappa_+]$ and
Theorem \ref{eigenvalue asymptotic behavior-bound} (3) for $\beta\in({9\over8}\kappa_+,\infty)$, we have
\begin{align}\label{first-eigen-asym-infty}
\lim_{c\to u_{\min}^-}\lambda_1(c)=-\infty.
\end{align}
Thus, there exists $c_k\in\sigma_d(\mathcal{R}_{k\alpha,\beta})\cap\mathbf{R}$ such that
  $c_k<c_{k+1}<u_{\min}$ for every $k\geq1$, and $c_k\to u_{\min}^-$.
Then $\theta=\sharp(\bigcup_{k\geq1}(\sigma_d(\mathcal{R}_{k\alpha,\beta})\cap\mathbf{R}))=\infty$.

 Next, let $\{u=u_{\min}\}\cap(y_1,y_2)=\emptyset$ and we separate the proof into two cases.
 \\
 {\bf Case 2a.} $\beta\in(0,{9\over 8}\kappa_+)$.

 By Corollary \ref{sturm-liouville-eigen-asym} (3), we obtain \eqref{first-eigen-bounded}.
Thus, there exists $1\leq k_0<\infty$ such that \eqref{large-k-alpha-empty} holds.
 By Theorem  \ref{main-result} (1), we obtain
\eqref{small-k-alpha-finte}.
 \\
 {\bf Case 2b.} $\beta\in({9\over 8}\kappa_+,\infty)$.

By Theorem \ref{eigenvalue asymptotic behavior-bound} (3), we have
\begin{align}\label{all-eigen-asym-infty}
\lim_{c\to u_{\min}^-}\lambda_n(c)=-\infty,\;\; n\geq1.
\end{align}
Using  \eqref{all-eigen-asym-infty} for $n=1$ and the fact that  $\theta=\sharp(\bigcup_{k\geq1}(\sigma_d(\mathcal{R}_{k\alpha,\beta})\cap\mathbf{R}))$, it can be proved that $\theta=\infty$ by a similar way as in Case 1b. This completes the proof of (1). The proof of (2) is similar.
 \smallskip
 \\
{\it Proof of (3):} Since $\{u'=0\}\cap\{u=u_{\min}\}= \emptyset$, we have $\{u=u_{\min}\}\cap(y_1,y_2)= \emptyset$ and $\kappa_+=\infty$.
  Fix $\beta\in(0,\infty)$.
  By Corollary \ref{sturm-liouville-eigen-asym} (3), we obtain \eqref{first-eigen-bounded}, and thus,
there exists $1\leq k_0<\infty$ such that \eqref{large-k-alpha-empty} holds.
By Theorem \ref{thm-flows good endpoints} (1),  we obtain
\eqref{small-k-alpha-finte}.
 This completes the proof of (3). The proof of (4) is similar.
\end{proof}
\begin{remark}
In Cases $1b$ and $2b$ of the above proof, the infinitely many traveling wave families are produced by
 the asymptotic behavior of  the first eigenvalue $\lambda_1(c)$ of $\mathcal{L}_c$, see \eqref{first-eigen-asym-infty}.
There could be many other traveling wave families in general, which are produced by the asymptotic behavior of $\lambda_n(c)$ for $n\geq2$, see \eqref{all-eigen-asym-infty}. In fact, if $\beta\in({9\over 8}\kappa_+,\infty)$,
for fixed $n\geq2$, there exists $c_{k,n}\in\sigma_d(\mathcal{R}_{k\alpha,\beta})\cap\mathbf{R}$ such that $\lambda_n(c_{k,n})=-(k\alpha)^2$  for every $k\geq1$. If $\{c_{k,n}:k\geq1,n\geq2\}\setminus\{c_{k}:k\geq1\}\neq\emptyset$, then a simple application of Theorem $\ref{the explicit number of traveling wave families}$ yields other traveling wave families.
\end{remark}
\section{Application to the sinus profile}

In this section, we apply our main results to the sinus profile. Moreover, we calculate  the explicit  number  of  isolated real eigenvalues of $\mathcal{R}_{\alpha,\beta}$ and traveling wave families near sinus profile.

\begin{example}\label{example-sinus flow}
The  sinus profile is $u(y)={\frac{1+\cos(\pi y)}{2}},$ $ y\in\lbrack-1,1]$.
 We determine $\sharp\big(\sigma_d(\mathcal{R}_{\alpha,\beta})\cap$ $\mathbf{R}\big)$ and the number  of traveling wave families for
the sinus profile
  on the $(\alpha,\beta)$'s region.
 For the sinus profile, we have $u_{\min}=0$, $u_{\max}=1$, $\{u'=0\}\cap\{u=u_{\min}\}=\{\pm1\}$, $\{u'=0\}\cap\{u=u_{\max}\}=\{0\}$,
 $\kappa_+=u''(\pm 1)={1\over2}\pi^2$ and $\kappa_-=u''(0)=-{1\over2}\pi^2$.
  We
  divide the plane into nine parts as follows.

\begin{center}
 \begin{tikzpicture}[scale=0.58]
 \draw [->](-12, 0)--(12, 0)node[right]{$\beta$};
 \draw [->](0,0)--(0,9) node[above]{$\alpha$};
   \draw (-7.8, 0).. controls (-7.8, 2) and (-7.8, 4)..(-7.8,5);
 \draw (0, 0).. controls (3, 4) and (8, 4)..(9,6.2);
 \draw (2.4, 2.27).. controls (4,4.01) and (5, 4.02)..(7.8,5);
  \draw (7.8, 0).. controls (8.3,1) and (9.05, 2.5)..(9,3);
  \draw (-7.8, 0).. controls (-8.3,1) and (-9.05, 2.5)..(-9,3);
  \draw (-7.8, 5).. controls (-7.78,3) and (-6.8, 2.5)..(-6.5,0);
  \draw (-9, 0).. controls (-9,3) and (-9, 4)..(-9,7);
   \draw (9, 0).. controls (9,3) and (9, 4)..(9,7);
    \path (-7.8, 5)  edge [-,dotted](-7.8, 7) [line width=0.7pt];
   \path (7.8, 0)  edge [-,dotted](7.8, 5) [line width=0.7pt];
   \path (2.4, 0)  edge [-,dotted](2.4, 2.2) [line width=0.7pt];
    \node (a) at (-9.2,-0.5) {\tiny$-{9\over 16}\pi^2$};
    \node (a) at (-7.7,-0.5) {\tiny$-{1\over 2}\pi^2$};
    \node (a) at (-6.5,-0.5) {\tiny$\beta_l$};
    \node (a) at (0,-0.5) {\tiny$0$};
    \node (a) at (7.7,-0.5) {\tiny${1\over 2}\pi^2$};
    \node (a) at (9.2,-0.5) {\tiny${9\over 16}\pi^2$};
   \node (a) at (2.4,-0.5) {\tiny${\sqrt{3}-1\over 4}\pi^2$};
    \node (a) at (-10.5,4) {\tiny$I$};
  \node (a) at (10.5,4) {\tiny$VIII$};
    \node (a) at (-8.5,0.5) {\tiny$II$};
     \node (a) at (-8.5,3) {\tiny$III$};
      \node (a) at (-7.2,1) {\tiny$IV$};
      \node (a) at (-1,3) {\tiny$IX$};
      \node (a) at (4.45,3.57) {\tiny$V$};
       \node (a) at (5,2) {\tiny$VII$};
        \node (a) at (8.5,0.5) {\tiny$VI$};
 \end{tikzpicture}
\end{center}\vspace{-0.2cm}
 \begin{center}\vspace{-0.2cm}
   {\small {\bf Figure 3.} }
  \end{center}

In Figure $3$,
\begin{align*}
I&=\{(\alpha,\beta)|\alpha>0,\beta<-{9\over 16}\pi^2\},\\
II&=\{(\alpha,\beta)|0<\alpha<\pi\sqrt{-r^2-r+{3\over4}},-{9\over 16}\pi^2\leq\beta<-{1\over 2}\pi^2,r\in[{1\over4},{1\over2})\},\\
III&=\{(\alpha,\beta)|\alpha\geq\pi\sqrt{-r^2-r+{3\over4}},-{9\over 16}\pi^2\leq\beta<-{1\over 2}\pi^2,r\in[{1\over4},{1\over2})\}\cup\\
&\{(\alpha,\beta)|0<\alpha<{\sqrt{3}\over2}\pi,\beta=-{1\over2}\pi^2 \},\\
IV&=\{(\alpha,\beta)|0<\alpha<\sqrt{\Lambda_\beta},-{1\over 2}\pi^2<\beta<\beta_l\},\\
V&=\{(\alpha,\beta)|\pi\sqrt{1-r^2}<\alpha<\sqrt{\Lambda_\beta},{\sqrt{3}-1\over 4}\pi^2<\beta<{1\over2}\pi^2\},\\
VI&=\{(\alpha,\beta)|0<\alpha<\pi\sqrt{-r^2-r+{3\over4}},{1\over 2}\pi^2<\beta\leq{9\over 16}\pi^2,r\in[{1\over4},{1\over2})\},\\
VII&=\{(\alpha,\beta)|0<\alpha<\sqrt{\Lambda_\beta},0<\beta\leq{\sqrt{3}-1\over 4}\pi^2\}\cup\\
&\{(\alpha,\beta)|0<\alpha\leq\pi\sqrt{1-r^2},{\sqrt{3}-1\over 4}\pi^2<\beta<{1\over2}\pi^2, r\in({1\over2},{\sqrt{3}\over2})\}\cup\\
&\{(\alpha,\beta)|\pi\sqrt{-r^2-r+{3\over4}}\leq\alpha<\pi\sqrt{1-r^2},{1\over 2}\pi^2\leq\beta\leq{9\over 16}\pi^2,r\in[{1\over4},{1\over2}]\},\\
VIII&=\{(\alpha,\beta)|\alpha>0,\beta>{9\over 16}\pi^2\},\\
IX&=\{(\alpha,\beta)|\alpha\geq{\sqrt{3}\over2}\pi,\beta=-{1\over 2}\pi^2\}\cup\{(\alpha,\beta)|\alpha>\sqrt{\Lambda_\beta},-{1\over 2}\pi^2<\beta<{1\over 2}\pi^2\}\cup\{(\alpha,\beta)|\\
&\alpha=\sqrt{\Lambda_\beta},0<\beta\leq{\sqrt{3}-1\over 4}\pi^2\}\cup\{(\alpha,\beta)|\alpha\geq\pi\sqrt{1-r^2},{1\over 2}\pi^2\leq\beta\leq{9\over 16}\pi^2,r\in[{1\over4},{1\over2}]\},
\end{align*}
where $\Lambda_\beta=\sup_{c\notin(0,1)}\max\{-\lambda_1(c),0\},$ $r={1\over4}+\sqrt{{9\over16}+{\beta\over\pi^2}}$ for $-{9\over 16}\pi^2\leq\beta<0$,
$r={1\over4}+\sqrt{{9\over16}-{\beta\over\pi^2}}$ for $0<\beta\leq{9\over 16}\pi^2$,  and $\beta_{l}$ is given by Theorem $6$ of \cite{LYZ}.
Moreover,  $m_{\beta}=0$ and $\max\{M_{\beta},0\}=\Lambda_{\beta}$ for $\beta\in[-{1\over2}\pi^2,0)\cup(0,{9\over16}\pi^2]$, and $m_{\beta}=1$ and $M_{\beta}=(-r^2-r+{3\over4})\pi^2$ for $\beta\in[-{9\over16}\pi^2,-{1\over2}\pi^2)$.

The explicit number  $\sharp\left(\sigma_d(\mathcal{R}_{\alpha,\beta})\cap\mathbf{R}\right)$ is given as follows:
\begin{align}\nonumber
(\alpha,\beta)\in IX&\Longrightarrow\sharp\left(\sigma_d(\mathcal{R}_{\alpha,\beta})\cap\mathbf{R}\right)=0;\\\nonumber
(\alpha,\beta)\in III\cup VII&\Longrightarrow\sharp\left(\sigma_d(\mathcal{R}_{\alpha,\beta})\cap\mathbf{R}\right)=1; \\\label{explicit number}
(\alpha,\beta)\in IV\cup V\cup VI&\Longrightarrow\sharp\left(\sigma_d(\mathcal{R}_{\alpha,\beta})\cap\mathbf{R}\right)=2;\\\nonumber
(\alpha,\beta)\in II&\Longrightarrow\sharp\left(\sigma_d(\mathcal{R}_{\alpha,\beta})\cap\mathbf{R}\right)=3;\\\nonumber
(\alpha,\beta)\in I\cup VIII&\Longrightarrow\sharp\left(\sigma_d(\mathcal{R}_{\alpha,\beta})\cap\mathbf{R}\right)=\infty.
\end{align}
In addition, $\sharp\left(\sigma_d(\mathcal{R}_{\alpha,\beta})\cap\mathbf{R}\right)=1$ if $(\alpha,\beta)\in\Gamma:=\{(\alpha,\beta)|\alpha=\sqrt{\Lambda_\beta}, -{1\over2}\pi^2<\beta<\beta_l\text{ or }$ ${\sqrt{3}-1\over 4}\pi^2<\beta<{\pi^2\over2}\}$. Now we fix $\alpha=2\pi/T$.

The number (denoted by $\theta$) of traveling wave families  near the sinus profile is given by
\begin{align}\nonumber
(\alpha,\beta)\in IX&\Longrightarrow\theta=0;\\\label{number of traveling wave families for sinus flow}
(\alpha,\beta)\in  VII&\Longrightarrow1\leq\theta<\infty;\\\nonumber
(\alpha,\beta)\in IV\cup V\cup VI &\Longrightarrow2\leq\theta<\infty; \\\nonumber
(\alpha,\beta)\in I\cup II\cup III\cup VIII,\ \beta\neq -\frac{\pi^2}{2}&\Longrightarrow\theta=\infty.
\end{align}
In addition, $\theta=\sharp(\sigma_d(\mathcal{R}_{\alpha,\beta})\cap\mathbf{R})=1$ if $(\alpha,\beta)\in\Gamma$.
Moreover,
\begin{align}\label{number of traveling wave families for sinus flow 2}
(\alpha,\beta)\in III\cup IV\cup V\cup VII \Longrightarrow\theta=\sum_{k\geq1}\sharp(\sigma_d(\mathcal{R}_{k\alpha,\beta})\cap\mathbf{R}).\end{align}
If $(\alpha,\beta)\in III\cup IV\cup V\cup VII\cup\Gamma$, then
\begin{align}\label{number of traveling wave families for sinus flow with minimal period T}
 c_0\in\sigma_d(\mathcal{R}_{\alpha,\beta})\cap\mathbf{R}\Longrightarrow
\vec{u}_{\varepsilon}\left(
x-c_{\varepsilon}t,y\right) \text{ has minimal period } {2\pi/\alpha} \text{ in } x
\end{align}
for $\varepsilon>0$ small enough,  where $\vec{u}_{\varepsilon}\left(
x-c_{\varepsilon}t,y\right)$ satisfies the assumption of Lemma $\ref{traveling wave concentration}$.


To prove \eqref{explicit number}--\eqref{number of traveling wave families for sinus flow with minimal period T}, we need  the following asymptotic behavior, signatures and monotonicity of $\lambda_n$.

\if0
If $\beta\in(-{1\over2}\pi^2,{1\over2}\pi^2)$, the number in \eqref{explicit number} is  concluded in Table 1 of \cite{LYZ}.
If  $\beta \notin[-{9\over16}\pi^2,{9\over16}\pi^2]$, the conclusion is due to Theorem \ref{number for sinus flow}.
Finally, we consider  $\beta \in[-{9\over16}\pi^2,-{1\over2}\pi^2]\cup[{1\over2}\pi^2,{9\over16}\pi^2]$.
Note that $\lambda_n(c)$ is decreasing on $(-\infty,0)$ for $\beta\in[{1\over2}\pi^2,{9\over16}\pi^2]$, and  increasing on $(1,\infty)$ for $\beta\in[-{9\over16}\pi^2,-{1\over2}\pi^2]$.
To get the explicit number in \eqref{explicit number}, it suffices to prove  that
\fi
$(\rm{1})$ $\lim_{c\to\pm\infty}\lambda_{n}(c)={n^2\over 4}\pi^2>0$  for $\beta\in\mathbf{R}$;

$(\rm{2})$ $\lim_{c\to0^-}\lambda_n(c)=-\infty$  and $\lambda_n$ is decreasing on $(-\infty,0)$ for $\beta\in({9\over16}\pi^2,\infty)$;

$(\rm{3})$ $\lim_{c\to0^-}\lambda_n(c)=\lambda_n(0)$, $\lambda_1(0)<\lambda_2(0)\leq0$, $\lambda_3(0)>0$  and $\lambda_n$ is decreasing on $(-\infty,0)$ for $\beta\in[{1\over2}\pi^2,{9\over16}\pi^2]$;

$(\rm{4})$ $\lim_{c\to0^-}\lambda_n(c)=\lambda_n(0)$,  $\lambda_1(0)<0$, $\lambda_2>0$ on $(-\infty,0]$, there exist $c_1<c_2\in (-\infty,0)$ such that $\lambda_1(c)\geq\lambda_1(c_1)=0$ for $c\in(-\infty,c_1)$,  $\lambda_1$ is decreasing on $(c_1,c_2)$, $ \lambda_1(c_2)=\inf_{c\in(-\infty,0)}
\lambda_1(c)$ and $\lambda_1$ is increasing on $(c_2,0)$ for $\beta\in({\sqrt{3}-1\over 4}\pi^2,{1\over2}\pi^2)$;

$(\rm{5})$ $\lim_{c\to0^-}\lambda_n(c)=\lambda_n(0)$, $\lambda_1(0)<0$,  $\lambda_2>0$ on $(-\infty,0]$, there exists $c_1\in (-\infty,0)$ such that $\lambda_1(c)\geq\lambda_1(c_1)=0$ for $c\in(-\infty,c_1)$ and $\lambda_1$ is decreasing on $(c_1,0)$ for $\beta\in(0,{\sqrt{3}-1\over 4}\pi^2]$;

$(\rm{6})$ $\lambda_1\geq0$ on $(1,\infty)$ for $\beta\in[\beta_l,0)$;

$(\rm{7})$ $\lim_{c\to1^+}\lambda_n(c)=\lambda_n(1)$, $\lambda_1(1)>0$,  $\lambda_2>0$ on $(1,\infty)$, there exist $c_1<c_2<c_3\in (1,\infty)$ such that $\lambda_1(c)\geq\lambda_1(c_1)=\lambda_1(c_3)=0$ for $c\in(1,c_1)\cup(c_3,\infty)$,  $\lambda_1$ is decreasing on $(c_1,c_2)$, $ \lambda_1(c_2)=\inf_{c\in(1,\infty)}\lambda_1(c)$ and $\lambda_1$ is increasing on $(c_2,c_3)$   for $\beta\in(-{1\over2}\pi^2,\beta_l)$;

 $(\rm{8})$ $\lim_{c\to1^+}\lambda_n(c)=\lambda_{n}(1)$, $\lambda_1(1)<0$, $\lambda_2(1)=0$ and $\lambda_n$ is increasing on $(1,\infty)$ for $\beta=-{1\over2}\pi^2$;

  $(\rm{9})$ $\lim_{c\to1^+}\lambda_{1}(c)=-\infty$, $\lim_{c\to1^+}\lambda_{n+1}(c)=\lambda_{n}(1)$, $\lambda_1(1)=\lambda_2(1)<0$, $\lambda_3 (1)>0$ and $\lambda_n$ is increasing on $(1,\infty)$ for $\beta\in[-{9\over16}\pi^2,-{1\over2}\pi^2)$;

  $(\rm{10})$ $\lim_{c\to1^+}\lambda_{n}(c)=-\infty$ and $\lambda_n$ is increasing on $(1,\infty)$ for $\beta\in(-\infty,-{9\over16}\pi^2)$,
  \\
  where  $n\geq 1$, $\lambda_n(0)=\left(\left(r+{n-1\over2}\right)^2-1\right)\pi^2$ for $\beta\in(0,{9\over16}\pi^2]$,
$\lambda_n(1)= \left(\left(r-{1\over2}+\lceil{n\over2}\rceil\right)^2-1\right)$ $\pi^2$ for $\beta\in[-{9\over16}\pi^2,-{1\over2}\pi^2)\cup(-{1\over2}\pi^2,0)$, and $\lambda_n(1)=({n^2\over4}-1)\pi^2$ for $\beta=-{1\over2}\pi^2$ by Proposition $1$ in \cite{LYZ}.

\if0
Assume that $\{\vec{u}_{\varepsilon}\left(
x-c_{\varepsilon}t,y\right)=(  u_{\varepsilon}\left(  x-c_{\varepsilon
}t,y\right),v_{\varepsilon}\left(  x-c_{\varepsilon}t,y\right)  ) ,\varepsilon\in(0,\varepsilon_0)\}$ is a set of traveling wave solutions
 which have minimal period $T=\frac{2\pi}{\alpha}$ in $x$ such that
 $
\|(u_{\varepsilon},v_{\varepsilon})-(u,0)\|_{H^3(D_T)} \leq\varepsilon,
$ $\|v_{\varepsilon}\|_{L^2\left(  D_T\right)}\neq0$,
 $c_{\varepsilon}\notin \text{Ran}(u)$ and $c_{\varepsilon}\rightarrow c_0\notin \text{Ran}(u)$. By Lemma 1.6,
 $c_0\in\sigma_d(\mathcal{R}_{\alpha,\beta})\cap\mathbf{R}$ is possible.
 Our goal is to prove that $c_0\notin\bigcup_{k>1}(\sigma_d(\mathcal{R}_{k\alpha,\beta})\cap\mathbf{R})$.
Suppose that $c_0\in\sigma_d(\mathcal{R}_{k_0\alpha,\beta})\cap\mathbf{R}$ for some $k_0>1$. Since $-(k_0\alpha)^2$ is the only negative eigenvalue of $\mathcal{L}_{c_0}$,
the limit function is $\tilde v_0=\sin({2\pi\over k_0\alpha}x)\phi_1(y)$, where $\phi_1(y)$ is an eigenfunction of  $-(k_0\alpha)^2\in\sigma(\mathcal{L}_{c_0})$. We might need to get some contradiction from the minimal period ${2\pi\over \alpha}$ for $\tilde v_\varepsilon$ and the explicit  limit function, where $\tilde v_\varepsilon={v_\varepsilon/\|v_\varepsilon\|_{L^2(D_T)}}$.

If the traveling speeds $c_\varepsilon\to c_0$ and $c_0\in\sigma_d(\mathcal{R}_{k_0\alpha,\beta})\cap\mathbf{R}$ for some $k_0>1$, then from our discussion last time,
any traveling wave family is of minimal period $\geq{2\pi\over k_0\alpha}$. Since ${2\pi\over \alpha}>{2\pi\over k_0\alpha}$,  our question still exists: how to exclude the case that the traveling wave family is of minimal period ${2\pi\over \alpha}$ $?$

Take the region IV for example. When $\alpha$ is small, $\sharp(\sigma_d(\mathcal{R}_{k\alpha,\beta})\cap\mathbf{R})>0$ and
  $(\sigma_d(\mathcal{R}_{k\alpha,\beta})\cap\mathbf{R})\cap(\sigma_d(\mathcal{R}_{\alpha,\beta})\cap\mathbf{R})=\emptyset$
  for at least $k=2$. If we can prove that $\theta_{\min}=\sharp(\sigma_d(\mathcal{R}_{\alpha,\beta})\cap\mathbf{R})$, then  $\theta>\theta_{\min}$. So
(6.2) can not be strengthened.
\fi

Assertions $(1)$--$(10)$ provide pictures of the negative eigenvalues of $\mathcal{L}_c$ for fixed $\beta$.
Assume that $(1)$--$(10)$ are true. Note that
$\sigma_d(\mathcal{R}_{\alpha,\beta})\cap(1,\infty)=\emptyset$ if $\alpha\in\mathbf{R}$ and $\beta>0$, and
$\sigma_d(\mathcal{R}_{\alpha,\beta})\cap(-\infty,0)=\emptyset$ if $\alpha\in\mathbf{R}$ and $\beta<0$.
 Then we claim that
 \begin{align}\label{non-intersect}
&(\sigma_d(\mathcal{R}_{k\alpha,\beta})\cap\mathbf{R})\cap(\sigma_d(\mathcal{R}_{j\alpha,\beta})\cap\mathbf{R})=\emptyset,\\ \label{Rk}
&\sigma_d(\mathcal{R}_{k\alpha,\beta})\cap\R=\{c\in\R\setminus[0,1]|\lambda_1(c)=-(k\alpha)^2\}
 \end{align}
for   $k,j\in\mathbf{Z}^+$, $k\neq j$ and $(\alpha,\beta)\in III\cup IV\cup V\cup VII\cup\Gamma$.
In fact, \eqref{Rk} implies \eqref{non-intersect}.
If $(\alpha,\beta)\in III$, then by $(8)$--$(9)$ we have $\lambda_2>\lambda_2(1)\geq -\alpha^2$ on $(1,\infty)$, which gives \eqref{Rk}.
If  $(\alpha,\beta)\in IV\cup\Gamma$ with $ \beta<0$,
then by  $(7)$ we have
$\lambda_2>0$ on $(1,\infty)$ and thus, $\mathcal{L}_{c}$ has at most  one negative eigenvalue for   $c\in(1,\infty)$, which gives \eqref{Rk}.
Similarly, we can prove \eqref{Rk} for  $(\alpha,\beta)\in V \cup VII\cup\Gamma$ with $0<\beta<{\pi^2\over2}$
 by $(4)$--$(5)$.
If $(\alpha,\beta)\in VII$ with $\beta\geq {\pi^2\over2}$, then by $(3)$ we have $\lambda_2>\lambda_2(0)\geq -\alpha^2$ on $(-\infty,0)$,
which gives \eqref{Rk}.

By applying Theorem $\ref{the explicit number of traveling wave families}$,
we get \eqref{explicit number}--\eqref{number of traveling wave families for sinus flow 2}.
\eqref{number of traveling wave families for sinus flow with minimal period T} is a direct consequence of Lemma $\ref{traveling wave concentration ad}$. Here, \eqref{non-intersect} is  used in the proof of \eqref{number of traveling wave families for sinus flow 2}--\eqref{number of traveling wave families for sinus flow with minimal period T}.

Using \eqref{number of traveling wave families for sinus flow 2} we can evaluate $ \theta$ for $(\alpha,\beta)\not\in VI$ as follows.
\begin{align}\nonumber
\beta\in (-\infty,-{1\over2}\pi^2)\cup({9\over16}\pi^2,+\infty)&\Longrightarrow\theta=\infty;\\\label{number1}
\beta=-{1\over2}\pi^2&\Longrightarrow\theta=\lceil\frac{\sqrt{3}\pi}{2\alpha}\rceil-1;\\\nonumber
-{1\over 2}\pi^2<\beta<\beta_l&\Longrightarrow\theta=\lfloor\frac{\sqrt{\Lambda_{\beta}}}{\alpha}\rfloor+\lceil\frac{\sqrt{\Lambda_{\beta}}}{\alpha}\rceil-1; \\\nonumber
\beta_l\leq \beta\leq0&\Longrightarrow\theta=0;\\\nonumber
0<\beta\leq{\sqrt{3}-1\over 4}\pi^2&\Longrightarrow\theta=\lceil\frac{\sqrt{\Lambda_{\beta}}}{\alpha}\rceil-1;\\\nonumber
{\sqrt{3}-1\over 4}\pi^2<\beta<{1\over2}\pi^2&\Longrightarrow\theta=\lfloor\frac{\sqrt{\Lambda_{\beta}}}{\alpha}\rfloor+\lceil\frac{\sqrt{\Lambda_{\beta}}}{\alpha}\rceil
-\lfloor\frac{\pi\sqrt{1-r^2}}{\alpha}\rfloor-1;\\\nonumber\pi\sqrt{-r^2-r+{3\over4}}\leq\alpha,\ {1\over 2}\pi^2\leq\beta\leq{9\over 16}\pi^2&\Longrightarrow\theta=\lceil\frac{{\pi\sqrt{1-r^2}}}{\alpha}\rceil-1.
\end{align}
The case $(\alpha,\beta)\in VI$ is more complicated. By $(3)$, we have $ \theta=\sharp(A_1\cup A_2)$  with $A_i=\{c<0|\lambda_i(c)=-(k\alpha)^2,\ k\in\mathbf{Z}^+\}$. Then
 by $(3)$ and the expression of $\lambda_i(0)$, $i=1,2$, we have $\sharp(A_1)=\lceil\frac{{\pi\sqrt{1-r^2}}}{\alpha}\rceil-1 $, $\sharp(A_2)=\lceil\frac{\pi\sqrt{-r^2-r+{3\over4}}}{\alpha}\rceil-1 $, and $\sharp(A_1)\leq \theta\leq \sharp(A_1)+\sharp(A_2)$, i.e. $\lceil\frac{{\pi\sqrt{1-r^2}}}{\alpha}\rceil-1\leq \theta\leq \lceil\frac{{\pi\sqrt{1-r^2}}}{\alpha}\rceil+\lceil\frac{\pi\sqrt{-r^2-r+{3\over4}}}{\alpha}\rceil-2$. In fact, $\theta= \sharp(A_1)+\sharp(A_2)-\sharp(A_1\cap A_2)$, but it seems difficult to give an explicit formula if $A_1\cap A_2\neq\emptyset.$

Now, we prove $(1)$--$(10)$. $(1)$ and $(4)$--$(7)$ are a summary of spectral results in Section $4$ of \cite{LYZ}. Monotonicity of $\lambda_n$ for $\beta\in(-\infty,-{1\over2}\pi^2]\cup[{1\over2}\pi^2,\infty)$ is due to Corollary $1$ in \cite{LYZ}. Asymptotic behavior of $\lambda_n$ in $(2)$ and $(10)$ is obtained by Corollary $\ref{sturm-liouville-eigen-asym}$. Signatures of $\lambda_n$ in $(3)$ and $(8)$--$(9)$ are due to Proposition $1$ in \cite{LYZ} and simple computation.

The rest is to prove the asymptotic behavior of $\lambda_n$ in $(3)$ and $(8)$--$(9)$.
First, we consider $\beta=\pm{1\over 2}\pi^2$. We only prove that $\lim_{c\to0^-}\lambda_n(c)=\lambda_n(0)$ for $\beta={1\over2}\pi^2$ and $n\geq1$. Note that ${\beta-u''\over u}=\pi^2$ and
$\left\|{\beta-u''\over u-c}-{\beta-u''\over u}\right\|_{L^1(-1,1)}=\pi^2\left\|{c\over u-c}\right\|_{L^1(-1,1)}$ for $c<0$. Let $0<\delta<1$.
Then
\begin{align*}
\left\|{c\over u-c}\right\|_{L^1(1-\delta,1)}={2\over\pi}\int_0^{\cos({\pi\over2}(1-\delta))}{-c\over z^2-c}{1\over \sqrt{1-z^2}}dz\leq C_\delta\sqrt{-c}\to0
\end{align*}
as $c\to0^-$. Similarly,  $\left\|{c\over u-c}\right\|_{L^1(-1,-1+\delta)}\to0$. Clearly, $\left\|{c\over u-c}\right\|_{L^1(-1+\delta,1-\delta)}\to 0$. Then $\left\|{c\over u-c}\right\|_{L^1(-1,1)}\to 0$.
It follows from Theorem $2.1$ in \cite{KWZ} that $\lim_{c\to0^-}\lambda_n(c)=\lambda_n(0)$ for  $n\geq1$.

We then consider $\beta\in({1\over2}\pi^2,{9\over16}\pi^2)$. We use the eigenfunctions of $\lambda_n(\beta,0)$ in Proposition $1$ $(\rm{iii})$ of \cite{LYZ}. Here, we rewrite
 $\lambda_n(\beta,c)=\lambda_n(c)$ to indicate its dependence on $\beta$ if necessary.
There exist $ \phi_n(y)=\phi_n^{(\beta,0)}(y)=\cos^{2r}(\frac{\pi}{2}y)P_{n-1}(\sin(\frac{\pi}{2}y))$, $n\geq 1$, satisfying\begin{align*}
&-\phi_n''-\frac{\beta-u''}{u}\phi_n=\lambda_n(0)\phi_n\quad \text{on}\quad(-1,1),\quad\phi_n(\pm 1)=0.
\end{align*}
Here, $\lambda_n(\beta,0)=\left(\left(r+{n-1\over2}\right)^2-1\right)\pi^2$, $r={1\over4}+\sqrt{{9\over16}-{\beta\over\pi^2}}$ and $P_{n-1}(\cdot)$ is a polynomial
 with order $n-1$.
Moreover, $\phi_n\in H_0^1(-1,1)$ is real-valued, and we normalize it such that $\|\phi_n\|_{L^2(-1,1)}=1.$ Then we have for $m, n\geq1$, \begin{align*}
&\int_{-1}^{ 1}\phi_n\phi_mdy=\delta_{mn}=\left\{\begin{array}{l@{\ \text{if}\ }l}1&n= m,\\0&n\neq m,\end{array}\right.\quad\int_{-1}^{1}\left(\phi_n'\phi_m'+{u''-\beta\over u}\phi_n\phi_m\right)dy=\lambda_n(0)\delta_{mn}.
\end{align*} Note that $\phi_n $ has $ n-1$ zeros in $(-1,1)$, and we denote $Z_n:=\{y\in(-1,1):\phi_n(y)=0\}=\{a_{n,1},\cdots,a_{n,n-1}\}$.
For any $n$-dimensional subspace $V=\text{span}\{\psi_1,\cdots,\psi_{n}\}$ in $H_0^1(-1,1)$,
there exists  $0\neq(\xi_1,\cdots,\xi_{n})\in\mathbf{R}^{n}$ such that
$
\xi_1\psi_1(a_{n,i})+\cdots+\xi_{n}\psi_{n}(a_{n,i})=0, i=1,\cdots,n-1.
$
Define
$
\tilde\psi=\xi_1\psi_1+\cdots+\xi_{n}\psi_{n}.
$
Then $\tilde\psi(a_{n,i})=0$, $i=1,\cdots,n-1$, i.e.  $\tilde\psi|_{Z_n}=0.$ We normalize $\tilde\psi$ such that $\|\tilde\psi\|_{L^2(-1,1)}=1$. Since $ \tilde\psi\in H_0^1(-1,1)$, we  have $\tilde\psi(\pm1)=0$. Similar to \eqref{phiyto0}, we have $|\tilde\psi(y)|^2\phi_n'(y)/\phi_n(y)\to 0 $ as $y\to a_{n,i}$ or $ y\to -1^+$ or $ y\to 1^-$, where $1\leq i\leq n-1$. Integration by parts gives \begin{align*}
&\left\|\tilde\psi'-\tilde\psi{\phi_n'\over\phi_n}\right\|_{L^2(-1,1)}^2=\int_{-1}^{ 1}\left(|\tilde\psi'|^2+{\phi_n''\over \phi_n}|\tilde\psi|^2\right)dy=\int_{-1}^{ 1}\left(|\tilde\psi'|^2-\frac{\beta-u''}{u}|\tilde\psi|^2-\lambda_n(0)|\tilde\psi|^2\right)dy.
\end{align*} If $c<0$, then using $ \beta-u''>0$ and $u-c>0$ for $y\in[-1,1]$, we have\begin{align*}
&\int_{-1}^{ 1}\left(|\tilde\psi'|^2-\frac{\beta-u''}{u-c}|\tilde\psi|^2\right)dy\geq\int_{-1}^{ 1}\left(|\tilde\psi'|^2-\frac{\beta-u''}{u}|\tilde\psi|^2\right)dy\geq\int_{-1}^{ 1}\lambda_n(0)|\tilde\psi|^2dy=\lambda_n(0).
\end{align*}This, along with \eqref{variation2}, yields that
$
\inf_{c\in(-\infty,0)}\lambda_{n}(c)\geq \lambda_n(0).
$ Now, we consider the upper bound.
 Let
$V_{n}=\text{span}\{\phi_1,\cdots,\phi_n\}.$ Then $V_{n}\subset H_0^1(-1,1)$.  By \eqref{variation2}, there exist $b_{i,c}\in\mathbf{R}$, $i=1,\cdots,n$, with $\sum_{i=1}^{n}|b_{i,c}|^2=1 $ such that $\varphi_c=\sum_{i=1}^{n}b_{i,c}\phi_i\in V_{n}$ with $\|\varphi_c\|^2_{L^2}=1$, and
\begin{align}\nonumber
\lambda_{n}(c)\leq &\sup_{\|\phi\|_{L^2}=1,\phi\in V_{n}}\int_{-1}^{1}\left(|\phi'|^2+{u''-\beta\over u-c}|\phi|^2\right)dy=
\int_{-1}^{1}\left(|\varphi_c'|^2+{u''-\beta\over u-c}|\varphi_c|^2\right)dy\\ \nonumber
=&\sum_{i=1}^{n}|b_{i,c}|^2\int_{-1}^{1}\left(|\phi_i'|^2+{u''-\beta\over u-c}|\phi_i|^2\right)dy+\!\!\!\!\sum_{1\leq i<j\leq n}\!\!\!2b_{i,c}b_{j,c}\int_{-1}^{1}\left(\phi_i'\phi_j'+{u''-\beta\over u-c}\phi_i\phi_j\right)dy\\ \nonumber
\leq&\max_{1\leq i\leq n}\int_{-1}^{1}\left(|\phi_i'|^2+{u''-\beta\over u-c}|\phi_i|^2\right)dy+\!\!\!\!\sum_{1\leq i<j\leq n}\!\left|\int_{-1}^{1}\left(\phi_i'\phi_j'+{u''-\beta\over u-c}\phi_i\phi_j\right)dy\right|\\ \nonumber\to&\max_{1\leq i\leq n}\int_{-1}^{1}\left(|\phi_i'|^2+{u''-\beta\over u}|\phi_i|^2\right)dy+\!\!\!\!\sum_{1\leq i<j\leq n}\!\left|\int_{-1}^{1}\left(\phi_i'\phi_j'+{u''-\beta\over u}\phi_i\phi_j\right)dy\right|\\ \nonumber =&\max_{1\leq i\leq n}\lambda_i(0)+\!\!\sum_{1\leq i<j\leq n}0=\lambda_n(0),\;\text{as}\;\;c\to 0^-.
\end{align}Combining the upper and lower bounds, we have $\lim_{c\to0^-}\lambda_n(c)=\lambda_n(0)$ for $\beta\in({1\over2}\pi^2,{9\over16}\pi^2).$

Now, we consider $\beta={9\over16}\pi^2$. By Corollary $1$ $(\rm{i})$ in \cite{LYZ}, we have for fixed $c<0$, $\lambda_n(\beta,c)\leq \lambda_n(\beta',c) $ if $ \beta'<\beta$. As $\lambda_n(\beta',c)\geq \lambda_n(\beta',c')$ if $c<c'<0$ (see Corollary $1$ $(\rm{iv})$ in \cite{LYZ}) and $\lim_{c\to0^-}\lambda_n(\beta',c)=\lambda_n(\beta',0)$,
we have for fixed $\beta'\in({1\over2}\pi^2,{9\over16}\pi^2)$, $\lambda_n(\beta',c)\geq \lambda_n(\beta',0)$ if $c<0$.
 Then\begin{align*}
&\lim_{c\to0^-}\lambda_n(\beta,c)\leq \liminf_{\beta'\to\beta^-}\lim_{c\to0^-}\lambda_n(\beta',c)=\liminf_{\beta'\to\beta^-}\lambda_n(\beta',0)=\lambda_n(\beta,0),
\\
&\lim_{c\to0^-}\lambda_n(\beta,c)=\lim_{c\to0^-}\lim_{\beta'\to\beta^-}\lambda_n(\beta',c)\geq\lim_{\beta'\to\beta^-}\lambda_n(\beta',0)
=\lambda_n(\beta,0).
\end{align*}Here, we used the left continuity of $ \lambda_n(\cdot,0)$ at $\beta={9\over16}\pi^2. $
Thus, $\lim_{c\to0^-}\lambda_n(\beta,c)=\lambda_n(\beta,0). $

Next, we consider $\beta\in(-\frac{9}{16}\pi^2,-\frac{1}{2}\pi^2)$.
By Proposition $1$ $(\rm{iv})$ in \cite{LYZ},
there exists $ \phi_n(y)=\phi_n^{(\beta,1)}(y)=|\sin(\frac{\pi}{2}y)|^{2r}P_{n}(\cos(\frac{\pi}{2}y))$ if $n$ is  odd; there exists $ \phi_n(y)=\phi_n^{(\beta,1)}(y)=\text{sign}(y)$ $|\sin(\frac{\pi}{2}y)|^{2r}P_{n-1}(\cos(\frac{\pi}{2}y))$ if $n$ is even; and for $n\geq1$,  \begin{align*}
&-\phi_n''-\frac{\beta-u''}{u-1}\phi_n=\lambda_n(1)\phi_n\quad \text{on}\quad(-1,1)\setminus\{0\},\quad\phi_n(\pm 1)=0.
\end{align*}
Here, $\lambda_n(\beta,1)=\left(\left(r-{1\over2}+\lceil{n\over2}\rceil\right)^2-1\right)\pi^2$ and $r={1\over4}+\sqrt{{9\over16}+{\beta\over\pi^2}}$.
Moreover,
 $\phi_n\in H_0^1(-1,1)$ is real-valued and we  normalize it such that $\|\phi_n\|_{L^2(-1,1)}=1 .$ Then for $m,n\geq1$,\begin{align*}
&\int_{-1}^{ 1}\phi_n\phi_mdy=\delta_{mn},\quad\int_{-1}^{1}\left(\phi_n'\phi_m'+{u''-\beta\over u-1}\phi_n\phi_m\right)dy=\lambda_n(1)\delta_{mn}.
\end{align*} If $n\geq 1$ is odd, then $\phi_n $ has $ n$ zeros in $(-1,1)$, and $0\in Z_n=\{y\in(-1,1):\phi_n(y)=0\}=\{a_{n,1},\cdots,a_{n,n}\}$.
 For any $(n+1)$-dimensional subspace $V=\text{span}\{\psi_1,\cdots, \psi_{n+1}\}$ in $H_0^1(-1,1)$,
there exists $0\neq(\xi_1,\cdots,\xi_{n+1})\in\mathbf{R}^{n+1}$ such that
$
\xi_1\psi_1(a_{n,i})+\cdots+\xi_{n+1}\psi_{n+1}(a_{n,i})=0, i=1,\cdots,n.
$
Define
$
\tilde\psi=\xi_1\psi_1+\cdots+\xi_{n+1}\psi_{n+1}.
$
Then $\tilde\psi(a_{n,i})=0$, $i=1,\cdots,n$, i.e.  $\tilde\psi|_{Z_n}=0.$ We normalize $\tilde\psi$ such that $\|\tilde\psi\|_{L^2(-1,1)}=1$. Since $ \tilde\psi\in H_0^1(-1,1)$, we have $\tilde\psi(\pm1)=0$. Integration by parts gives\begin{align*}
&\left\|\tilde\psi'-\tilde\psi{\phi_n'\over\phi_n}\right\|_{L^2(-1,1)}^2=\int_{-1}^{ 1}\left(|\tilde\psi'|^2+{\phi_n''\over \phi_n}|\tilde\psi|^2\right)dy=\int_{-1}^{ 1}\left(|\tilde\psi'|^2-\frac{\beta-u''}{u-1}|\tilde\psi|^2-\lambda_n(1)|\tilde\psi|^2\right)dy.
\end{align*}
If $c>1$, then using $ \beta-u''<0$ and $u-c<0$ for $y\in[-1,1]$, we have\begin{align*}
&\int_{-1}^{ 1}\left(|\tilde\psi'|^2-\frac{\beta-u''}{u-c}|\tilde\psi|^2\right)dy\geq\int_{-1}^{ 1}\left(|\tilde\psi'|^2-\frac{\beta-u''}{u-1}|\tilde\psi|^2\right)dy\geq\int_{-1}^{ 1}\lambda_n(1)|\tilde\psi|^2dy=\lambda_n(1).
\end{align*}This, along with \eqref{variation2}, yields that
$
\inf_{c\in(1,+\infty)}\lambda_{n+1}(c)\geq \lambda_n(1).
$ If $n\geq 1$ is even, then $\lambda_{n+1}(c)\geq\lambda_{n}(c)\geq\lambda_{n-1}(1)=\lambda_{n}(1)$. Thus, $
\inf_{c\in(1,+\infty)}\lambda_{n+1}(c)\geq \lambda_n(1)
$ is always true.
 Now, we consider the upper bound. Let
$V_{n+1}=\text{span}\{\phi_0,\phi_1,\cdots,\phi_n\},$  here $\phi_0=\eta|_{[-1,1]} $ is defined in \eqref{def-eta}, and $\phi_1,\cdots,\phi_n $ are $L^2$ normalized eigenfunctions. Then $V_{n+1}\subset H_0^1(-1,1)$. By \eqref{variation2}, for $c>1,$ there exists $\varphi_c\in V_{n+1}$ with $\|\varphi_c\|^2_{L^2}=1$, and \begin{align*}
\lambda_{n+1}(c)\leq &\sup_{\|\phi\|_{L^2}=1,\phi\in V_{n+1}}\int_{-1}^{1}\left(|\phi'|^2+{u''-\beta\over u-c}|\phi|^2\right)dy=
\int_{-1}^{1}\left(|\varphi_c'|^2+{u''-\beta\over u-c}|\varphi_c|^2\right)dy.
\end{align*}Since $V_{n+1}\subset H_0^1(-1,1) $ is finite dimensional, there exist $ \varphi_1\in V_{n+1}$ and $c_m\to1^+$ such that $\varphi_{c_m}\to\varphi_1$ in $ H_0^1(-1,1).$ Then  $\|\varphi_{1}\|^2_{L^2}=1,$ $\int_{-1}^{1}|\varphi_{c_m}'|^2dy\to\int_{-1}^{1}|\varphi_{1}'|^2dy$ and \begin{align*}
\lambda_{n+1}(c_m)\leq &
\int_{-1}^{1}\left(|\varphi_{c_m}'|^2+{u''-\beta\over u-c_m}|\varphi_{c_m}|^2\right)dy.
\end{align*} Since $u''-\beta>0$ and $u-c<0 $ for $y\in[-1,1]$ and $c>1$,   by Fatou's Lemma, we have\begin{align*}
&\limsup_{m\to\infty}
\int_{-1}^{1}{u''-\beta\over u-c_m}|\varphi_{c_m}|^2dy\leq\int_{-1}^{1}\limsup_{m\to\infty}{u''-\beta\over u-c_m}|\varphi_{c_m}|^2dy=\int_{-1}^{1}{u''-\beta\over u-1}|\varphi_{1}|^2dy.
\end{align*}In particular, if $\varphi_{1}(0)\neq 0$, then\begin{align*}
&\int_{-1}^{1}{u''-\beta\over u-1}|\varphi_{1}|^2dy=-\infty,\quad\limsup_{m\to\infty}
\int_{-1}^{1}{u''-\beta\over u-c_m}|\varphi_{c_m}|^2dy=-\infty,\quad\limsup_{m\to\infty}\lambda_{n+1}(c_m)=-\infty.
\end{align*}If $\varphi_{1}(0)= 0$, then $\varphi_{1}\in\text{span}\{\phi_1,\cdots,\phi_n\}$ and\begin{align*}
&\limsup_{m\to\infty}\lambda_{n+1}(c_m)\leq\int_{-1}^{1}\left(|\varphi_{1}'|^2+{u''-\beta\over u-1}|\varphi_{1}|^2\right)dy.
\end{align*}As $\|\varphi_{1}\|^2_{L^2}=1 ,$ there exist $b_{i}\in\mathbf{R}$, $i=1,\cdots,n$, with $\sum_{i=1}^{n}|b_{i}|^2=1 $ such that $\varphi_1=\sum_{i=1}^{n}b_{i}\phi_i\in V_{n+1}$ and
\begin{align*}
\int_{-1}^{1}\left(|\varphi_1'|^2+{u''-\beta\over u-1}|\varphi_1|^2\right)dy&=\sum_{i=1}^{n}|b_{i}|^2\int_{-1}^{1}\left(|\phi_i'|^2+{u''-\beta\over u-1}|\phi_i|^2\right)dy
=\sum_{i=1}^{n}|b_{i}|^2\lambda_i(1)\\&\leq\max_{1\leq i\leq n}\lambda_i(1)=\lambda_n(1).
\end{align*}Therefore, if $\varphi_{1}(0)= 0$, then $
\limsup\limits_{m\to\infty}\lambda_{n+1}(c_m)\leq\lambda_n(1);
$ if $\varphi_{1}(0)\neq 0 $,  this is clearly true since the limit is $-\infty$ in this case. By monotonicity of $\lambda_n$, we have $
\lim\limits_{c\to1^+}\lambda_{n+1}(c)\leq\lambda_n(1).$
Combining the upper and lower bounds, we have $
\lim\limits_{c\to1^+}\lambda_{n+1}(c)=\lambda_n(1)$ for $\beta\in(-\frac{9}{16}\pi^2,-\frac{1}{2}\pi^2)$.

For $\beta=-\frac{9}{16}\pi^2 $, the limits $
\lim\limits_{c\to1^+}\lambda_{n+1}(c)=\lambda_n(1)$, $n\geq1$, can be proved similarly as in the case $\beta=\frac{9}{16}\pi^2.$
Finally, the limit $\lim_{c\to1^+}\lambda_{1}(c)=-\infty$ for $\beta\in[-\frac{9}{16}\pi^2,-\frac{1}{2}\pi^2)$ follows from Theorem $\ref{eigenvalue asymptotic behavior-bound}$ $(2)$.
\if0
We only prove that $\lim_{c\to1^+}\lambda_{n+1}(c)=\lambda_n(1)$ for $1\leq n\leq4$, and the case for $n\geq5 $ can be proved by induction. By Theorem \ref{eigenvalue asymptotic behavior-bound}, $\lim_{c\to1^+}\lambda_{1}(c)=-\infty$. Since $\lambda_n(1/2-\beta/\pi^2)=(n^2/4-1)\pi^2$, we get by Corollary 3.1 in \cite{LYZ} that
$\lambda_n(c)<(n^2/4-1)\pi^2$ for $c\in(1,1/2-\beta/\pi^2)$. This, along with Theorem \ref{eigenvalue asymptotic behavior-bound}, gives $-\infty<\hat\lambda_{n+1}(1):=\lim_{c\to1^+}\lambda_{n+1}(c)<((n+1)^2/4-1)\pi^2$ for $n\geq1$.
Let $\psi_{n,c}$ be the $L^2$ normalized eigenfunction of \eqref{sturm-Liouville} for $\lambda_n(c)$, where $c\in(1,1/2-\beta/\pi^2)$.
We claim that
 \begin{align}\label{uniform-bound-c-geq-1}
 \|\psi_{n+1,c}\|_{H^1(-1,1)}\leq C,\;\; c\in(1,1/2-\beta/\pi^2)
 \end{align}
 for a fixed  $n\geq 1$.
By a similar proof of Lemma 3.4 in \cite{WZZ}, we have Lemma \ref{critical-point-not-beta-u-sec-dao-equal0} holds true for $\alpha^2\in\mathbf{R}$.
In fact, we only need to consider $c=0$, $y_0=0$, $u''(0)-\beta=2$ and $\omega_n(0)=0$ for $n\geq1$. Similar to (3.20) in \cite{WZZ},
$\|\phi_n'\|_{L^2(-{b\over \sqrt{|u''(0)|}}r_n,{b\over \sqrt{|u''(0)|}}r_n)}\to0$ for a fixed $b>0$. (3.23) and (3.26) in \cite{WZZ} are replaced by
\begin{align*}
&\|\phi_n'\|^2_{L^2({{3\over\sqrt {u''(0)}}r_n},{\delta_1})}
\leq-\alpha^2\|\phi_n\|^2_{L^2({{3\over\sqrt {u''(0)}}r_n},{\delta_1})}+ C\|{\omega_n\over y}\|^2_{L^2({3\over\sqrt {u''(0)}}r_n,\delta_1)}+\big|\phi_n'\bar{\phi}_n|^{\delta_1}_{{3\over\sqrt {u''(0)}}r_n}\big|,\end{align*}
\begin{align*}
&\|\phi_n'\|^2_{L^2({{b\over\sqrt {|u''(0)|}}r_n},{\delta_2})}\leq -\alpha^2C\|\phi_n\|^2_{L^2({{b\over\sqrt {|u''(0)|}}r_n},{\delta_2})}+{ b^2-8\over\varepsilon_1b^2/2-8}\\
&\quad\times \left(-{(1-\varepsilon_1/2)b^2\over4(b^2-8)}{2|\phi_n|^2\over y}\big|_{{b\over\sqrt {|u''(0)|}}r_n}^{\delta_2}+C\|{\omega_n\over y}\|^2_{L^2({b\over\sqrt {|u''(0)|}}r_n,\delta_2)}+\big|\phi_n'\bar{\phi}_n|^{\delta_2}_{{b\over\sqrt {|u''(0)|}}r_n}\big|\right)
\end{align*}
if $\alpha^2<0$. Since $\phi_n\rightharpoonup0$ in $H^1(-\delta,\delta)$, the additional terms $-\alpha^2\|\phi_n\|^2_{L^2({{3\over\sqrt {u''(0)}}r_n},{\delta_1})}$, $-\alpha^2C$ $\|\phi_n\|^2_{L^2({{b\over\sqrt {|u''(0)|}}r_n},{\delta_2})}\to0$. Then $\|\phi_n'\|^2_{L^2({{3\over\sqrt {u''(0)}}r_n},{\delta_1})}\to 0$ in Case I and $\|\phi_n'\|^2_{L^2({{b\over\sqrt {|u''(0)|}}r_n},{\delta_2})}$ $\to0$ in Case II. Thus, $\phi_n\to 0$ in $H^1(-\delta,\delta)$ and Lemma \ref{critical-point-not-beta-u-sec-dao-equal0} holds true for $\alpha^2\in\mathbf{R}$.
Similarly, Lemma 3.5 and Remark 3.6 in \cite{WZZ} still holds true for $\alpha^2\in\mathbf{R}$.
If $\{u=u_{\min}\}=\{u=u_{\min}\}\cap\{u'=0\}\cap\{u''\neq\beta\}$(which is enough for Sinus flow and $\beta\in(-{9\over16}\pi^2, -{1\over2}\pi^2)\cup({1\over2}\pi^2,{9\over16}\pi^2)$), then Proposition \ref{uniformH1bound} still holds true for $m_\beta<n<N_\beta$ and $-\lambda_n(c_k)=\alpha^2>0$  replaced by $n>m_\beta$ and $-\lambda_n(c_k)=\alpha_k^2\to\alpha^2\in\mathbf{R}$, respectively. The proof is by a slight modification. In fact, let $a\in\{u=u_{\min}\}\cap(y_1,y_2)\cap\{u''\neq\beta\}$ and $\hat \psi_k={\psi_k\over\|\psi_k\|_{H^1(y_1,y_2)}}$, then $\hat\psi_{k}''-\alpha_k^2\hat\psi_{k}={u''-\beta\over u-c_k}\hat\psi_{k}.$   $ \epsilon_k>0$ and $\omega_k$ are chosen such that $ \epsilon_k(1+\|\hat\psi_{k}''-\alpha_k^2\hat\psi_{k}\|_{H^1(a-\delta,a+\delta)})\to 0$ and $\omega_k=-i\epsilon_k(\hat\psi_{k}''-\alpha_k^2\hat\psi_{k})+(\alpha_k^2-\alpha^2)(u-c_k-i\epsilon_k)\hat \psi_k$ in the proof of Proposition \ref{uniformH1bound}, then the proof is also valid. Applying the modified Proposition \ref{uniformH1bound}(the version for ${9\over8}\kappa_-<\beta<0$), we get \eqref{uniform-bound-c-geq-1}.
Thus, there exists $\psi_{n+1,1}\in H_0^1(-1,1)$ such that, up to a subsequence, $\psi_{n+1,c}\rightharpoonup\psi_{n+1,1}$ in $H^1(-1,1)$.
Note that $ \psi_{n+1,c}\to\psi_{n+1,1}$ in $C^2_{loc}([-1,1]\setminus\{0\})$. Moreover, $\psi_{n+1,1}$ solves \eqref{sturm-Liouville} on $[-1,1]\setminus\{0\}$, where $c=1$ and $\lambda=\hat\lambda_{n+1}(1)$.
 Thus, $\hat\lambda_{n+1}(1)$ is an  eigenvalue with corresponding eigenfunction $\psi_{n+1,1}$ of \eqref{sturm-Liouville}, where $c=1$.
Recall that $-\infty<\hat\lambda_{2}(1)\leq \hat\lambda_{3}(1)<{5\over4}\pi^2$.
By Proposition 4.2 in \cite{LYZ}, $\lambda_1(1)=\lambda_2(1)$ are the only two eigenvalues on $(-\infty,{5\over4}\pi^2)$ of \eqref{sturm-Liouville}, where $c=1$.
This proves $\hat\lambda_{2}(1)=\hat\lambda_{3}(1)=\lambda_1(1)=\lambda_2(1).$ Note that $\langle\psi_{i,1},\psi_{j,1}\rangle_{L^2}=\lim_{c\to1^+}\langle\psi_{i,c},\psi_{j,c}\rangle_{L^2}=0$ for $2\leq i\neq j$ and $\hat\lambda_{4}(1)\leq \hat\lambda_{5}(1)<{21\over4}\pi^2$. By Proposition 4.2 in \cite{LYZ} again, the dimension of eigenspace of $\lambda_{2n-1}(1)=\lambda_{2n}(1)$ is $2$ for $n\geq1$, and $\lambda_1(1)=\lambda_2(1)<\lambda_3(1)=\lambda_4(1)$ are the only four  eigenvalues on $(-\infty,{21\over4}\pi^2)$ of \eqref{sturm-Liouville}, where $c=1$. Thus, $\hat\lambda_{4}(1)=\hat\lambda_{5}(1)=\lambda_3(1)=\lambda_4(1).$



Next, we consider $\beta\in({1\over2}\pi^2, {9\over16}\pi^2)$. We only prove that $\lim_{c\to0^-}\lambda_n(c)=\lambda_n(0)$ for $n=1,2$, and the case for $n\geq3 $ can be proved by induction. Since $\lambda_n(1/2-\beta/\pi^2)=(n^2/4-1)\pi^2$, we get by Corollary 3.1 in \cite{LYZ} that
$\lambda_n(c)<(n^2/4-1)\pi^2$ for $c\in(1/2-\beta/\pi^2,0)$, and $-\infty<\hat\lambda_n(0):=\lim_{c\to0^-}\lambda_n(c)<(n^2/4-1)\pi^2$.
Let $\psi_{n,c}$ be the $L^2$ normalized eigenfunction of \eqref{sturm-Liouville} for $\lambda_n(c)$, where $c\in(1/2-\beta/\pi^2,0)$.
Similar to \eqref{uniform-bound-c-geq-1}, we have for a fixed $n\geq1$,
$\|\psi_{n,c}\|_{H^1(-1,1)}\leq C, c\in(1/2-\beta/\pi^2,0)$.
Thus, there exists $\psi_{n,0}\in H_0^1(-1,1)$ such that, up to a subsequence, $\psi_{n,c}\rightharpoonup\psi_{n,0}$ in $H_0^1(-1,1)$.
 Note that $ \psi_{n,c}\to\psi_{n,0}$ in $C^2_{loc}(-1,1)$. In addition, $\psi_{n,0}$ solves \eqref{sturm-Liouville} on $(-1,1)$, where $c=0$ and $\lambda=\hat\lambda_n(0)$.
 Thus, $\hat\lambda_n(0)$ is an  eigenvalue with corresponding eigenfunction $\psi_{n,0}$ of \eqref{sturm-Liouville}, where $c=0$.  Recall that $-\infty<\hat\lambda_n(0)<(n^2/4-1)\pi^2$.
By Proposition 4.2 in \cite{LYZ}, $\lambda_1(0)$ is the unique eigenvalue on $(-\infty,-{3\over4}\pi^2)$ of \eqref{sturm-Liouville}, where $c=0$.
This proves $\hat \lambda_1(0)=\lambda_1(0).$ Note that $\langle\psi_{1,0},\psi_{2,0}\rangle_{L^2}=\lim_{c\to0^-}\langle\psi_{1,c},\psi_{2,c}\rangle_{L^2}=0$. By Proposition 4.2 in \cite{LYZ} again, the dimension of eigenspace of $\lambda_n(0)$ is $1$ for $n=1,2$, and $\lambda_1(0)<\lambda_2(0)$ are the only two eigenvalues on $(-\infty,0)$ of \eqref{sturm-Liouville}, where $c=0$. This proves $\hat \lambda_2(0)=\lambda_2(0).$

 By the limits for  $\beta\in({1\over2}\pi^2, {9\over16}\pi^2)\cup(-{9\over16}\pi^2,-{1\over2}\pi^2)$, and the continuity and monotonicity of $\lambda_n$ on $c\notin \textrm{Ran}(u)$ and $\beta\in\mathbf{R}$, we have $\lim_{c\to0^-}\lambda_n(c)=\lambda_n(0)$ for $\beta={9\over16}\pi^2$
and $\lim_{c\to1^+}\lambda_{n+1}(c)=$
$\lambda_{n}(1)$ for $\beta=-{9\over16}\pi^2$.\fi
\end{example}

\section*{Acknowledgement}
Z. Lin is
 partially supported by the NSF grants DMS-1715201 and DMS-2007457.
Z. Zhang is partially supported by NSF
of China under Grant 11425103.

\end{CJK*}

\end{document}